\newtheorem{defn}[theorem]{Definition}
\newtheorem{assumption}[theorem]{Assumption}
\newtheorem{rem}[theorem]{Remark}
\newcommand{\rd}{\color{red}}
\renewcommand{\rd}{}
\newcommand{\Rd}{\color{red}}
\renewcommand{\Rd}{}
\newcommand{\bl}{\color{blue}}
\renewcommand{\bl}{}
\newcommand{\Bl}{\color{blue}}
\renewcommand{\Bl}{}
\newcommand{\MG}{\color{magenta}}
\renewcommand{\MG}{}
\newcommand{\bn}{\color[rgb]{0.5,0.25,0.0}}
\renewcommand{\bn}{}
\newcommand{\Bn}{\color[rgb]{0.5,0.25,0.0}}
\renewcommand{\Bn}{}
\newcommand{\Gr}{\color[rgb]{0.0,0.45,0.0}}
\renewcommand{\Gr}{}
\newcommand{\gr}{\color[rgb]{0.0,0.45,0.0}}
\renewcommand{\gr}{}
\newcommand{\gR}{\color[rgb]{0.0,0.45,0.0}}
\renewcommand{\gR}{}
\newcommand{\bs}{\boldsymbol}
\newcommand{\comment}[1]{}
\newcommand{\R}{\mathbb R}
\journalname{}
\begin{document}
%
%






%
%
%
%
%
%
%
%
%


\title{Non-homogeneous Dirichlet-transmission
problems
for the anisotropic Stokes and Navier-Stokes systems in Lipschitz domains with transversal interfaces}

\titlerunning{Non-homogeneous Dirichlet-transmission problems for the anisotropic Navier-Stokes}

\author{Mirela Kohr \and Sergey E. Mikhailov$^*$\thanks{$^*$ Corresponding author} \and Wolfgang L. Wendland}

\authorrunning{M. Kohr, S.E. Mikhailov and W.L. Wendland}

\institute{M. Kohr
           \at
              Faculty of Mathematics and Computer Science, Babe\c{s}-Bolyai University,\\
              1 M. Kog\u{a}lniceanu Str., 400084 Cluj-Napoca, Romania \\
              \email{mkohr@math.ubbcluj.ro}
          \and
           S.E. Mikhailov
           \at
           Department of Mathematics, Brunel University London,
             Uxbridge, UB8 3PH,\\ United Kingdom\\
             \email{sergey.mikhailov@brunel.ac.uk}
           \and
           W.L. Wendland
           \at
              Institut f\"ur Angewandte Analysis und Numerische
              Simulation, Universit\"at Stuttgart,\\
              Pfaffenwaldring, 57, 70569 Stuttgart, Germany\\
              \email{wendland@mathematik.uni-stuttgart.de}
}
\date{\today}

\maketitle


\begin{abstract}
This paper is build around the stationary anisotropic Stokes and Navier-Stokes systems with an $L^\infty$-tensor coefficient satisfying an ellipticity condition in terms of symmetric matrices in ${\mathbb R}^{n\times n}$ with zero matrix traces. We analyze, in $L^2$-based Sobolev spaces, the non-homogeneous boundary value problems of Dirichlet-transmission type for the anisotropic Stokes and Navier-Stokes systems in a compressible framework in a bounded Lipschitz domain with a Lipschitz interface  in ${\mathbb R}^n$, $n\ge 2$ ($n=2,3$ for the nonlinear problems).
The  transversal interface intersects the boundary of the Lipschitz domain.
First, we use a  mixed variational approach to prove well-posedness results for the linear anisotropic Stokes system.
Then we show the existence of a weak solution  for the nonlinear anisotropic Navier-Stokes system by implementing the Leray-Schauder fixed point theorem and using various results and estimates from the linear case, as well as  the Leray-Hopf and some other norm inequalities. Explicit conditions for uniqueness of solutions to the nonlinear problems are also provided.

\keywords{Anisotropic Stokes and Navier-Stokes systems with $L^\infty$ coefficients \and  variational problem \and $L^2$-based Sobolev spaces \and Dirichlet-transmission problems \and existence results \and fixed point theorems.}
\subclass{35J57 \and 35Q30 \and  31C \and 46E35\and 76D \and 76M}
\end{abstract}





\section{\bf Introduction}

Variational methods have been intensively used in the analysis of elliptic boundary problems, in particular, boundary value problems for the Stokes and Navier-Stokes equations (see, e.g., \cite{Co,Ga-Sa,Sayas-book}).
Employing variational methods, Angot \cite{Angot-2,Angot-3} analyzed a well-posedness of some Stokes/Brinkman problems with constant isotropic viscosity and a family of embedded jump conditions on an immersed (transversal) interface with weak regularity assumptions.

The authors in \cite{K-L-W} combined a layer potential approach with the Leray-Schauder fixed point theorem and proved existence results for a nonlinear Neumann-transmission problem for the Stokes and Brinkman systems in $L^p$, Sobolev, and Besov spaces.

Dong and Kim \cite{Dong-Kim} obtained regularity results for the Stokes system with measurable coefficients in one direction (see also \cite{Choi-Dong-Kim-JMFM}). Korobkov, Pileckas and Russo \cite{Korobkov} analyzed the flux problem in the theory of steady Navier-Stokes equations with constant coefficients and non-homogeneous boundary conditions.
Amrouche and Rodr\'{i}guez-Bellido \cite{Amrouche-Bellido} proved the existence of a very weak solution for the {\bn non-}homogeneous Dirichlet problem for the compressible Navier-Stokes system in a bounded domain of the class $C^{1,1}$ in ${\mathbb R}^3$.

An alternative integral approach, which reduces boundary value problems for the Stokes system with variable coefficients and a large spectrum of other variable-coefficient elliptic partial differential equations to {boundary-domain integral equations} (BDIEs), by employing explicit parametrix-based integral potentials, was developed in \cite{CMN-1,CMN-2,Ch-Mi-Na,
Mikh-18,FP-Mik2019}.

Mazzucato and Nistor \cite{Ma-Ni} obtained well-posedness and regularity results in weighted Sobolev spaces for the anisotropic linear elasticity equations with mixed boundary conditions on polyhedral domains. Brewster et al. \cite{B-M-M-M} used a variational approach to show well-posedness of Dirichlet, Neumann and mixed boundary problems for higher order divergence-form elliptic equations with $L^\infty$ coefficients in locally $(\epsilon,\delta )$-domains and in Besov and Bessel potential spaces.

The authors in \cite{K-M-W-2} developed a variational analysis in the pseudostress setting for transmission problems with internal interfaces in weighted Sobolev spaces for the anisotropic Stokes and Navier-Stokes systems with $L^\infty $ strongly elliptic coefficient tensor, see also \cite{Dong-Kim}.
Note that in \cite{K-M-W-2} and \cite{Dong-Kim} it was assumed that the coefficients of the viscosity tensor satisfy a stronger ellipticity condition than in \eqref{mu}, for all matrices in ${\mathbb R}^{n\times n}$ (not only for symmetric and with zero-trace, see \cite[Eqs. (2)-(3)]{K-M-W-2}).
Such a condition allowed to explore the associated non-symmetric pseudostress setting (see also \cite{K-M-W}, and \cite{K-W,K-W1} 
for the Stokes and Navier-Stokes systems with non-smooth coefficients in compact Riemannian setting).
The authors extended in \cite{KMW-DCDS2021} and \cite{KMW-LP} their variational analysis to other transmission and exterior boundary problems with internal interfaces for the anisotropic Stokes and Navier-Stokes systems by assuming that the corresponding $L^{\infty }$ viscosity tensor coefficient satisfies the ellipticity condition only in terms of symmetric matrices in ${\mathbb R}^{n\times n}$ with zero traces, that is, the strong ellipticity condition \eqref{mu}.
Only homogeneous Dirichlet conditions and zero velocity jumps were considered in the (nonlinear) Navier-Stokes problems in \cite{K-M-W}-\cite{K-W1}.

In this paper we
investigate {\it non-homogeneous} Dirichlet-transmission problems for the anisotropic Stokes and Navier-Stokes systems in a bounded Lipschitz domain of ${\mathbb R}^n$ ($n=2,3$ for the nonlinear problems) with {\it a transversal Lipschitz interface that intersects the boundary of the domain.}
As in \cite{KMW-DCDS2021} and \cite{KMW-LP}, we impose the ellipticity condition \eqref{mu}, which is less restrictive than in \cite{K-M-W-2} and \cite{Dong-Kim}.
We show well-posedness results for the linear problems, as well as existence results for the nonlinear problems in $L^2$-based Sobolev spaces.
First, we explore equivalent mixed variational formulations and prove the well-posedness of linear Dirichlet-transmission problems for the anisotropic Stokes system in a compressible framework in bounded Lipschitz domains of ${\mathbb R}^n$ with transversal Lipschitz interfaces and given data in $L^2$-based Sobolev spaces.
Next, we use well-posedness results in the linear case and the Leray-Schauder fixed point theorem and show the existence of a weak solution of the Dirichlet problem for the anisotropic Navier-Stokes system with general non-homogeneous data in $L^2$-based Sobolev spaces in a bounded Lipschitz domain in ${\mathbb R}^n$, $n=2,3$.
Finally, we prove the existence of weak solutions ${\bf u}$ of the Dirichlet-transmission problems for the anisotropic Navier-Stokes system in a bounded Lipschitz domain in ${\mathbb R}^n$, $n=2,3$, with transversal Lipschitz interface and data in $L^2$-based Sobolev spaces.

In addition to their mathematical interest, the anisotropic Stokes and Navier-Stokes transmission problems analyzed in this paper are also motivated
by industrial applications related to the flow of immiscible fluids, liquid crystals, and flows of non-homogeneous fluids
with variable anisotropic viscosity tensors depending on physical properties of the fluids (cf., e.g., \cite{duffy}, \cite[Chapter 3]{Malkin}).

\section{Anisotropic Stokes system with elliptic $L^\infty $ viscosity tensor coefficient} 

Let $\Omega \subseteq\mathbb R^n$, $n\ge 2$, be an open set, and let
$\boldsymbol{\mathfrak L}$ denote a second order differential operator in the component-wise divergence form\footnote{The standard notation $\partial _\beta $ for the first order partial derivative $\displaystyle\frac{\partial }{\partial x_\beta }$, $\beta =1,\ldots ,n$, and the Einstein summation rule on repeated indices are used all along the paper.},

\begin{align}
\label{L-oper}
&(\boldsymbol{\mathfrak L}{\bf u})_i:=
\partial _\alpha\big(a_{ij}^{\alpha \beta }E_{j\beta }({\bf u})\big),\ \ i=1,\ldots ,n,
\end{align}
where ${\bf u}\!=\!(u_1,\ldots ,u_n)^\top$, and $E_{j\beta }({\bf u})\!:=\!\frac{1}{2}(\partial _ju_\beta +\partial _\beta u_j)$ are the {entries} of the symmetric part ${\mathbb E}({\bf u})$ of $\nabla {\bf u}$ (the gradient of ${\bf u}$).
The coefficients $a_{ij}^{\alpha \beta }$ are essentially bounded, measurable, real-valued functions constituting the tensor coefficient ${\mathbb A}$, that is,
\begin{align}
\label{Stokes-1}
\!\!\!\!\!{\mathbb A}:=\!\left({a_{ij}^{\alpha \beta }}\right)_{1\leq i,j,\alpha ,\beta \leq n},\  a_{ij}^{\alpha \beta }\in L^\infty(\Omega ),\ 1\leq i,j,\alpha ,\beta \leq n,
\end{align}
and satisfying the following symmetry conditions
\begin{align}
\label{Stokes-sym}
a_{ij}^{\alpha \beta }(x)=a_{\alpha j}^{i\beta }(x)=a_{i\beta }^{\alpha j}(x),\ \ x\in \Omega
\end{align}
(see also \cite[(3.1),(3.3)]{Oleinik}). In addition, we require that ${\mathbb A}$ satisfies the ellipticity condition only in terms of all {\it symmetric} matrices in ${\mathbb R}^{n\times n}$ with {\it zero matrix trace}. This ellipticity condition has been first used in \cite{KMW-DCDS2021,KMW-LP}. Thus, we assume that there exists a constant $C_{\mathbb A} >0$ such that, for almost all $x\in \Omega$,
\begin{align}
\label{mu}
a_{ij}^{\alpha \beta }(x)\xi _{i\alpha }\xi _{j\beta }\geq C_{\mathbb A}^{-1}|\boldsymbol\xi|^2\,,
\ \ &\forall\ \boldsymbol\xi =(\xi _{i\alpha })_{i,\alpha =1,\ldots ,n}\in {\mathbb R}^{n\times n}\nonumber\\
&\mbox{ such that }\, \boldsymbol\xi=\boldsymbol\xi^\top \mbox{ and }
\sum_{i=1}^n\xi _{ii}=0,
\end{align}
where $|\boldsymbol\xi |^2=\xi _{i\alpha }\xi _{i\alpha }$, and the superscript $\top $ denotes the transpose of a matrix. The tensor coefficient ${\mathbb A}$ is endowed with the norm
\begin{align}
\label{A}
\|{\mathbb A}\|:=\max\left\{\|a_{ij}^{\alpha \beta }\|_{\rd L^\infty (\Omega)}:i,j,\alpha ,\beta =1\ldots ,n\right\}.
\end{align}

The symmetry conditions \eqref{Stokes-sym} lead to the following equivalent forms of the operator $\boldsymbol{\mathfrak L}$,
{\begin{equation}
\label{Stokes-0}
\begin{array}{lll}
(\boldsymbol{\mathfrak L}{\bf u})_i=\partial _\alpha\big(a_{ij}^{\alpha \beta }\partial _\beta u_j\big),\ \ i=1,\ldots ,n;\quad
\boldsymbol{\mathfrak L}{\bf u}=\partial _\alpha\left(A^{\alpha \beta }\partial _\beta {\bf u}\right),
\end{array}
\end{equation}
and the tensor coefficient ${\mathbb A}$ can be considered as consisting of $n\times n$ matrix valued functions $A^{\alpha \beta }$, i.e.,
\begin{align}
\label{Stokes-1A}
\!\!\!\!\!{\mathbb A}\!=\!\left(A^{\alpha \beta }\right)_{1\leq \alpha ,\beta \leq n},\
A^{\alpha \beta }:=\left(a_{ij}^{\alpha \beta }\right)_{1\leq i,j\leq n},\ 1\leq \alpha ,\beta \leq n.
\end{align}}

Let ${\bf u}$ be an unknown vector field, $\pi $ be an unknown scalar field, ${\bf f}$ be a given vector field and $g$ be a given scalar field defined in $\Omega $. Then the equations
\begin{equation}
\label{Stokes}
\begin{array}{lll}
\boldsymbol{\mathcal L}({\bf u},\pi ):=\boldsymbol{\mathfrak L}{\bf u}-\nabla \pi={\bf f},\ {\rm{div}}\ {\bf u}=g \mbox{ in } \Omega
\end{array}
\end{equation}
determine the {\it Stokes system with variable anisotropic viscosity tensor coefficient ${\mathbb A}=\left(A^{\alpha \beta }\right)_{1\leq \alpha ,\beta \leq n}$ in a compressible framework}.

According to \eqref{Stokes-0} and \eqref{L-oper}, the Stokes operator $\boldsymbol{\mathcal L}$ can be written in any of the following equivalent forms
\begin{align}
\label{Stokes-new}
&\boldsymbol{\mathcal L}({\bf u},\pi )=\partial _\alpha\left(A^{\alpha \beta }\partial _\beta {\bf u}\right)-\nabla \pi ,\\
&\left(\boldsymbol{\mathcal L}({\bf u},\pi )\right)_i=\partial _\alpha\big(a_{ij}^{\alpha \beta }E_{j\beta }({\bf u})\big)-\partial _i\pi ,\ i=1,\ldots ,n\,.
\end{align}
In addition, the following nonlinear system
\begin{align}
\label{anisotropic-NS}
\boldsymbol{\mathcal L}({\bf u},\pi )-{({\bf u}\cdot \nabla ){\bf u}}={\bf f}\,, \ \ {\rm{div}} \, {\bf u}=g {\rd \mbox{ in } \Omega }
\end{align}
is called the {\it anisotropic Navier-Stokes system with variable viscosity tensor coefficient ${\mathbb A}\!=\!\left(A^{\alpha \beta }\right)_{1\leq \alpha ,\beta \leq n}$ {\Rd in a compressible framework}}.

If ${\rm{div}}\ {\bf u}=0$ in \eqref{Stokes} and \eqref{Stokes-new} one obtains the {\it anisotropic Stokes and Navier-Stokes systems in the incompressible case}.

In the {\it isotropic case}, the tensor ${\mathbb A}$ in \eqref{Stokes-1} has the following entries
\begin{align}
\label{isotropic}
a_{ij}^{\alpha \beta}{(x)}={\lambda{(x)} \delta _{i\alpha }\delta _{j\beta }}+\mu{(x)} \left(\delta_{\alpha j}\delta _{\beta i}+\delta_{\alpha \beta }\delta _{ij}\right),\ 1\leq i,j, \alpha ,\beta \leq n\,,
\end{align}
where $\lambda,\mu\in L^{\infty }(\Omega)$, and
$c_\mu^{-1}\leq\mu(x) \leq {c_\mu} \mbox{ for a.e. } x\in\Omega $,
with some constant $c_\mu>0$ (cf., e.g., Appendix III, Part I, Section 1 in \cite{Temam}).
Then it is immediate that condition \eqref{mu} is fulfilled (see also \cite{KMW-LP}) and thus our results apply also to the Stokes system in the isotropic case.

\section{Functional framework and preliminaries}
\label{preliminaries}
\setcounter{equation}{0}

Given a Banach space ${\mathcal X}$, its topological dual is denoted by ${\mathcal X}'$, and the notation $\langle \cdot ,\cdot \rangle _X$ means the duality pairing of two dual spaces defined on a set $X\subseteq {\mathbb R}^n$.

\subsection{\bf Sobolev spaces on Lipschitz domains in ${\mathbb R}^n$}

Let $n\geq 2$ and let $\Omega $ be a bounded Lipschitz domain in ${\mathbb R}^n$, i.e., an open and connected set with a connected boundary $\partial \Omega $.
Let ${\mathcal D}(\Omega ):=C^{\infty }_{0}(\Omega )$ denote the space of infinitely differentiable functions with compact support in $\Omega $, equipped with the inductive limit topology. Let ${\mathcal D}'(\Omega )$ denote the corresponding space of distributions on $\Omega $, i.e., the dual of the space ${\mathcal D}(\Omega )$.
Let $L^2(\Omega )$ be the Lebesgue space of
square-integrable functions on $\Omega $, and $L^\infty(\Omega )$ be the space of (equivalence classes of) essentially bounded measurable functions on $\Omega $.
Let also
\begin{align}
\label{L2-0}
L_0^2(\Omega ):=\{f\in L^2(\Omega ):\langle f,1\rangle _{\Omega }=0\}\,.
\end{align}
The dual of $L_0^2(\Omega )$ is the space $L^2(\Omega )/{\mathbb R}$.
The Sobolev space $H^1({\Omega })$ is defined as
\begin{align}
\label{bessel-potential2}
H^1({\Omega }):=\big\{f\in L^2({\Omega }):\nabla f\in L^2({\Omega })^n\big\},
\end{align}
and is endowed with the norm
\begin{align}
\label{Sobolev}
\|f\|^2_{H^1({\Omega })}=\|f\|^2_{L^2({\Omega })}+\|\nabla f\|^2_{L^2(\Omega )^n}\,.
\end{align}
The space $\widetilde{H}^1(\Omega )$ is the closure of ${\mathcal D}(\Omega )$ in $H^1({\mathbb R}^n)$, and can be also described as
\begin{align}
\label{2.5}
&\widetilde{H}^1(\Omega ):=\big\{\widetilde{f}\in H^1({\mathbb R}^n):{\rm{supp}}\,
\widetilde{f}\subseteq \overline{\Omega }\big\},
\end{align}
where ${\rm{supp}}f:=\overline{\{x\in {\mathbb R}^n:f(x)\neq 0\}}$. The dual of $\widetilde{H}^1(\Omega )$ is the space $H^{-1}(\Omega )$.
Then the following equivalent characterization of the spaces $H^{\pm 1}(\Omega )$ holds
\begin{align}
\label{spaces-Sobolev-inverse}
&{H^{\pm 1}}(\Omega )=\{f\in {\mathcal D}'(\Omega ):\exists \, F\in {H^{\pm 1}}({\mathbb R}^n) \mbox{ such that } F|_{\Omega }=f\}\,,
\end{align}
where {$|_X=r_{{X}}$ 
is the restriction operator of functions or distributions to a set $X$}.

Let $\mathring{H}^1(\Omega )$ be the closure of ${\mathcal D}(\Omega )$ in ${H}^1(\Omega )$.
The space $\mathring{H}^1(\Omega )$ can be equivalently described as the space of all functions in ${H}^1(\Omega )$ with null traces on the boundary of $\Omega $,
\begin{align}
\mathring{H}^1(\Omega ):=\{f\in {H}^1(\Omega ):\gamma_{_\Omega }f=0 \mbox{ on } \partial \Omega \},
\end{align}
where $\gamma_{_{\Omega }}:H^1(\Omega )\to H^{\frac{1}{2}}(\partial\Omega )$ is the trace operator, see Theorem~\ref{trace-operator1}.
Note that the spaces $\widetilde{H}^1({\Omega })$ and $\mathring{H}^1(\Omega )$ can be identified isomorphically (see, e.g., \cite[Theorem 3.33]{Lean}).

The dual of ${H}^{1}(\Omega )$ is denoted by $\widetilde{H}^{-1}(\Omega )$, and is a space of distributions. (Note that $\widetilde{H}^{-1}({\mathbb R}^n)={H}^{-1}({\mathbb R}^n)$.) Moreover, the following spaces can be isomorphically identified (cf., e.g.,
\cite[Theorem 3.14]{Lean})
\begin{equation}\label{duality-spaces}
\big({H}^{1}(\Omega )\big)'=\widetilde{H}^{-1}(\Omega ),\ \ {H}^{-1}(\Omega )
=\big(\widetilde{H}^{1}(\Omega )\big)'\,.
\end{equation}

Let $s\in (0,1)$. Then the boundary Sobolev space $H^s(\partial \Omega )$ is defined by
\begin{align}
\label{Sobolev-boundary}
H^s(\partial \Omega ):=\left\{f\in L^2(\partial \Omega ): \int _{\partial \Omega }\int_{\partial \Omega }\frac{|f({\bf x})-f({\bf y})|^2}{|{\bf x}-{\bf y}|^{n-1+2s}}d\sigma _{{\bf x}}d\sigma _{{\bf y}}<\infty \right\}\,,
\end{align}
where $\sigma _{\bf y}$ is the surface measure on $\partial \Omega $ (see, e.g., \cite[Proposition 2.5.1]{M-W}). The dual of $H^s(\partial \Omega )$ is the space $H^{-s}(\partial \Omega )$, and $H^0(\partial \Omega )\!=\!L^2(\partial \Omega )$.

By $H^1({\Omega })^n$, $\widetilde{H}^1({\Omega })^n$, and $H^{s}(\partial \Omega )^n$ we denote the spaces of vector-valued functions whose components belong to the Sobolev spaces $H^1({\Omega })$, $\widetilde{H}^1({\Omega })$, and $H^{s}(\partial \Omega )$, respectively.

For further properties of Sobolev spaces we refer the reader to \cite{H-W,Lean,M-W}.

We will need the following well known result (see, e.g., \cite[Lemma 2.5]{LS1976}, \cite{Bogovskii}, \cite[Theorem 3.1]{Am-Ciarlet}), for which we will provide several generalizations further on.
\begin{proposition}
\label{LS-prop}
Let $\Omega $ be a bounded Lipschitz domain in ${\mathbb R}^n$, $n\geq 2$, with connected boundary. Then the divergence operator
$
{\rm{div}}:\mathring{H}^1(\Omega)^n\to {L_0^2(\Omega)}
$
is bounded, linear and surjective. It has a bounded, linear right inverse ${\mathcal R}_{\Omega }:{L^2_0(\Omega)}\to \mathring{H}^{1}(\Omega )^n$. Thus, there exists a constant $C=C(\Omega ,n)>0$ such that
\begin{align}
\label{Bog2}
{\rm{div}}({\mathcal R}_{\Omega }f)=f,\ \|{\mathcal R}_{\Omega }f\|_{{H}^{1}(\Omega )^n}\leq C\|f\|_{{L^2(\Omega )}},\ \forall \, f\in {L^2_0(\Omega )}.
\end{align}
\end{proposition}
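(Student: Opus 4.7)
The boundedness and linearity of $\mathrm{div}:\mathring{H}^1(\Omega)^n\to L^2(\Omega)$ are immediate from $\|\mathrm{div}\,{\bf u}\|_{L^2(\Omega)}\le \|\nabla{\bf u}\|_{L^2(\Omega)^{n\times n}}\le \|{\bf u}\|_{H^1(\Omega)^n}$. To see that the range lies in $L_0^2(\Omega)$, approximate ${\bf u}\in\mathring{H}^1(\Omega)^n$ by ${\bf u}_k\in\mathcal{D}(\Omega)^n$ in the $H^1$-norm, note that $\int_\Omega\mathrm{div}\,{\bf u}_k\,dx=0$ since ${\bf u}_k$ is compactly supported in $\Omega$, and pass to the $L^2$-limit. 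So the real content is producing a bounded right inverse $\mathcal{R}_\Omega$, which is the classical Bogovskii construction. My plan carries this out in two steps.

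\textbf{Step 1: The star-shaped case.} Assume first that $\Omega$ is star-shaped with respect to an open ball $B\Subset\Omega$. Pick a cut-off $\omega\in\mathcal{D}(B)$ with $\int_B\omega\,dy=1$, and for $f\in L_0^2(\Omega)$ define the Bogovskii operator
\begin{equation*}
(\mathcal{R}_\Omega f)(x):=\int_\Omega f(y)\,(x-y)\int_0^1 \omega\bigl(y+t^{-1}(x-y)\bigr)\,t^{-n-1}\,dt\,dy,\quad x\in\Omega.
\end{equation*}
A direct computation (using the change of variable $z=y+t^{-1}(x-y)$ and the identity $\int_\Omega f=0$) shows that the resulting vector field is supported in $\overline{\Omega}$ and satisfies $\mathrm{div}\,\mathcal{R}_\Omega f=f$ pointwise. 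The key analytic step is the estimate $\|\nabla\mathcal{R}_\Omega f\|_{L^2(\Omega)^{n\times n}}\le C\|f\|_{L^2(\Omega)}$; its kernel, after differentiation, is a Calder\'on--Zygmund singular integral whose cancellation properties yield $L^2$-boundedness. Combined with Poincar\'e's inequality (since $\mathcal{R}_\Omega f$ has compact support in $\overline{\Omega}$) this gives $\mathcal{R}_\Omega f\in\mathring{H}^1(\Omega)^n$ together with the bound \eqref{Bog2}.

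\textbf{Step 2: General Lipschitz domain.} Cover $\overline{\Omega}$ by a finite family of open balls $U_1,\ldots,U_N$ so that each $\Omega_j:=\Omega\cap U_j$ is a bounded Lipschitz domain star-shaped with respect to some ball $B_j\Subset\Omega_j$ (this is possible by the Lipschitz graph description of $\partial\Omega$, together with one inner patch), and so that the pairwise-overlap graph of the $\Omega_j$ is connected. Using connectedness of $\partial\Omega$, one can in fact arrange the $\Omega_j$ to form a connected chain. Choose a partition of unity $\{\psi_j\}_{j=1}^N\subset\mathcal{D}(U_j)$ subordinate to this cover. For $f\in L_0^2(\Omega)$ one constructs, by the standard chaining procedure through overlaps, functions $g_j\in L^2(\Omega_j)$ with $g_j$ zero-mean on $\Omega_j$, $\mathrm{supp}\,g_j\subset\overline{\Omega_j}$, $\sum_j g_j=f$ on $\Omega$, and $\sum_j\|g_j\|_{L^2(\Omega_j)}\le C\|f\|_{L^2(\Omega)}$. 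Applying the star-shaped inverse from Step 1 in each $\Omega_j$ to $g_j$, and summing (the extension by zero belongs to $\mathring{H}^1(\Omega)^n$ because each piece has a trace vanishing on $\partial\Omega\cap\partial\Omega_j$), yields the required $\mathcal{R}_\Omega f$ satisfying \eqref{Bog2}.

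The main obstacle is the singular-integral estimate in Step 1: establishing $\|\nabla\mathcal{R}_\Omega f\|_{L^2}\lesssim\|f\|_{L^2}$ requires identifying the gradient of Bogovskii's kernel as a principal-value Calder\'on--Zygmund operator and verifying the cancellation condition that makes it bounded on $L^2(\mathbb{R}^n)$. Step 2 is essentially bookkeeping once Step 1 is available, although one has to be careful that the connectedness of $\partial\Omega$ is exactly what allows the chain of corrections producing the decomposition $f=\sum g_j$ with the zero-mean properties; without it, compatibility constants on disconnected boundary components could prevent such a decomposition.
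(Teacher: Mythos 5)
The paper does not prove Proposition~\ref{LS-prop}; it states it as well known and cites Ladyzhenskaya--Solonnikov, Bogovski\u{\i}, and Amrouche--Ciarlet--Mardare. Your proof follows the explicit Bogovski\u{\i} construction, which is one of those cited routes, and it is sound: the kernel you wrote is equivalent, via the substitution $s=t^{-1}|x-y|$, to the standard Bogovski\u{\i} kernel $\frac{x-y}{|x-y|^n}\int_{|x-y|}^\infty\omega\bigl(y+s\tfrac{x-y}{|x-y|}\bigr)\,s^{n-1}\,ds$; the support argument uses star-shapedness correctly; and the Calder\'on--Zygmund bound for $\nabla\mathcal{R}_\Omega$, combined with a decomposition of $f$ into zero-mean pieces over a chain of star-shaped subdomains, is the classical argument. (The alternative cited route gets surjectivity of $\mathrm{div}$ from the Ne\v{c}as--Lions lemma and then produces a bounded linear right inverse by inverting on the orthogonal complement of the kernel of $\mathrm{div}$; your approach is more constructive but leans on the singular-integral machinery.)

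One correction to your closing remark. The chaining in Step~2 requires only that $\Omega$ itself be connected, so that the overlap graph of the $\Omega_j$ can be made connected; it does not need $\partial\Omega$ to be connected. The Bogovski\u{\i} right inverse exists for any connected bounded Lipschitz domain, including ones with disconnected boundary (the annulus is the canonical example), and the sole compatibility condition on $f$ is $\int_\Omega f=0$; disconnected boundary components do not create extra mean-value constraints. The phrase ``with connected boundary'' in the hypothesis is just part of this paper's standing convention for the term ``Lipschitz domain'' (Section~3.1), not a structural requirement of the proposition or of your proof.
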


\section{Dirichlet problems for the anisotropic compressible Stokes system in bounded Lipschitz domains}
Dindo\u{s} and Mitrea \cite{D-M} 
obtained well-posedness results in Sobolev and Besov spaces for the Dirichlet problem for the Stokes and Navier-Stokes systems with smooth coefficients in Lipschitz domains on compact Riemannian manifolds. Mitrea and Wright \cite{M-W} obtained well-posedness results in Sobolev and Besov spaces for Dirichlet problems for the Stokes system with constant coefficients in Lipschitz domains in ${\mathbb R}^n$  (see also the references therein, and \cite{Amrouche-Bellido} for Dirichlet problems for the Stokes, Oseen and Navier-Stokes systems with constant coefficients in a non-solenoidal framework).
Dirichlet problems for the anisotropic Stokes system in exterior Lipschitz domains and in ${\mathbb R}^n$, $n\geq 3$, have been studied in \cite{K-M-W-2} by using both variational and potential approaches  (see also \cite{Choi-Lee}, \cite{KMW-LP} and \cite{KMW-DCDS2021}).

\subsection{\bf Mixed variational formulation for the anisotropic Stokes system in bounded Lipschitz domains and partly homogeneous Dirichlet problem}
\label{N-S-D}
Let $\Omega \subset {\mathbb R}^n$, $n\geq 2$, be a bounded Lipschitz domain with connected boundary $\partial \Omega $.
Recall that $\mathring{H}^1(\Omega )^n$ is the closure of the space ${\mathcal D}(\Omega )^n$ in ${H}^1(\Omega )^n$ and that 
\begin{align}
\label{seminorm-Omega}
|{\bf u}|_{{H}^1(\Omega )^n}:=\|\nabla {\bf u}\|_{L^2(\Omega )^{n\times n}}
\end{align}
is a norm on the space $\mathring{H}^1(\Omega )$, equivalent to the norm
\begin{align}
\label{norm-H1}
\|{\bf u}\|_{H^1(\Omega )^n}=\|{\bf u}\|_{L^2(\Omega )^n}+\|\nabla {\bf u}\|_{L^2(\Omega  )^{n\times n}}
\end{align}
(cf., e.g., \cite[Theorem II.5.1 and Remark II.6.2]{Galdi}), {\gr that is,
\begin{align}
\label{norm-ineq}
\|{\bf u}\|_{H^1(\Omega )^n}\le\mathring C\|\nabla {\bf u}\|_{L^2(\Omega  )^{n\times n}}\quad \forall\ {\bf u}\in \mathring H^1(\Omega )^n
\end{align}
for some constant $\mathring C=\mathring C(\Omega,n)>0$.}
Let also ${H}^{-1}(\Omega )^n=\big(\mathring{H}^1(\Omega )^n\big)'$ and let
$|\!|\!|\cdot |\!|\!|_{H^{-1}(\Omega )^n}$ denote the corresponding norm on ${H}^{-1}(\Omega )^n$ generated by the semi-norm \eqref{seminorm-Omega}, i.e.,
\begin{align}
\label{norm-3e}
|\!|\!|{\bf g}|\!|\!|_{{H}^{-1}(\Omega )^n}:=\sup_{{\bf v}\in \mathring{H}^1(\Omega )^n,\ \|\nabla {\bf v}\|_{L^2(\Omega )^{n\times n}}=1}|\langle {\bf g},{\bf v}\rangle _{\Omega }|\,, \quad \forall \ {\bf g}\in {H}^{-1}(\Omega )^n.
\end{align}
This implies that
\begin{align*}
|\langle {\bf g},{\bf v}\rangle _{\Omega }|\le|\!|\!|{\bf g}|\!|\!|_{{H}^{-1}(\Omega )^n}\, \|\nabla {\bf v}\|_{L^2(\Omega )^{n\times n}}\,, \quad \forall \ {\bf g}\in {H}^{-1}(\Omega )^n,\ \forall \ {\bf v}\in \mathring{H}^1(\Omega )^n,
\end{align*}
and
\begin{align}
\label{norm-3e0}
\bn\|{\bf g}\|_{{H}^{-1}(\Omega )^n}\le |\!|\!|{\bf g}|\!|\!|_{{H}^{-1}(\Omega )^n}\,, \quad \forall \ {\bf g}\in {H}^{-1}(\Omega )^n.
\end{align}

Let $a_{{\mathbb A};\Omega }:\mathring{H}^1(\Omega )^n\times \mathring{H}^{1}(\Omega )^n\to {\mathbb R}$ and $b_{\Omega }:\mathring{H}^1(\Omega )^n\times {L^2(\Omega )/{\mathbb R}}\to {\mathbb R}$ be the bilinear forms given by {\Bn  \eqref{a-v-A}, \eqref{b-v-A}, i.e.,}
\begin{align}
\label{a-v}
&a_{{\mathbb A};\Omega }({\bf u},{\bf v}):
=\left\langle a_{ij}^{\alpha \beta }E_{j\beta }({\bf u}),E_{i\alpha }({\bf v})\right\rangle _{\Omega },\ \forall \, {\bf u}, {\bf v}\in \mathring{H}^{1}(\Omega )^n\,,\\
\label{b-v}
&b_{\Omega }({\bf v},q):=-\langle {\rm{div}}\, {\bf v},q\rangle _{\Omega },\ \forall \, {\bf v}\in \mathring{H}^1(\Omega )^n,\ \forall \, q\in {L^2(\Omega )/{\mathbb R}}\,.
\end{align}
Let us also introduce the following spaces of divergence-free vector fields
\begin{align*}
&{H}^1_{\rm{div}}(\Omega )^n:=\{{\bf w}\in {H}^1(\Omega )^n:{\rm{div}}\, {\bf w}=0 \mbox{ in } \Omega \}\,,\\
&\mathring{H}^1_{\rm{div}}(\Omega )^n:=\{{\bf w}\in \mathring{H}^1(\Omega )^n:{\rm{div}}\, {\bf w}=0
\mbox{ in } \Omega \}
\end{align*}
and note the characterisation
\begin{align*}
\mathring{H}^1_{\rm div}(\Omega )^n
&=\left\{{\bf w}\in \mathring{H}^1(\Omega )^n: b_{\Omega }({\bf w},q)=0,\ \forall \, q\in  L^2(\Omega )/{\mathbb R}\right\}.
\end{align*}

The H\"{o}lder inequality implies that there exists a constant ${\mathcal C}>0$,
such that
\begin{align}
\label{a-1-v}
|a_{{\mathbb A};\Omega }({\bf u},{\bf v})|
\leq {\mathcal C}\|\nabla {\bf u}\|_{L^2(\Omega )^{n\times n}}\|\nabla {\bf v}\|_{L^2(\Omega )^{n\times n}},\ \forall\, {\bf u},{\bf v}\in \mathring{H}^1(\Omega )^n.
\end{align}
Thus, the bilinear form $a_{{\mathbb A};\Omega }(\cdot ,\cdot ):\mathring{H}^1(\Omega )^n\times \mathring{H}^1(\Omega )^n\to {\mathbb R}$ is bounded. Moreover, the Korn first inequality applied to functions in $\mathring H^1(\Omega )^n$,
\begin{align}
\label{Korn3-R3}
\|\nabla {\bf v}\|_{L^2(\Omega )^{n\times n}}\leq 2^{\frac{1}{2}}\|\mathbb E ({\bf v})\|_{L^2(\Omega )^{n\times n}}
\end{align}
(cf. \cite[Theorem 10.1]{Lean}) combined with the ellipticity condition \eqref{mu} and the property that the semi-norm $\|\nabla (\cdot )\|_{L^2(\Omega )^{3\times 3}}$
is a norm in $\mathring H^1(\Omega )^3$ equivalent to the norm $\|\cdot \|_{H^1(\Omega )^3}$,
shows that the bilinear form
$a_{{\mathbb A};\Omega }(\cdot ,\cdot ): \mathring{H}_{\rm{div}}^1(\Omega )^n\times \mathring{H}_{\rm{div}}^1(\Omega )^n\to {\mathbb R}$
 is coercive, that is,
\begin{align}
\label{a-1-v2-S}
a_{{\mathbb A};\Omega }({\bf v},{\bf v})
\geq \frac{1}{2}C_{\mathbb A}^{-1}\|\nabla {\bf u}\|_{L^2(\Omega )^{n\times n}}^2\,,\ \ \forall \, {\bf v}\in \mathring{H}_{\rm{div}}^1(\Omega )^n\,.
\end{align}
On the other hand, the surjectivity of the operator
${\rm{div}}:\mathring{H}^1(\Omega )^n\to {\bl L_0^2(\Omega )}$
(see Proposition \ref{LS-prop}) shows that the bilinear and bounded form $b_{\Omega }:\mathring{H}^1(\Omega )^n\!\times \!L_0^2(\Omega )\!\to \!{\mathbb R}$ satisfies the inf-sup condition (see Lemma \ref{surj-inj-inf-sup}(ii)).
Then Theorem \ref{B-B} and Remark \ref{RB.2} lead to the following well-posedness result, whose detailed proof can be consulted in \cite[Theorem 3.2]{KMW-DCDS2021} (see also \cite[Lemma 3.1]{K-M-W-2}).
\begin{theorem}\label{lemma-a47-1-Stokes}
Let conditions \eqref{Stokes-1}-\eqref{mu} hold. Let $a_{{\mathbb A};\Omega }$ and $b_{\Omega }$ be the bilinear forms defined in \eqref{a-v} and \eqref{b-v}.
\begin{itemize}
\item[$(i)$]
Then for all given data $\boldsymbol{\mathfrak F} \in {H}^{-1}(\Omega )^n$ and $g\in L_0^2(\Omega )$, the variational problem
\begin{align}
\label{transmission-S-variational-dl-3-equiv-0-2}
\left\{\begin{array}{ll}
a_{{\mathbb A};\Omega }({\bf u},{\bf v})+b_{\Omega }({\bf v},\pi )=\langle \boldsymbol{\mathfrak F} ,{\bf v}\rangle _{\Omega }\,, & \forall \, {\bf v}\in \mathring{H}^1(\Omega )^n,\\
b_{\Omega }({\bf u},q)=-\langle g,q\rangle _{\Omega }\,, & \forall \, q\in {L^2(\Omega )/{\mathbb R}} 
\end{array}
\right.
\end{align}
for $({\bf u},\pi )\in {\mathring{H}^1(\Omega )^n}\times {L^2(\Omega )/{\mathbb R}}$ is well-posed, i.e., \eqref{transmission-S-variational-dl-3-equiv-0-2} has a unique solution and there exists a constant $C>0$ {\bl depending only on $\|{\mathbb A}\|$, $\Omega $ and $n$}, such that
\begin{align}
\label{estimate-1-wp-S-2}
\|{\bf u}\|_{H^1(\Omega )^n}+\|\pi \|_{L^2(\Omega )/\R}\leq
C\left(\|\boldsymbol{\mathfrak F} \|_{{H}^{-1}(\Omega )^n}+\|g\|_{{L^2(\Omega )}}\right).
\end{align}
\item[$(ii)$]
Moreover, the pair $({\bf u},\pi )$ is the unique solution in ${{H}^1(\Omega )^n}\times  L^2(\Omega )/{\mathbb R}$ of the Dirichlet problem for the anisotropic Stokes system
\begin{align}
\label{Dirichlet-homog}
\left\{\begin{array}{ll}
\boldsymbol{\mathcal L}({\bf u},\pi )=-\boldsymbol{\mathfrak F},\quad
{\rm{div}}\, {\bf u}={g} & \mbox{ in } \Omega \,,\\
\gamma _{_{\Omega}}{\bf u}=0 & \mbox{ on } \partial \Omega \,,
\end{array}
\right.
\end{align}
\item[$(iii)$]
The solution can be represented in the form
$(\mathbf u,\pi)=\bs{\mathfrak U}(\boldsymbol{\mathfrak F},g),$
where $\bs{\mathfrak U}:{H}^{-1}(\Omega )^n\times L_0^2(\Omega )\to {{H}^1(\Omega )^n}\times L^2(\Omega )/{\mathbb R}$ is a linear continuous operator.
\end{itemize}
\end{theorem}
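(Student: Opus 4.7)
The plan is to reduce the entire statement to a single application of the Babu\v{s}ka--Brezzi theorem (invoked as Theorem B-B together with Remark R.B.2) applied to the saddle-point system \eqref{transmission-S-variational-dl-3-equiv-0-2}, and then to extract the strong Dirichlet formulation from the variational one, with part (iii) following essentially for free from the abstract well-posedness estimate.

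To feed the Babu\v{s}ka--Brezzi machinery I would check the four standard ingredients, all of which are in fact collected in the paragraphs immediately preceding the statement. Boundedness of $a_{{\mathbb A};\Omega }$ on $\mathring{H}^1(\Omega)^n\times\mathring{H}^1(\Omega)^n$ is \eqref{a-1-v}. Coercivity of $a_{{\mathbb A};\Omega }$ on the kernel $\mathring{H}^1_{\rm{div}}(\Omega)^n$ of $b_\Omega$ is \eqref{a-1-v2-S}: here I would stress that the restricted ellipticity \eqref{mu} is applicable precisely because, for $\mathbf v\in\mathring{H}^1_{\rm{div}}$, the matrix $\mathbb E(\mathbf v)$ is symmetric and its trace equals $\operatorname{div}\mathbf v=0$, so \eqref{mu} can be chained with Korn \eqref{Korn3-R3} and the norm equivalence \eqref{norm-ineq} on $\mathring H^1$. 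Boundedness of $b_\Omega$ is immediate from Cauchy--Schwarz, and the inf--sup condition for $b_\Omega$ is obtained from Proposition \ref{LS-prop}: given $q\in L^2(\Omega)/{\mathbb R}$ (represented by its zero-mean representative in $L^2_0(\Omega)$), the right inverse ${\mathcal R}_\Omega q\in\mathring{H}^1(\Omega)^n$ gives $b_\Omega({\mathcal R}_\Omega q,q)=-\|q\|_{L^2/\R}^2$ with $\|\nabla{\mathcal R}_\Omega q\|_{L^2}\le C\|q\|_{L^2/\R}$, yielding the required lower bound with constant $1/C$. The Babu\v{s}ka--Brezzi theorem then delivers existence, uniqueness, and the estimate \eqref{estimate-1-wp-S-2} with a constant depending only on $\|{\mathbb A}\|$, $\Omega$ and $n$, proving (i).

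For (ii), I would test the first equation of \eqref{transmission-S-variational-dl-3-equiv-0-2} against arbitrary $\mathbf v\in{\mathcal D}(\Omega)^n$ and, using the symmetry \eqref{Stokes-sym} to rewrite $a_{ij}^{\alpha\beta}E_{j\beta}(\mathbf u)E_{i\alpha}(\mathbf v) = a_{ij}^{\alpha\beta}\partial_\beta u_j\,\partial_\alpha v_i$, integrate by parts in the distributional sense to identify $\boldsymbol{\mathfrak L}\mathbf u-\nabla\pi=-\boldsymbol{\mathfrak F}$ in $\Omega$, i.e.\ $\boldsymbol{\mathcal L}(\mathbf u,\pi)=-\boldsymbol{\mathfrak F}$. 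Testing the second equation against zero-mean $q\in{\mathcal D}(\Omega)$ yields $\operatorname{div}\mathbf u=g$ as a distribution, and both sides agree on additive constants because $g\in L^2_0(\Omega)$. The Dirichlet boundary condition $\gamma_\Omega\mathbf u=0$ is automatic from $\mathbf u\in\mathring{H}^1(\Omega)^n$. Uniqueness in the strong Dirichlet sense follows from uniqueness of the variational pair, since the reverse implication (a Dirichlet solution satisfies the variational system) is a direct integration by parts. For (iii), linearity of the solution operator $\bs{\mathfrak U}$ follows from linearity of the problem combined with uniqueness, and its continuity is exactly \eqref{estimate-1-wp-S-2}.

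The genuinely non-trivial step is the inf--sup condition for $b_\Omega$, which is why Proposition \ref{LS-prop} is the true technical linchpin; without the Bogovski\u{\i}-type right inverse of the divergence one would recover only existence of a velocity and would be unable to extract the pressure $\pi$ in $L^2(\Omega)/{\mathbb R}$ with quantitative control. A secondary subtlety I would watch is the restricted character of the ellipticity \eqref{mu} (symmetric, trace-free matrices only), which is what forces the coercivity argument to be carried out on $\mathring{H}^1_{\rm{div}}$ rather than all of $\mathring{H}^1$, and which makes the kernel-coercivity formulation of Babu\v{s}ka--Brezzi (rather than a direct Lax--Milgram on the full space) essential.
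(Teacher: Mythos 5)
Your proposal is correct and follows essentially the same route as the paper, which assembles the same four ingredients (boundedness \eqref{a-1-v}, kernel-coercivity \eqref{a-1-v2-S} from Korn and the restricted ellipticity, the inf--sup condition from Proposition~\ref{LS-prop} via Lemma~\ref{surj-inj-inf-sup}(ii)) and then invokes Theorem~\ref{B-B} together with Remark~\ref{RB.2}, deferring the detailed write-up to the cited reference. One cosmetic slip: since $b_\Omega({\mathcal R}_\Omega q,q)=-\|q\|^2_{L^2(\Omega)/\mathbb{R}}$ is negative, one should test with $\mathbf v=-{\mathcal R}_\Omega q$ (or take absolute values) to read off the positive inf--sup constant $1/C$.
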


\subsection{\bf Non-homogeneous Dirichlet problem}\label{S3.2}
Let us consider the following non-homogeneous Dirichlet problem for the anisotropic Stokes system
\begin{align}
\label{Dirichlet-nonhomog}
\left\{\begin{array}{ll}
\boldsymbol{\mathcal L}({\bf u},\pi )=-\bs{\mathfrak F},\quad
{\rm{div}}\, {\bf u}=g & \mbox{ in } \Omega \,,\\
\gamma _{_{\Omega }}{\bf u}=\bs\varphi & \mbox{ on } \partial \Omega \,,
\end{array}
\right.
\end{align}
for the unknowns $({\bf u},\pi )\in {H}^1(\Omega )^n\times {L^2(\Omega )/{\mathbb R}}$, with the given data
$(\bs{\mathfrak F},g,\bs\varphi)\in{H}^{-1}(\Omega)^n\times L^2(\Omega)\times H^{\frac{1}{2}}(\partial \Omega )^n$,
which satisfy the compatibility condition
\begin{align}
\label{hD}
\int _{\Omega }g(x) dx=\int _{\partial \Omega }\bs\varphi\cdot \boldsymbol\nu d\sigma \,,
\end{align}
where $\boldsymbol\nu$ is the exterior unit normal to $\partial\Omega$.

To analyse the Dirichlet problem \eqref{Dirichlet-nonhomog}, we need the following well-known Bogovskii-type result (see, e.g., \cite{Bogovskii}, \cite{Gi-Ra}, and the proof of Theorem 3.2 in \cite{Amrouche-Bellido}).
\begin{lemma}\label{B-D}
For any
$(g,\boldsymbol \varphi)\in L^2(\Omega)\times H^{\frac{1}{2}}(\partial \Omega )^n$
satisfying condition \eqref{hD}, there exists ${\bf v}\in H^1(\Omega )^n$ such that
\begin{align}
\left\{\begin{array}{ll}
{\rm{div}}\, {\bf v}=g \mbox{ in } \Omega \\
\gamma _{_\Omega }{\bf v}=\bs\varphi \mbox{ on } \partial \Omega \,,
\end{array}
\right.
\end{align}
and there exists a  constant $c=c(\Omega,n)>0$ such that
\begin{align}
\label{v-0-0}
{\|{\bf v}\|_{H^1(\Omega )^n}\leq c\Big(\|g\|_{L^2(\Omega )}+\|\bs\varphi\|_{H^{\frac{1}{2}}(\partial \Omega )^n}\Big)}\,.
\end{align}
\end{lemma}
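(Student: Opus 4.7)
\medskip
\noindent
\textbf{Proof proposal.} The plan is to reduce the non-homogeneous problem to the homogeneous-trace problem already solved by Proposition \ref{LS-prop}, by subtracting off a lifting of the boundary datum $\boldsymbol\varphi$. First I would invoke the surjectivity of the trace operator $\gamma_{_\Omega}:H^1(\Omega)^n\to H^{1/2}(\partial\Omega)^n$ and choose a bounded, linear right inverse (an extension operator) $\mathcal{E}:H^{1/2}(\partial\Omega)^n\to H^1(\Omega)^n$; the existence of such an $\mathcal{E}$ is the standard companion to Theorem~\ref{trace-operator1}. Set $\mathbf{w}:=\mathcal{E}\boldsymbol\varphi\in H^1(\Omega)^n$, so that
\begin{align*}
\gamma_{_\Omega}\mathbf{w}=\boldsymbol\varphi,\qquad \|\mathbf{w}\|_{H^1(\Omega)^n}\le C_1\|\boldsymbol\varphi\|_{H^{1/2}(\partial\Omega)^n},
\end{align*}
for a constant $C_1=C_1(\Omega,n)>0$.

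Next, define $h:=g-\mathrm{div}\,\mathbf{w}\in L^2(\Omega)$. The key point is that $h$ has zero mean: since $\mathrm{div}:H^1(\Omega)^n\to L^2(\Omega)$ is continuous and $\mathbf{w}\in H^1(\Omega)^n$, the generalized divergence theorem (i.e., Green's identity) gives
\begin{align*}
\int_\Omega \mathrm{div}\,\mathbf{w}\,dx=\langle \gamma_{_\Omega}\mathbf{w}\cdot\boldsymbol\nu,1\rangle_{\partial\Omega}=\int_{\partial\Omega}\boldsymbol\varphi\cdot\boldsymbol\nu\,d\sigma,
\end{align*}
which combined with the compatibility condition \eqref{hD} yields $\int_\Omega h\,dx=\int_\Omega g\,dx-\int_{\partial\Omega}\boldsymbol\varphi\cdot\boldsymbol\nu\,d\sigma=0$. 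Thus $h\in L^2_0(\Omega)$.

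Now I would apply Proposition \ref{LS-prop} to obtain $\mathbf{z}:=\mathcal{R}_\Omega h\in\mathring H^1(\Omega)^n$ with $\mathrm{div}\,\mathbf{z}=h$ and
\begin{align*}
\|\mathbf{z}\|_{H^1(\Omega)^n}\le C_2\|h\|_{L^2(\Omega)}\le C_2\bigl(\|g\|_{L^2(\Omega)}+\|\mathrm{div}\,\mathbf{w}\|_{L^2(\Omega)}\bigr)\le C_3\bigl(\|g\|_{L^2(\Omega)}+\|\boldsymbol\varphi\|_{H^{1/2}(\partial\Omega)^n}\bigr).
\end{align*}
Set $\mathbf{v}:=\mathbf{w}+\mathbf{z}\in H^1(\Omega)^n$. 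Because $\mathbf{z}\in\mathring H^1(\Omega)^n$ has zero trace, we get $\gamma_{_\Omega}\mathbf{v}=\gamma_{_\Omega}\mathbf{w}=\boldsymbol\varphi$ on $\partial\Omega$, while $\mathrm{div}\,\mathbf{v}=\mathrm{div}\,\mathbf{w}+h=g$ in $\Omega$. The triangle inequality on the two estimates above delivers \eqref{v-0-0} with $c=C_1+C_3$.

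No step in this argument presents a genuine obstacle; the only delicate point is verifying that $h\in L^2_0(\Omega)$, which is precisely where the compatibility condition \eqref{hD} is used. The rest is a direct concatenation of two well-known building blocks: surjectivity of the trace and the Bogovskii right inverse of Proposition~\ref{LS-prop}.
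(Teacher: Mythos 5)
Your proof is correct and follows the standard argument that the paper itself cites (via \cite{Amrouche-Bellido}, \cite{Bogovskii}, \cite{Gi-Ra}) rather than reproducing; it is also exactly the lift-and-correct scheme the authors later carry out in detail for the transmission analogues, Lemmas \ref{L.BT} and \ref{L.BTpm}, where the lifting $\mathbf{w}$ is built from right inverses of the trace operators on $\Omega^\pm$ and the zero-mean correction is again supplied by Proposition~\ref{LS-prop}. The only point worth flagging is that the step
$\int_\Omega \mathrm{div}\,\mathbf{w}\,dx=\int_{\partial\Omega}\boldsymbol\varphi\cdot\boldsymbol\nu\,d\sigma$
uses the divergence (Gauss--Green) identity for $H^1$ vector fields on a Lipschitz domain, which holds since $\mathrm{div}\,\mathbf{w}\in L^2(\Omega)$ and $\gamma_{_\Omega}\mathbf{w}\in H^{1/2}(\partial\Omega)^n\subset L^1(\partial\Omega)^n$ --- this is standard and is also invoked implicitly in \eqref{hg1} and \eqref{hg1pm} of the paper.
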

\begin{theorem}\label{Stokes-Dirichlet-nonhom}
Let conditions \eqref{Stokes-1}-\eqref{mu} hold.
Then for all given data
$(\bs{\mathfrak F},g,\bs\varphi)\in{H}^{-1}(\Omega)^n\times L^2(\Omega)\times H^{\frac{1}{2}}(\partial \Omega )^n$
satisfying condition \eqref{hD},
the Dirichlet problem \eqref{Dirichlet-nonhomog} has a unique solution
$({\bf u},\pi )\in {H}^1(\Omega )^n\times {L^2(\Omega )/{\mathbb R}}$
and there exists a constant $C=C(\Omega,C_{\mathbb A},n)>0$ such that
\begin{align}
\label{estimate-Dnh}
\|{\bf u}\|_{H^1(\Omega )^n}+\|\pi \|_{L^2(\Omega )/\R}\leq
C\left(\|\bs{\mathfrak F} \|_{{H}^{-1}(\Omega )^n}+\|g\|_{L^2(\Omega )}
+\|\bs\varphi\|_{H^{\frac{1}{2}}(\partial \Omega )^n}
\right).
\end{align}
\end{theorem}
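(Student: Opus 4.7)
The plan is to reduce the non-homogeneous Dirichlet problem \eqref{Dirichlet-nonhomog} to the partly-homogeneous one \eqref{Dirichlet-homog} already solved in Theorem \ref{lemma-a47-1-Stokes}, by means of the Bogovskii-type lift supplied by Lemma \ref{B-D}. Since $(g,\bs\varphi)$ satisfies the compatibility condition \eqref{hD}, Lemma \ref{B-D} provides a field ${\bf v}\in H^1(\Omega)^n$ with ${\rm div}\,{\bf v}=g$ in $\Omega$, $\gamma_{_{\Omega}}{\bf v}=\bs\varphi$ on $\partial\Omega$, and the bound \eqref{v-0-0}. Writing ${\bf u}={\bf u}_0+{\bf v}$, the pair $({\bf u},\pi)\in H^1(\Omega)^n\times L^2(\Omega)/\mathbb R$ solves \eqref{Dirichlet-nonhomog} if and only if $({\bf u}_0,\pi)\in\mathring H^1(\Omega)^n\times L^2(\Omega)/\mathbb R$ solves the partly-homogeneous problem
\begin{equation*}
\boldsymbol{\mathcal L}({\bf u}_0,\pi)=-(\bs{\mathfrak F}+\boldsymbol{\mathfrak L}{\bf v}),\quad {\rm div}\,{\bf u}_0=0\ \text{in}\ \Omega,\quad \gamma_{_{\Omega}}{\bf u}_0=0\ \text{on}\ \partial\Omega.
\end{equation*}

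Before Theorem \ref{lemma-a47-1-Stokes} can be invoked I must verify that $\boldsymbol{\mathfrak L}{\bf v}\in H^{-1}(\Omega)^n$. Extending $\boldsymbol{\mathfrak L}{\bf v}$ to a functional on $\mathring H^1(\Omega)^n$ via the symmetry relations \eqref{Stokes-sym},
\begin{equation*}
\langle\boldsymbol{\mathfrak L}{\bf v},\bs\psi\rangle_\Omega:=-\langle a_{ij}^{\alpha\beta}E_{j\beta}({\bf v}),E_{i\alpha}(\bs\psi)\rangle_\Omega,\quad\forall\,\bs\psi\in\mathring H^1(\Omega)^n,
\end{equation*}
the H\"older inequality and \eqref{norm-ineq} yield $\|\boldsymbol{\mathfrak L}{\bf v}\|_{H^{-1}(\Omega)^n}\le C\|\mathbb A\|\,\|{\bf v}\|_{H^1(\Omega)^n}$. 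Applying Theorem \ref{lemma-a47-1-Stokes} to the data $(\bs{\mathfrak F}+\boldsymbol{\mathfrak L}{\bf v},0)\in H^{-1}(\Omega)^n\times L^2_0(\Omega)$ produces a unique $({\bf u}_0,\pi)\in\mathring H^1(\Omega)^n\times L^2(\Omega)/\mathbb R$ satisfying \eqref{estimate-1-wp-S-2}, so that ${\bf u}:={\bf u}_0+{\bf v}\in H^1(\Omega)^n$ solves \eqref{Dirichlet-nonhomog}. Combining \eqref{estimate-1-wp-S-2} for $({\bf u}_0,\pi)$ with the bound $\|\boldsymbol{\mathfrak L}{\bf v}\|_{H^{-1}(\Omega)^n}\le C\|\mathbb A\|\,\|{\bf v}\|_{H^1(\Omega)^n}$ and \eqref{v-0-0} delivers the desired estimate \eqref{estimate-Dnh}.

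For uniqueness, if $({\bf u}_1,\pi_1),({\bf u}_2,\pi_2)$ are two solutions of \eqref{Dirichlet-nonhomog}, their difference lies in $\mathring H^1(\Omega)^n\times L^2(\Omega)/\mathbb R$ and satisfies \eqref{Dirichlet-homog} with zero data, hence must vanish by Theorem \ref{lemma-a47-1-Stokes}(i). No serious obstacle is anticipated: the main technical points are the distributional identification of $\boldsymbol{\mathfrak L}{\bf v}$ as an element of $H^{-1}(\Omega)^n$ through the symmetry of $\mathbb A$, and the bookkeeping of constants so that $C$ in \eqref{estimate-Dnh} depends only on $\Omega$, $C_{\mathbb A}$, $\|\mathbb A\|$, and $n$, consistently with the dependence already established in Theorem \ref{lemma-a47-1-Stokes} and Lemma \ref{B-D}.
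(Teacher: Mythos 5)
Your proof is correct and follows essentially the same route as the paper's: lift the boundary and divergence data with Lemma~\ref{B-D}, shift by ${\bf v}$ to reduce to the partly-homogeneous problem \eqref{Dirichlet-homog} with modified right-hand side $-(\boldsymbol{\mathfrak F}+\boldsymbol{\mathfrak L}{\bf v})$, and invoke Theorem~\ref{lemma-a47-1-Stokes}. The only difference is cosmetic: you spell out the distributional identification and $H^{-1}$-bound for $\boldsymbol{\mathfrak L}{\bf v}$, which the paper treats implicitly by referring back to \eqref{L-oper}--\eqref{Stokes-0}.
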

\begin{proof}
Let ${\bf v}\in {H}^1(\Omega)^n$ be the function given by Lemma~\ref{B-D}.
For the velocity-pressure couple $({\bf v},0)$, let us also define
\begin{align}\label{checkLD}
\check{\bs{\mathfrak F}}:={\MG-}\boldsymbol{\mathcal L}({\bf v},0)=-\boldsymbol{\mathfrak L}{\bf v}\in {H}^{-1}(\Omega)^n
\end{align}
(cf. notations for $\boldsymbol{\mathfrak L}{\bf v}$ in \eqref{Stokes-0} and \eqref{L-oper}).
Then  the fully non-homogeneous Dirichlet problem \eqref{Dirichlet-nonhomog} reduces to the following Dirichlet problem with homogeneous Dirichlet condition, for the new function ${\bf w} :={\bf u} -{\bf v}$,
\begin{align}
\label{Dirichlet-homog-w}
\left\{\begin{array}{ll}
\boldsymbol{\mathcal L}({\bf w},\pi )=-(\boldsymbol{\mathfrak F}-\check{\bs{\mathfrak F}}),\quad
{\rm{div}}\, {\bf w}=0 & \mbox{ in } \Omega \,,\\
\gamma _{_{\Omega }}{\bf w}=0 & \mbox{ on } \partial \Omega \,.
\end{array}
\right.
\end{align}

Theorem \ref{lemma-a47-1-Stokes} implies that the Dirichlet problem \eqref{Dirichlet-homog-w} has a unique solution $({\bf w},\pi)$ in the space $({\bf u},\pi )\in {H}^1(\Omega )^n\times {L^2(\Omega )/{\mathbb R}}$
and depends continuously on the given data of this problem.
Finally, the well-posedness of problem \eqref{Dirichlet-homog-w}
implies that the couple
$({\bf u}={\bf w} + {\bf v},\ \pi)$
determines a solution of the full non-homogeneous Dirichlet problem \eqref{Dirichlet-nonhomog} in the space
${H}^1(\Omega )^n\times {L^2(\Omega )/{\mathbb R}}$, and estimate \eqref{estimate-Dnh} holds.
This solution is unique by the uniqueness statement in Theorem  \ref{lemma-a47-1-Stokes}.
\qed
\end{proof}

\section{Dirichlet-transmission problems for the anisotropic compressible Stokes system in bounded Lipschitz domains with transversal interfaces}
Mitrea and Wright \cite{M-W} obtained well-posedness results in Sobolev and Besov spaces for transmission problems for the isotropic Stokes system with constant coefficients in Lipschitz domains in ${\mathbb R}^n$  (see also the references therein).
Various transmission problems for the anisotropic Stokes system  in Lipschitz domains in ${\mathbb R}^n$, $n\geq 3$, with internal interface and homogeneous conditions for traces, have been studied in \cite{K-M-W-2} by using both variational and potential approaches (see also \cite{KMW-DCDS2021} and \cite{KMW-LP}).

In this section we show the well-posedness of boundary value problems of Dirichlet-transmission type for the anisotropic Stokes system in a compressible framework in a bounded Lipschitz domain with transversal Lipschitz interfaces satisfying the following assumption.
\begin{assumption}
\label{interface-Sigma}
Let $n\!\geq \!2$ and $\Omega \subset \mathbb R^n$ be a bounded Lipschitz domain with connected boundary $\partial \Omega $. The domain $\Omega $ is divided into two disjoint Lipschitz sub-domains $\Omega ^+$ and $\Omega ^-$ by an $(n-1)$-dimensional Lipschitz open interface ${\Sigma }$, such that $\partial \Sigma =\overline\Sigma\cap\partial\Omega$ is {a non-empty} $(n-2)$-dimensional Lipschitz manifold. In this case $\overline{\Sigma }$ intersects $\partial \Omega $ transversally and $\Omega =\Omega ^+\cup \Sigma \cup \Omega ^-$.
Let the remaining boundaries $\Gamma ^+=\partial \Omega ^+\setminus \overline{\Sigma }$ and $\Gamma ^-=\partial \Omega ^-\setminus \overline{\Sigma }$ of $\partial \Omega ^+$ and $\partial \Omega ^-$, respectively, be not empty, see Fig.~\ref{Fig1}.
\end{assumption}
Thus, $\Gamma ^+$ and $\Gamma ^-$ are relatively open subsets of $\partial \Omega $.

\begin{figure}[h]
\begin{center}
        \includegraphics[width=0.45\textwidth]{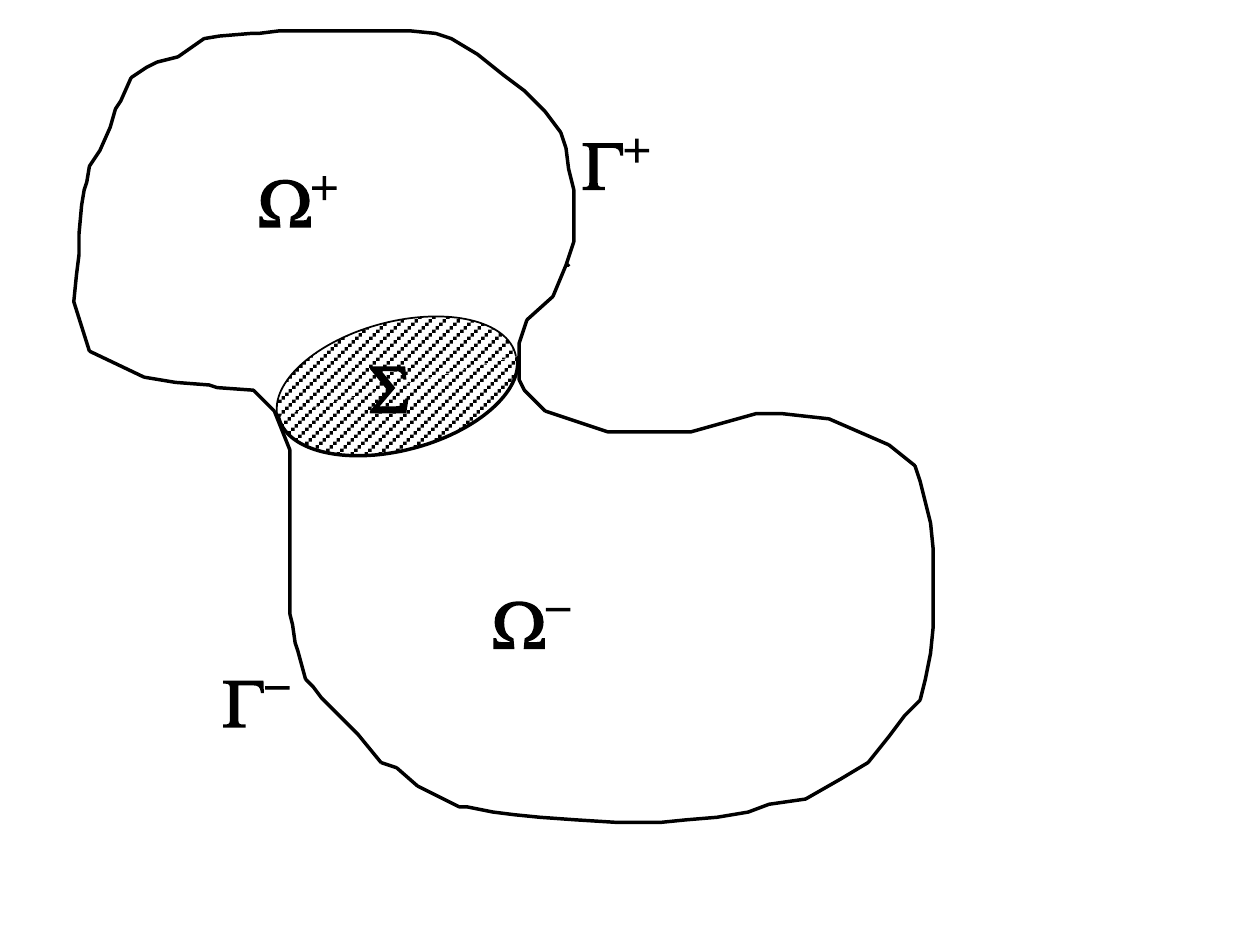}
	\caption{Bounded composite domain $\Omega =\Omega ^+\cup \Sigma \cup \Omega ^-$ with the interface $\Sigma$, for $n=3$.}
	\label{Fig1}
\end{center}
\end{figure}

\vspace{-9ex}

\subsection{\bf Sobolev spaces on bounded domains with partially vanishing traces}
Let $\Omega' \subset {\mathbb R}^n$ $(n\geq 2)$ be a bounded Lipschitz domain with connected boundary $\partial\Omega'$. Let $D$ and $N$ be relatively open subsets of $\partial\Omega'$, such that $D$ has positive $(n-1)$-Hausdorff measure, $D\cap N=\emptyset $, $\overline{D}\cup \overline{N}=\partial\Omega'$, and $\overline{D}\cap \overline{N}=\partial{D}=\partial{N}$ is an $(n-2)$-dimensional closed Lipschitz submanifold of $\partial\Omega'$.

We need the following space defined on the Lipschitz domains $\Omega'$
\begin{align}
C_{D}^{\infty }({\Omega'})^n:=\left\{\boldsymbol\varphi |_{_{\Omega'} }:\boldsymbol\varphi \in C^\infty ({\mathbb R}^n)^n,\, {\rm{supp}}\, (\boldsymbol\varphi )\cap \overline{D}=\emptyset \right\}\,,
\end{align}
and let $H_{D}^1(\Omega')^n$ be the closure of $C_{D}^{\infty }({\Omega'})^n$ in $H^1(\Omega')^n$. The space $H_{D}^1(\Omega')^n$ can be equivalently characterized as \begin{align}
\label{H-D-a}
{H}^1_{D}(\Omega')^n=\left\{{\bf v}\in {H}^1(\Omega')^n:\left(\gamma _{_{\Omega'}}{\bf v}\right)|_{_{D}}\!=\!{\bf 0}\ \mbox{ on } D\right\}
\end{align}
(cf. \cite[Corollary 3.11]{B-M-M-M}).
Let also
\begin{align}
\label{H1-div-OmegaD}
{H}^1_{D;\rm div}(\Omega ')^n
&:=\left\{{\bf w}\in{H}^1_D(\Omega ')^n:{\rm div}\, {\bf w}=0\right\}.
\end{align}

Let $\Xi $ be {a relatively} open $(n-1)$-dimensional subset of $\partial\Omega'$, e.g., $D$ or $N$. Let $r_{_{\Xi }}$ denote the operator of restriction of distributions from $\partial\Omega'$ to  $\Xi $.
Then the boundary Sobolev spaces on $N$ are defined by
\begin{align}
\label{D-N-spaces-a}
&H^{\frac{1}{2}}(\Xi )^n:=\left\{\boldsymbol\phi |_{_{\Xi }}: \boldsymbol\phi \in H^{\frac{1}{2}}(\partial\Omega')^n\right\}\,,\\
&\widetilde{H}^{\frac{1}{2}}(\Xi )^n:=\left\{{\boldsymbol\phi }\in H^{\frac{1}{2}}(\partial\Omega')^n\!:{\boldsymbol\phi }\!=\!{\bf 0}\!\ \mbox{ on } \partial\Omega'\setminus \overline {\Xi }\right\}\,,
\\
\label{duality-D-N-a}
&H^{-\frac{1}{2}}(\Xi )^n:=\big(\widetilde{H}^{\frac{1}{2}}(\Xi )^n\big)',\
\widetilde{H}^{-\frac{1}{2}}(\Xi )^n:=\big(H^{\frac{1}{2}}(\Xi )^n\big)'
\end{align}
(cf., e.g., \cite{Lean}, \cite[Definition 4.8, Theorem 5.1]{B-M-M-M}).

\begin{lemma}
\label{mixt-trace-D}
The trace operator $\gamma_{_{\Omega'}}:{H}^1_{D}(\Omega')^n\!\to \! \widetilde{H}^{\frac{1}{2}}(N)^n$ is bounded, linear and surjective, with a $($non-unique$)$ bounded, linear right inverse
$\gamma_{_{\Omega'}}^{-1}:\widetilde{H}^{\frac{1}{2}}(N)^n\!\to \!{H}^1_{D}(\Omega')^n.$
\end{lemma}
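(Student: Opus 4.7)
The claim is that restricting the standard trace operator to $H^1_D(\Omega')^n$ gives a bounded surjection onto $\widetilde{H}^{\frac{1}{2}}(N)^n$, and that the restriction of a right inverse of the full trace also serves as a right inverse. The plan is to exploit the definition \eqref{H-D-a} of $H^1_D(\Omega')^n$ together with the definition \eqref{D-N-spaces-a} of $\widetilde{H}^{\frac{1}{2}}(N)^n$ as a subspace of $H^{\frac{1}{2}}(\partial\Omega')^n$, and then to invoke the classical trace theorem for Lipschitz domains, which provides a bounded linear surjection $\gamma_{_{\Omega'}}\colon H^1(\Omega')^n\to H^{\frac{1}{2}}(\partial\Omega')^n$ together with a bounded linear right inverse $\gamma_{_{\Omega'}}^{-1}\colon H^{\frac{1}{2}}(\partial\Omega')^n\to H^1(\Omega')^n$.

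First, I would verify that $\partial\Omega'\setminus\overline{N}=D$ as sets: since $D$ and $N$ are disjoint relatively open subsets of $\partial\Omega'$ with $\overline{D}\cup\overline{N}=\partial\Omega'$ and $\overline{D}\cap\overline{N}=\partial D=\partial N$, the relative openness of $D$ forces $D\cap\overline{N}=\emptyset$, so indeed $\partial\Omega'\setminus\overline{N}=D$. Consequently, the condition ``$\boldsymbol\phi={\bf 0}$ on $\partial\Omega'\setminus\overline{N}$'' in \eqref{D-N-spaces-a} is the same as ``$\boldsymbol\phi={\bf 0}$ on $D$.''

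Second, for boundedness: given ${\bf u}\in H^1_D(\Omega')^n$, the classical trace theorem yields $\gamma_{_{\Omega'}}{\bf u}\in H^{\frac{1}{2}}(\partial\Omega')^n$ with $\|\gamma_{_{\Omega'}}{\bf u}\|_{H^{\frac{1}{2}}(\partial\Omega')^n}\le C\|{\bf u}\|_{H^1(\Omega')^n}$, while the characterization \eqref{H-D-a} gives $(\gamma_{_{\Omega'}}{\bf u})|_D={\bf 0}$. By the previous paragraph, this places $\gamma_{_{\Omega'}}{\bf u}$ inside $\widetilde{H}^{\frac{1}{2}}(N)^n$, and the norm estimate transfers because $\widetilde{H}^{\frac{1}{2}}(N)^n$ is equipped (by \eqref{D-N-spaces-a}) with the norm induced by $H^{\frac{1}{2}}(\partial\Omega')^n$. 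Linearity is automatic.

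Third, for surjectivity together with a bounded right inverse: given $\boldsymbol\phi\in\widetilde{H}^{\frac{1}{2}}(N)^n$, view it as an element of $H^{\frac{1}{2}}(\partial\Omega')^n$ that vanishes on $D$ and set ${\bf v}:=\gamma_{_{\Omega'}}^{-1}\boldsymbol\phi\in H^1(\Omega')^n$. Then $\gamma_{_{\Omega'}}{\bf v}=\boldsymbol\phi$ on $\partial\Omega'$, and since $\boldsymbol\phi|_D={\bf 0}$, the characterization \eqref{H-D-a} yields ${\bf v}\in H^1_D(\Omega')^n$. The estimate $\|{\bf v}\|_{H^1(\Omega')^n}\le C\|\boldsymbol\phi\|_{H^{\frac{1}{2}}(\partial\Omega')^n}=C\|\boldsymbol\phi\|_{\widetilde{H}^{\frac{1}{2}}(N)^n}$ shows that the restriction of $\gamma_{_{\Omega'}}^{-1}$ to $\widetilde{H}^{\frac{1}{2}}(N)^n$ is the desired bounded linear right inverse into $H^1_D(\Omega')^n$.

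No step is truly delicate: the whole argument is a direct bookkeeping exercise that hinges on identifying $\partial\Omega'\setminus\overline{N}$ with $D$, so that the two side conditions (trace vanishing on $D$ for elements of $H^1_D(\Omega')^n$, and vanishing on $\partial\Omega'\setminus\overline{N}$ for elements of $\widetilde{H}^{\frac{1}{2}}(N)^n$) are matched. The only mild subtlety to flag in passing is that \eqref{H-D-a} — originally defining $H^1_D(\Omega')^n$ as the closure of $C^{\infty}_{D}(\overline{\Omega'})^n$ in $H^1(\Omega')^n$ — already coincides with the kernel-style description via the full trace, which is crucial for both directions above and is justified by the cited \cite[Corollary 3.11]{B-M-M-M}.
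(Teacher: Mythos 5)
Your proposal is correct and follows essentially the same route as the paper: fix the classical right inverse $\gamma_{_{\Omega'}}^{-1}:H^{\frac{1}{2}}(\partial\Omega')^n\to H^1(\Omega')^n$, observe that applying it to a $\bs\phi$ vanishing on $D$ produces a function in $H^1_D(\Omega')^n$ (using the characterization \eqref{H-D-a}), and restrict. Your write-up is a bit more careful on the bookkeeping — you explicitly verify the identification $\partial\Omega'\setminus\overline{N}=D$ and separately check that the trace lands in $\widetilde{H}^{\frac{1}{2}}(N)^n$ with the induced norm — but the key step and its justification are the same.
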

\begin{proof}
Let $\gamma_{_{\Omega'}}^{-1}:{H}^{\frac{1}{2}}(\Gamma_0)^n\!\to \!{H}^1(\Omega')^n$ be a bounded right inverse of the trace operator $\gamma_{_{\Omega'}}:{H}^1(\Omega')^n\!\to \!{H}^{\frac{1}{2}}(\Gamma_0)^n$ (see Theorem \ref{trace-operator1}).
Consequently, we have $\gamma_{_{\Omega'}}\gamma_{_{\Omega'}}^{-1}\bs\phi=\bs\phi$, for any $\bs\phi \in {H}^{\frac{1}{2}}(\Gamma_0)^n$. Therefore, if $\bs\phi\in\widetilde{H}^{\frac{1}{2}}(N)^n$, that is, $\bs\phi=0$ on $D$, then $\gamma_{_{\Omega'}}\gamma_{_{\Omega'}}^{-1}\bs\phi=0$ on $D$, and, thus, $\gamma_{_{\Omega'}}^{-1}\bs\phi\in {H}^1_{D}(\Omega')^n$. Hence, the existence of a right inverse $\gamma_{_{\Omega'}}^{-1}:{H}^{\frac{1}{2}}(\Gamma_0)^n\to {H}^1(\Omega')^n$ of the trace operator $\gamma_{_{\Omega'}}:{H}^1(\Omega')^n\to {H}^{\frac{1}{2}}(\Gamma_0)^n$ assures the existence of a bounded right inverse $\gamma_{_{\Omega'}}^{-1}:\widetilde{H}^{\frac{1}{2}}(N)^n\to {H}^1_{D}(\Omega')^n$ of the operator $\gamma_{_{\Omega'}}:{H}^1_{D}(\Omega')^n\to \widetilde{H}^{\frac{1}{2}}(N)^n$.
\qed
\end{proof}

\subsection{\bf Sobolev spaces, conormal derivatives and Green's {\Bn identity} in a bounded Lipschitz domain with a transversal Lipschitz interface}
In the sequel, $\Omega $ is a bounded Lipschitz domain in ${\mathbb R}^n$, $n\geq 2$, satisfying Assumption \ref{interface-Sigma}.
We need the following spaces defined on the domains $\Omega $, $\Omega ^+$ and $\Omega ^-$,
\begin{align}
\label{5.8}
&{H}_{\Gamma ^\pm }^1(\Omega )^n:=\left\{{\bf v}\in H^1(\Omega )^n:(\gamma _{_{\Omega }}{\bf v})|_{\Gamma ^\pm }={\bf 0} \mbox{ on } \Gamma ^\pm \right\},\\
&{H}_{\Gamma ^\pm }^1(\Omega ^\pm )^n:=\left\{{\bf v}^\pm \in H^1(\Omega ^\pm )^n: (\gamma _{_{\Omega ^\pm }}{\bf v}^\pm )|_{\Gamma ^\pm }={\bf 0} \mbox{ on } \Gamma ^\pm \right\}\,,
\end{align}
where ${\gamma _{_{\Omega ^\pm }}}:H^1(\Omega ^\pm )\to H^{\frac{1}{2}}(\partial \Omega ^\pm )$ are the trace operators acting on functions defined on the domains $\Omega ^\pm $.
The spaces ${H}_{\Gamma ^\pm }^1(\Omega ^\pm )^n$ can be equivalently described as
\begin{align}
&{H}_{\Gamma ^\pm }^1(\Omega ^\pm )^n:=\left\{{\bf v}|_{\Omega ^\pm}: {\bf v}\in H_{\Gamma ^\pm }^1(\Omega )^n\right\}\,,
\end{align}
and the space $\mathring{H}^1(\Omega )^n=\left\{{\bf w}\in H^1(\Omega ^\pm )^n:\gamma _{_\Omega }{\bf w}=0 \mbox{ on } \partial \Omega \right\}$ can be identified with the space
\begin{align}
\label{H-equiv}
\Big\{\big({\bf v}^+,{\bf v}^-\big)\in {H}_{\Gamma ^+}^1(\Omega ^+)^n\times {H}_{\Gamma ^-}^1(\Omega ^-)^n: \big(\gamma _{_{\Omega ^+}}{\bf v}^+\big)\big|_{\Sigma  }=\big(\gamma _{_{\Omega ^-}}{\bf v}^-\big)\big|_{\Sigma }\Big\}.
\end{align}
This property is an immediate consequence of Lemma \ref{extention}.

Let us next introduce the Sobolev spaces on the interface $\Sigma $ (cf., e.g.,
\cite{B-M-M-M,Lean}). First, define the space
\begin{align}
\label{D-N-spaces}
&{H^{\frac{1}{2}}(\Sigma )^n}:={\bl \left\{\bs\phi  \in L^2(\Sigma )^n: \exists \, \bs\phi  ^+\in H^{\frac{1}{2}}(\partial \Omega ^+)^n \mbox{ such that } \bs\phi  =\bs\phi  ^+\big|_{_{\Sigma }}\right\}}\,,
\end{align}
which can be identified with the space
\begin{align}
\label{D-N-spaces-1}
&\left\{\bs\phi  \in L^2(\Sigma )^n: \exists \, \bs\phi  ^-\in H^{\frac{1}{2}}(\partial \Omega ^-)^n \mbox{ such that } \bs\phi  =\bs\phi  ^-\big|_{_{\Sigma }}\right\}
\end{align}
due to the equivalence of both of them  to the space defined as in \eqref{Sobolev-boundary}, with $\Sigma $ instead of $\partial \Omega $
(see also Lemma \ref{L3} and Theorem \ref{trace-operator1}).

Let us also consider the space
\begin{align}
&{\widetilde{H}^{\frac{1}{2}}(\Sigma{\gR;}\partial\Omega^+)^n}
:=\left\{{\widetilde{\bs\phi}  ^+}\in H^{\frac{1}{2}}(\partial \Omega ^+)^n\!:{\rm{supp}}\, {\widetilde{\bs\phi}  ^+}\subseteq \overline{\Sigma }\right\}\,,
\end{align}
which, by Lemma \ref{L3}(ii), can be identified with the space
\begin{align}
\label{D-N-spaces-tilde-1}
\widetilde{H}^{\frac{1}{2}}(\Sigma{\gR;}\partial\Omega^-)^n:=
\left\{{\widetilde{\bs\phi}  ^-}\in H^{\frac{1}{2}}(\partial \Omega ^-)^n\!:{\rm{supp}}\, {\widetilde{\bs\phi}  ^-}\subseteq \overline{\Sigma }\right\}\,,
\end{align}
and both spaces inherit their norms from the spaces $H^{\frac{1}{2}}(\partial \Omega^\pm)^n$, respectively.
Let us denote by ${H}_\bullet^{\frac{1}{2}}(\Sigma)^n$ the space consisting of all functions
$\bs\phi \in{H}^{\frac{1}{2}}(\Sigma)^n$ such that their extensions by zero on
$\partial\Omega^+$, $\mathring E_{_{\Sigma\to\partial\Omega^+}}\bs\phi $, belong to $\widetilde{H}^{\frac{1}{2}}(\Sigma{\gR;}\partial\Omega^+)^n$, i.e.,
$$
{H}_\bullet^{\frac{1}{2}}(\Sigma)^n:=\{\bs\phi\in {H}^{\frac{1}{2}}(\Sigma)^n :
\mathring E_{_{\Sigma\to\partial\Omega^+}}\bs\phi \in\widetilde{H}^{\frac{1}{2}}(\Sigma{\gR;}\partial\Omega^+)^n\}.
$$
Lemma \ref{L3}(ii) shows that the space ${H}_\bullet^{\frac{1}{2}}(\Sigma)^n$ can be also defined as
$${H}_\bullet^{\frac{1}{2}}(\Sigma)^n:=\{\bs\phi\in {H}^{\frac{1}{2}}(\Sigma)^n :
\mathring E_{_{\Sigma\to\partial\Omega^-}}\bs\phi \in\widetilde{H}^{\frac{1}{2}}(\Sigma{\gR;}\partial\Omega^-)^n\}.
$$
The space ${H}_\bullet^{\frac{1}{2}}(\Sigma)^n$ can be endowed with the norm
$$\|\bs\phi\|_{{H}_\bullet^{\frac{1}{2}}(\Sigma)^n}
=\max\left\{\|\mathring E_{_{\Sigma\to\partial\Omega^+}}\bs\phi\|_{{H}^{\frac{1}{2}}(\partial\Omega^+)^n},
\|\mathring E_{_{\Sigma\to\partial\Omega^-}}\bs\phi\|_{{H}^{\frac{1}{2}}(\partial\Omega^-)^n}\right\}.
$$
Note that the operators of extension by zero
$\mathring E_{_{\Sigma\to\partial\Omega^\pm}}: {H}_\bullet^{\frac{1}{2}}(\Sigma)^n \to\widetilde{H}^{\frac{1}{2}}(\Sigma{\gR;}\partial\Omega^\pm)^n$
are continuous,
and due to, e.g., \cite[Theorem 2.10(i)]{Mikh} also surjective, implying that
the space ${H}_\bullet^{\frac{1}{2}}(\Sigma)^n$ can be identified with the spaces $\widetilde{H}^{\frac{1}{2}}(\Sigma{\gR;}\partial\Omega^\pm)^n$.
In addition,
by Theorem~\ref{Tbl}
the space ${H}_\bullet^{\frac{1}{2}}(\Sigma)^n$ can be characterized as the weighted space ${H}_{00}^{\frac{1}{2}}(\Sigma)$ consisting of functions $\bs\phi\in\mathring{H}^{\frac{1}{2}}(\Sigma)^n$,  such that $\delta^{-\frac{1}{2}}\bs\phi\in L^2(\Sigma)^n$, where $\delta(x)$ is the distance from $x\gR\in\Sigma$ to the boundary $\partial\Sigma$.
The counterpart of ${H}_{00}^{\frac{1}{2}}(\cdot)$ on smooth domains in $\R^n$ has been considered in \cite[Chapter 1, Theorem 11.7]{LiMa1} and on Lipschitz domains
 in Corollary 1.4.4.10 in \cite{Grisvard1985}.
Note also that the space
${H}_\bullet^{\frac{1}{2}}(\cdot)$ is similar to {the space} $L_{s,z}^p(\cdot)$ in \cite[Eq. (2.212)]{MM2013Spr}, cf. also \cite[p.3]{Mikh-18}.

\begin{lemma}
\label{gamma-Sigma-surj}
The operator $\gamma _{_{\Sigma}} :\mathring{H}^1(\Omega )^n\to {H}_\bullet^{\frac{1}{2}}(\Sigma )^n$ given by
\begin{align}
\label{gamma-Sigma}
\gamma _{_{\Sigma}}{\bf v}:=\left(\gamma _{_{\Omega ^+}}\left({\bf v}|_{_{\Omega ^+}}\right)\right)\big|_{\Sigma }=\left(\gamma _{_{\Omega ^-}}\left({\bf v}|_{_{\Omega ^-}}\right)\right)\big|_{\Sigma }\,, \ \ \forall \ {\bf v}\in \mathring{H}^1(\Omega )^n\,,
\end{align}
is linear, bounded and surjective.
\end{lemma}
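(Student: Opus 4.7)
The statement has two parts: (a) $\gamma_\Sigma$ is a well-defined bounded linear map into $H_\bullet^{\frac{1}{2}}(\Sigma)^n$, and (b) it is surjective. Linearity is trivial from the definition, so I focus on well-definedness, boundedness, and surjectivity.

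\emph{Well-definedness and mapping into $H_\bullet^{\frac{1}{2}}(\Sigma)^n$.} For ${\bf v}\in \mathring H^1(\Omega)^n$, the identification \eqref{H-equiv} gives that ${\bf v}|_{\Omega^\pm}\in H^1_{\Gamma^\pm}(\Omega^\pm)^n$ (because $\gamma_\Omega{\bf v}=0$ on $\partial\Omega\supset\Gamma^\pm$) and that the two one-sided traces $\gamma_{\Omega^\pm}({\bf v}|_{\Omega^\pm})$ agree on $\Sigma$; this makes the two expressions in \eqref{gamma-Sigma} coincide. Since $\gamma_{\Omega^+}({\bf v}|_{\Omega^+})\in H^{\frac{1}{2}}(\partial\Omega^+)^n$ vanishes on $\Gamma^+$, its support lies in $\overline\Sigma$, so it belongs to $\widetilde H^{\frac{1}{2}}(\Sigma;\partial\Omega^+)^n$. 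The key observation is that this function is exactly $\mathring E_{\Sigma\to\partial\Omega^+}\gamma_\Sigma{\bf v}$, because restriction to $\Sigma$ followed by extension by zero to $\Gamma^+$ reproduces a function that already vanishes on $\Gamma^+$. The same argument works for the ``$-$'' side. By the definition of $H_\bullet^{\frac{1}{2}}(\Sigma)^n$ in the excerpt, this yields $\gamma_\Sigma{\bf v}\in H_\bullet^{\frac{1}{2}}(\Sigma)^n$.

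\emph{Boundedness.} The trace theorem (Theorem~\ref{trace-operator1}) applied in each of the Lipschitz subdomains $\Omega^\pm$ yields
\begin{equation*}
\|\mathring E_{\Sigma\to\partial\Omega^\pm}\gamma_\Sigma{\bf v}\|_{H^{\frac{1}{2}}(\partial\Omega^\pm)^n}
=\|\gamma_{\Omega^\pm}({\bf v}|_{\Omega^\pm})\|_{H^{\frac{1}{2}}(\partial\Omega^\pm)^n}
\leq C^\pm\|{\bf v}\|_{H^1(\Omega^\pm)^n}
\leq C^\pm\|{\bf v}\|_{H^1(\Omega)^n}.
\end{equation*}
Taking the maximum of the two bounds and using the norm on $H_\bullet^{\frac{1}{2}}(\Sigma)^n$ displayed in the excerpt gives the bound $\|\gamma_\Sigma{\bf v}\|_{H_\bullet^{\frac{1}{2}}(\Sigma)^n}\leq C\|{\bf v}\|_{H^1(\Omega)^n}$.

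\emph{Surjectivity.} Given $\bs\phi\in H_\bullet^{\frac{1}{2}}(\Sigma)^n$, set $\widetilde{\bs\phi}^\pm:=\mathring E_{\Sigma\to\partial\Omega^\pm}\bs\phi\in\widetilde H^{\frac{1}{2}}(\Sigma;\partial\Omega^\pm)^n$. Apply Lemma~\ref{mixt-trace-D} with $\Omega'=\Omega^\pm$, $D=\Gamma^\pm$, $N=\Sigma$: the trace operator $\gamma_{\Omega^\pm}:H^1_{\Gamma^\pm}(\Omega^\pm)^n\to\widetilde H^{\frac{1}{2}}(\Sigma;\partial\Omega^\pm)^n$ has a bounded linear right inverse, so there exist ${\bf v}^\pm\in H^1_{\Gamma^\pm}(\Omega^\pm)^n$ with $\gamma_{\Omega^\pm}{\bf v}^\pm=\widetilde{\bs\phi}^\pm$ and $\|{\bf v}^\pm\|_{H^1(\Omega^\pm)^n}\le C\|\bs\phi\|_{H_\bullet^{\frac{1}{2}}(\Sigma)^n}$. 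Since $(\gamma_{\Omega^+}{\bf v}^+)|_\Sigma=\bs\phi=(\gamma_{\Omega^-}{\bf v}^-)|_\Sigma$, the characterisation \eqref{H-equiv} identifies the pair $({\bf v}^+,{\bf v}^-)$ with a function ${\bf v}\in\mathring H^1(\Omega)^n$, and $\gamma_\Sigma{\bf v}=\bs\phi$ by construction.

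\emph{Main obstacle.} The only nontrivial point is confirming that the extension-by-zero/restriction operations are compatible with the paper's definition of $H_\bullet^{\frac{1}{2}}(\Sigma)^n$. Once one recognises that $\mathring E_{\Sigma\to\partial\Omega^\pm}\gamma_\Sigma{\bf v}$ coincides with the full one-sided trace $\gamma_{\Omega^\pm}({\bf v}|_{\Omega^\pm})$ (owing to the vanishing of $\gamma_\Omega{\bf v}$ on $\Gamma^\pm$), all the work reduces to invoking the trace theorem on $\Omega^\pm$, Lemma~\ref{mixt-trace-D}, and the identification \eqref{H-equiv}.
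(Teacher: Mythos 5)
Your proposal is correct and follows essentially the same route as the paper's proof: linearity and boundedness are reduced to the trace theorem on the subdomains $\Omega^\pm$ via Lemma~\ref{mixt-trace-D}, the well-definedness of \eqref{gamma-Sigma} comes from Lemma~\ref{extention} (encapsulated in \eqref{H-equiv}), and surjectivity is obtained exactly as in the paper by extending $\bs\phi$ by zero to $\widetilde H^{1/2}(\Sigma;\partial\Omega^\pm)^n$, lifting with the right inverse of $\gamma_{\Omega^\pm}$ from Lemma~\ref{mixt-trace-D}, and gluing the two pieces into an element of $\mathring H^1(\Omega)^n$. Your explicit identification $\mathring E_{\Sigma\to\partial\Omega^\pm}\gamma_\Sigma\mathbf v=\gamma_{\Omega^\pm}(\mathbf v|_{\Omega^\pm})$ is a small expository addition the paper leaves implicit, but the substance is the same.
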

\begin{proof}
The linearity and boundedness of the operator $\gamma _{_{\Sigma}}$ are immediate consequences of the linearity and boundedness of the trace operators
\begin{align}\label{E4.17}
\gamma _{_{\Omega ^\pm }}:H_{\Gamma ^\pm }^1(\Omega ^\pm )^n\to \widetilde{H}^{\frac{1}{2}}(\Sigma{\gR;}\partial\Omega^\pm)^n,
\end{align}
 cf. Lemma \ref{mixt-trace-D}.
The equality of restrictions to $\Sigma$ of the traces from $\Omega^+$ and $\Omega^-$ in \eqref{gamma-Sigma} follows from the inclusion
${\bf v}\in \mathring{H}^1(\Omega )^n$, see Lemma~\ref{extention}(ii).
The surjectivity of operators \eqref{E4.17} implies the surjectivity of the operator
$\gamma _{_{\Sigma}} :\mathring{H}^1(\Omega )^n\to {H}_\bullet^{\frac{1}{2}}(\Sigma )^n$.
To this end, let $\boldsymbol \phi \in {H}_\bullet^{\frac{1}{2}}(\Sigma )^n$.
Then  $\mathring E_{_{\Sigma\to\partial\Omega^\pm}}\bs\phi \in \widetilde{H}^{\frac{1}{2}}(\Sigma{\gR;}\partial\Omega^\pm)^n$ and by Lemma \ref{mixt-trace-D}, there exist ${\bf v}^\pm \in H_{\Gamma ^\pm }^1(\Omega ^\pm )^n$ such that
$\gamma _{_{\Omega^\pm}}{\bf v}^\pm =\mathring E_{_{\Sigma\to\partial\Omega^\pm}}\bs\phi$ on $\partial\Omega^\pm$.
Hence, we obtain that $\gamma _{_{\Sigma}}{\bf v}^+=\gamma _{_{\Sigma}}{\bf v}^-$ on $\Sigma $. According to Lemma \ref{extention}(i) there exists a unique function ${\bf v}\in {H}^1(\Omega )^n$ such that ${\bf v}|_{\Omega ^\pm }={\bf v}^\pm $. Moreover, since $\gamma _{_{\Omega ^\pm }}{\bf v}^\pm =0$ on $\Gamma ^\pm $, we deduce that $\gamma _{_{\Omega }}{\bf v}=0$ (a.e.) on $\partial \Omega $, and hence ${\bf v}\in \mathring{H}^1(\Omega )^n$.
\qed
\end{proof}

Lemma~\ref{gamma-Sigma-surj} implies that the space ${H}_\bullet^{\frac{1}{2}}(\Sigma )^n$ can be also characterised as
\begin{align}
{H}_\bullet^{\frac{1}{2}}(\Sigma )^n=\Big\{{\bs\phi }\in L^2(\Sigma ): \, & {\Bl \exists \, {\bf v}\in \mathring{H}^1(\Omega )^n \mbox{ such that }}\nonumber\\
&{\Bl {\bs\phi }=\left(\gamma _{_{\Omega ^+}}\left({\bf v}|_{_{\Omega ^+}}\right)\right)\big|_{\Sigma }=\left(\gamma _{_{\Omega ^-}}\left({\bf v}|_{_{\Omega ^-}}\right)\right)\big|_{\Sigma }}\Big\}\,.
\end{align}

In addition, we consider the spaces
\begin{align}
&H^{-\frac{1}{2}}(\Sigma )^n:=\big({H}_\bullet^{\frac{1}{2}}(\Sigma )^n\big)',\
\widetilde{H}^{-\frac{1}{2}}(\Sigma )^n:=\big(H^{\frac{1}{2}}(\Sigma )^n\big)'\,.
\end{align}

In Appendix \ref{A-GCN} we provide a definition of the generalized conormal derivative, associated with anisotropic Stokes operator $\boldsymbol{\mathcal L}$, on the entire boundary of the domain. If we need the conormal derivative only on a part of the boundary of the domain, we do not not need the extension of the PDE right hand side to the `tilde-space' on the rest of the boundary.
To this end, we consider the following counterpart of Definition \ref{conormal-derivative-var-Brinkman} in the case of
$\tilde{\bf f}^{\pm} \in \left(H_{\Gamma ^\pm }^1({\Omega ^\pm })^n\right)'$.
\begin{defn}
\label{conormal-derivative-var-Brinkman-pm}
Let Assumption $\ref{interface-Sigma}$ and condition \eqref{Stokes-1} hold. Let
\begin{align}
\label{Hu-pi-f}
{\pmb{H}}_{\Gamma ^\pm }^1(\Omega ^{\pm},{\boldsymbol{\mathcal L}}):=\Big\{({\bf u}^{\pm},\pi^{\pm},\tilde{\bf f}^{\pm})
&\in H^1({\Omega ^\pm })^n\times L^2({\Omega ^\pm })\times \left(H_{\Gamma ^\pm }^1({\Omega ^\pm })^n\right)':\nonumber\\
&{\boldsymbol{\mathcal L}}({\bf u}^\pm ,\pi ^\pm )=\tilde{\bf f}^{\pm}|_{\Omega ^\pm } \mbox{ in } {\Omega ^\pm }\Big\}.
\end{align}
If $({\bf u}^{\pm},\pi^{\pm},{\tilde{\bf f}}^{\pm})\in {\pmb{H}}_{\Gamma ^\pm }^1(\Omega ^{\pm},{\boldsymbol{\mathcal L}})$, then the formula
 \begin{align}
\label{conormal-derivative-var-Brinkman-3pm}
&\left\langle \big({\bf t}_{\Omega ^\pm}({\bf u}^\pm ,\pi^\pm ;\tilde{\bf f}^\pm )\big)|_{_{\Sigma }},{\bs\Phi}^\pm \right\rangle _{_{\Sigma }}:=
\left\langle{a_{ij}^{\alpha \beta }E_{j\beta }({\bf u}^\pm),E_{i\alpha }} (\gamma^{-1}_{_{\Omega ^\pm }}{\bs\Phi}^\pm )\right\rangle _{\Omega ^\pm }\nonumber\\
&\hspace{3em}-\left\langle {\pi }^\pm ,{\rm{div}}(\gamma^{-1}_{_{\Omega ^\pm }}{\bs\Phi}^\pm )\right\rangle _{\Omega ^\pm }
+\left\langle \tilde{\bf f}^\pm ,\gamma^{-1}_{_{\Omega ^\pm }}{\bs\Phi}^\pm \right\rangle _{{\Omega ^\pm }}, \ \forall\, {\bs\Phi}^\pm \in {H}_\bullet^{\frac{1}{2}}(\Sigma )^n\,,
\end{align}
defines the generalized conormal derivatives $\big({\bf t}_{\Omega ^\pm}({\bf u}^\pm ,\pi^\pm ;\tilde{\bf f}^\pm )\big)|_{_{\Sigma }}\in H^{-\frac{1}{2}}(\Sigma )^n$,
where $\gamma^{-1}_{_{\Omega ^\pm }}:{H}_\bullet^{\frac{1}{2}}(\Sigma )^n\to H_{\Gamma ^\pm }^{1}({\Omega ^\pm })^n$ are bounded right {\Bl inverses} of the trace operators
$\gamma _{_{\Omega ^\pm }}:H_{\Gamma ^\pm }^{1}(\Omega ^\pm )^n\to {H}_\bullet^{\frac{1}{2}}(\Sigma )^n$.
\end{defn}
Note that, in view of Lemma \ref{mixt-trace-D}, all duality pairings in formula  \eqref{conormal-derivative-var-Brinkman-3pm} are well-defined. Moreover,
as in Lemma \ref{lem-add1}, we have the following result, whose proof is omitted for the brevity  (cf. \cite[Proposition 8.1]{M-M1} for the Laplace operator, \cite[Lemma 7.6]{K-W1} for extensions to compact Riemannian manifolds, and \cite[Definition 7.1]{B-M-M-M} in the case of higher order elliptic operators).
\begin{lemma}
\label{lemma-add-new-1H}
Let Assumption $\ref{interface-Sigma}$ and conditions \eqref{Stokes-1} and \eqref{Stokes-sym} hold.
\begin{itemize}
\item[$(i)$]
The generalized conormal derivative operators
${{\bf t}_{\Omega ^\pm }}:{\pmb{H}}_{\Gamma ^\pm }^1(\Omega ^{\pm},{\boldsymbol{\mathcal L}})
\to H^{-\frac{1}{2}}(\Sigma)^n$
are linear and bounded, and definition \eqref{conormal-derivative-var-Brinkman-3pm} does not depend on the particular choice of  right inverses
$\gamma^{-1}_{_{\Omega ^\pm }}:{H}_\bullet^{\frac{1}{2}}(\Sigma )^n
\to H_{\Gamma ^\pm }^{1}({\Omega ^\pm })^n$
of the trace operators
$\gamma _{_{\Omega ^\pm }}:H_{\Gamma ^\pm }^{1}(\Omega^\pm)^n\to {H}_\bullet^{\frac{1}{2}}(\Sigma )^n$.
\item[$(ii)$]
Let $({\bf u}^{\pm},\pi^{\pm},{\tilde{\bf f}}^{\pm})\in {\pmb{H}}_{\Gamma ^\pm }^1(\Omega ^{\pm},{\boldsymbol{\mathcal L}})$.
Let
$\pi \in L^2(\Omega )$ and $\tilde{\bf f}\in H^{-1}(\Omega )$ be such that
\begin{align}
\label{u-pi-f}
\pi|_{\Omega ^\pm }=\pi ^\pm ,\ \ \tilde{\bf f}=\tilde{\bf f}^++\tilde{\bf f}^-\,.
\end{align}
Then the following first Green identities hold
\begin{align}
\label{Green-particular+-}
\big\langle &\big({\bf t}_{\Omega ^\pm}({\bf u}^\pm,\pi^\pm;\tilde{\bf f}^\pm)\big)|_{_{\Sigma }}\,
,\gamma _{_{\Sigma}}{\bf w}^\pm\big\rangle _{_{\Sigma }}
=\left\langle a_{ij}^{\alpha \beta }E_{j\beta }({\bf u}^\pm),E_{i\alpha }({\bf w}^\pm)\right\rangle _{\Omega^\pm}
\nonumber\\
&\hspace{7em}
-\langle {\pi},{\rm{div}}\, {\bf w}^\pm \rangle_{\Omega^\pm}+\langle \tilde{\bf f},{\bf w}^\pm\rangle_{\Omega^\pm}\,,
\quad \forall\ {\bf w}^\pm\in H_{\Gamma ^\pm }^{1}({\Omega ^\pm })^n\,,
\end{align}
and hence
\begin{align}
\label{Green-particular}
\big\langle &\big({\bf t}_{\Omega ^+}({\bf u}^+,\pi^+;\tilde{\bf f}^+)\big)|_{_{\Sigma }}\,
+\, \big({{\bf t}_{\Omega ^-}}({\bf u}^-,\pi^-;\tilde{\bf f}^-)\big)|_{_{\Sigma }}\,,\gamma _{_{\Sigma}}{\bf w}\big\rangle _{_{\Sigma }}
\nonumber\\
&=\left\langle a_{ij}^{\alpha \beta }E_{j\beta }({\bf u}^+),E_{i\alpha }({\bf w})\right\rangle _{\Omega^+}
+\left\langle a_{ij}^{\alpha \beta }E_{j\beta }({\bf u}^-),E_{i\alpha }({\bf w})\right\rangle _{\Omega^-}
\nonumber\\
&\hspace{10em}-\langle {\pi},{\rm{div}}\, {\bf w} \rangle_{\Omega }+\langle \tilde{\bf f},{\bf w}\rangle_{\Omega }\,,
\quad \forall\ {\bf w}\in \mathring{H}^{1}(\Omega )^n\,.
\end{align}
\end{itemize}
\end{lemma}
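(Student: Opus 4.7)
The plan is to mimic the strategy used for the analogous result on the full boundary (Lemma~\ref{lem-add1}), adapting it to the `one-sided' setting where the conormal datum is tested only against traces on the interface $\Sigma$. The two main ingredients will be the surjective trace operator $\gamma_{_{\Omega^\pm}}:H_{\Gamma^\pm}^1(\Omega^\pm)^n\to H_\bullet^{\frac{1}{2}}(\Sigma)^n$ with bounded right inverse (Lemma~\ref{mixt-trace-D}) and the fact that the equation $\boldsymbol{\mathcal L}({\bf u}^\pm,\pi^\pm)=\tilde{\bf f}^\pm|_{\Omega^\pm}$ may be tested against any element of $\mathring H^1(\Omega^\pm)^n$.

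For part (i), linearity of ${\bf t}_{\Omega^\pm}$ in $({\bf u}^\pm,\pi^\pm,\tilde{\bf f}^\pm)$ is immediate from \eqref{conormal-derivative-var-Brinkman-3pm}. Boundedness follows by bounding each of the three terms on the right-hand side of \eqref{conormal-derivative-var-Brinkman-3pm}: the first by $\|\mathbb A\|\,\|{\bf u}^\pm\|_{H^1(\Omega^\pm)^n}\,\|\gamma^{-1}_{_{\Omega^\pm}}\bs\Phi^\pm\|_{H^1(\Omega^\pm)^n}$, the second by $\|\pi^\pm\|_{L^2(\Omega^\pm)}\,\|\gamma^{-1}_{_{\Omega^\pm}}\bs\Phi^\pm\|_{H^1(\Omega^\pm)^n}$, and the third by $\|\tilde{\bf f}^\pm\|_{(H_{\Gamma^\pm}^1(\Omega^\pm)^n)'}\,\|\gamma^{-1}_{_{\Omega^\pm}}\bs\Phi^\pm\|_{H_{\Gamma^\pm}^1(\Omega^\pm)^n}$, and then invoking continuity of $\gamma^{-1}_{_{\Omega^\pm}}$ to convert the $H^1$-norm of the lifting into a $H_\bullet^{\frac{1}{2}}(\Sigma)^n$-norm of $\bs\Phi^\pm$. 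To show independence of the choice of right inverse, I would pick two bounded right inverses $\gamma^{-1}_{_{\Omega^\pm},1}$ and $\gamma^{-1}_{_{\Omega^\pm},2}$, set ${\bf h}:=(\gamma^{-1}_{_{\Omega^\pm},1}-\gamma^{-1}_{_{\Omega^\pm},2})\bs\Phi^\pm$, and observe that ${\bf h}$ has zero trace on $\Gamma^\pm$ (both liftings lie in $H_{\Gamma^\pm}^1(\Omega^\pm)^n$) and on $\Sigma$ (by the right-inverse property), so ${\bf h}\in\mathring H^1(\Omega^\pm)^n$. The difference of the two candidate values of $\langle({\bf t}_{\Omega^\pm})|_\Sigma,\bs\Phi^\pm\rangle_\Sigma$ then becomes exactly the duality expression of $\boldsymbol{\mathcal L}({\bf u}^\pm,\pi^\pm)-\tilde{\bf f}^\pm|_{\Omega^\pm}$ against ${\bf h}\in\mathring H^1(\Omega^\pm)^n$, which vanishes by the PDE in \eqref{Hu-pi-f}; here the symmetry \eqref{Stokes-sym} is used to replace $\partial_\alpha h_i$ by $E_{i\alpha}({\bf h})$ when unfolding $\boldsymbol{\mathcal L}$ in the weak sense.

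For part (ii), to derive \eqref{Green-particular+-}, I would decompose an arbitrary ${\bf w}^\pm\in H_{\Gamma^\pm}^1(\Omega^\pm)^n$ as ${\bf w}^\pm={\bf v}^\pm+\boldsymbol\eta^\pm$ with ${\bf v}^\pm:=\gamma^{-1}_{_{\Omega^\pm}}(\gamma_{_\Sigma}{\bf w}^\pm)\in H_{\Gamma^\pm}^1(\Omega^\pm)^n$ and $\boldsymbol\eta^\pm:={\bf w}^\pm-{\bf v}^\pm\in\mathring H^1(\Omega^\pm)^n$. Applying \eqref{conormal-derivative-var-Brinkman-3pm} with $\bs\Phi^\pm=\gamma_{_\Sigma}{\bf w}^\pm$ gives the identity \eqref{Green-particular+-} with ${\bf v}^\pm$ in place of ${\bf w}^\pm$; testing $\boldsymbol{\mathcal L}({\bf u}^\pm,\pi^\pm)=\tilde{\bf f}^\pm|_{\Omega^\pm}$ against $\boldsymbol\eta^\pm\in\mathring H^1(\Omega^\pm)^n$ and using \eqref{Stokes-sym} yields the remaining $\boldsymbol\eta^\pm$-contribution, and the two sum to \eqref{Green-particular+-}. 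For \eqref{Green-particular}, given ${\bf w}\in\mathring H^1(\Omega)^n$, the identification \eqref{H-equiv} provided by Lemma~\ref{extention} ensures that ${\bf w}^\pm:={\bf w}|_{\Omega^\pm}\in H_{\Gamma^\pm}^1(\Omega^\pm)^n$ and that $\gamma_{_\Sigma}{\bf w}^+=\gamma_{_\Sigma}{\bf w}^-=\gamma_{_\Sigma}{\bf w}$; writing \eqref{Green-particular+-} for both signs, summing, and applying the hypotheses $\pi|_{\Omega^\pm}=\pi^\pm$ and $\tilde{\bf f}=\tilde{\bf f}^++\tilde{\bf f}^-$ from \eqref{u-pi-f} produces \eqref{Green-particular}.

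The only non-routine obstacle I anticipate is keeping the pairing structure consistent on the three different spaces involved (namely $H_\bullet^{\frac{1}{2}}(\Sigma)^n$ versus $\widetilde H^{\frac{1}{2}}(\Sigma;\partial\Omega^\pm)^n$, and $\mathring H^1(\Omega^\pm)^n$ versus $H_{\Gamma^\pm}^1(\Omega^\pm)^n$), so that the ``weak-PDE'' step in the independence argument and the decomposition ${\bf w}^\pm={\bf v}^\pm+\boldsymbol\eta^\pm$ are justified with the correct duality brackets. Once the spaces are aligned via Lemmas~\ref{mixt-trace-D}, \ref{extention} and the surjectivity result Lemma~\ref{gamma-Sigma-surj}, the argument becomes a direct adaptation of the full-boundary case.
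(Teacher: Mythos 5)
Your proposal is correct and follows the route the paper implicitly intends: it mirrors the standard conormal-derivative argument of Lemma~\ref{lem-add1} (and its referenced sources), with the independence-of-lifting step reduced to testing the weak form of $\boldsymbol{\mathcal L}({\bf u}^\pm,\pi^\pm)=\tilde{\bf f}^\pm|_{\Omega^\pm}$ against the difference of two liftings, which vanishes on all of $\partial\Omega^\pm$ and hence lies in $\mathring H^1(\Omega^\pm)^n$. The decomposition ${\bf w}^\pm={\bf v}^\pm+\boldsymbol\eta^\pm$ with $\boldsymbol\eta^\pm\in\mathring H^1(\Omega^\pm)^n$ for part (ii), together with Lemma~\ref{extention} to pass from the one-sided identities to \eqref{Green-particular}, is exactly the expected adaptation, and you have correctly flagged the needed identifications among $H_\bullet^{\frac{1}{2}}(\Sigma)^n$, $\widetilde H^{\frac{1}{2}}(\Sigma;\partial\Omega^\pm)^n$, $\mathring H^1(\Omega^\pm)^n$ and $H_{\Gamma^\pm}^1(\Omega^\pm)^n$.
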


Note that the existence of a function
$\pi \in L^2(\Omega )$ as in \eqref{u-pi-f} follows immediately from Lemma \ref{extention}, while Lemma \ref{identification} shows that $\tilde{\bf f}$ defined in \eqref{u-pi-f} belongs indeed to the space $H^{-1}(\Omega )^n$.

\subsection{\bf Dirichlet-transmission problem with homogeneous Dirichlet conditions}
\label{S4.2.1}
First, we analyze the following Dirichlet-transmission problem for the anisotropic Stokes system in $\Omega $ with {homogeneous Dirichlet conditions}
\begin{equation}
\label{Dirichlet-var-Stokes-a}
\left\{
\begin{array}{ll}
\boldsymbol{\mathcal L}({\bf u}^+,\pi ^+)=\widetilde{\bf f}^+|_{\Omega ^+},\ {\rm{div}}\, {\bf u}^+={g|_{\Omega ^+}}
& \mbox{ in } \Omega ^+\,,\\
\boldsymbol{\mathcal L}({\bf u}^-,\pi ^-)=\widetilde{\bf f}^-|_{\Omega ^-},\ {\rm{div}}\, {\bf u}^-={g|_{\Omega ^-}}
& \mbox{ in } \Omega ^-\,,\\
{(\gamma _{_{\Omega ^+}}{\bf u}^+)|_{_{\Sigma }}}=(\gamma _{_{\Omega ^-}}{\bf u}^{-})|_{_{\Sigma }} &  \mbox{ on } \Sigma \,,\\
({\bf t}_{\Omega ^+}({\bf u}^+,\pi ^+;\widetilde{\bf f}^+))|_{_{\Sigma }}\, +\, ({\bf t}_{\Omega ^-}({\bf u}^-,\pi ^-;\widetilde{\bf f}^-))|_{_{\Sigma }}
={\bs\psi_{_{\Sigma }}} &  \mbox{ on } \Sigma \,,\\
({\gamma _{_{\Omega ^+}}}{\bf u}^+)|_{\Gamma ^+}={\bf 0} &  \mbox{ on } \Gamma ^+\,,\\
({\gamma _{_{\Omega ^-}}}{\bf u}^-)|_{\Gamma ^-}={\bf 0} &  \mbox{ on } \Gamma ^-
\end{array}\right.
\end{equation}
and the given data $(\widetilde{\bf f}^+,\widetilde{\bf f}^-,{g},{\bs\psi_{_{\Sigma }}}) \in\mathfrak Y^0$. The space
\begin{align}
\mathfrak Y^0:=
\left({H}_{\Gamma ^+}^1(\Omega ^+)^n\right)'{\times \left({H}_{\Gamma ^-}^1(\Omega ^-)^n\right)'}
\times {L^2_0(\Omega)}\times H^{-\frac{1}{2}}(\Sigma )^n
\end{align}
is endowed with the norm
\begin{align*}
\|(\widetilde{\bf f}^+,\widetilde{\bf f}^-,g,{\bs\psi_{_{\Sigma }}})\|_{\mathfrak Y^0}:=
\|\widetilde{\bf f}^+\|_{\left({H}_{\Gamma ^+}^1(\Omega ^+)^n\right)'}
&+\|\widetilde{\bf f}^-\|_{\left({H}_{\Gamma ^-}^1(\Omega ^-)^n\right)'}\\
&+\|g\|_{L^2(\Omega )}
+\|{\bs\psi_{_{\Sigma }}}\|_{H^{-\frac{1}{2}}(\Sigma )^n}.
\end{align*}
Note that the conormal derivative operators ${\bf t}_{\Omega^+}$ and ${\bf t}_{\Omega^-}$,
as introduced in Definition~\ref{conormal-derivative-var-Brinkman}, correspond to the outward unit normal vectors to $\Omega^+$ and $\Omega ^-$, respectively, that have opposite directions on $\Sigma $.
However, if one would consider the conormal derivatives with respect to unit normal vectors of the same direction on $\Sigma$, then the sum in the corresponding transmission condition in \eqref{Dirichlet-var-Stokes-a} would be replaced by the difference, leading to the {\it jump} of the conormal derivatives as, e.g., in \cite{K-M-W-2,KMW-DCDS2021,KMW-LP}.

We show that \eqref{Dirichlet-var-Stokes-a} has a unique solution $({\bf u}^+,\pi ^+,{\bf u}^-,\pi ^-)$ in the space
\begin{align}
\label{Omega-pm}
{\mathfrak X}_{\Omega ^+,\Omega ^-}:=\big\{({\bf v}^+,q^+,{\bf v}^-,q^-):\
&{\bf v}^+\in {H}^1(\Omega ^+)^n,\
{\bf v}^-\in {H}^1(\Omega ^-)^n,\nonumber\\
& q^+=q|_{\Omega ^+}, \ q^-=q|_{\Omega ^-},\  q\in L^2(\Omega)/{\mathbb R}
\big\}\,,
\end{align}
endowed with the norm
$$\|({\bf v}^+,q^+,{\bf v}^-,q^-)\|_{{\mathfrak X}_{\Omega ^+,\Omega ^-}}=
\|{\bf v}^+\|_{{H}^1(\Omega ^+)^n}
+\|{\bf v}^-\|_{{H}^1(\Omega ^-)^n}
+\| q\|_{L^2(\Omega)/{\mathbb R}}.$$
(The choice of the space $L^2(\Omega )/{\mathbb R}$ for the pressure is only for convenience, and one may consider the space $L_0^2(\Omega )$ as well.)

Let $({\bf u}^+,\pi ^+,{\bf u}^-,\pi ^-)\in\Gr {\mathfrak X}_{\Omega ^+,\Omega ^-}$
and ${\bf u}^+$ and ${\bf u}^-$ satisfy the homogeneous interface condition for traces in \eqref{Dirichlet-var-Stokes-a},
${(\gamma _{_{\Omega ^+}}{\bf u}^+)|_{_{\Sigma }}}-{(\gamma _{_{\Omega ^-}}{\bf u}^{-})|_{_{\Sigma }}}={\bf 0}$ on $\Sigma $.
Then Lemma \ref{extention} implies that there exists a unique pair $({\bf u},\pi)\in  H^1(\Omega )^n\times  L^2(\Omega )/{\mathbb R}$ such that
\begin{align}
\label{u}
{\bf u}|_{\Omega ^+}={\bf u}^+,\quad
{\bf u}|_{\Omega^-}={\bf u}^-,\quad
\pi|_{\Omega ^+} =\pi ^+,\quad
\pi|_{\Omega^-} =\pi^-\,.
\end{align}
Assuming that ${\bf u}^+$ and ${\bf u}^-$ also satisfy the homogeneous Dirichlet condition in \eqref{Dirichlet-var-Stokes-a} 
we have that ${\bf u}\in \mathring{H}^1(\Omega )^n$.
Therefore, $({\bf u},\pi)\in
\mathring{H}^1(\Omega )^n\times L^2(\Omega )/\mathbb R$.

Note that the membership of $\widetilde{\bf f}^+$ to $\left({H}_{\Gamma ^+}^1(\Omega ^+)^n\right)'$ and the identification of this space with the space defined {by \eqref{equivalence-set} in Lemma \ref{identification}} imply that $\widetilde{\bf f}^+$ can be considered also as a distribution in $H^{-1}(\Omega )^n$.
Similarly, the assumption $\widetilde{\bf f}^-\in \left({H}_{\Gamma ^-}^1(\Omega ^-)^n\right)'$ implies that $\widetilde{\bf f}^-$ can be considered as a distribution in $H^{-1}(\Omega )^n$.

Let also $\boldsymbol{\mathfrak F}\in {H}^{-1}(\Omega )^n$ be such that
\begin{align}
\label{F-a}
\langle \boldsymbol{\mathfrak F},{\bf v}\rangle _{\Omega }&:=-\big\langle \tilde{\bf f}^+,{\bf v}\big\rangle _{\Omega ^+}\!-\!\big\langle \tilde{\bf f}^-,{\bf v}\big\rangle _{\Omega ^-}+\langle {\bs\psi_{_{\Sigma }}},\gamma _{_{\Sigma }}{\bf v}\rangle _{\Sigma }\nonumber\\
&=-\big\langle \tilde{\bf f}^++\tilde{\bf f}^-,{\bf v}\big\rangle _{\Omega }+\langle \gamma _{_{\Sigma }}^*{\bs\psi_{_{\Sigma }}},{\bf v}\rangle _{\Omega }\,,\ \ \forall \, {\bf v}\in \mathring{H}^{1}(\Omega )^n\,,
\end{align}
that is,
$\boldsymbol{\mathfrak F}=-(\tilde{\bf f}^++\tilde{\bf f}^-)+\gamma _{_\Sigma }^*{\bs\psi_{_{\Sigma }}}$.
Note that ${\gamma _{_{\Sigma }}^*}:H^{-\frac{1}{2}}(\Sigma )^n\to {H}^{-1}(\Omega )^n$ is {the adjoint of the trace operator $\gamma _{_{\Sigma }}:\mathring{H}^{1}(\Omega )^n\to {\widetilde{H}^{\frac{1}{2}}(\Sigma )^n}$ defined by \eqref{gamma-Sigma}, and the support of $\gamma _{_{\Sigma }}^*{\bs\psi_{_{\Sigma }}}$ is a subset of $\overline\Sigma $}.

Now, we can show the well-posedness of the Dirichlet-transmission problem \eqref{Dirichlet-var-Stokes-a} (see also \cite[Theorem 1.2]{Angot-2}, \cite[Corollary 3.1]{Angot-3} for interface problems involving the Stokes and Brinkman systems
in Lipschitz domains with transversal interfaces and jump
conditions in the isotropic case \eqref{isotropic}).
\begin{theorem}
\label{T-p}
Let Assumption $\ref{interface-Sigma}$  and conditions \eqref{Stokes-1}-\eqref{mu} hold.
\begin{enumerate}
\item[$(i)$]
Then for all $(\widetilde{\bf f}^+,\widetilde{\bf f}^-,g,{\bs\psi_{_{\Sigma }}})\in \mathfrak Y^0$ the Dirichlet-transmission problem \eqref{Dirichlet-var-Stokes-a} has a unique solution $({\bf u}^+,\pi^+,{\bf u}^-,\pi^-)$ in the space ${\mathfrak X}_{\Omega^+,\Omega^-}$, and there exists a positive constant $C=C(\Omega^+,\Omega^-, C_{\mathbb A},n)$ such that
\begin{align}\label{E3.27}
&\|({\bf u}^+,\pi^+,{\bf u}^-,\pi^-)\|_{{\mathfrak X}_{\Omega^+,\Omega^-}}
\le C\|(\widetilde{\bf f}^+,\widetilde{\bf f}^-,g,{\bs\psi_{_{\Sigma }}})\|_{\mathfrak Y^0}
\end{align}
\item[$(ii)$]
The solution can be represented in the form
$({\bf u}^+,\pi^+,{\bf u}^-,\pi^-)=\bs{\mathfrak U}^0(\widetilde{\bf f}^+,\widetilde{\bf f}^-,g,{\bs\psi_{_{\Sigma }}}),$
where $\bs{\mathfrak U}^0:\mathfrak Y^0\to {\mathfrak X}_{\Omega^+,\Omega^-}$ is a linear continuous operator.
\end{enumerate}
\end{theorem}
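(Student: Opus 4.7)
The plan is to reduce the transmission problem \eqref{Dirichlet-var-Stokes-a} to the homogeneous Dirichlet problem on the whole domain $\Omega$ already treated in Theorem~\ref{lemma-a47-1-Stokes}, by encoding simultaneously the body forces $\tilde{\bf f}^\pm$ and the interface datum ${\bs\psi_{_{\Sigma }}}$ into a single distribution $\boldsymbol{\mathfrak F}\in H^{-1}(\Omega)^n$ as in~\eqref{F-a}. First, I would observe that by Lemma~\ref{identification} each $\tilde{\bf f}^\pm\in (H_{\Gamma^\pm}^1(\Omega^\pm)^n)'$ can be canonically identified with an element of $H^{-1}(\Omega)^n$, and that the adjoint $\gamma_{_\Sigma}^*:H^{-\frac{1}{2}}(\Sigma)^n\to H^{-1}(\Omega)^n$ of the surjective trace $\gamma_{_\Sigma}:\mathring H^1(\Omega)^n\to H_\bullet^{\frac{1}{2}}(\Sigma)^n$ (Lemma~\ref{gamma-Sigma-surj}) is bounded. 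Hence the map
$$(\tilde{\bf f}^+,\tilde{\bf f}^-,{\bs\psi_{_{\Sigma }}})\,\longmapsto\,\boldsymbol{\mathfrak F}=-(\tilde{\bf f}^++\tilde{\bf f}^-)+\gamma_{_\Sigma}^*{\bs\psi_{_{\Sigma }}}$$
is linear and continuous into $H^{-1}(\Omega)^n$.

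Applying Theorem~\ref{lemma-a47-1-Stokes} to the data $(\boldsymbol{\mathfrak F},g)\in H^{-1}(\Omega)^n\times L^2_0(\Omega)$ yields a unique pair $({\bf u},\pi)\in \mathring H^1(\Omega)^n\times L^2(\Omega)/\mathbb R$ solving the homogeneous Dirichlet problem with continuous dependence. I then set $({\bf u}^\pm,\pi^\pm):=({\bf u}|_{\Omega^\pm},\pi|_{\Omega^\pm})$, which automatically lies in $\mathfrak X_{\Omega^+,\Omega^-}$, satisfies the Stokes PDEs in $\Omega^\pm$ and ${\rm div}\,{\bf u}^\pm = g|_{\Omega^\pm}$, and fulfills both the interface trace-continuity on $\Sigma$ and the homogeneous Dirichlet conditions on $\Gamma^\pm$, by virtue of ${\bf u}\in\mathring H^1(\Omega)^n$ together with Lemma~\ref{extention}.

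The only nontrivial verification is the transmission condition for the conormal derivatives on $\Sigma$. Here I would invoke the Green identity~\eqref{Green-particular} of Lemma~\ref{lemma-add-new-1H}: for every test ${\bf v}\in\mathring H^1(\Omega)^n$,
\begin{align*}
\bigl\langle ({\bf t}_{\Omega^+}({\bf u}^+,\pi^+;\tilde{\bf f}^+))|_{_{\Sigma}}+({\bf t}_{\Omega^-}({\bf u}^-,\pi^-;\tilde{\bf f}^-))|_{_{\Sigma}},\gamma_{_\Sigma}{\bf v}\bigr\rangle_{_\Sigma}
&= a_{{\mathbb A};\Omega}({\bf u},{\bf v})-\langle\pi,{\rm div}\,{\bf v}\rangle_\Omega\\
&\quad +\langle\tilde{\bf f}^++\tilde{\bf f}^-,{\bf v}\rangle_\Omega.
\end{align*}
Since $({\bf u},\pi)$ solves the variational system~\eqref{transmission-S-variational-dl-3-equiv-0-2} with right-hand side $\boldsymbol{\mathfrak F}$, and by definition~\eqref{F-a} we have $\langle\boldsymbol{\mathfrak F},{\bf v}\rangle_\Omega=-\langle\tilde{\bf f}^++\tilde{\bf f}^-,{\bf v}\rangle_\Omega+\langle{\bs\psi_{_{\Sigma}}},\gamma_{_\Sigma}{\bf v}\rangle_\Sigma$, the right-hand side above reduces to $\langle{\bs\psi_{_{\Sigma}}},\gamma_{_\Sigma}{\bf v}\rangle_\Sigma$. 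The surjectivity of $\gamma_{_\Sigma}$ onto $H_\bullet^{\frac{1}{2}}(\Sigma)^n$ then forces the identity of the two elements of $H^{-\frac{1}{2}}(\Sigma)^n$, which is precisely the transmission condition.

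For uniqueness, any solution $({\bf u}^+,\pi^+,{\bf u}^-,\pi^-)\in\mathfrak X_{\Omega^+,\Omega^-}$ of~\eqref{Dirichlet-var-Stokes-a} extends by Lemma~\ref{extention} to $({\bf u},\pi)\in\mathring H^1(\Omega)^n\times L^2(\Omega)/\mathbb R$ (the homogeneous Dirichlet conditions on $\Gamma^\pm$ combined with the trace-continuity on $\Sigma$ give the zero full trace on $\partial\Omega$); the same Green identity together with the transmission and divergence conditions shows that $({\bf u},\pi)$ satisfies~\eqref{transmission-S-variational-dl-3-equiv-0-2} with the same $\boldsymbol{\mathfrak F}$, and is therefore $\bs{\mathfrak U}(\boldsymbol{\mathfrak F},g)$ by Theorem~\ref{lemma-a47-1-Stokes}(iii). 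Hence $\bs{\mathfrak U}^0$ is defined as the composition of the continuous map $(\tilde{\bf f}^+,\tilde{\bf f}^-,g,{\bs\psi_{_\Sigma}})\mapsto(\boldsymbol{\mathfrak F},g)$ with $\bs{\mathfrak U}$ and the bounded restriction $({\bf u},\pi)\mapsto({\bf u}^+,\pi^+,{\bf u}^-,\pi^-)$, and the estimate~\eqref{E3.27} follows from~\eqref{estimate-1-wp-S-2}. The main obstacle I expect is purely bookkeeping: keeping all duality pairings well-defined and mutually consistent under the identifications of $\tilde{\bf f}^\pm$ with elements of $H^{-1}(\Omega)^n$ and of $H_\bullet^{\frac{1}{2}}(\Sigma)^n$ with $\widetilde H^{\frac{1}{2}}(\Sigma;\partial\Omega^\pm)^n$ used in the conormal-derivative definition~\eqref{conormal-derivative-var-Brinkman-3pm}, so that the Green identity can indeed be applied with a single test field ${\bf v}\in\mathring H^1(\Omega)^n$.
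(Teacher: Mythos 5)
Your proposal is correct and follows essentially the same route as the paper: encode $(\widetilde{\bf f}^\pm,{\bs\psi_{_\Sigma}})$ into the single functional $\boldsymbol{\mathfrak F}\in H^{-1}(\Omega)^n$ via \eqref{F-a}, invoke Theorem~\ref{lemma-a47-1-Stokes} for well-posedness of the associated variational problem on $\Omega$, glue/restrict via Lemma~\ref{extention}, and recover the conormal transmission condition from the Green identity~\eqref{Green-particular} together with the surjectivity of $\gamma_{_\Sigma}$. The paper phrases this as an explicit two-way equivalence between~\eqref{Dirichlet-var-Stokes-a} and~\eqref{transmission-S-variational-dl-3-equiv-0-2}, whereas you split it into existence (forward direction) and uniqueness (backward direction), but the content is the same.
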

\begin{proof}
Let us prove that the Dirichlet-transmission problem \eqref{Dirichlet-var-Stokes-a}
with the unknowns $\left({\bf u}^+,\pi ^+,{\bf u}^-,\pi ^-\right)\in {\mathfrak X}_{\Omega ^+,\Omega ^-}$ is equivalent,
in the sense of relations \eqref{u}, to the variational problem \eqref{transmission-S-variational-dl-3-equiv-0-2} with the unknowns
$({\bf u},\pi )\in\Gr\mathring{H}^1(\Omega )^n\times L^2(\Omega )/\mathbb R$, and
with $\boldsymbol{\mathfrak{F}}\in H^{-1}(\Omega )^n$ given by \eqref{F-a}.

First, assume that $\left({\bf u}^+,\pi ^+,{\bf u}^-,\pi ^-\right)\in {\mathfrak X}_{\Omega ^+,\Omega ^-}$ satisfies the Dirichlet-transmission problem \eqref{Dirichlet-var-Stokes-a}.
Let $({\bf u},\pi )\in \mathring{H}^1(\Omega )^n\times {\bl L_0^2(\Omega )}$
be the pair defined by formula \eqref{u} (cf. Lemma \ref{extention}). Then the first equation of variational problem \eqref{transmission-S-variational-dl-3-equiv-0-2} follows from the Green identity \eqref{Green-particular} and relation \eqref{F-a} for $\boldsymbol{\mathfrak F}$.
The second equation in \eqref{transmission-S-variational-dl-3-equiv-0-2} follows from the equations ${\rm{div}}\, {\bf u}^\pm =g|_{\Omega ^\pm }$ in $\Omega ^\pm $ and the inclusion ${\rm{div}}\, {\bf u}\in L^2(\Omega)$.

Conversely, assume that $({\bf u},\pi )\in \mathring{H}^1(\Omega )^n\times  L^2(\Omega )/{\mathbb R}$ satisfies the variational problem \eqref{transmission-S-variational-dl-3-equiv-0-2}
and let $({\bf u}^\pm ,\pi ^\pm )=({\bf u}|_{\Omega ^\pm },\pi |_{\Omega ^\pm })$.
Then the first equation in \eqref{transmission-S-variational-dl-3-equiv-0-2} can be written as
\begin{align}
\label{conormal-derivative-particular-a}
&\big\langle a_{ij}^{\alpha \beta }E_{j\beta }({\bf u}^+),E_{i\alpha }({\bf w}^+)\big\rangle _{\Omega ^+}-\big\langle \pi ^+,{\rm{div}}\, {\bf w}^+\big\rangle _{\Omega ^+}\nonumber\\
&+\big\langle a_{ij}^{\alpha \beta }E_{j\beta }({\bf u}^-),E_{i\alpha }({\bf w}^-)\big\rangle _{\Omega ^-}-\big\langle \pi ^-,{\rm{div}}\, {\bf w}^-\big\rangle _{\Omega ^-}\nonumber\\
&-\big\langle \tilde{\bf f}^+,{\bf w}\big\rangle _{\Omega ^+}-\big\langle \tilde{\bf f}^-,{\bf w}\big\rangle _{\Omega ^-}+\langle {\bs\psi_{_{\Sigma }}},\gamma _{_{\Sigma }}{\bf w}\rangle _{\Sigma }=0\ \ \forall \, {\bf w}\in \mathring{H}^1(\Omega )^n\,.
\end{align}
Since
the spaces
${\mathcal D}(\Omega ^\pm )^n$ are subspaces of $\mathring{H}^{1}(\Omega)^n$,
the (distributional form of) the anisotropic Stokes equation in \eqref{Dirichlet-var-Stokes-a}, in each of the domains $\Omega ^+$ and $\Omega ^-$, follows from equation \eqref{conormal-derivative-particular-a} written for all ${\bf w}\in {\mathcal D}(\Omega ^+)^n$ and
${\bf w}\in {\mathcal D}(\Omega ^-)^n$, respectively.
A similar argument
yields that the second variational equation in \eqref{transmission-S-variational-dl-3-equiv-0-2} implies the divergence equation ${\rm{div}}\, {\bf u}^\pm =g^\pm $ in $\Omega ^\pm $.
Thus,
$\left({\bf u}^+,\pi ^+,{\bf u}^-,\pi ^-\right)$
satisfies the anisotropic Stokes system in $\Omega ^+\cup \Omega ^-$, 
the Dirichlet boundary condition $(\gamma _{_{\Omega ^\pm }}{\bf u}^\pm )|_{_{\Gamma ^\pm }}={\bf 0}$ on $\Gamma ^\pm $, and the interface condition $(\gamma _{_{\Omega ^+}}{\bf u}^+)|_{_{\Sigma }}={(\gamma _{_{\Omega ^-}}{\bf u}^{-})|_{_{\Sigma }}}$ on $\Sigma $.
Then substituting \eqref{conormal-derivative-particular-a} into the Green identity \eqref{Green-particular}, we obtain the equation
\begin{align}
\label{Sigma-ab}
\big\langle \big({\bf t}_{\Omega ^+}({\bf u}^+,\pi^+;\tilde{\bf f}^+)
{+}{{\bf t}_{\Omega ^-}}({\bf u}^-,\pi^-;\tilde{\bf f}^-)\big)\big|_{\Sigma },
(\gamma _{_{\Omega }}{\bf w})|_{_{\Sigma }}\big\rangle _{_{\Sigma }}
=\big\langle {\bs\psi_{_{\Sigma }}},{\gamma _{_{\Sigma }}{\bf w}}\big\rangle _\Sigma \,.
\end{align}
In view of {Lemma \ref{gamma-Sigma-surj}}, formula \eqref{Sigma-ab} {\Gr implies}
\begin{align}
\label{Sigma-aa}
\!\!\!\!\big\langle \big({\bf t}_{\Omega ^+}({\bf u}^+,\pi^+;\tilde{\bf f}^+)
{+}{{\bf t}_{\Omega ^-}}({\bf u}^-,\pi^-;\tilde{\bf f}^-)\big)\big|_{\Sigma },\bs\phi  \big\rangle _{_{\Sigma }}\!
=\!\big\langle {\bs\psi_{_{\Sigma }}},\bs\phi  \big\rangle _\Sigma \,,\
\forall \, {\bs\phi  \!\in \! \widetilde{H}^{\frac{1}{2}}(\Sigma )^n}.
\end{align}
Therefore,
$\big({\bf t}_{\Omega ^+}({\bf u}^+,\pi^+;\tilde{\bf f}^+)
{+}{{\bf t}_{\Omega ^-}}({\bf u}^-,\pi^-;\tilde{\bf f}^-)\big)\big|_{\Sigma }
={\bs\psi_{_{\Sigma }}}$ on $\Sigma $.

Consequently, the Dirichlet-transmission problem \eqref{Dirichlet-var-Stokes-a} and the variational problem \eqref{transmission-S-variational-dl-3-equiv-0-2} are equivalent, as asserted.
By Theorem \ref{lemma-a47-1-Stokes}, the
variational problem \eqref{transmission-S-variational-dl-3-equiv-0-2} in the space
$\mathring{H}^1(\Omega )^n\times L^2(\Omega )/\mathbb R$ is well-posed.
Hence the proved equivalence implies the well-posedness of problem \eqref{Dirichlet-var-Stokes-a} in the space
${\mathfrak X}_{\Omega ^+,\Omega ^-}$,
and estimate \eqref{E3.27} follows from \eqref{estimate-1-wp-S-2} and  \eqref{F-a}.
Together with Theorem \ref{lemma-a47-1-Stokes}(iii) this also implies the representation of item (ii).
\qed
\end{proof}

\subsection{\bf Dirichlet-transmission problem with non-homogeneous interface and Dirichlet conditions}\label{S3.3.2}
Let the space $\mathfrak Y_\bullet$ consists of all elements
\begin{multline*}
(\widetilde{\bf f}^+,\widetilde{\bf f}^-,g^+,g^-,\boldsymbol \varphi_{_{\Sigma }}, {\boldsymbol \psi}_{_{\Sigma }},\bs\varphi)\in
\\
\left({H}_{\Gamma ^+}^1(\Omega ^+)^n\right)'\times \left({H}_{\Gamma ^-}^1(\Omega ^-)^n\right)'
\times  L^2(\Omega^+)\times L^2(\Omega^-)
\times {{H}_\bullet^{\frac{1}{2}}(\Sigma )^n}\!
\times \!{{H}^{-\frac{1}{2}}(\Sigma )^n}\!\times \!H^{\frac{1}{2}}(\partial \Omega )^n
\end{multline*}
such that $g^+$, $g^-$, $\bs\varphi $, and ${\boldsymbol \varphi }_{_{\Sigma }}$ satisfy the compatibility condition
\begin{align}
\label{hg}
{\int _{\Omega^+}g^+ dx + \int _{\Omega^-}g^-dx}=\int _{\partial \Omega }{\bs\varphi}\cdot \boldsymbol\nu d\sigma
+\int _{\Sigma}{\boldsymbol \varphi }_{_{\Sigma }}\cdot\boldsymbol\nu _{_\Sigma }d\sigma \,,
\end{align}
where $\boldsymbol\nu _{_{\Sigma }}$ is the unit normal to $\Sigma $ oriented from $\Omega ^+$ to $\Omega ^-$.
The space $\mathfrak Y_\bullet$ is endowed with the norm
\begin{multline*}
\|(\widetilde{\bf f}^+,\widetilde{\bf f}^-,{g^+,g^-},{\boldsymbol \varphi}_{_{\Sigma }},{\boldsymbol \psi}_{_{\Sigma }}, \bs\varphi)\|_{\mathfrak Y_\bullet}
:= \|\widetilde{\bf f}^+\|_{\left({H}_{\Gamma ^+}^1(\Omega ^+)^n\right)'}+
\|\widetilde{\bf f}^-\|_{\left({H}_{\Gamma ^-}^1(\Omega ^-)^n\right)'}\\
+\|g^+\|_{L^2(\Omega^+)}+\|g^-\|_{L^2(\Omega^-)}
+\|\bs\varphi_{_{\Sigma }}\|_{{H}_\bullet^{\frac{1}{2}}(\Sigma)^n}
+\|\bs\psi_{_{\Sigma }}\|_{H^{-\frac{1}{2}}(\Sigma )^n}
+\|\bs\varphi\|_{H^{\frac{1}{2}}(\partial\Omega^n)}.
\end{multline*}
Let us also define the space $\mathcal M_\bullet$  consisting of all elements
\begin{align*}
(g^+,g^-,{\boldsymbol \varphi}_{_{\Sigma }},{\boldsymbol \varphi})\in
L^2(\Omega^+)\times L^2(\Omega^-)\times {H}_\bullet^{\frac{1}{2}}(\Sigma )^n\times H^{\frac{1}{2}}(\partial \Omega )^n
\end{align*}
satisfying the compatibility condition \eqref{hg}, with the norm
\begin{align}
\label{M-norm}
&\|(g^+,g^-,{\boldsymbol \varphi}_{_{\Sigma }},{\boldsymbol \varphi})\|_{\mathcal M_\bullet}:=
\|g^+\|_{L^2(\Omega^+)}+\|g^-\|_{L^2(\Omega^-)}+\|{\boldsymbol \varphi}_{_{\Sigma }}\|_{{H}_\bullet^{\frac{1}{2}}(\Sigma )^n}
+\|\bs\varphi \|_{H^{\frac{1}{2}}(\partial \Omega )^n}.
\end{align}

Let us consider the following non-homogeneous Dirichlet-transmission problem
\begin{equation}
\label{Dirichlet-var-Stokes-nn}
\left\{
\begin{array}{ll}
\boldsymbol{\mathcal L}({\bf u}^+,\pi ^+)=\widetilde{\bf f}^+|_{\Omega ^+},\quad {\rm{div}}\, {\bf u}^+={g^+} & \mbox{ in } \Omega ^+\,,\\
\boldsymbol{\mathcal L}({\bf u}^-,\pi ^-)=\widetilde{\bf f}^-|_{\Omega ^-},\quad {\rm{div}}\, {\bf u}^-={g^-} & \mbox{ in } \Omega ^-\,,\\
(\gamma _{_{\Omega ^+}}{\bf u}^+)|_{_{\Sigma }}-{(\gamma _{_{\Omega ^-}}{\bf u}^{-})|_{_{\Sigma }}}={\boldsymbol \varphi}_{_{\Sigma }} & \mbox{ on } \Sigma \,,\\
{\big({\bf t}_{\Omega ^+}({\bf u}^+,\pi ^+;\widetilde{\bf f}^+)\big)|_{_{\Sigma }}}\,  +\, {\big({\bf t}_{\Omega ^-}({\bf u}^-,\pi ^-;\widetilde{\bf f}^-)\big)|_{_{\Sigma }}}={\boldsymbol \psi}_{_{\Sigma }} &  \mbox{ on } \Sigma \,,\\
({\gamma _{_{\Omega ^+}}}{\bf u}^+)|_{\Gamma ^+}={{\bs\varphi}|_{\Gamma ^+}} &  \mbox{ on } \Gamma ^+\,,\\
({\gamma _{_{\Omega ^-}}}{\bf u}^-)|_{\Gamma ^-}={{\bs\varphi}|_{\Gamma ^-}} &  \mbox{ on } \Gamma ^-\,,
\end{array}\right.
\end{equation}
with the unknown functions $({\bf u}^+,\pi ^+,{\bf u}^-,\pi ^-)$ in the space ${\mathfrak X}_{\Omega ^+,\Omega ^-}$ defined in \eqref{Omega-pm},
and with the given data
$(\widetilde{\bf f}^+,\widetilde{\bf f}^-,g^+,g^-,{\boldsymbol \varphi }_{_{\Sigma }}, {\boldsymbol \psi}_{_{\Sigma }},\bs\varphi )\in \mathfrak Y_\bullet$.

In order to analyze the non-homogeneous Dirichlet-transmission problem, we need the following Bogovskii-type transmission result.
\begin{lemma}\label{L.BT}
Let Assumption $\ref{interface-Sigma}$ hold.
Then for all given data
$({g^+,g^-},\boldsymbol \varphi _{_\Sigma },{\bs\varphi})\in
{\mathcal M_\bullet}$
there exist ${\bf v}^\pm\in {H}^1(\Omega ^\pm)^n$
such that
\begin{align}
\label{BT1}
{\rm{div}}\, {\bf v}^+={g^+} & \mbox{ in } \Omega ^+,  \\
\label{BT2}
{\rm{div}}\, {\bf v}^-={g^-}  & \mbox{ in } \Omega ^- , \\
\label{BT3}
(\gamma _{_{\Omega ^+}}{\bf v}^+)|_{_{\Sigma }}-{(\gamma _{_{\Omega ^-}}{\bf v}^-)|_{_{\Sigma }}}
=\boldsymbol \varphi_{_{\Sigma }} &\mbox{ on } \Sigma \,,\\
\label{BT4}
({\gamma _{_{\Omega ^+}}}{\bf v}^+)|_{\Gamma ^+}={{\bs\varphi}|_{\Gamma ^+}} &  \mbox{ on } \Gamma ^+\,,\\
\label{BT5}
({\gamma _{_{\Omega ^-}}}{\bf v}^-)|_{\Gamma ^-}={{\bs\varphi}|_{\Gamma ^-}} &  \mbox{ on } \Gamma ^-\,,
\end{align}
and, moreover,
\begin{align}
\label{BT6}
\|{\bf v}^\pm\|_{H^1(\Omega ^\pm)^n}\leq
C_\Sigma\|(g^+,g^-,{\boldsymbol \varphi}_{_{\Sigma }},{\boldsymbol \varphi})\|_{\mathcal M_\bullet}
\end{align}
with some constant $C_\Sigma=C_\Sigma(\Omega^+,\Omega^- ,n)>0$.
\end{lemma}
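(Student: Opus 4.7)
The plan is to reduce the problem, via suitable extensions of the Dirichlet and interface data, to two independent divergence equations with homogeneous boundary traces on $\Omega^+$ and $\Omega^-$, each of which is solvable by Proposition~\ref{LS-prop}.

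First I would use the surjectivity of the trace operator $\gamma_{_\Omega}:H^1(\Omega)^n\to H^{\frac{1}{2}}(\partial\Omega)^n$ (Theorem~\ref{trace-operator1}) to pick ${\bf v}_1\in H^1(\Omega)^n$ with $\gamma_{_\Omega}{\bf v}_1=\bs\varphi$. Since $\bs\varphi_{_\Sigma}\in {H}_\bullet^{\frac{1}{2}}(\Sigma)^n$, the extension by zero $\mathring E_{_{\Sigma\to\partial\Omega^-}}\bs\varphi_{_\Sigma}$ belongs to $H^{\frac{1}{2}}(\partial\Omega^-)^n$, and a second application of the trace theorem on $\Omega^-$ produces ${\bf v}_2\in H^1(\Omega^-)^n$ with $\gamma_{_{\Omega^-}}{\bf v}_2=-\mathring E_{_{\Sigma\to\partial\Omega^-}}\bs\varphi_{_\Sigma}$. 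Taking $\tilde{\bf v}^+:={\bf v}_1|_{\Omega^+}$ and $\tilde{\bf v}^-:={\bf v}_1|_{\Omega^-}+{\bf v}_2$, a direct check shows that $\tilde{\bf v}^\pm$ already satisfy the interface condition~\eqref{BT3} (the two one-sided traces of ${\bf v}_1$ on $\Sigma$ coincide by Lemma~\ref{extention}, and the jump $\bs\varphi_{_\Sigma}$ is delivered by ${\bf v}_2$) and the Dirichlet conditions \eqref{BT4}--\eqref{BT5} (since $\mathring E_{_{\Sigma\to\partial\Omega^-}}\bs\varphi_{_\Sigma}$ vanishes on $\Gamma^-$).

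The remaining task is to add corrections ${\bf w}^\pm\in\mathring H^1(\Omega^\pm)^n$ with ${\rm div}\,{\bf w}^\pm=g^\pm-{\rm div}\,\tilde{\bf v}^\pm$. The obstacle here is that Proposition~\ref{LS-prop} requires each right-hand side to lie in $L^2_0(\Omega^\pm)$, whereas in general only the \emph{sum} of the two integrals vanishes: using $\int_{\Omega^\pm}{\rm div}\,\tilde{\bf v}^\pm=\int_{\partial\Omega^\pm}\gamma_{_{\Omega^\pm}}\tilde{\bf v}^\pm\cdot\bs\nu^\pm$ together with the global compatibility~\eqref{hg}, a short calculation gives $\int_{\Omega^+}(g^+-{\rm div}\,\tilde{\bf v}^+)+\int_{\Omega^-}(g^--{\rm div}\,\tilde{\bf v}^-)=0$. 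To redistribute this defect I would fix once and for all an auxiliary $\bs\eta\in\mathring H^1(\Omega)^n$ with $\int_{\Omega^+}{\rm div}\,\bs\eta=1$, obtained by applying Proposition~\ref{LS-prop} to any function in $L^2_0(\Omega)$ having integral $1$ on $\Omega^+$. Setting $\beta:=\int_{\Omega^+}(g^+-{\rm div}\,\tilde{\bf v}^+)$ and $\hat{\bf v}^\pm:=\tilde{\bf v}^\pm+\beta\,\bs\eta|_{\Omega^\pm}$, one obtains $g^\pm-{\rm div}\,\hat{\bf v}^\pm\in L^2_0(\Omega^\pm)$, while the conditions \eqref{BT3}--\eqref{BT5} are preserved because $\gamma_{_\Omega}\bs\eta=0$.

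Finally, Proposition~\ref{LS-prop} applied on each of the Lipschitz sub-domains $\Omega^\pm$ furnishes ${\bf w}^\pm\in\mathring H^1(\Omega^\pm)^n$ with ${\rm div}\,{\bf w}^\pm=g^\pm-{\rm div}\,\hat{\bf v}^\pm$, and setting ${\bf v}^\pm:=\hat{\bf v}^\pm+{\bf w}^\pm$ produces the required functions. The bound~\eqref{BT6} then follows by chaining the estimate~\eqref{Bog2} from the two Bogovskii steps with the continuity of the trace right-inverses used to build ${\bf v}_1$ and ${\bf v}_2$, the continuity of the extension-by-zero $\bs\varphi_{_\Sigma}\mapsto\mathring E_{_{\Sigma\to\partial\Omega^-}}\bs\varphi_{_\Sigma}$, and the obvious estimate $|\beta|\le C\bigl(\|g^+\|_{L^2(\Omega^+)}+\|\tilde{\bf v}^+\|_{H^1(\Omega^+)^n}\bigr)$. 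The step I expect to be the main obstacle is precisely this redistribution: one must simultaneously repair the individual $L^2_0$-defects on \emph{both} sub-domains using a \emph{single} scalar parameter $\beta$ without spoiling the interface jump~\eqref{BT3}, and this is achieved exactly because $\bs\eta$ has vanishing trace on \emph{all} of $\partial\Omega$, which forces $\int_{\Omega^+}{\rm div}\,\bs\eta+\int_{\Omega^-}{\rm div}\,\bs\eta=0$ and leaves the jump across $\Sigma$ unchanged.
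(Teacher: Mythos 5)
Your proposal is correct, but it follows a genuinely different route from the paper's proof, and it is worth pointing out the contrast. The paper constructs ${\bf v}_1^\pm$ directly on the two sub-domains (splitting the interface jump symmetrically as $\pm\tfrac12\bs\varphi_\Sigma$), forms a \emph{single} corrector $g_2\in L^2(\Omega)$ with $g_2|_{\Omega^\pm}=g^\pm-\operatorname{div}{\bf v}_1^\pm$, verifies via the divergence theorem and \eqref{hg} that $g_2\in L^2_0(\Omega)$, and applies Proposition~\ref{LS-prop} \emph{once on the whole domain} $\Omega$ to obtain ${\bf v}_2\in\mathring H^1(\Omega)^n$. Since ${\bf v}_2$ has vanishing trace on $\partial\Omega$ and matching one-sided traces on $\Sigma$ (Lemma~\ref{extention}), it repairs both divergences at once without perturbing any of the transmission or boundary conditions — no redistribution step is needed. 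You instead apply Proposition~\ref{LS-prop} \emph{separately on each sub-domain}, which forces you to first kill the individual $L^2_0$-defects; you do this cleanly with the auxiliary $\bs\eta\in\mathring H^1(\Omega)^n$ normalised by $\int_{\Omega^+}\operatorname{div}\bs\eta=1$, exploiting that the two defects sum to zero. The step you flag as the expected obstacle (the defect redistribution) is exactly the extra work your decomposition incurs; the paper sidesteps it entirely by choosing the global Bogovskii solve. Both routes yield the same bound \eqref{BT6}. The global-Bogovskii version is shorter and uses one fewer auxiliary object, but your argument has the pedagogical advantage of keeping the two sub-domain problems visibly decoupled once $\bs\eta$ is fixed, and it would transfer more readily to a setting where the global operator $\mathcal R_\Omega$ is unavailable but the local ones $\mathcal R_{\Omega^\pm}$ are.
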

\begin{proof}
Let
$\gamma^{-1}_{\Omega^\pm}: H^{\frac{1}{2}}(\partial \Omega^\pm)^n\to H^1(\Omega ^\pm )^n$
be some continuous right inverses to the corresponding trace operators and  $E_{_{\Gamma^\pm\to\partial\Omega}}:H^{\frac{1}{2}}(\Gamma^\pm)^n\to H^{\frac{1}{2}}(\partial\Omega)$ some continuous extension operators.
Let us introduce the functions
\begin{align}
\label{E3.49}
&{\bf v}^+_1:=r_{_{\Omega^+}}\gamma^{-1}_{\Omega}{\bs\varphi}
+\frac{1}{2}
\gamma^{-1}_{\Omega^+}\mathring E_{_{\Sigma\to\partial\Omega^+}}
\boldsymbol \varphi_{_{\Sigma }},\\
&{\bf v}^-_1:=r_{_{\Omega^-}}\gamma^{-1}_{\Omega}{\bs\varphi}
-\frac{1}{2}
\gamma^{-1}_{\Omega^-}\mathring E_{_{\Sigma\to\partial\Omega^-}}
\boldsymbol \varphi_{_{\Sigma }},
\end{align}
where $ \gamma^{-1}_{\Omega }: H^{\frac{1}{2}}(\partial \Omega )^n\to H^1(\Omega )^n$ and $\gamma^{-1}_{\Omega^\pm}: H^{\frac{1}{2}}(\partial \Omega^\pm)^n\to H^1(\Omega ^\pm )^n$
are continuous right inverses to the corresponding trace operators, while $r_{_{\Omega^\pm}}:H^1(\Omega )^n\to H^1(\Omega^\pm)^n$ are restriction operators to the corresponding domains, and each of them is a continuous operator.
Then ${\bf v}^\pm_1$ belong to $H^1(\Omega^\pm)^n$, respectively, and satisfy transmission and boundary conditions \eqref{BT3}-\eqref{BT5}.
Let us now define
\begin{align}\label{gpm}
g^\pm_1:={\rm{div}}\, {\bf v}_1^\pm\in L^2(\Omega^\pm)
\end{align}
Then by the divergence Theorem and condition \eqref{hg} we obtain
\begin{multline}
\label{hg1}
\int _{\Omega^+}g_1^+(x) dx+\int _{\Omega^-}g_1^-(x) dx
=\int _{\Omega^+}{\rm{div}}\, {\bf v}_1^+(x) dx+\int _{\Omega^-}{\rm{div}}\, {\bf v}_1^-(x) dx\\
=\int _{\partial\Omega^+}\gamma_{_{\Omega^+}}{\bf v}_1^+\cdot \boldsymbol\nu d\sigma
+\int _{\partial\Omega^-}\gamma_{_{\Omega^-}}{\bf v}_1^-\cdot \boldsymbol\nu d\sigma\\
=\int _{\Gamma^+}\gamma_{_{\Omega^+}}{\bf v}_1^+\cdot \boldsymbol\nu d\sigma
+\int _{\Gamma^-}\gamma_{_{\Omega^-}}{\bf v}_1^-\cdot \boldsymbol\nu d\sigma
+\int _{\Sigma}(\gamma_{_{\Omega^+}}{\bf v}_1^+-\gamma_{_{\Omega^-}}{\bf v}_1^-)\cdot \boldsymbol\nu _{_{\Sigma }}d\sigma\\
=\int _{\partial \Omega }{\bs\varphi}\cdot \boldsymbol\nu d\sigma
+\int _{\Sigma}{\boldsymbol \varphi }_{_{\Sigma }}\cdot\boldsymbol\nu _{_\Sigma }d\sigma \,
=\int _{\Omega^+}g^+ dx +\int _{\Omega_-}g^-dx.
\end{multline}
Let $g_2\in L^2(\Omega)$ be such that $g_2|_{\Omega_\pm}=g^\pm-g^\pm_1$.
Hence $g_2$ belongs to the space $L_0^2(\Omega)$ defined in \eqref{L2-0}.
Then by Proposition~\ref{LS-prop} there exists ${\bf v}_2\in \mathring H^1(\Omega)$ satisfying
\begin{align}
{\rm{div}}\, {\bf v}_2=g_2 \mbox{ in } \Omega \,,
\end{align}
and a constant $C=C(\Omega,n)>0$ such that
\begin{align}
\label{BT7}
\|{\bf v}_2\|_{H^1(\Omega )^n}\le
C\|g-g_1\|_{L^2(\Omega )}\le C\|g_2\|_{L^2(\Omega )}.
\end{align}
Finally, choosing
$
{\bf v}^\pm:={\bf v}^\pm_1 + r_{_{\Omega^\pm}}{\bf v}_2
$
and using inequality \eqref{BT7} and continuity of the operators involved in \eqref{E3.49}-\eqref{gpm}, we obtain the desired result.
\qed
\end{proof}

Let us define the operators
$\check{\bs{\mathfrak L}}^\pm: {H}^1(\Omega^\pm)^n\to \widetilde{H}^{-1}(\Omega^\pm)^n$, cf. \eqref{L-oper}, as
\begin{align}
\label{checkL}
(\check{\bs{\mathfrak L}}^\pm{\bf v}^\pm)_i
:=\partial _\alpha\mathring E_{\Omega^\pm}\big(a_{ij}^{\alpha \beta }E_{j\beta }({\bf v}^\pm)\big),\ \ i=1,\ldots ,n,\quad
\forall\ {\bf v}^\pm\in {H}^1(\Omega^\pm)^n\,,
\end{align}
where $\mathring E_{\Omega^\pm}$ are the operators of zero extensions from $\Omega^\pm$ onto $\R^n$.
Then {\Rd by \eqref{Stokes-new} we have that} $\big(\check{\bs{\mathfrak L}}^\pm{\bf v}^\pm\big)|_{\Omega^\pm}=\boldsymbol{\mathcal L}({\bf v}^\pm,0)$ in $\Omega^\pm$.
\begin{theorem}
\label{T-nn}
Let Assumption $\ref{interface-Sigma}$ {\Gr and
conditions \eqref{Stokes-1}-\eqref{mu} hold.}
Then for all
\linebreak
$(\widetilde{\bf f}^+,\widetilde{\bf f}^-,{g^+,g^-},{\boldsymbol \varphi }_{_{\Sigma }}, {\boldsymbol \psi}_{_{\Sigma }}, {\bs\varphi})\in \mathfrak Y_\bullet$ the Dirichlet-transmission problem \eqref{Dirichlet-var-Stokes-nn} has a unique solution $({\bf u}^+,\pi ^+,{\bf u}^-,\pi ^-)$ in the space ${\mathfrak X}_{\Omega ^+,\Omega ^-}$ and there exists a constant $C=C({\Omega^+,\Omega^-} ,C_{\mathbb A},n)>0$ such that
\begin{align*}
\|({\bf u}^+,\pi ^+,{\bf u}^-,\pi ^-)\|_{{\mathfrak X}_{\Omega ^+,\Omega ^-}}
\leq C\|(\widetilde{\bf f}^+,\widetilde{\bf f}^-,{g^+,g^-},{\boldsymbol \varphi}_{_{\Sigma }},{\boldsymbol \psi}_{_{\Sigma }}, {\bs\varphi})\|_{\mathfrak Y_\bullet}
\end{align*}
\end{theorem}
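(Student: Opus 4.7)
The plan is to reduce the fully non-homogeneous Dirichlet-transmission problem \eqref{Dirichlet-var-Stokes-nn} to the partly homogeneous version already settled in Theorem \ref{T-p}, by means of a Bogovskii-type lift of the divergence, interface, and Dirichlet data. First, I would invoke Lemma \ref{L.BT}, whose applicability is ensured by the compatibility condition \eqref{hg}, to produce a pair $({\bf v}^+,{\bf v}^-) \in H^1(\Omega^+)^n\times H^1(\Omega^-)^n$ satisfying \eqref{BT1}--\eqref{BT5} together with the norm estimate \eqref{BT6}. Then I set ${\bf w}^\pm := {\bf u}^\pm - {\bf v}^\pm$ and look for $({\bf w}^+,\pi^+,{\bf w}^-,\pi^-)$ in ${\mathfrak X}_{\Omega^+,\Omega^-}$.

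After this substitution, the divergence equations become ${\rm div}\, {\bf w}^\pm = 0$ in $\Omega^\pm$, the interface jump for traces and the Dirichlet conditions on $\Gamma^\pm$ become homogeneous. For the Stokes equations, using the operators $\check{\bs{\mathfrak L}}^\pm$ from \eqref{checkL} together with the natural embedding $\widetilde{H}^{-1}(\Omega^\pm)^n \hookrightarrow \big(H^1_{\Gamma^\pm}(\Omega^\pm)^n\big)'$, the new right-hand sides read $\widetilde{\bf f}^\pm_{\mathrm{new}} := \widetilde{\bf f}^\pm - \check{\bs{\mathfrak L}}^\pm {\bf v}^\pm \in \big(H^1_{\Gamma^\pm}(\Omega^\pm)^n\big)'$. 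Using linearity of the generalized conormal derivatives in all three of their arguments (Lemma \ref{lemma-add-new-1H}(i)), the conormal-derivative transmission condition for the pair $({\bf w}^\pm,\pi^\pm)$ takes the form
\begin{align*}
\big({\bf t}_{\Omega^+}({\bf w}^+,\pi^+;\widetilde{\bf f}^+_{\mathrm{new}})\big)\big|_\Sigma
+\big({\bf t}_{\Omega^-}({\bf w}^-,\pi^-;\widetilde{\bf f}^-_{\mathrm{new}})\big)\big|_\Sigma
=\widetilde{\bs\psi}_{_{\Sigma}},
\end{align*}
where
$\widetilde{\bs\psi}_{_{\Sigma}} := {\bs\psi}_{_{\Sigma}} - \big({\bf t}_{\Omega^+}({\bf v}^+,0;\check{\bs{\mathfrak L}}^+{\bf v}^+)\big)\big|_\Sigma - \big({\bf t}_{\Omega^-}({\bf v}^-,0;\check{\bs{\mathfrak L}}^-{\bf v}^-)\big)\big|_\Sigma$.
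Since $({\bf v}^\pm,0,\check{\bs{\mathfrak L}}^\pm{\bf v}^\pm) \in {\pmb H}^1_{\Gamma^\pm}(\Omega^\pm,\boldsymbol{\mathcal L})$ by the very definition of $\check{\bs{\mathfrak L}}^\pm$, Lemma \ref{lemma-add-new-1H}(i) guarantees that both correction terms lie in $H^{-\frac{1}{2}}(\Sigma)^n$, and hence so does $\widetilde{\bs\psi}_{_{\Sigma}}$.

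The reduced problem for $({\bf w}^+,\pi^+,{\bf w}^-,\pi^-)$ has data $(\widetilde{\bf f}^+_{\mathrm{new}},\widetilde{\bf f}^-_{\mathrm{new}},0,\widetilde{\bs\psi}_{_{\Sigma}}) \in \mathfrak Y^0$ and is precisely of the form \eqref{Dirichlet-var-Stokes-a}; Theorem \ref{T-p} therefore supplies a unique solution in ${\mathfrak X}_{\Omega^+,\Omega^-}$ together with estimate \eqref{E3.27}. Reconstituting $({\bf u}^+,\pi^+,{\bf u}^-,\pi^-) := ({\bf w}^+ + {\bf v}^+,\pi^+,{\bf w}^- + {\bf v}^-,\pi^-)$ yields a solution of \eqref{Dirichlet-var-Stokes-nn}, and combining \eqref{E3.27} with the bounds \eqref{BT6}, with the continuity of $\check{\bs{\mathfrak L}}^\pm : H^1(\Omega^\pm)^n \to \widetilde{H}^{-1}(\Omega^\pm)^n$, and with the conormal-derivative bound from Lemma \ref{lemma-add-new-1H}(i), produces the required estimate in terms of $\|\cdot\|_{\mathfrak Y_\bullet}$. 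Uniqueness follows at once: the difference of any two solutions of \eqref{Dirichlet-var-Stokes-nn} solves the homogeneous Dirichlet-transmission problem with zero data in $\mathfrak Y^0$, which by Theorem \ref{T-p} admits only the trivial solution.

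The main technical burden will be the control of $\widetilde{\bs\psi}_{_{\Sigma}}$ and of $\widetilde{\bf f}^\pm_{\mathrm{new}}$ in terms of the original data; this boils down to tracking the linearity of the generalized conormal derivative in the triple $({\bf u},\pi,\widetilde{\bf f})$, applying the continuity in Lemma \ref{lemma-add-new-1H}(i) to the lifts $({\bf v}^\pm,0,\check{\bs{\mathfrak L}}^\pm{\bf v}^\pm)$, and stringing these bounds together with the Bogovskii estimate \eqref{BT6}. Apart from this bookkeeping, the argument is essentially algebraic once the decomposition ${\bf u}^\pm = {\bf w}^\pm + {\bf v}^\pm$ is made.
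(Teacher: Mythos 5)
Your proposal is correct and follows the same strategy as the paper: lift the data with Lemma~\ref{L.BT}, set ${\bf w}^\pm = {\bf u}^\pm - {\bf v}^\pm$ with $\check{\bf f}^\pm = \check{\bs{\mathfrak L}}^\pm {\bf v}^\pm$, reduce to the partly homogeneous problem, and apply Theorem~\ref{T-p}. The one simplification the paper exploits that you do not is that, directly from Definition~\ref{conormal-derivative-var-Brinkman} applied to the triple $({\bf v}^\pm,0,\check{\bs{\mathfrak L}}^\pm{\bf v}^\pm)$, the first Green identity forces
\[
{\bf t}_{\Omega^\pm}({\bf v}^\pm,0;\check{\bs{\mathfrak L}}^\pm{\bf v}^\pm)={\bf 0}\quad\text{on }\partial\Omega^\pm,
\]
because the duality pairing $\langle\check{\bs{\mathfrak L}}^\pm{\bf v}^\pm,\gamma^{-1}_{\Omega^\pm}{\bs\Phi}\rangle_{\Omega^\pm}$ exactly cancels the bilinear-form term. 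Consequently your corrected interface datum $\widetilde{\bs\psi}_{_\Sigma}$ is literally equal to the original ${\bs\psi}_{_\Sigma}$, and there is no extra quantity to estimate; the reduced problem keeps ${\bs\psi}_{_\Sigma}$ unchanged. Your route — keeping the correction terms and controlling them through the boundedness of the conormal-derivative operator in Lemma~\ref{lemma-add-new-1H}(i) together with the continuity of $\check{\bs{\mathfrak L}}^\pm$ and the Bogovskii bound \eqref{BT6} — is also valid and produces the same estimate, just with a slightly longer chain of inequalities.
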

\begin{proof}
Let ${\bf v}^\pm\in {H}^1(\Omega^\pm)^n$  be the functions given by Lemma~\ref{L.BT}.
Defining ${\bf \check f}^\pm:=\check{\bs{\mathfrak L}}^\pm{\bf v}^\pm \in \widetilde{H}^{-1}(\Omega^\pm)^n$, we obtain that ${\bf \check f}^\pm|_{\Omega^\pm}=\boldsymbol{\mathcal L}({\bf v}^\pm,0)$ in $\Omega^\pm$
and ${\bf t}_{\Omega^\pm}({\bf v}^\pm,0;\check{\bf f}^\pm)={\bf 0}$ by Definition~\ref{conormal-derivative-var-Brinkman}.

Then the fully non-homogeneous Dirichlet-transmission problem \eqref{Dirichlet-var-Stokes-nn} reduces to the {following} Dirichlet-transmission problem with homogeneous {Dirichlet} conditions on $\Gamma^\pm$ and homogeneous interface condition for the traces across $\Sigma $, for the new functions ${\bf w}^\pm :={\bf u}^\pm -{\bf v}^\pm$.
\begin{equation}
\label{Dirichlet-var-Stokes-nnn}
\!\!\!\!\!\left\{
\begin{array}{llll}
\boldsymbol{\mathcal L}({\bf w}^+,\pi ^+)=(\widetilde{\bf f}^+ -\check{\bf f}^+)|_{\Omega^+},\
{\rm{div}}\, {\bf w}^+=0 & \mbox{ in } \Omega^+\\
\boldsymbol{\mathcal L}({\bf w}^-,\pi ^-)=(\widetilde{\bf f}^- -\check{\bf f}^-)|_{\Omega^-},\
{\rm{div}}\, {\bf w}^-=0 & \mbox{ in } \Omega^-\\
(\gamma _{_{\Omega^+}}{\bf w}^+)|_{_{\Sigma }} =(\gamma _{_{\Omega^-}}{\bf w}^{-})|_{_{\Sigma }}
& \mbox{ on } \Sigma \\
\big({\bf t}_{\Omega^+}({\bf w}^+,\pi ^+;\widetilde{\bf f}^+ -\check{\bf f}^+)\big)|_{_{\Sigma }}\,
+\, \big({\bf t}_{\Omega^-}({\bf w}^-,\pi ^-;\widetilde{\bf f}^- -\check{\bf f}^-)\big)|_{_{\Sigma }} = {\boldsymbol \psi}_{_{\Sigma }}
&\mbox{ on } \Sigma \\
\!({\gamma _{_{\Omega^+}}}{\bf w}^+)|_{\Gamma ^+}={\bf 0}&  \mbox{ on } \Gamma ^+\\
\!({\gamma _{_{\Omega^-}}}{\bf w}^-)|_{\Gamma ^-}={\bf 0}&  \mbox{ on } \Gamma ^-.
\end{array}\right.
\end{equation}

Theorem \ref{T-p} implies that the Dirichlet-transmission problem \eqref{Dirichlet-var-Stokes-nnn} has a unique solution $({\bf w}^+,\pi ^+,{\bf w}^-,\pi ^-)$ in the space ${\mathfrak X}_{\Omega^+,\Omega^-}$ and depends continuously on the given data of this problem.
Finally, the well-posedness of problem \eqref{Dirichlet-var-Stokes-nnn} implies that the functions $({\bf u}^\pm={\bf w}^\pm + {\bf v}^\pm,\ \pi ^\pm)$ determine a solution of the full non-homogeneous Dirichlet-transmission problem \eqref{Dirichlet-var-Stokes-nn} in the space ${\mathfrak X}_{\Omega^+,\Omega^-}$, and depends continuously on the given data $(\widetilde{\bf f}^+,\widetilde{\bf f}^-,{ g^+,g^-},{\boldsymbol \varphi }_{_{\Sigma }}, {\boldsymbol \psi}_{_{\Sigma }}, {\bs\varphi})\in \mathfrak Y_\bullet$.
This solution is unique by the uniqueness statement in Theorem \ref{T-p}.
\qed
\end{proof}

\subsection{\bf Dirichlet-transmission problem with more general non-homogeneous interface and Dirichlet conditions}\label{S5.5}
Let us now consider the non-homogeneous Dirichlet-transmission problem
\begin{equation}
\label{Dirichlet-var-Stokes-pm}
\left\{
\begin{array}{ll}
\boldsymbol{\mathcal L}({\bf u}^+,\pi ^+)=\widetilde{\bf f}^+|_{\Omega ^+},\quad {\rm{div}}\, {\bf u}^+={g^+} & \mbox{ in } \Omega ^+\,,\\
\boldsymbol{\mathcal L}({\bf u}^-,\pi ^-)=\widetilde{\bf f}^-|_{\Omega ^-},\quad {\rm{div}}\, {\bf u}^-={g^-} & \mbox{ in } \Omega ^-\,,\\
(\gamma _{_{\Omega ^+}}{\bf u}^+)|_{_{\Sigma }}-{(\gamma _{_{\Omega ^-}}{\bf u}^{-})|_{_{\Sigma }}}={\boldsymbol \varphi}_{_{\Sigma }} & \mbox{ on } \Sigma \,,\\
{\big({\bf t}_{\Omega ^+}({\bf u}^+,\pi ^+;\widetilde{\bf f}^+)\big)|_{_{\Sigma }}}\,  +\, {\big({\bf t}_{\Omega ^-}({\bf u}^-,\pi ^-;\widetilde{\bf f}^-)\big)|_{_{\Sigma }}}={\boldsymbol \psi}_{_{\Sigma }} &  \mbox{ on } \Sigma \,,\\
({\gamma _{_{\Omega ^+}}}{\bf u}^+)|_{\Gamma ^+}={\bs\varphi^+} &  \mbox{ on } \Gamma ^+\,,\\
({\gamma _{_{\Omega ^-}}}{\bf u}^-)|_{\Gamma ^-}={\bs\varphi^-} &  \mbox{ on } \Gamma ^-\,,
\end{array}\right.
\end{equation}
with more general data
$(\widetilde{\bf f}^+,\widetilde{\bf f}^-,{g^+,g^-},\boldsymbol \varphi_{_{\Sigma }}, {\boldsymbol \psi}_{_{\Sigma }}, {\bs\varphi^+,\bs\varphi^-})\in \mathfrak Y$, where
$\mathfrak Y$ consists of
\begin{multline*}
(\widetilde{\bf f}^+,\widetilde{\bf f}^-,g^+,g^-,\boldsymbol \varphi_{_{\Sigma }}, {\boldsymbol \psi}_{_{\Sigma }}, \bs\varphi^+,\bs\varphi^-)\in
\left({H}_{\Gamma ^+}^1(\Omega ^+)^n\right)'
\!\times\!\left({H}_{\Gamma ^-}^1(\Omega ^-)^n\right)'\\
\!\times{L^2(\Omega^+)
\!\times L^2(\Omega^-)}
\times \!{H^{\frac{1}{2}}}(\Sigma )^n\!
\times \!{{H}^{-\frac{1}{2}}(\Sigma )^n}
\!\times \!H^{\frac{1}{2}}(\Gamma^+)^n
\!\times \!H^{\frac{1}{2}}(\Gamma^-)^n,
\end{multline*}
such that $g^+,g^-,\bs\varphi_{_\Sigma},\bs\varphi^+, \bs\varphi^-$ satisfy the compatibility condition
\begin{align}\label{hg-pm}
\int _{\Omega^+}g^+ dx + \int _{\Omega^-}g^-dx
=\int_{\Gamma^+}\bs\varphi^+\cdot \boldsymbol\nu d\sigma
+\int_{\Gamma^-}\bs\varphi^-\cdot \boldsymbol\nu d\sigma
+\int_{\Sigma}{\boldsymbol \varphi }_{_{\Sigma }}\cdot\boldsymbol\nu _{_\Sigma }d\sigma,
\end{align}
and the condition
\begin{align}
\label{phi-pm-comp}
\bs\varphi_{_\Sigma}-r_{_\Sigma} \bs\Phi^+ +r_{_\Sigma} \bs\Phi^-
\in{H}_\bullet^{\frac{1}{2}}(\Sigma)^n
\end{align}
for some extensions $\bs\Phi^\pm\in H^{\frac{1}{2}}(\partial\Omega^\pm)^n$ such that
$\gr r_{_{\Gamma^\pm}}\bs\Phi^\pm=\bs\varphi^\pm$.
This space is endowed with the norm
\begin{multline}
\hspace{-1em}
\|(\widetilde{\bf f}^+,\widetilde{\bf f}^-,{g^+,g^-},{\boldsymbol \varphi}_{_{\Sigma }},{\boldsymbol \psi}_{_{\Sigma }}, \bs\varphi^+,\bs\varphi^-)\|_{\mathfrak Y}:=\\
\|\widetilde{\bf f}^+\|_{\left({H}_{\Gamma ^+}^1(\Omega ^+)^n\right)'}+
\|\widetilde{\bf f}^-\|_{\left({H}_{\Gamma ^-}^1(\Omega ^-)^n\right)'}
+\|g^+\|_{L^2(\Omega^+)}
+\|g^-\|_{L^2(\Omega^-)}\\
+\|\bs\varphi_{_{\Sigma }}\|_{H^{\frac{1}{2}}(\Sigma)^n}
+\|\bs\psi_{_{\Sigma }}\|_{H^{-\frac{1}{2}}(\Sigma )^n}
+\|\bs\varphi^+\|_{H^{\frac{1}{2}}(\Gamma^+)^n}
+\|\bs\varphi^-\|_{H^{\frac{1}{2}}(\Gamma^-)^n}\gR.
\label{mfYnorm}
\end{multline}
\begin{rem}\label{R3.3}
$(i)$ Condition \eqref{phi-pm-comp} is particularly satisfied if
$\bs\varphi_{_\Sigma}\in {H}_\bullet^{\frac{1}{2}}(\Sigma)^n$ and
$\bs\varphi^\pm=r_{_{\Gamma^\pm}}\bs\varphi$ for some $\bs\varphi\in H^{\frac{1}{2}}(\partial\Omega)^n$, as in the case considered in Section $\ref{S3.3.2}$.
Indeed, we can choose
$\bs\Phi^\pm=\gamma_{_{\Omega^\pm}}\gamma^{-1}_{_{\Omega}}\bs\varphi$ and obtain that
$\bs\Phi^\pm\in H^{\frac{1}{2}}(\partial\Omega^\pm)^n$ and
$$
r_{_{\Gamma^\pm}}\bs\Phi^\pm
=r_{_{\Gamma^\pm}}\gamma_{_{\Omega^\pm}}\gamma^{-1}_{_{\Omega}}\bs\varphi
=r_{_{\Gamma^\pm}}\gamma_{_{\Omega}}\gamma^{-1}_{_{\Omega}}\bs\varphi
=r_{_{\Gamma^\pm}}\bs\varphi
=\bs\varphi^\pm\,,
$$
which implies that $\bs\Phi^\pm$ are extensions of $\bs\varphi^\pm$.
Moreover, the property $\gamma^{-1}_{_{\Omega}}\bs\varphi\in H^{1}(\Omega)^n$ implies that
$$r_{_\Sigma} \bs\Phi^+ -r_{_\Sigma} \bs\Phi^-
=r_{_\Sigma}\gamma_{_{\Omega^+}}\gamma^{-1}_{_{\Omega}}\bs\varphi - r_{_\Sigma}\gamma_{_{\Omega^-}}\gamma^{-1}_{_{\Omega}}\bs\varphi
=\bf 0\,.$$

$(ii)$ If condition \eqref{phi-pm-comp} is satisfied for some functions $\bs\Phi^\pm\in H^{\frac{1}{2}}(\partial\Omega^\pm)^n$ such that $\bs\varphi^\pm=r_{_{\Gamma^\pm}}\bs\Phi^\pm$
then it is also satisfied for all functions $\bs\Phi^\pm_*\in H^{\frac{1}{2}}(\partial\Omega^\pm)^n$ such that $\bs\varphi^\pm=r_{_{\Gamma^\pm}}\bs\Phi^\pm_*$
because $\bs\Phi^\pm -\bs\Phi^\pm_* ={\bf 0}$ on $\Gamma^\pm$ and hence
$r_{_\Sigma} (\bs\Phi^\pm -\bs\Phi^\pm_*)\in{H}_\bullet^{\frac{1}{2}}(\Sigma)^n$.
\end{rem}

In order to analyze the non-homogeneous Dirichlet-transmission problem, we need the following generalized Bogoskii-type transmission result.
\begin{lemma}\label{L.BTpm}
Let Assumption $\ref{interface-Sigma}$ hold
and let
$$
(g^+,g^-,\bs\varphi_{_\Sigma},\bs\varphi^+,\bs\varphi^-)\in
L^2(\Omega^+)
\times L^2(\Omega^-)
\times H^{\frac{1}{2}}(\Sigma )^n
\times H^{\frac{1}{2}}(\Gamma^+)^n
\times H^{\frac{1}{2}}(\Gamma^-)^n
$$
satisfy conditions \eqref{hg-pm} and \eqref{phi-pm-comp}
Then
there exist ${\bf v}^\pm\in {H}^1(\Omega ^\pm)^n$
such that
\begin{align}
\label{BT1pm}
{\rm{div}}\, {\bf v}^+=g^+ & \mbox{ in } \Omega ^+,  \\
\label{BT2pm}
{\rm{div}}\, {\bf v}^-=g^-  & \mbox{ in } \Omega ^-,  \\
\label{BT3pm}
(\gamma _{_{\Omega ^+}}{\bf v}^+)|_{_{\Sigma }}-{(\gamma _{_{\Omega ^-}}{\bf v}^-)|_{_{\Sigma }}}
=\boldsymbol \varphi_{_{\Sigma }} &\mbox{ on } \Sigma \,,\\
\label{BT4pm}
({\gamma _{_{\Omega ^+}}}{\bf v}^+)|_{\Gamma ^+}=\bs\varphi^+ &  \mbox{ on } \Gamma ^+\,,\\
\label{BT5pm}
({\gamma _{_{\Omega ^-}}}{\bf v}^-)|_{\Gamma ^-}=\bs\varphi^- &  \mbox{ on } \Gamma ^-\,,
\end{align}
and, moreover,
\begin{multline}
\label{BT6pm}
\!\!\!\!\|{\bf v}^+\|_{H^1(\Omega ^+)^n}\!+\!\|{\bf v}^-\|_{H^1(\Omega ^-)^n}
\!\leq \!
C_\Sigma\Big(\|g^+\|_{L^2(\Omega^+)}\!+\|g^-\|_{L^2(\Omega^-)}\\
+\!\|{\boldsymbol \varphi }_{_{\Sigma }}\|_{H^{\frac{1}{2}}(\Sigma )^n}
\!+\!\|\bs\varphi^+\|_{H^{\frac{1}{2}}(\Gamma^+)^n}
\!+\!\|\bs\varphi^-\|_{H^{\frac{1}{2}}(\Gamma^-)^n}\Big)\,,
\end{multline}
with some constant $C_\Sigma=C_\Sigma(\Omega^+,\Omega^- ,n)>0$.
\end{lemma}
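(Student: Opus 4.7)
The plan is to reduce the statement to Lemma~\ref{L.BT} by first lifting the Dirichlet traces $\bs\varphi^\pm$ separately into each subdomain. Since $H^{\frac{1}{2}}(\Gamma^\pm)^n$ are defined in \eqref{D-N-spaces-a} as the restriction spaces of $H^{\frac{1}{2}}(\partial\Omega^\pm)^n$, the restriction operators $r_{_{\Gamma^\pm}}:H^{\frac{1}{2}}(\partial\Omega^\pm)^n\to H^{\frac{1}{2}}(\Gamma^\pm)^n$ admit continuous right inverses; I would pick such extensions $\bs\Phi^\pm\in H^{\frac{1}{2}}(\partial\Omega^\pm)^n$ of $\bs\varphi^\pm$ with $\|\bs\Phi^\pm\|_{H^{\frac{1}{2}}(\partial\Omega^\pm)^n}\le C\|\bs\varphi^\pm\|_{H^{\frac{1}{2}}(\Gamma^\pm)^n}$, and then define $\tilde{\bf v}^\pm:=\gamma^{-1}_{_{\Omega^\pm}}\bs\Phi^\pm\in H^1(\Omega^\pm)^n$, so that $\gamma_{_{\Omega^\pm}}\tilde{\bf v}^\pm=\bs\Phi^\pm$ on $\partial\Omega^\pm$ with a corresponding estimate in $H^1(\Omega^\pm)^n$.

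Next, I would introduce new unknowns ${\bf w}^\pm:={\bf v}^\pm-\tilde{\bf v}^\pm$. In these variables, the problem \eqref{BT1pm}-\eqref{BT5pm} becomes a Bogovskii-type transmission problem covered by Lemma~\ref{L.BT}, with reduced data $\tilde g^\pm:=g^\pm-{\rm{div}}\,\tilde{\bf v}^\pm\in L^2(\Omega^\pm)$, interface jump $\tilde{\bs\varphi}_{_\Sigma}:=\bs\varphi_{_\Sigma}-r_{_\Sigma}\bs\Phi^++r_{_\Sigma}\bs\Phi^-$, and vanishing boundary trace $\tilde{\bs\varphi}\equiv{\bf 0}$ on $\partial\Omega$. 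Assumption \eqref{phi-pm-comp} (whose independence of the choice of $\bs\Phi^\pm$ is Remark~\ref{R3.3}(ii)) says precisely that $\tilde{\bs\varphi}_{_\Sigma}\in{H}^{\frac{1}{2}}_\bullet(\Sigma)^n$, so the reduced data belong to the space $\mathcal M_\bullet$ required by Lemma~\ref{L.BT}, provided the compatibility condition \eqref{hg} holds for $(\tilde g^+,\tilde g^-,\tilde{\bs\varphi}_{_\Sigma},\mathbf 0)$.

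To verify the latter, I would apply the divergence theorem to each $\tilde{\bf v}^\pm$ on $\Omega^\pm$, using that $\partial\Omega^\pm=\Gamma^\pm\cup\Sigma$ and that the outward unit normal to $\Omega^\pm$ on $\Sigma$ equals $\pm\boldsymbol\nu_{_\Sigma}$, respectively. Summing the two boundary integrals and substituting \eqref{hg-pm} yields
\begin{align*}
\int_{\Omega^+}\tilde g^+\, dx+\int_{\Omega^-}\tilde g^-\, dx
=\int_\Sigma \tilde{\bs\varphi}_{_\Sigma}\cdot\boldsymbol\nu_{_\Sigma}\, d\sigma,
\end{align*}
which is exactly \eqref{hg} for the reduced data. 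Then Lemma~\ref{L.BT} supplies ${\bf w}^\pm\in H^1(\Omega^\pm)^n$ solving \eqref{BT1}-\eqref{BT5} with estimate \eqref{BT6}, and setting ${\bf v}^\pm:=\tilde{\bf v}^\pm+{\bf w}^\pm$ gives a solution of \eqref{BT1pm}-\eqref{BT5pm}.

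The estimate \eqref{BT6pm} then follows by combining the bounds on $\tilde{\bf v}^\pm$ and on ${\bf w}^\pm$ together with the boundedness of $r_{_\Sigma}:H^{\frac{1}{2}}(\partial\Omega^\pm)^n\to H^{\frac{1}{2}}(\Sigma)^n$. I expect the only delicate step to be the careful bookkeeping of \eqref{hg} for the reduced data and of the membership $\tilde{\bs\varphi}_{_\Sigma}\in{H}^{\frac{1}{2}}_\bullet(\Sigma)^n$; both are handled by condition \eqref{phi-pm-comp}, which was tailored precisely so that lifting the Dirichlet data from $\Gamma^+$ and $\Gamma^-$ produces only a jump of the admissible, compactly-supported-on-$\overline\Sigma$ class across the interface.
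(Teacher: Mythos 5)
Your construction essentially coincides with the paper's proof, up to a difference of packaging: the paper picks extensions $\bs\Phi^\pm$ of $\bs\varphi^\pm$, forms $\bs\Phi_{_\Sigma}:=\bs\varphi_{_\Sigma}-r_{_\Sigma}\bs\Phi^++r_{_\Sigma}\bs\Phi^-$, lifts $\bs\Phi^\pm\pm\frac{1}{2}\mathring E_{_{\Sigma\to\partial\Omega^\pm}}\bs\Phi_{_\Sigma}$ through $\gamma^{-1}_{\Omega^\pm}$, and then applies Proposition~\ref{LS-prop} to correct the divergence, whereas you lift only $\bs\Phi^\pm$, pass to residual unknowns ${\bf w}^\pm$, and invoke Lemma~\ref{L.BT} as a black box. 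The two decompositions are the same in substance (your application of Lemma~\ref{L.BT} secretly performs the paper's half-and-half splitting of the interface jump), and your verification of the compatibility condition \eqref{hg} for the reduced data via the divergence theorem and \eqref{hg-pm} is exactly the computation in the paper. Your modular formulation is a bit cleaner because it avoids repeating the internal steps of Lemma~\ref{L.BT}.

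There is, however, one point that neither you nor the paper actually closes, and you should not brush past it: the norm estimate \eqref{BT6pm}. Applying Lemma~\ref{L.BT} to the residual data $(\tilde g^+,\tilde g^-,\tilde{\bs\varphi}_{_\Sigma},\mathbf 0)$ produces the bound \eqref{BT6} in the $\mathcal M_\bullet$ norm, which contains $\|\tilde{\bs\varphi}_{_\Sigma}\|_{{H}_\bullet^{1/2}(\Sigma)^n}$. By Theorem~\ref{Tbl} this is the Lions--Magenes ${H}_{00}^{1/2}$-type norm on $\Sigma$, which is genuinely \emph{stronger} than $\|\tilde{\bs\varphi}_{_\Sigma}\|_{H^{1/2}(\Sigma)^n}$; the boundedness of $r_{_\Sigma}:H^{1/2}(\partial\Omega^\pm)^n\to H^{1/2}(\Sigma)^n$ that you invoke controls only the weaker quantity. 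Consequently, your final sentence (``The estimate \eqref{BT6pm} then follows by combining the bounds\ldots'') does not yield a constant $C_\Sigma$ depending only on $\Omega^\pm,n$ with the right-hand side as written in \eqref{BT6pm}. The paper's phrasing (``using \ldots the continuity of the operators involved'') has the same gap, because the zero-extension $\mathring E_{_{\Sigma\to\partial\Omega^\pm}}$ is continuous from ${H}_\bullet^{1/2}(\Sigma)^n$ but not from $H^{1/2}(\Sigma)^n$. What your argument (and the paper's) actually establishes is \eqref{BT6pm} with $\|\bs\varphi_{_\Sigma}\|_{H^{1/2}(\Sigma)^n}$ replaced by $\|\tilde{\bs\varphi}_{_\Sigma}\|_{{H}_\bullet^{1/2}(\Sigma)^n}$ (equivalently, an infimum over admissible extensions $\bs\Phi^\pm$); you should state that explicitly rather than claim the sharper form without a further argument.
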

\begin{proof}
We will prove this lemma by modifying the proof of Lemma \ref{L.BT} appropriately.
Let
$\gamma^{-1}_{\Omega^\pm}: H^{\frac{1}{2}}(\partial \Omega^\pm)^n\to H^1(\Omega ^\pm )^n$
be some continuous right inverses to the corresponding trace operators.

Let $\bs\Phi^\pm\in H^{\frac{1}{2}}(\partial\Omega^\pm)^n$ denote some extensions of the functions $\bs\varphi^\pm$ that is,
$\gr r_{_{\Gamma^\pm}}\bs\Phi^\pm=\bs\varphi^\pm$. Let us introduce the functions
\begin{align}
\label{E3.49pm}
&\bs\Phi_{_\Sigma}:=\bs\varphi_{_{\Sigma }}
-r_{_\Sigma}\bs\Phi^+
+r_{_\Sigma}\bs\Phi^-,\\
&{\bf v}^+_1:=\gamma^{-1}_{\Omega^+}(\bs\Phi^+
+\frac{1}{2}\mathring E_{_{\Sigma\to\partial\Omega^+}}\bs\Phi_{_\Sigma}),\\
&{\bf v}^-_1:=\gamma^{-1}_{\Omega^-}(\bs\Phi^- -\frac{1}{2}\mathring E_{_{\Sigma\to\partial\Omega^-}}\bs\Phi_{_\Sigma}).
\end{align}
Due to condition \eqref{phi-pm-comp} and Remark \ref{R3.3}(ii), $\bs\Phi_{_\Sigma}\in {H}_\bullet^{\frac{1}{2}}(\Sigma)^n$ and hence
$\mathring E_{_{\Sigma\to\partial\Omega^\pm}}\bs\Phi_{_\Sigma}\in {\gR H^{\frac{1}{2}}}(\partial \Omega^\pm)^n$.
Then ${\bf v}^\pm_1\in H^1(\Omega^\pm)^n$ and satisfy transmission and boundary conditions \eqref{BT3pm}-\eqref{BT5pm}.
Let us now define
\begin{align}\label{gpm-pm}
g^\pm_1:={\rm{div}}\, {\bf v}_1^\pm\in L^2(\Omega^\pm)
\end{align}
Then by the divergence Theorem and condition \eqref{hg-pm} we obtain
\begin{align}
\label{hg1pm}
&\int _{\Omega^+}g_1^+ dx+\int _{\Omega^-}g_1^-dx
=\int _{\Omega^+}{\rm{div}}\, {\bf v}_1^+ dx+\int _{\Omega^-}{\rm{div}}\, {\bf v}_1^-dx\\
&=\int _{\partial\Omega^+}\gamma_{_{\Omega^+}}{\bf v}_1^+\cdot \boldsymbol\nu d\sigma
+\int _{\partial\Omega^-}\gamma_{_{\Omega^-}}{\bf v}_1^-\cdot \boldsymbol\nu d\sigma\nonumber\\
&=\int _{\Gamma^+}\gamma_{_{\Omega^+}}{\bf v}_1^+\cdot \boldsymbol\nu d\sigma
+\int _{\Gamma^-}\gamma_{_{\Omega^-}}{\bf v}_1^-\cdot \boldsymbol\nu d\sigma
+\int _{\Sigma}(\gamma_{_{\Omega^+}}{\bf v}_1^+-\gamma_{_{\Omega^-}}{\bf v}_1^-)\cdot \boldsymbol\nu _{_{\Sigma }}d\sigma\nonumber\\
&=\int _{\Gamma^+}\bs\varphi^+\cdot \boldsymbol\nu d\sigma
+\int _{\Gamma^-}\bs\varphi^-\cdot \boldsymbol\nu d\sigma
+\int _{\Sigma}{\boldsymbol \varphi }_{_{\Sigma }}\cdot\boldsymbol\nu _{_\Sigma }d\sigma
=\int _{\Omega^+}g^+ dx +\int _{\Omega_-}g^-dx.\nonumber
\end{align}
Let $g_2\in L^2(\Omega)$ be such that $g_2|_{\Omega_\pm}=g^\pm-g^\pm_1$.
Hence $g_2$ belongs to the space $L_0^2(\Omega)$ defined in \eqref{L2-0}.
Then by Proposition~\ref{LS-prop} there exists ${\bf v}_2\in \mathring H^1(\Omega)$ such that
\begin{align}
{\rm{div}}\, {\bf v}_2=g_2\ \mbox{ in }\, \Omega \,,
\quad
\label{BT7pm}
&\|{\bf v}_2\|_{H^1(\Omega )^n}\le C\|g_2\|_{L^2(\Omega )},
\end{align}
where $C=C(\Omega,n)$ is a positive constant.

Finally, choosing
$
{\bf v}^\pm:={\bf v}^\pm_1 + r_{_{\Omega^\pm}}{\bf v}_2
$
and using the inequality in \eqref{BT7pm} and the continuity of the operators involved in \eqref{E3.49pm}-\eqref{gpm-pm}, we get the assertion.
\qed
\end{proof}

\begin{theorem}
\label{T-pm}
Let Assumption $\ref{interface-Sigma}$ {and
conditions \eqref{Stokes-1}-\eqref{mu} hold.}
Then for all
\linebreak
$(\widetilde{\bf f}^+,\widetilde{\bf f}^-,{g^+,g^-},\boldsymbol \varphi_{_{\Sigma }}, {\boldsymbol \psi}_{_{\Sigma }}, {\bs\varphi^+,\bs\varphi^-})\in \mathfrak Y$
the Dirichlet-transmission problem \eqref{Dirichlet-var-Stokes-pm} has a unique solution $({\bf u}^+,\pi ^+,{\bf u}^-,\pi ^-)$ in the space
${\mathfrak X}_{\Omega ^+,\Omega ^-}$ defined in \eqref{Omega-pm},
and there exists a constant $C=C({\Omega^+,\Omega^-} ,C_{\mathbb A},n)>0$ such that
\begin{align*}
\|({\bf u}^+,\pi ^+,{\bf u}^-,\pi ^-)\|_{{\mathfrak X}_{\Omega ^+,\Omega ^-}}
\leq
C\|(\widetilde{\bf f}^+,\widetilde{\bf f}^-,{g^+,g^-},\boldsymbol \varphi_{_{\Sigma }}, {\boldsymbol \psi}_{_{\Sigma }}, {\bs\varphi^+,\bs\varphi^-})\|_{\mathfrak Y}.
\end{align*}
\end{theorem}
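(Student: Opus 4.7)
The plan is to mimic the reduction used in the proof of Theorem~\ref{T-nn}, but now relying on the more general Bogovskii-type transmission result Lemma~\ref{L.BTpm} rather than on Lemma~\ref{L.BT}. First, given data
$(\widetilde{\bf f}^+,\widetilde{\bf f}^-,g^+,g^-,\boldsymbol \varphi_{_{\Sigma }},{\boldsymbol \psi}_{_{\Sigma }},\bs\varphi^+,\bs\varphi^-)\in \mathfrak Y$,
the compatibility conditions \eqref{hg-pm} and \eqref{phi-pm-comp} are exactly the hypotheses of Lemma~\ref{L.BTpm}. Hence there exist ${\bf v}^\pm \in H^1(\Omega^\pm)^n$ satisfying \eqref{BT1pm}--\eqref{BT5pm} together with the estimate \eqref{BT6pm}. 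I then set $\check{\bf f}^\pm := \check{\bs{\mathfrak L}}^\pm {\bf v}^\pm \in \widetilde H^{-1}(\Omega^\pm)^n$ as in \eqref{checkL}, so that $\check{\bf f}^\pm|_{\Omega^\pm} = \boldsymbol{\mathcal L}({\bf v}^\pm,0)$ in $\Omega^\pm$, and by Definition~\ref{conormal-derivative-var-Brinkman} one has ${\bf t}_{\Omega^\pm}({\bf v}^\pm,0;\check{\bf f}^\pm) = {\bf 0}$ on $\partial\Omega^\pm$.

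Next, I introduce the new unknowns ${\bf w}^\pm := {\bf u}^\pm - {\bf v}^\pm$. The linearity of $\boldsymbol{\mathcal L}$, the divergence operator, the trace operators, and the conormal-derivative operators, combined with the identities satisfied by ${\bf v}^\pm$, reduces the fully non-homogeneous problem \eqref{Dirichlet-var-Stokes-pm} to a Dirichlet-transmission problem of the form \eqref{Dirichlet-var-Stokes-a} for $({\bf w}^+,\pi^+,{\bf w}^-,\pi^-)\in\mathfrak X_{\Omega^+,\Omega^-}$, with homogeneous Dirichlet data on $\Gamma^\pm$, homogeneous trace-jump across $\Sigma$, modified body forces $\widetilde{\bf f}^\pm-\check{\bf f}^\pm \in (H^1_{\Gamma^\pm}(\Omega^\pm)^n)'$, divergence datum in $L^2_0(\Omega)$, and interface datum still equal to ${\boldsymbol \psi}_{_{\Sigma}}\in H^{-1/2}(\Sigma)^n$. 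Thus the homogenised data belong to $\mathfrak Y^0$, and Theorem~\ref{T-p} delivers a unique solution of the homogenised problem together with an estimate of its norm by $\|(\widetilde{\bf f}^+-\check{\bf f}^+,\widetilde{\bf f}^--\check{\bf f}^-,g_{\mathrm{hom}},{\boldsymbol \psi}_{_{\Sigma }})\|_{\mathfrak Y^0}$.

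Setting ${\bf u}^\pm := {\bf w}^\pm + {\bf v}^\pm$ produces a solution $(\mathbf u^+,\pi^+,\mathbf u^-,\pi^-)\in\mathfrak X_{\Omega^+,\Omega^-}$ of the original problem \eqref{Dirichlet-var-Stokes-pm}. Uniqueness follows from the uniqueness statement in Theorem~\ref{T-p}: any two solutions differ by a function whose reduction ${\bf w}^\pm$ solves the homogeneous problem with zero data, hence vanishes. For the a priori estimate I combine the $\mathfrak X_{\Omega^+,\Omega^-}$-bound coming from Theorem~\ref{T-p} with the bound \eqref{BT6pm} for ${\bf v}^\pm$, and with the boundedness of $\check{\bs{\mathfrak L}}^\pm : H^1(\Omega^\pm)^n \to \widetilde H^{-1}(\Omega^\pm)^n$ implied by \eqref{checkL}. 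This yields the required bound by $\|(\widetilde{\bf f}^+,\widetilde{\bf f}^-,g^+,g^-,\boldsymbol \varphi_{_{\Sigma }}, {\boldsymbol \psi}_{_{\Sigma }}, \bs\varphi^+,\bs\varphi^-)\|_{\mathfrak Y}$ as defined in \eqref{mfYnorm}.

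The main technical obstacle will be the careful bookkeeping of the interface term. The conormal derivatives ${\bf t}_{\Omega^\pm}(\cdot,\cdot;\cdot)$ depend on the choice of extension of the body force to $\widetilde H^{-1}(\Omega^\pm)^n$, so one must verify that the transmission datum on $\Sigma$ for ${\bf w}^\pm$ is indeed still ${\boldsymbol \psi}_{_{\Sigma}}$; this uses the identity ${\bf t}_{\Omega^\pm}({\bf v}^\pm,0;\check{\bf f}^\pm)={\bf 0}$ and the linearity of the conormal derivative in its entries. A second subtlety is that the Bogovskii construction in Lemma~\ref{L.BTpm} depends on auxiliary extensions $\bs\Phi^\pm\in H^{1/2}(\partial\Omega^\pm)^n$ of $\bs\varphi^\pm$; Remark~\ref{R3.3}(ii) ensures the construction is well-defined, and taking such extensions with norms controlled by $\|\bs\varphi^\pm\|_{H^{1/2}(\Gamma^\pm)^n}$ provides the required continuous dependence of the final estimate on the data in the $\mathfrak Y$-norm.
\qed
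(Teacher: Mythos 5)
Your proposal follows the paper's proof essentially verbatim: apply Lemma~\ref{L.BTpm} to produce ${\bf v}^\pm$ solving \eqref{BT1pm}--\eqref{BT5pm}, set $\check{\bf f}^\pm=\check{\bs{\mathfrak L}}^\pm{\bf v}^\pm$ so that ${\bf t}_{\Omega^\pm}({\bf v}^\pm,0;\check{\bf f}^\pm)={\bf 0}$, shift by ${\bf w}^\pm={\bf u}^\pm-{\bf v}^\pm$ to reduce to the homogeneous problem of Theorem~\ref{T-p}, and combine the resulting estimate with \eqref{BT6pm}. This is precisely the paper's argument, with the same lemma and the same reduction.
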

\begin{proof}
We use arguments similar to those in the proof of Theorem \ref{T-nn}.
Let ${\bf v}^\pm\in {H}^1(\Omega^\pm)^n$ be the functions given by Lemma~\ref{L.BTpm}.
For the velocity-pressure couples $({\bf v}^\pm,0)$, let
${\bf \check f}^\pm:=\check{\bs{\mathfrak L}}^\pm{\bf v}^\pm \in \widetilde{H}^{-1}(\Omega^\pm)^n$,
where operators $\check{\bs{\mathfrak L}}^\pm$ are defined in \eqref{checkL}.
Hence ${\bf \check f}^\pm\in \widetilde{H}^{-1}(\Omega ^\pm)^n$,
${\bf \check f}^\pm|_{\Omega^\pm}=\boldsymbol{\mathcal L}({\bf v}^\pm,0)$ in $\Omega^\pm$,
cf. \eqref{Stokes-new}, and
${\bf t}_{\Omega^\pm}({\bf v}^\pm,0;\check{\bf f}^\pm)={\bf 0}$ by Definition~\ref{conormal-derivative-var-Brinkman}.

Then for the new functions ${\bf w}^\pm :={\bf u}^\pm -{\bf v}^\pm$, the {fully} {\bn non-}homogeneous Dirichlet-transmission problem \eqref{Dirichlet-var-Stokes-pm} reduces to the following Dirichlet-transmission problem with homogeneous {Dirichlet} conditions on $\Gamma^\pm$ and homogeneous interface condition for the traces across $\Sigma $.
\begin{equation}
\label{Dirichlet-var-Stokes-0pm}
\!\!\!\!\!\left\{
\begin{array}{llll}
\boldsymbol{\mathcal L}({\bf w}^+,\pi ^+)=(\widetilde{\bf f}^+ -\check{\bf f}^+)|_{\Omega^+},\
{\rm{div}}\, {\bf w}^+=0 & \mbox{ in } \Omega^+,\\
\boldsymbol{\mathcal L}({\bf w}^-,\pi ^-)=(\widetilde{\bf f}^- -\check{\bf f}^-)|_{\Omega^-},\
{\rm{div}}\, {\bf w}^-=0 & \mbox{ in } \Omega^-,\\
(\gamma _{_{\Omega^+}}{\bf w}^+)|_{_{\Sigma }} =(\gamma _{_{\Omega^-}}{\bf w}^{-})|_{_{\Sigma }}
& \mbox{ on } \Sigma, \\
\big({\bf t}_{\Omega^+}({\bf w}^+,\pi ^+;\widetilde{\bf f}^+ -\check{\bf f}^+{)}\big)|_{_{\Sigma }}\,
+\, \big({\bf t}_{\Omega^-}({\bf w}^-,\pi ^-;\widetilde{\bf f}^- -\check{\bf f}^-)\big)|_{_{\Sigma }} = {\boldsymbol \psi}_{_{\Sigma }}
&\mbox{ on } \Sigma, \\
\!({\gamma _{_{\Omega^+}}}{\bf w}^+)|_{\Gamma ^+}={\bf 0}&  \mbox{ on } \Gamma ^+,\\
\!({\gamma _{_{\Omega^-}}}{\bf w}^-)|_{\Gamma ^-}={\bf 0}&  \mbox{ on } \Gamma ^-.
\end{array}\right.
\end{equation}

Theorem \ref{T-p} implies that the Dirichlet-transmission problem \eqref{Dirichlet-var-Stokes-0pm} has a unique solution $({\bf w}^+,\pi ^+,{\bf w}^-,\pi ^-)$ in the space ${\mathfrak X}_{\Omega^+,\Omega^-}$
and depends continuously on the given data of this problem.
Finally, the well-posedness of problem \eqref{Dirichlet-var-Stokes-0pm}
implies that the functions
$({\bf u}^\pm={\bf w}^\pm + {\bf v}^\pm,\ \pi ^\pm)$
determine a solution of the full {\bn non-}homogeneous Dirichlet-transmission problem \eqref{Dirichlet-var-Stokes-pm} in the space ${\mathfrak X}_{\Omega^+,\Omega^-}$,
and {the solution} depends continuously on the given data
$(\widetilde{\bf f}^+,\widetilde{\bf f}^-,{g^+,g^-},\boldsymbol \varphi_{_{\Sigma }}, {\boldsymbol \psi}_{_{\Sigma }}, {\bs\varphi^+,\bs\varphi^-})\in\mathfrak Y$.
This solution is unique by the uniqueness statement in Theorem \ref{T-p}.
\qed
\end{proof}

\section{\bf Dirichlet problem for the incompressible anisotropic Navier-Stokes system with general data in a bounded Lipschitz domain.}
In this section, we consider the existence of a weak solution of a fully {\bn non-}homogeneous Dirichlet problem for the anisotropic Navier-Stokes system {in the incompressible case} with general data in $L^2$-based Sobolev spaces in a bounded Lipschitz domain in ${\mathbb R}^n$, $n=2,3$.

We use the well-posedness result established in Theorem \ref{lemma-a47-1-Stokes} for the Dirichlet problem for the Stokes system and the following variant of the {\it Leray-Schauder fixed point theorem} (see \cite[Theorem 11.3]{Gilbarg-Trudinger}).
\begin{theorem}
\label{L-S-fixed}
Let ${\mathcal X}$ denote a Banach space and $T:{\mathcal X}\to {\mathcal X}$ be a continuous
and compact operator. If there exists a constant $M_0>0$ such that {$\|x\|_{\mathcal X}\leq M_0$} for every pair $(x,{\lambda })\in {\mathcal X}\times [0,1]$ satisfying $x={\lambda}Tx$, then the operator $T$ has a fixed point $x_0$ $($with $\|x_0\|_X\leq M_0$$)$.
\end{theorem}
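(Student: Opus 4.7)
The plan is to deduce this version of the Leray--Schauder alternative from Schauder's fixed point theorem by composing $T$ with the radial retraction onto a sufficiently large closed ball. Fix any $M>M_0$ and define the Lipschitz retraction $R:{\mathcal X}\to\overline{B_M(0)}$ by $R(x)=x$ when $\|x\|_{\mathcal X}\le M$ and $R(x)=Mx/\|x\|_{\mathcal X}$ otherwise. I will study $F:=R\circ T$, restricted to the non-empty, closed, convex subset $\overline{B_M(0)}$, as a self-map of that set.

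Next I would verify the hypotheses of Schauder's fixed point theorem for $F|_{\overline{B_M(0)}}$. Continuity is immediate from the continuity of both $R$ and $T$. For relative compactness of $F(\overline{B_M(0)})$, the compactness of $T$ implies that $\overline{T(\overline{B_M(0)})}$ is compact in ${\mathcal X}$; hence so is its image $R\big(\overline{T(\overline{B_M(0)})}\big)$ under the continuous map $R$, and this compact set contains $F(\overline{B_M(0)})$. Schauder's theorem then produces $x_0\in\overline{B_M(0)}$ with $x_0=R(Tx_0)$.

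The final step is to promote $x_0$ to a genuine fixed point of $T$. If $\|Tx_0\|_{\mathcal X}\le M$, then $R(Tx_0)=Tx_0$ and we are done. Otherwise $x_0=R(Tx_0)=\lambda Tx_0$ with $\lambda:=M/\|Tx_0\|_{\mathcal X}\in(0,1)$, which forces $\|x_0\|_{\mathcal X}=M$ on the one hand and $\|x_0\|_{\mathcal X}\le M_0<M$ by the a priori bound assumed in the hypothesis on the other, a contradiction. Hence this case cannot occur, so $Tx_0=x_0$; applying the a priori estimate with $\lambda=1$ yields in addition $\|x_0\|_{\mathcal X}\le M_0$.

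The only substantive ingredient is Schauder's fixed point theorem itself; the retraction/compactness manipulations are routine. I would not anticipate any real obstacle here, since the statement is precisely the classical Leray--Schauder alternative as recorded in \cite[Theorem~11.3]{Gilbarg-Trudinger}, which is in fact the form the authors cite verbatim.
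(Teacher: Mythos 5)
Your proof is correct. There is one nuance worth flagging about what you are comparing against: the paper does not supply a proof of Theorem~\ref{L-S-fixed} at all -- it simply cites the result from Gilbarg--Trudinger~\cite[Theorem 11.3]{Gilbarg-Trudinger} and uses it as a black box. So there is no ``paper's proof'' to match against. That said, the argument you give (compose $T$ with the radial retraction $R$ onto $\overline{B_M(0)}$ for some $M>M_0$, apply Schauder to the continuous compact self-map $R\circ T$, and then rule out the boundary case $\|Tx_0\|_{\mathcal X}>M$ via the a priori bound, which forces $\|x_0\|_{\mathcal X}=M$ and $\|x_0\|_{\mathcal X}\le M_0<M$ simultaneously) is exactly the textbook proof, and in particular is the one in the cited reference. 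All the details are in order: $R$ is continuous, $\overline{B_M(0)}$ is closed, bounded, convex, and nonempty, $F=R\circ T$ maps it into itself with relatively compact image because $R$ carries the compact set $\overline{T(\overline{B_M(0)})}$ to a compact subset of $\overline{B_M(0)}$, and the version of Schauder you invoke (continuous self-map of a closed convex set with image contained in a compact set) is the right one. The final application of the hypothesis with $\lambda=1$ gives $\|x_0\|_{\mathcal X}\le M_0$, as claimed in the statement.
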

Recall that $\Omega \subset \mathbb R^n$ is a bounded Lipschitz domain
and denote
\begin{align}
\label{h-n}
&H_{\boldsymbol\nu }^{\frac{1}{2}}(\partial \Omega )^n:=\left\{\boldsymbol\varphi \in H^{\frac{1}{2}}(\partial \Omega )^n: \langle \boldsymbol\varphi,\boldsymbol \nu \rangle _{\partial \Omega }=0\right\}.
\end{align}

Next we restrict our analysis to the case $n\in \{2,3\}$, which will allow to use some compact embedding results.
Consider the following Dirichlet problem,
\begin{equation}
\label{int-NS-D}
\left\{
\begin{array}{ll}
\boldsymbol{\mathcal L}({\bf u},\pi)=-\bs{\mathfrak F}+({\bf u}\cdot \nabla ){\bf u}\,, \ \ {\rm{div}} \, {\bf u}=0 & \mbox{ in } \Omega,\\
{\gamma _{_{\Omega}}}{\bf u}=\bs\varphi &  \mbox{ on } \partial\Omega.
\end{array}\right.
\end{equation}
for the couple of unknowns $({\bf u},\pi )\in {H}^1(\Omega )^n\times L^2(\Omega )/{\mathbb R}$ and the given data
$(\bs{\mathfrak F},\bs\varphi)\in {H}^{-1}(\Omega )^n
\times H_{\boldsymbol\nu }^{\frac{1}{2}}(\partial \Omega )^n$.

The main tool for our next arguments is the following assertion (see, e.g., \cite[(1.4)]{Korobkov}, \cite{Leray}).
\begin{lemma}\label{LH-lem}
Let
$\bs\varphi\in H_{\boldsymbol\nu }^{\frac{1}{2}}(\partial \Omega )^n$.
Then for any $\varepsilon >0$  there exists
${\bf v}_\varepsilon={\bf v}_\varepsilon(\bs\varphi;\Omega)\in H_{{\rm{div}}}^{1}(\Omega )^n$ such that
\begin{align}
\label{extension-h-1}
\gamma_{\Omega }{\bf v}_\varepsilon=\bs\varphi& \mbox{ on } \ \partial \Omega \,
\end{align}
and the following Leray-Hopf inequality holds
\begin{align}
\label{extension-h}
\big|\big\langle ({\bf v} \cdot \nabla ){\bf v}_\varepsilon,{\bf v}\big\rangle _{\Omega }\big|
\leq \varepsilon \|\nabla {\bf v}\|_{L^2(\Omega )^{n\times n}}^2,\ \forall \ {\bf v} \in \mathring{H}_{{\rm{div}}}^{1}(\Omega )^n\,.
\end{align}
\end{lemma}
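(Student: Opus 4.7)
This is the classical Leray--Hopf extension lemma, adapted to bounded Lipschitz domains, and I would follow the line of argument going back to \cite{Leray} and refined in \cite{Korobkov}. First, since $\bs\varphi \in H_{\bs\nu}^{\frac{1}{2}}(\partial\Omega)^n$ means $\int_{\partial\Omega}\bs\varphi\cdot\bs\nu\,d\sigma = 0$, the compatibility condition~\eqref{hD} is satisfied with $g=0$, so Lemma~\ref{B-D} yields an initial solenoidal extension ${\bf u}_0 \in H^1_{{\rm div}}(\Omega)^n$ with $\gamma_\Omega {\bf u}_0 = \bs\varphi$. Next, for each small $\delta>0$, writing $d(x):={\rm dist}(x,\partial\Omega)$, I would construct a Hopf cutoff: a Lipschitz function $\chi_\delta$ on $\overline\Omega$ with $0\le\chi_\delta\le 1$, equal to $1$ where $d(x)\le\delta^2$, equal to $0$ where $d(x)\ge\delta$, and interpolated logarithmically in between, so that
\begin{equation*}
d(x)\,|\nabla\chi_\delta(x)| \le \frac{C(\Omega)}{\log(1/\delta)},\qquad x\in\Omega.
\end{equation*}
The Lipschitz regularity of $d$ on bounded Lipschitz domains makes this construction available.

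The field $\chi_\delta{\bf u}_0$ has the correct boundary trace but is not solenoidal; to repair this I would either employ a vector potential (stream function, in 2D) for ${\bf u}_0$ and multiply it by $\chi_\delta$ before differentiating, or subtract off the Bogovskii correction ${\mathcal R}_\Omega(\nabla\chi_\delta\cdot{\bf u}_0)$ from Proposition~\ref{LS-prop} (the divergence integrates to zero on $\Omega$ by the divergence theorem). Either route delivers ${\bf v}_\varepsilon \in H^1_{{\rm div}}(\Omega)^n$ with $\gamma_\Omega{\bf v}_\varepsilon = \bs\varphi$, using that $\chi_\delta \equiv 1$ in a neighborhood of $\partial\Omega$. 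For the trilinear estimate on ${\bf v}\in\mathring H^1_{{\rm div}}(\Omega)^n$, I would split $\nabla{\bf v}_\varepsilon$ into contributions involving $\nabla\chi_\delta$ (supported in the thin annulus $\{\delta^2\le d\le\delta\}$) and contributions involving $\chi_\delta$ times smoother derivatives, and bound each by combining H\"older's inequality, the Sobolev embedding $H^1(\Omega)\hookrightarrow L^4(\Omega)$ valid for $n\in\{2,3\}$, and the Hardy inequality $\|{\bf v}/d\|_{L^2(\Omega)} \le C_H\,\|\nabla{\bf v}\|_{L^2(\Omega)^{n\times n}}$ for ${\bf v}\in\mathring H^1(\Omega)^n$. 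The leading contribution is controlled by $C(\bs\varphi,\Omega)/\log(1/\delta)$, and the remaining $\chi_\delta$-weighted terms vanish as $\delta\to 0$ by dominated convergence since $\chi_\delta\to 0$ a.e.\ in $\Omega$. Choosing $\delta$ sufficiently small in terms of $\varepsilon$ yields \eqref{extension-h}.

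The main obstacle is reconciling boundary-trace preservation, incompressibility, and the smallness of the trilinear form in a single construction; the logarithmic profile of the Hopf cutoff is what allows all three requirements to hold simultaneously, and its manipulation through the Hardy inequality, together with a careful control of the Bogovskii (or stream-function) correction in $H^1$, is the most delicate part of the proof.
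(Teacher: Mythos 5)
The paper does not prove Lemma~\ref{LH-lem}: it is quoted with a pointer to \cite[(1.4)]{Korobkov} and \cite{Leray}, so there is no internal argument to compare against. Judged against the classical literature, your overall framework --- logarithmic Hopf cutoff $\chi_\delta$ near $\partial\Omega$, transfer of the derivative onto ${\bf v}$ via the antisymmetry identity \eqref{antisym}, and closure through Hardy's inequality $\|{\bf v}/d\|_{L^2(\Omega)}\lesssim\|\nabla{\bf v}\|_{L^2(\Omega)^{n\times n}}$ --- is exactly the standard route, and your stream-function/vector-potential variant is correct in spirit: one needs a potential ${\bf w}$ (or $\psi$) in $L^\infty(\Omega)$, which for $n\in\{2,3\}$ is available via an $H^2$ potential constructed after extending ${\bf b}$ to a larger smooth domain, and then $|\nabla\chi_\delta||{\bf w}|\le\|{\bf w}\|_{L^\infty}/(d\log(1/\delta))$ feeds directly into Hardy. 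You should, however, state the antisymmetry step explicitly: the trilinear form in \eqref{extension-h} is first rewritten as $-\langle({\bf v}\cdot\nabla){\bf v},{\bf v}_\varepsilon\rangle_\Omega$ using that ${\bf v}$ is divergence-free with zero trace; estimating $\nabla{\bf v}_\varepsilon$ directly would produce second derivatives of $\chi_\delta$, which scale like $1/(d^2\log(1/\delta))$ and cannot be absorbed.

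Your second route --- subtracting the Bogovskii correction $\mathcal R_\Omega(\nabla\chi_\delta\cdot{\bf u}_0)$ --- has a genuine gap. The mean-zero property you invoke only ensures $\mathcal R_\Omega$ is applicable; it says nothing about smallness. Writing ${\bf r}_\delta:=\mathcal R_\Omega(\nabla\chi_\delta\cdot{\bf u}_0)$, estimate \eqref{Bog2} gives $\|{\bf r}_\delta\|_{H^1(\Omega)^n}\lesssim\|\nabla\chi_\delta\cdot{\bf u}_0\|_{L^2(\Omega)}$, and this right-hand side does \emph{not} vanish as $\delta\to0$. Since ${\bf u}_0$ has nonzero trace $\bs\varphi$ on $\partial\Omega$, Hardy's inequality is unavailable for ${\bf u}_0$, and on the shell $\{\delta^2\le d\le\delta\}$ one has $|\nabla\chi_\delta|\sim 1/(d\log(1/\delta))$ while $|{\bf u}_0|\sim 1$; a direct computation then gives
$\|\nabla\chi_\delta\cdot{\bf u}_0\|^2_{L^2(\Omega)}\sim \frac{1}{\log^2(1/\delta)}\int_{\delta^2}^{\delta}\frac{dt}{t^2}\sim\frac{1}{\delta^2\log^2(1/\delta)}\to\infty$
whenever $\bs\varphi\cdot\bs\nu$ does not vanish identically (which the hypothesis $\int_{\partial\Omega}\bs\varphi\cdot\bs\nu\,d\sigma=0$ certainly permits). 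Hence $\|{\bf r}_\delta\|_{H^1(\Omega)^n}$ blows up and the term $\langle({\bf v}\cdot\nabla){\bf r}_\delta,{\bf v}\rangle_\Omega$, controlled only by $c_0^2\|\nabla{\bf r}_\delta\|_{L^2}\|\nabla{\bf v}\|^2_{L^2}$, cannot be made small. The Bogovskii route does work as a simple way to manufacture a divergence-free extension (this is precisely Lemma~\ref{B-D}), but it does not give the Leray--Hopf smallness; that requires the potential-based construction you describe first, which encodes the needed decay of $|{\bf v}_\varepsilon|$ like $1/(d\log(1/\delta))$ rather than merely $H^1$ control.
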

Next we show the following existence result (see also
\cite[Proposition 1.1]{Seregin} in the isotropic incompressible case \eqref{isotropic} with $\mu=1$).
\begin{theorem}
\label{Dir-NS-var}
Let $\Omega \subset {\mathbb R}^n$, $n\in \{2,3\}$, be a bounded Lipschitz domain. Let conditions \eqref{Stokes-1}-\eqref{mu} hold.
Then for all given data $(\bs{\mathfrak F},\bs\varphi)\in {H}^{-1}(\Omega )^n
\times H_{\boldsymbol\nu }^{\frac{1}{2}}(\partial \Omega )^n$,
the Dirichlet problem for the anisotropic Navier-Stokes system \eqref{int-NS-D}
has a solution $({\bf u},\pi)\in {H}^1(\Omega )^n\times L^2(\Omega)/\R$.
\end{theorem}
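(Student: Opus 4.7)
\bigskip

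\textbf{Proof plan.}
The overall strategy is to reduce the non-homogeneous Dirichlet problem to a nonlinear operator equation with homogeneous boundary conditions on a divergence-free velocity, and then apply the Leray-Schauder fixed point theorem (Theorem~\ref{L-S-fixed}), relying on the linear Stokes well-posedness result of Theorem~\ref{lemma-a47-1-Stokes}, the compact embedding $H^1(\Omega)^n\hookrightarrow L^4(\Omega)^n$ (valid for $n=2,3$), and the Leray-Hopf inequality of Lemma~\ref{LH-lem}. First, observe that the data satisfy the compatibility condition $\langle\bs\varphi,\bs\nu\rangle_{\partial\Omega}=0$ since $\bs\varphi\in H_{\bs\nu}^{1/2}(\partial\Omega)^n$, so for any $\varepsilon>0$ Lemma~\ref{LH-lem} yields a solenoidal lifting ${\bf v}_\varepsilon\in H^1_{\rm div}(\Omega)^n$ with $\gamma_{_\Omega}{\bf v}_\varepsilon=\bs\varphi$ and the smallness property \eqref{extension-h}. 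The plan is then to seek the solution in the form ${\bf u}={\bf w}+{\bf v}_\varepsilon$ with the new unknown ${\bf w}\in\mathring{H}^1_{\rm div}(\Omega)^n$.

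Next, I would define a nonlinear operator $T:\mathring{H}^1_{\rm div}(\Omega)^n\to\mathring{H}^1_{\rm div}(\Omega)^n$ as follows: given ${\bf w}$, let $T{\bf w}$ be the velocity component of the unique solution, provided by Theorem~\ref{lemma-a47-1-Stokes}, of the linear Stokes Dirichlet problem
\begin{equation*}
\boldsymbol{\mathcal L}(T{\bf w},\tilde\pi)=-\bs{\mathfrak F}+\big(({\bf w}+{\bf v}_\varepsilon)\cdot\nabla\big)({\bf w}+{\bf v}_\varepsilon)-\boldsymbol{\mathcal L}({\bf v}_\varepsilon,0),
\quad {\rm div}\,T{\bf w}=0\ \text{in}\ \Omega,\quad \gamma_{_\Omega}T{\bf w}=\mathbf 0.
\end{equation*}
The right-hand side belongs to $H^{-1}(\Omega)^n$ because $\boldsymbol{\mathcal L}({\bf v}_\varepsilon,0)\in H^{-1}(\Omega)^n$ by \eqref{Stokes-new} and, for $n\in\{2,3\}$, $H^1\hookrightarrow L^4$ guarantees that the convection term lies in $L^{2}(\Omega)^n\hookrightarrow H^{-1}(\Omega)^n$. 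Standard estimates combined with the continuous dependence in Theorem~\ref{lemma-a47-1-Stokes}(iii) show that $T$ is continuous, and the compactness of $H^1\hookrightarrow L^4$ (so that weak $H^1$-convergence of ${\bf w}_k$ produces strong $L^4$-convergence, making the quadratic term converge in $H^{-1}$) yields compactness of $T$. A fixed point ${\bf w}=T{\bf w}$ together with the associated $\tilde\pi$ gives a solution $({\bf u},\pi)=({\bf w}+{\bf v}_\varepsilon,\tilde\pi)$ of \eqref{int-NS-D}.

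The main obstacle, and the step where the choice of $\varepsilon$ becomes essential, is the a priori bound required by Theorem~\ref{L-S-fixed}: one must show that all pairs $({\bf w},\lambda)\in\mathring{H}^1_{\rm div}(\Omega)^n\times[0,1]$ satisfying ${\bf w}=\lambda T{\bf w}$ obey $\|{\bf w}\|_{H^1(\Omega)^n}\le M_0$ for some $M_0$ independent of $\lambda$. To obtain this, I would test the corresponding linear Stokes equation (with right-hand side multiplied by $\lambda$) against ${\bf w}$ itself, exploit the coercivity \eqref{a-1-v2-S} on $\mathring{H}^1_{\rm div}(\Omega)^n$, and handle the convection contribution via the decomposition
\begin{equation*}
\big\langle\big(({\bf w}+{\bf v}_\varepsilon)\cdot\nabla\big)({\bf w}+{\bf v}_\varepsilon),{\bf w}\big\rangle_\Omega
=\langle({\bf w}\cdot\nabla){\bf v}_\varepsilon,{\bf w}\rangle_\Omega+\langle({\bf v}_\varepsilon\cdot\nabla){\bf v}_\varepsilon,{\bf w}\rangle_\Omega,
\end{equation*}
where the two skew-symmetric terms $\langle({\bf w}\cdot\nabla){\bf w},{\bf w}\rangle$ and $\langle({\bf v}_\varepsilon\cdot\nabla){\bf w},{\bf w}\rangle$ vanish because ${\rm div}\,{\bf w}={\rm div}\,{\bf v}_\varepsilon=0$ and $\gamma_{_\Omega}{\bf w}=\mathbf 0$. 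The first remaining term is controlled by $\varepsilon\|\nabla{\bf w}\|_{L^2}^2$ thanks to \eqref{extension-h}, while the second is bounded by $C\|{\bf v}_\varepsilon\|_{L^4}^2\|\nabla{\bf w}\|_{L^2}$ via Hölder. Choosing $\varepsilon$ small enough so that $\varepsilon<(4C_{\mathbb A})^{-1}$, the quadratic-in-$\nabla{\bf w}$ Leray-Hopf contribution is absorbed into the coercivity estimate, and a Cauchy-Young splitting of the remaining linear-in-$\nabla{\bf w}$ terms yields the required uniform bound on $\|\nabla{\bf w}\|_{L^2}$, hence on $\|{\bf w}\|_{H^1(\Omega)^n}$ via \eqref{norm-ineq}. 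Invoking Theorem~\ref{L-S-fixed} completes the argument.
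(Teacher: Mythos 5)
Your argument follows the same route as the paper's proof: lift $\bs\varphi$ to a solenoidal ${\bf v}_\varepsilon$ via Lemma~\ref{LH-lem}, reduce to a fixed-point equation for ${\bf w}\in\mathring{H}^1_{\rm div}(\Omega)^n$ using the Stokes solution operator of Theorem~\ref{lemma-a47-1-Stokes}, establish continuity and compactness through the compact embedding $H^1\hookrightarrow L^4$, and obtain the $\lambda$-uniform a priori bound by testing against ${\bf w}$, killing the skew-symmetric terms with \eqref{P-5a}, and absorbing the remaining quadratic term via the Leray--Hopf inequality \eqref{extension-h} with $\varepsilon$ small. One small slip: the convection term $({\bf u}\cdot\nabla){\bf u}$ with ${\bf u}\in H^1(\Omega)^n$, $n\le 3$, lies in $L^{4/3}(\Omega)^n$ (Hölder: $L^4\cdot L^2$), not in $L^2(\Omega)^n$; the conclusion that it belongs to $H^{-1}(\Omega)^n$ is nevertheless correct, either because $L^{4/3}\hookrightarrow H^{-1}$ for $n\le 3$ or directly from the estimate \eqref{P-01}.
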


\begin{proof}
We reduce the analysis of the nonlinear problem \eqref{int-NS-D}
to the analysis of a nonlinear operator in the Hilbert space $\mathring{H}_{{\rm{div}}}^1(\Omega )^n$ and show that this operator has a fixed-point due to the Leray-Schauder Theorem (cf. \cite{Leray}, see also \cite{Korobkov}).

To this end, we represent a solution of problem \eqref{int-NS-D} in the form
\begin{align}
\label{form-solution}
{\bf u}={\mathbf u_0}+{\bf v}_\varepsilon\,,
\end{align}
where ${\bf v}_\varepsilon\in {H}_{{\rm{div}}}^1(\Omega )^n$ satisfies relations \eqref{extension-h-1} and \eqref{extension-h} with an $\varepsilon$ that will be specified later, while ${\mathbf u_0}\in \mathring{H}_{{\rm{div}}}^1(\Omega )^n$. 

Then the Dirichlet problem \eqref{int-NS-D} reduces to the nonlinear equation
\begin{align}
\label{transmission-linear-NS-a-equiv-1}
\bs{\mathcal L}({\bf u}_0,\pi)=
\mathbf F_\varepsilon\mathbf u_0
\end{align}
for the couple of unknowns $({\bf u}_0,\pi )\in \mathring{H}_{{\rm{div}}}^1(\Omega )^n\times L^2(\Omega )/{\mathbb R}$,
where
\begin{align}
\label{Fuphi}
\mathbf F_\varepsilon\mathbf w:&=
-\bs{\mathfrak F}-\boldsymbol{\mathfrak L}{\bf v}_\varepsilon
+(({\mathbf w}+{\bf v}_\varepsilon)\cdot \nabla )({\mathbf w}+{\bf v}_\varepsilon),\ \ \forall \, {\bf w}\in \mathring{H}_{{\rm{div}}}^1(\Omega )^n
\end{align}
(cf. notations in \eqref{Stokes-0}, \eqref{L-oper} and \eqref{Stokes}).

For a fixed ${\bf v}_\varepsilon\in {H}_{{\rm{div}}}^1(\Omega )^n$, formula \eqref{Fuphi} defines a nonlinear mapping
${\bf w}\mapsto \mathbf F_\varepsilon\mathbf w$,
and  the nonlinear operator
$\mathbf F_\varepsilon$ acts from $\mathring{H}_{{\rm{div}}}^{1}(\Omega )^n$ to ${H}^{-1}(\Omega )^n$
due to the inclusion $\boldsymbol{\mathfrak L}{\bf v}_\varepsilon\in {H}^{-1}(\Omega )^n$ (provided by \eqref{A}) and estimate \eqref{P-01}.

By Theorem \ref{lemma-a47-1-Stokes}, the linear operator
\begin{align}\label{Lisom}
\bs{\mathcal L}:\mathring{H}_{{\rm{div}}}^1(\Omega )^n\times L^2(\Omega )/{\mathbb R}\to H^{-1}(\Omega)^n
\end{align}
is an isomorphism.
Its inverse operator can be split into two operator components,
$$\bs{\mathcal L}^{-1}=(\bs{\mathcal U},\mathcal P),$$
where
$\bs{\mathcal U}: H^{-1}(\Omega)^n\to \mathring{H}_{{\rm{div}}}^1(\Omega )^n$
and $\mathcal P:H^{-1}(\Omega)^n\to L^2(\Omega )/{\mathbb R}$ are linear continuous operators such that
$\bs{\mathcal L}(\bs{\mathcal U}\mathbf F,\mathcal P\mathbf F)=\mathbf F$ for any $\mathbf F\in H^{-1}(\Omega)^n$.
By applying the operator $\bs{\mathcal L}^{-1}$ in equation \eqref{transmission-linear-NS-a-equiv-1} we obtain the equivalent nonlinear system
\begin{align}
\label{transmission-linear-NS-a-equiv-2}
{\mathbf u_0}
&={\mathbf U}_{\varepsilon}{\mathbf u_0},\\
\label{transmission-linear-NS-a-equiv-2P}
\pi&= P_\varepsilon{\mathbf u_0},
\end{align}
where ${\mathbf U}_{\varepsilon}:\mathring{H}_{{\rm{div}}}^{1}(\Omega )^n\to \mathring{H}_{{\rm{div}}}^{1}(\Omega )^n$
and
$P_{\varepsilon}:\mathring{H}_{{\rm{div}}}^{1}(\Omega )^n\to L^2(\Omega )/{\mathbb R}$
are the nonlinear operators defined as
\begin{align}
\label{N}
&{\mathbf U}_{\varepsilon}{\mathbf w}:=\bs{\mathcal U}\,\mathbf F_\varepsilon\mathbf w,\\
\label{P}
&P_{\varepsilon}{\mathbf w}:=\mathcal P\,\mathbf F_\varepsilon\mathbf w
\end{align}
(cf. also \cite{Leray}
for $\mu =1$ in the isotropic incompressible case \eqref{isotropic}).

Since $\pi$ is not involved in \eqref{transmission-linear-NS-a-equiv-2}, we will first prove the existence of a solution ${\mathbf u_0}\in \mathring{H}_{\rm{div}}^1(\Omega )^n$ to this equation and then use \eqref{transmission-linear-NS-a-equiv-2P} as a representation formula.
This formula provides the existence of the pressure field $\pi\in L^2(\Omega )/{\mathbb R}$.

In order to show the existence of a fixed point of the operator ${\mathbf U}_\varepsilon$ and, thus, the existence of a solution of equation \eqref{transmission-linear-NS-a-equiv-2},
we employ Theorem \ref{L-S-fixed}.

We show first that ${\mathbf U}_{\varepsilon}$ is continuous.
Let ${\bf w},{\bf w}'\in \mathring{H}_{\rm{div}}^1(\Omega )^n$.
Then by \eqref{Fuphi} and  \eqref{P-01} there exists a constant $c_1>0$ such that
\begin{align}
\big\|\mathbf F_\varepsilon{\bf w}&-\mathbf F_\varepsilon{\bf w}'\big\|_{{H}^{-1}(\Omega)^n}
\le\left\|({\bf w}\cdot \nabla ){\bf w}-({\bf w}'\cdot \nabla ){\bf w}'\right\|_{{H}^{-1}(\Omega)^n}\nonumber\\
&\hspace{6em}+\left\|({\bf v}_\varepsilon\cdot \nabla )({\bf w}-{\bf w}')
+(({\bf w}-{\bf w}')\cdot \nabla ){\bf v}_\varepsilon\right\|_{{H}^{-1}(\Omega)^n}\nonumber\\
\le&\left\|(({\bf w}-{\bf w}')\cdot \nabla ){\bf w} + ({\bf w}'\cdot \nabla )({\bf w}-{\bf w}')\right\|_{{H}^{-1}(\Omega)^n}\nonumber\\
&\hspace{6em}
+2{\bn c_1^2}\left\|{\bf w}-{\bf w}'\right\|_{{H}^{1}(\Omega)^n} \|{\bf v}_\varepsilon\|_{{H}^{1}(\Omega)^n}\nonumber\\
\le& {\bn c_1^2}\left\|{\bf w}-{\bf w}'\right\|_{{H}^{1}(\Omega)^n}\left( \|{\bf w}\|_{{H}^{1}(\Omega)^n}
+ \|{\bf w}'\|_{{H}^{1}(\Omega)^n}
+2 \|{\bf v}_\varepsilon\|_{{H}^{1}(\Omega)^n}\right).
\label{P-3}
\end{align}
This estimate shows that the operator
$\mathbf F_\varepsilon:\mathring{H}_{{\rm{div}}}^{1}(\Omega )^n\to {H}^{-1}(\Omega )^n$
is continuous.
Consequently, the operator
${\mathbf U}_{\varepsilon}=\bs{\mathcal U}\,\mathbf F_\varepsilon:\mathring{H}_{{\rm{div}}}^1(\Omega )^n\to \mathring H_{{\rm{div}}}^1(\Omega )^n$ is also continuous, as asserted.

Next we show that the operator ${\mathbf U}_{\varepsilon}$ is compact. To this end, we assume that $\{{\bf w}_k\}_{k\in {\mathbb N}}$ is a bounded sequence in the space $\mathring{H}_{{\rm{div}}}^1(\Omega )^n$ endowed with the norm, coinciding with semi-norm \eqref{seminorm-Omega}, and prove that the sequence $\{\mathbf F_\varepsilon{\bf w}_k\}_{k\in {\mathbb N}}$ contains a convergent subsequence in ${H}^{-1}(\Omega )^n$.

Let $M>0$ be such that $\|{\bf w}_k\|_{\mathring{H}_{{\rm{div}}}^1(\Omega )^n}\leq M$ for all $k\in {\mathbb N}$.
Since the embedding of the space $\mathring{H}_{{\rm{div}}}^1(\Omega )^n$ into the space $L^4(\Omega )^n$ is compact (see, e.g., \cite[Theorem 6.3]{Adams2003}), there exists a subsequence of $\{{\bf w}_k\}_{k\in {\mathbb N}}$, labeled as the sequence for the sake of brevity, which converges in $L^4(\Omega )^n$, and, hence, is a Cauchy sequence in $L^4(\Omega )^n$.
From \eqref{Fuphi}, \eqref{P-01} and \eqref{P-0512b},  we obtain
\begin{align}
\label{compactness}
\big\|\mathbf F_\varepsilon{\bf w}_k&-\mathbf F_\varepsilon{\bf w}_\ell\big\|_{{H}^{-1}(\Omega)^n}
\!\le\!\left\|(({\bf w}_k-{\bf w}_\ell)\cdot \nabla ){\bf w}_k
\!+\!({\bf w}_\ell\cdot \nabla )({\bf w}_k-{\bf w}_\ell)\right\|_{{H}^{-1}(\Omega)^n}
\quad\nonumber\\
&\hspace{8em}+\left\|({\bf v}_\varepsilon\cdot \nabla )({\bf w}_k-{\bf w}_\ell)
+(({\bf w}_k-{\bf w}_\ell)\cdot \nabla ){\bf v}_\varepsilon\right\|_{{H}^{-1}(\Omega)^n}\nonumber\\
&\hspace{-2em}\leq {c_1}\left(\|{\bf w}_k\|_{H^1(\Omega)^n} + \|{\bf w}_\ell\|_{\bn H^1(\Omega)^n}
+ 2\|{\bf v}_\varepsilon\|_{H^1(\Omega)^n}
\right)
\left\|{\bf w}_k-{\bf w}_\ell\right\|_{L^4(\Omega)^n}
\nonumber\\
&\hspace{-2em} {\leq 2c_1}\left(M+\|{\bf v}_\varepsilon\|_{{H}^{1}(\Omega)^n}\right)
\left\|{\bf w}_k-{\bf w}_\ell\right\|_{L^4(\Omega)^n}.
\end{align}
This inequality, combined with the property that $\{{\bf w}_k\}_{k\in {\mathbb N}}$ is a Cauchy sequence in the space $L^4(\Omega )^n$, implies that $\{\mathbf F_\varepsilon{\bf w}_k\}_{k\in \mathbb N}$ is a Cauchy sequence in the space ${H}^{-1}(\Omega )^n$.
Therefore,
$\mathbf F_\varepsilon:\mathring{H}_{\rm{div}}^1(\Omega )^n\to {H}^{-1}(\Omega )^n$
is a compact operator.
Hence, the operator
${\mathbf U}_{\varepsilon}=\bs{\mathcal U}\,\mathbf F_\varepsilon:
\mathring{H}_{\rm{div}}^1(\Omega )^n\to \mathring H_{\rm{div}}^1(\Omega )^n$  is also compact, as asserted.

Next, we show that there exists a constant $M_0>0$ such that if ${\bf w}\in \mathring{H}_{\rm{div}}^1(\Omega )^n$ satisfies the equation
\begin{align}
\label{Aset}
&{\bf w}=\lambda{\mathbf U}_{\varepsilon}{\bf w}
\end{align}
for some $\lambda \in [0,1]$, then $\|{\bf w}\|_{{H}^1(\Omega )^n}\le M_0$.
Let us also introduce the function
\begin{align}\label{Aset-q}
q:=\lambda P_{\varepsilon}{\mathbf w}.
\end{align}
By applying the operator $\boldsymbol{\mathcal L}$ to equations \eqref{Aset}-\eqref{Aset-q} and by using relations \eqref{N} and \eqref{P}, we deduce that whenever the pair
$({\mathbf w},\lambda)\in \mathring{H}_{\rm{div}}^1(\Omega )^n\times\R$ satisfies equation \eqref{Aset}, then the equation
\begin{align}\label{transmission-linear-NS-a-equiv-2new}
&\boldsymbol{\mathcal L}({\bf w},q)=\lambda \mathbf F_\varepsilon{\bf w}, 
\end{align}
is also satisfied. (Recall the isomorphism property of operator \eqref{Lisom}.)
Then the first Green identity \eqref{Green-H0} implies the equation
\begin{align}
&\left\langle a_{ij}^{\alpha \beta }E_{j\beta }({\bf w}),E_{i\alpha }({\bf w})\right\rangle _{\Omega }
=-\langle\lambda \mathbf F_\varepsilon{\bf w},{\bf w}\rangle_{\Omega}\,,
\end{align}
which, in view of relation \eqref{Fuphi}, takes the form
\begin{align}
\label{transmission-linear-NS-a-equiv-5new-e}
\big\langle a_{ij}^{\alpha \beta }&E_{j\beta }({\bf w}),E_{i\alpha }({\bf w})\big\rangle _{\Omega }
=\lambda \langle \boldsymbol{\mathfrak F},{\bf w}\rangle _{\Omega }
-\lambda \big\langle a_{ij}^{\alpha \beta }E_{j\beta }({\bf v}_\varepsilon),E_{i\alpha }({\bf w})\big\rangle _{\Omega }\nonumber\\
-&\lambda \big\langle(({\bf w}+{\bf v}_\varepsilon)\cdot \nabla ){\bf w},{\bf w}\big\rangle _{\Omega }
-\lambda \big\langle({\bf v}_\varepsilon\cdot \nabla ){\bf v}_\varepsilon,{\bf w}\big\rangle _{\Omega }
-\lambda \big\langle({\bf w}\cdot \nabla ){\bf v}_\varepsilon,{\bf w}\big\rangle _{\Omega }\,.
\end{align}
Relation \eqref{P-5a} implies that
$\big\langle(({\bf w}+{\bf v}_\varepsilon)\cdot \nabla ){\bf w},{\bf w}\big\rangle _{\Omega }=0$.
Then by using the Korn first inequality \eqref{Korn3-R3}, the ellipticity condition \eqref{mu}, equation \eqref{transmission-linear-NS-a-equiv-5new-e},
the H\"older inequality, relation \eqref{C7}, and the Leray-Hopf inequality \eqref{extension-h}, we obtain for $\lambda\ge 0$ that
\begin{align}
\label{weak-D-1}
&\frac{1}{2}C_{\mathbb A}^{-1}\|\nabla {\bf w}\|_{L^2(\Omega )^{n\times n}}^2\leq
C_{\mathbb A}^{-1}\|{\mathbb E}({\bf w})\|_{L^2(\Omega )^{n\times n}}^2
\leq \left\langle a_{ij}^{\alpha \beta }E_{j\beta }({\bf w}),E_{i\alpha }({\bf w})\right\rangle _{\Omega }\nonumber\\
&\hspace{5em}\leq \lambda |\!|\!|\bs{\mathfrak F}|\!|\!|_{{H}^{-1}(\Omega )^n} \|\nabla {\bf w}\|_{L^2(\Omega )^{n\times n}}
+\lambda \|{\mathbb A}\|\, \|\nabla {\bf w}\|_{L^2(\Omega )^{n\times n}}
\|\nabla {\bf v}_\varepsilon\|_{L^2(\Omega )^{n\times n}}\nonumber\\
&\hspace{9em}+\lambda \|{\bf v}_\varepsilon\|_{L^4(\Omega )^n}^2 \|\nabla {\bf w}\|_{L^2(\Omega )^{n\times n}}
+ {\lambda }{\Bl \varepsilon }\|\nabla {\bf w}\|_{L^2(\Omega )^{n\times n}}^2,
\end{align}
where the norm $|\!|\!|\cdot|\!|\!|_{{H}^{-1}(\Omega )^n}$ is defined in \eqref{norm-3e} and
$\|{\mathbb A}\|$ is the norm of the viscosity tensor coefficient given by \eqref{A}.
Hence, for $\lambda\in[0,1]$,
\begin{align*}
\Big({\frac{1}{2}C_{\mathbb A}^{-1}-{\Bl \varepsilon }}\Big)\|\nabla {\bf w}\|_{L^2(\Omega )^{n\times n}}
\leq |\!|\!|\bs{\mathfrak F}|\!|\!|_{{H}^{-1}(\Omega )^n}+
\|{\mathbb A}\|\, \|\nabla {\bf v}_\varepsilon\|_{L^2(\Omega )^{n\times n}}+ \|{\bf v}_\varepsilon\|_{L^4(\Omega )^n}^2 \,.
\end{align*}
Choosing ${\Bl \varepsilon }< \frac{1}{2}C_{\mathbb A}^{-1}$ in the {\Bl Leray-Hopf's inequality \eqref{extension-h}}, we obtain the estimate
\begin{align}
\label{weak-D}
\hspace{-1em}\|\nabla {\bf w}\|_{L^2(\Omega )^{n\times n}}\!\leq \!\frac{2}{C_{\mathbb A}^{-1}\!-\!2{\Bl \varepsilon }}
\left(|\!|\!|\bs{\mathfrak F}|\!|\!|_{{H}^{-1}(\Omega )^n}\!+\!
\|{\mathbb A}\|\, \|\nabla {\bf v}_\varepsilon\|_{L^2(\Omega )^{n\times n}}\!+\!\|{\bf v}_\varepsilon\|_{L^4(\Omega )^n}^2\right),
\end{align}
that is, $\|{\bf w}\|_{{H}^1(\Omega )^n}\le M_0$, where $M_0$ is given by the right hand side of \eqref{weak-D} multiplied by the equivalence constant between the norm and semi-norm in $\mathring{H}^1(\Omega )^n$.

Therefore, the operator $\mathbf U_\varepsilon :\mathring{H}_{{\rm{div}}}^1(\Omega )^n\to \mathring H_{{\rm{div}}}^1(\Omega )^n$ satisfies the hypothesis of Theorem \ref{L-S-fixed} (with $\mathcal X=\mathring{H}_{\rm{div}}^1(\Omega )^n$), and hence it has a fixed point ${\mathbf u_0}\!\in \!\mathring{H}_{\rm{div}}^1(\Omega )^n$, that is, ${\mathbf u_0}=\mathbf U_\varepsilon {\mathbf u_0}$. Then with $\pi\in L^2(\Omega )/{\mathbb R}$ as in \eqref{transmission-linear-NS-a-equiv-2P}, we obtain that the couple $(\mathbf u_0, \pi)\in\mathring H_{{\rm{div}}}^1(\Omega )^n\times L^2(\Omega )/{\mathbb R}$ satisfies the nonlinear equation \eqref{transmission-linear-NS-a-equiv-1}.
Consequently, 
the couple $(\mathbf u, \pi)=({\bf v}_\varepsilon+{\mathbf u_0},\pi )\in H^1(\Omega )^n\times L^2(\Omega )/{\mathbb R}$ is a solution of the nonlinear Dirichlet problem \eqref{int-NS-D}. (Recall that ${\bf v}_\varepsilon$ is an extension to $H_{{\rm{div}}}^{1}(\Omega )^n$ of the function ${\bs\varphi}\in H_{\boldsymbol\nu }^{\frac{1}{2}}(\partial \Omega )^n$, and, thus, it satisfies the Dirichlet condition \eqref{extension-h-1}.)
\qed
\end{proof}

\section{\bf Dirichlet-transmission problem for the anisotropic Navier-Stokes system in a bounded Lipschitz domain with a transversal Lipschitz interface}

In this section we show the  existence of weak solutions of Dirichlet-transmission problems for the anisotropic Navier-Stokes system with data in $L^2$-based Sobolev spaces in a bounded Lipschitz domain in ${\mathbb R}^n$, $n=2,3$, satisfying Assumption \ref{interface-Sigma}.
First, we analyze a Dirichlet-transmission problem for the incompressible Navier-Stokes system with general PDE right hand sides and a jump of conormal derivatives on the transversal Lipschitz interface. We reduce this nonlinear problem to a Dirichlet problem for the Navier-Stokes system whose analysis is based on the  Leray-Hopf inequality and the Leray-Schauder fixed point theorem.
Then, we study a Dirichlet-transmission problem for the anisotropic Navier-Stokes system in a compressible framework with non-homogeneous Dirichlet condition and trace and conormal derivative jumps across the internal Lipschitz interface. We use a Bogovskii-type result established in Lemma \ref{L.BT}, some useful estimates and the Leray-Schauder fixed point theorem to show the existence of a weak solution to this nonlinear problem.
In the case of all small data, the uniqueness of the weak solution is also established.

\subsection{\bf Dirichlet-transmission problem in a bounded Lipschitz domain with conormal derivative jump on a transversal Lipschitz interface}

Let us consider the following {\it {Dirichlet-transmission problem} for the incompressible anisotropic Navier-Stokes system with a prescribed conormal derivative jump but without velocity jump on the interface},
\begin{equation}
\label{int-NS-DT-0}
\left\{
\begin{array}{ll}
{\boldsymbol{\mathcal L}({\bf u}^+,\pi ^+)}=\widetilde{\bf f}^+ |_{\Omega ^{+}}+{\bl ({\bf u}^+\cdot \nabla ){\bf u}^+}\,, \ \ {\rm{div}} \, {\bf u}^+=0 & \mbox{ in } \Omega ^{+},\\
{\boldsymbol{\mathcal L}({\bf u}^-,\pi ^-)}=\widetilde{\bf f}^- |_{\Omega ^{-}}
+{\bl ({\bf u}^-\cdot \nabla ){\bf u}^-}\,, \ \ {\rm{div}} \, {\bf u}^-=0  & \mbox{ in } \Omega ^-,\\
{(\gamma _{_{\Omega ^+}}{\bf u}^+)|_{_{\Sigma }}=(\gamma _{_{\Omega ^-}}{\bf u}^{-})|_{_{\Sigma }}} &  \mbox{ on } \Sigma \,,\\
\big({\bf t}_{{\Omega ^+}}\big({\bf u}^+,\pi ^+;\widetilde{\bf f}^+
+\mathring{E}_{\Omega ^+\to \Omega ^-}({\bf u}^+\cdot \nabla ){\bf u}^+\big)\big)\big|_{\Sigma }\\
\hspace{3em}+ \, \big({\bf t}_{{\Omega ^-}}\big({\bf u}^-,\pi ^-;\widetilde{\bf f}^-
+\mathring{E}_{\Omega ^-\to \Omega ^+}({\bf u}^-\cdot \nabla ){\bf u}^-\big)\big)\big|_{\Sigma }
={{\boldsymbol \psi }_{_\Sigma }} &  \mbox{ on } \Sigma \,,\\
({\gamma _{_{\Omega ^+}}}{\bf u}^+)|_{\Gamma ^+}=\bs\varphi |_{\Gamma ^+}&  \mbox{ on } \Gamma ^+\,,\\
({\gamma _{_{\Omega ^-}}}{\bf u}^-)|_{\Gamma ^-}=\bs\varphi |_{\Gamma ^-} &  \mbox{ on } \Gamma ^-\,
\end{array}\right.
\end{equation}
with the unknown $({\bf u}^+,\pi ^+,{\bf u}^-,\pi ^-)\in {\mathfrak X}_{\Omega ^+,\Omega ^-}$ and the given data
$({\widetilde{\bf f}^+,\widetilde{\bf f}^-},{{\boldsymbol \psi }_{_\Sigma }},{\bs\varphi})
\in {\left({H}_{\Gamma ^+}^1(\Omega ^+)^n\right)'
\!\times\!\left({H}_{\Gamma ^-}^1(\Omega ^-)^n\right)'}\!
\times \!H^{-\frac{1}{2}}(\Sigma )^n\times H_{\boldsymbol\nu }^{\frac{1}{2}}(\partial \Omega )^n$.
Recall that ${\mathfrak X}_{\Omega ^+,\Omega ^-}$ is the space defined in \eqref{Omega-pm}, and note that $\mathring E_{\Omega^\pm\to \Omega}$ in \eqref{int-NS-DT-0} is the operator of extension by zero from $\Omega ^\pm $ to $\Omega$.

\subsection*{{\bf Existence of a weak solution}}

Let $({\bf u}^+,\pi ^+,{\bf u}^-,\pi ^-)\in {\mathfrak X}_{\Omega ^+,\Omega ^-}$.
Assume that ${\bf u}^+$ and ${\bf u}^-$ satisfy the homogeneous interface condition
${(\gamma _{_{\Omega ^+}}{\bf u}^+)|_{_{\Sigma }}-(\gamma _{_{\Omega ^-}}{\bf u}^{-})|_{_{\Sigma }}={\bf 0}}$ on $\Sigma $.
Then by Lemma \ref{extention}, there exists a unique pair
$({\bf u},\pi)\in {{H}^1(\Omega )^n}\times\!L^2(\Omega )/{\mathbb R}$ such that
\begin{align}
\label{u-new}
{\bf u}|_{\Omega ^+}={\bf u}^+,\quad
{\bf u}|_{\Omega ^-}={\bf u}^-,\quad
\pi|_{\Omega ^+} =\pi^+,\quad
\pi|_{\Omega ^-} =\pi^-.
\end{align}
Let also $\boldsymbol{\mathfrak F}\in {\Bl {H}^{-1}(\Omega )^n}=\big(\mathring{H}^{1}(\Omega )^n\big)'\subset
\big(\mathring{H}_{{\rm{div}}}^{1}(\Omega )^n\big)'$ be such that
\begin{align}
\label{F-a-new}
\langle \boldsymbol{\mathfrak F},{\bf v}\rangle _{\Omega }:=-\big\langle \tilde{\bf f}^++\tilde{\bf f}^-,{\bf v}\big\rangle _{\Omega }+\langle \bs\psi_{_{\Sigma }},\gamma _{_{\Sigma }}{\bf v}\rangle _{\Sigma }\,, \ \ \forall \, {\bf v}\in \mathring{H}^{1}(\Omega )^n\,,
\end{align}
that is,
$\bs{\mathfrak F}=-(\tilde{\bf f}^++\tilde{\bf f}^-)+\gamma _{_\Sigma }^*{\bs\psi_{_{\Sigma }}}$.
Here
$\gamma _{_{\Sigma }}^*:{H}^{-\frac{1}{2}}(\Sigma )^n\!\to \!{H}^{-1}(\Omega )^n$
denotes the adjoint of the trace operator
$\gamma _{_{\Sigma }}:{\mathring{H}^{1}(\Omega )^n}\to \widetilde{H}^{\frac{1}{2}}(\Sigma )^n$
defined by \eqref{gamma-Sigma}, and the support of $\gamma _{_{\Sigma }}^*{\bs\psi_{_{\Sigma }}}$ is a subset of
$\overline\Sigma $.

An argument similar to that for problem \eqref{Dirichlet-var-Stokes-a} implies the following result.
\begin{lemma}\label{equiv-nonlin}
The nonlinear Dirichlet-transmission problem \eqref{int-NS-DT-0}
is equivalent, in the sense of relations \eqref{u-new}, to the nonlinear Dirichlet problem \eqref{int-NS-D} with
$\bs{\mathfrak F}=-(\tilde{\bf f}^++\tilde{\bf f}^-)+\gamma _{_\Sigma }^*{\bs\psi_{_{\Sigma }}}$.
\end{lemma}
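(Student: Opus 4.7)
The plan is to mirror the equivalence argument already carried out in the proof of Theorem \ref{T-p} for the linear Stokes Dirichlet-transmission problem, with the nonlinear convective term absorbed into the PDE right-hand side. First, note that for $n\in\{2,3\}$ the embedding $H^1(\Omega^\pm)\hookrightarrow L^4(\Omega^\pm)$ gives $({\bf u}^\pm\cdot\nabla){\bf u}^\pm\in H^{-1}(\Omega^\pm)^n$, so the zero-extensions $\mathring{E}_{\Omega^\pm\to\Omega^\mp}({\bf u}^\pm\cdot\nabla){\bf u}^\pm$ belong to $\bigl(H^1_{\Gamma^\pm}(\Omega^\pm)^n\bigr)'$ and the conormal derivatives in \eqref{int-NS-DT-0} are well defined via Definition \ref{conormal-derivative-var-Brinkman-pm}.

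For the forward direction, let $({\bf u}^+,\pi^+,{\bf u}^-,\pi^-)\in{\mathfrak X}_{\Omega^+,\Omega^-}$ solve \eqref{int-NS-DT-0}. The homogeneous interface trace condition combined with the Dirichlet data on $\Gamma^\pm$ allows Lemma \ref{extention} to produce the unique glued pair $({\bf u},\pi)\in H^1(\Omega)^n\times L^2(\Omega)/\mathbb{R}$ satisfying \eqref{u-new}, with $\mathrm{div}\,{\bf u}=0$ in $\Omega$ and $\gamma_{_\Omega}{\bf u}=\bs\varphi$ on $\partial\Omega$; moreover $({\bf u}\cdot\nabla){\bf u}\big|_{\Omega^\pm}=({\bf u}^\pm\cdot\nabla){\bf u}^\pm$. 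I would then apply the Green identity \eqref{Green-particular} to the two subdomains, with $\tilde{\bf f}^\pm$ replaced by $\tilde{\bf f}^\pm+\mathring E_{\Omega^\pm\to\Omega^\mp}({\bf u}^\pm\cdot\nabla){\bf u}^\pm$, and use the prescribed conormal-derivative jump condition to collapse the interface terms to $\langle\bs\psi_{_\Sigma},\gamma_{_\Sigma}{\bf w}\rangle_\Sigma$. Recalling \eqref{F-a-new}, this is precisely the variational form of $\boldsymbol{\mathcal L}({\bf u},\pi)=-\bs{\mathfrak F}+({\bf u}\cdot\nabla){\bf u}$ on $\mathring H^1(\Omega)^n$, so $({\bf u},\pi)$ solves \eqref{int-NS-D}.

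For the converse, if $({\bf u},\pi)\in H^1(\Omega)^n\times L^2(\Omega)/\mathbb R$ solves \eqref{int-NS-D}, then $({\bf u}^\pm,\pi^\pm):=({\bf u}|_{\Omega^\pm},\pi|_{\Omega^\pm})$ satisfies the Navier-Stokes PDE in each $\Omega^\pm$ (by testing against ${\mathcal D}(\Omega^\pm)^n\subset\mathring H^1(\Omega)^n$) together with the divergence-free condition. The trace matching on $\Sigma$ and the Dirichlet condition on $\Gamma^\pm$ are immediate from ${\bf u}\in H^1(\Omega)^n$ and $\gamma_{_\Omega}{\bf u}=\bs\varphi$. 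Substituting the weak formulation of \eqref{int-NS-D} into the Green identity \eqref{Green-particular} and invoking the surjectivity of $\gamma_{_\Sigma}:\mathring H^1(\Omega)^n\to{H}_\bullet^{\frac 12}(\Sigma)^n$ from Lemma \ref{gamma-Sigma-surj}, exactly as in the derivation of \eqref{Sigma-ab}--\eqref{Sigma-aa}, produces the prescribed conormal jump on $\Sigma$.

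The only real subtlety beyond the linear case, which I expect to be the main point to verify, is the compatibility of the nonlinear term with the conormal derivative framework: one must check that $({\bf u}^\pm\cdot\nabla){\bf u}^\pm$, zero-extended to $\Omega^\mp$, indeed represents a functional in the dual space required by Definition \ref{conormal-derivative-var-Brinkman-pm}, and that the restriction identity $({\bf u}\cdot\nabla){\bf u}|_{\Omega^\pm}=({\bf u}^\pm\cdot\nabla){\bf u}^\pm$ holds. Both follow from the $H^1\hookrightarrow L^4$ embedding in dimensions $n\le 3$ together with standard duality. Everything else is a direct transcription of the linear argument from Theorem \ref{T-p}.
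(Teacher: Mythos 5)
Your proposal reproduces the paper's argument: glue via Lemma \ref{extention}, apply the Green identity \eqref{Green-particular} with the convective term folded into the PDE right-hand side, and in the converse direction test against $\mathcal D(\Omega^\pm)^n$ and invoke the surjectivity of $\gamma_{_\Sigma}$ from Lemma \ref{gamma-Sigma-surj} to recover the conormal jump. One small imprecision: the inclusion $({\bf u}^\pm\cdot\nabla){\bf u}^\pm\in H^{-1}(\Omega^\pm)^n=(\mathring H^1(\Omega^\pm)^n)'$ does not by itself give a well-defined zero-extension in $(H^1_{\Gamma^\pm}(\Omega^\pm)^n)'$; what actually makes this work is that for $n\le 3$ the convective term is an $L^{4/3}(\Omega^\pm)^n$ function, so it pairs with all of $H^1(\Omega^\pm)^n$ and its extension by zero is unambiguous — a point you effectively acknowledge at the end.
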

\begin{proof}
Assume that $\left({\bf u}^+,\pi ^+,{\bf u}^-,\pi ^-\right)\in {\mathfrak X}_{\Omega ^+,\Omega ^-}$ satisfies the nonlinear Dirichlet-transmission problem \eqref{int-NS-DT-0}.
Let $({\bf u},\pi )\in \mathring{H}^1(\Omega )^n\times L^2(\Omega )/{\mathbb R}$
be the pair defined by relations \eqref{u-new} (cf. Lemma \ref{extention}). Then the Green identity \eqref{Green-particular} and relation \eqref{F-a-new} for $\boldsymbol{\mathfrak F}$ give the following weak equation:
\begin{align}
\label{transmission-linear-NS-a}
\left\langle a_{ij}^{\alpha \beta }E_{j\beta }({\bf u}),E_{i\alpha }({\bf w})\right\rangle _{\Omega }&\!+\!\left\langle({\bf u}\cdot \nabla ){\bf u},{\bf w}\right\rangle _{\Omega }
-\langle {\rm{div}}\, {\bf w},\pi \rangle _{\Omega }\nonumber\\
&={-\big\langle \tilde{\bf f}^++\tilde{\bf f}^-,{\bf w}\big\rangle _{\Omega }+\big\langle \boldsymbol\psi_{_\Sigma},{\gamma _{_{\Sigma }}}{\bf w} \big\rangle _{\Sigma }}\,, \forall \, {\bf w} \in {\mathring{H}^{1}(\Omega )^n}\,,
\end{align}
which implies the distributional form of the first Navier-Stokes equation in \eqref{int-NS-D}.

Note that the variational form of the nonlinear Dirichlet problem \eqref{int-NS-D} is given by equation \eqref{transmission-linear-NS-a} complemented by equations
\begin{align*}
\begin{array}{lll}
\langle {\rm{div}}\, {\bf u},q\rangle _{\Omega }=0, \quad& \forall \, q\in L^2(\Omega )/{\mathbb R}\,, \\
{\gamma _{_{\Omega }}}{\bf u}={\bs\varphi}&  \mbox{ on } \partial \Omega \,.
\end{array}
\end{align*}

Conversely, assume that $({\bf u},\pi )\!\in \!{H}^1(\Omega )^n\!\times \! L^2(\Omega )/{\mathbb R}$ satisfies the Dirichlet problem \eqref{int-NS-D} and let $({\bf u}^\pm ,\pi ^\pm )=({\bf u}|_{\Omega ^\pm },\pi |_{\Omega ^\pm })$.
Then by the Green identity \eqref{Green-H0} and relation \eqref{F-a-new}, the first equation in \eqref{int-NS-D} can be written in the equivalent variational form
\begin{align}
\label{conormal-derivative-particular-ae}
&\big\langle a_{ij}^{\alpha \beta }E_{j\beta }({\bf u}^+),E_{i\alpha }({\bf w}^+)\big\rangle _{\Omega ^+}-\big\langle \pi ^+,{\rm{div}}\, {\bf w}^+\big\rangle _{\Omega ^+}
-\big\langle ({\bf u}^+\cdot \nabla ){\bf u}^+,{\bf w}^+\big\rangle _{\Omega ^+}\nonumber\\
&+\big\langle a_{ij}^{\alpha \beta }E_{j\beta }({\bf u}^-),E_{i\alpha }({\bf w}^-)\big\rangle _{\Omega ^-}-\big\langle \pi ^-,{\rm{div}}\, {\bf w}^-\big\rangle _{\Omega ^-}
-\big\langle ({\bf u}^-\cdot \nabla ){\bf u}^-,{\bf w}^-\big\rangle _{\Omega ^-}\nonumber\\
&-\big\langle \tilde{\bf f}^+,{\bf w}\big\rangle _{\Omega ^+}-\big\langle \tilde{\bf f}^-,{\bf w}\big\rangle _{\Omega ^-}+\langle {\bs\psi_{_{\Sigma }}},\gamma _{_{\Sigma }}{\bf w}\rangle _{\Sigma }=0\,, \ \forall \, {\bf w}\in \mathring{H}^1(\Omega )^n\,.
\end{align}
Since the spaces ${\mathcal D}(\Omega ^\pm )^n$ are subspaces of $\mathring{H}^{1}(\Omega)^n$,
the (distributional form of the) anisotropic Navier-Stokes equation in \eqref{int-NS-DT-0}, in each of the domains $\Omega ^+$ and $\Omega ^-$, follows from equation \eqref{conormal-derivative-particular-ae} written for all
${\bf w}\in {\mathcal D}(\Omega ^+)^n$ and ${\bf w}\in {\mathcal D}(\Omega ^-)^n$, respectively.
The second equation in \eqref{int-NS-D} implies the equation ${\rm{div}}\, {\bf u}^\pm =0$ in $\Omega ^\pm $.
Thus, $\left({\bf u}^+,\pi ^+,{\bf u}^-,\pi ^-\right)$ satisfies the anisotropic Navier-Stokes system in $\Omega ^+\cup \Omega ^-$,
the Dirichlet boundary condition $(\gamma _{_{\Omega ^\pm }}{\bf u}^\pm )|_{_{\Gamma ^\pm }}=\boldsymbol \varphi |_{\Gamma ^\pm }$ on $\Gamma ^\pm $, and the interface condition $(\gamma _{_{\Omega ^+}}{\bf u}^+)|_{_{\Sigma }}={(\gamma _{_{\Omega ^-}}{\bf u}^{-})|_{_{\Sigma }}}$ on $\Sigma $.
Then substituting \eqref{conormal-derivative-particular-ae} into the Green identity \eqref{Green-particular}, we obtain the equation
\begin{align}
\label{Sigma-abe}
&\big\langle \big({\bf t}_{{\Omega ^+}}\big({\bf u}^+,\pi ^+;\tilde{\bf f}^+ +\mathring{E}_{\Omega ^+\to \Omega ^-}({\bf u}^+\cdot \nabla ){\bf u}^+\big)\big)\big|_{\Sigma }\\
&\hspace{1em}+\, \big({\bf t}_{{\Omega ^-}}\big({\bf u}^-,\pi ^-;\tilde{\bf f}^- +\mathring{E}_{\Omega ^-\to \Omega ^+}({\bf u}^-\cdot \nabla ){\bf u}^-\big)\big)\big|_{\Sigma },
(\gamma _{_{\Omega }}{\bf w})|_{_{\Sigma }}\big\rangle _{_{\Sigma }}
=\big\langle {\bs\psi_{_{\Sigma }}},{\gamma _{_{\Sigma }}{\bf w}}\big\rangle _\Sigma \,.\nonumber
\end{align}
In view of Lemma \ref{gamma-Sigma-surj}, formula \eqref{Sigma-abe}
can be written in the equivalent form
\begin{align*}
&\big\langle \big({\bf t}_{{\Omega ^+}}\big({\bf u}^+,\pi ^+;\tilde{\bf f}^+ +\mathring{E}_{\Omega ^+\!\to \!\Omega ^-}({\bf u}^+\cdot \nabla ){\bf u}^+\big)\big|_{\Sigma }\nonumber\\
&+{\bf t}_{{\Omega ^-}}\big({\bf u}^-,\pi ^-;\tilde{\bf f}^- +\mathring{E}_{\Omega ^-\!\to \!\Omega ^+}({\bf u}^-\cdot \nabla ){\bf u}^-\big)\big)\big|_{\Sigma },
\bs\phi\big\rangle _{_{\Sigma }}
\!=\!\big\langle {\bs\psi_{_{\Sigma }}},{\bs\phi}\big\rangle _\Sigma\,,\
\forall \, {{\bs\phi}\!\in \! \widetilde{H}^{\frac{1}{2}}(\Sigma )^n}.
\end{align*}
Therefore,
$\big({\bf t}_{\Omega ^+}({\bf u}^+,\pi^+;\tilde{\bf f}^+)
{+}{{\bf t}_{\Omega ^-}}({\bf u}^-,\pi^-;\tilde{\bf f}^-)\big)\big|_{\Sigma }
={\bs\psi_{_{\Sigma }}}$ on $\Sigma $.

Consequently, problems \eqref{int-NS-DT-0} and \eqref{int-NS-D} are equivalent, as asserted. \qed
\end{proof}

\begin{theorem}
\label{int-D-NS-var}
Let $\Omega \!\subset \!{\mathbb R}^n$, $n\!\in \!\{2,3\}$, be a bounded Lipschitz domain satisfying Assumption $\ref{interface-Sigma}$.
Let conditions \eqref{Stokes-1}-\eqref{mu} hold.
Then for all given data $(\tilde{\bf f}^+,\tilde{\bf f}^-,{\boldsymbol \psi }_{_\Sigma },{\bs\varphi})$ in the space
$\left({H}_{\Gamma ^+}^1(\Omega ^+)^n\right)'\times \left({H}_{\Gamma ^-}^1(\Omega ^-)^n\right)'\times H^{-\frac{1}{2}}(\Sigma )^n\times H^{\frac{1}{2}}(\partial \Omega )^n$,
the Dirichlet-transmission problem \eqref{int-NS-DT-0} for the anisotropic Navier-Stokes system has a weak solution
$({\bf u}^+,\pi ^+,{\bf u}^-,\pi ^-)\in {\mathfrak X}_{\Omega ^+,\Omega ^-}$ defined by relations \eqref{u-new} in terms of the solution $({\bf u},\pi )$ of the nonlinear Dirichlet problem \eqref{int-NS-D} with
$\bs{\mathfrak F}=-(\tilde{\bf f}^++\tilde{\bf f}^-)+\gamma _{_\Sigma }^*{\bs\psi_{_{\Sigma }}}$.
\end{theorem}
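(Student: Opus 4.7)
The plan is to deduce Theorem \ref{int-D-NS-var} directly from the equivalence already established in Lemma \ref{equiv-nonlin} combined with the existence result of Theorem \ref{Dir-NS-var}. Concretely, given the data $(\widetilde{\bf f}^+,\widetilde{\bf f}^-,\boldsymbol\psi_{_\Sigma},\bs\varphi)$, I would set
$$\boldsymbol{\mathfrak F}:=-(\widetilde{\bf f}^++\widetilde{\bf f}^-)+\gamma _{_\Sigma }^*\boldsymbol\psi_{_\Sigma}$$
and recast the transmission problem \eqref{int-NS-DT-0} as the Dirichlet problem \eqref{int-NS-D} driven by $(\boldsymbol{\mathfrak F},\bs\varphi)$.

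Before invoking Theorem \ref{Dir-NS-var}, I would verify its two hypotheses on the data. Membership $\boldsymbol{\mathfrak F}\in H^{-1}(\Omega)^n$ follows because each $\widetilde{\bf f}^\pm\in\big(H_{\Gamma^\pm}^1(\Omega^\pm)^n\big)'$ identifies canonically with a distribution in $H^{-1}(\Omega)^n$ (Lemma \ref{identification}), while $\gamma_{_\Sigma}^*\colon H^{-\frac{1}{2}}(\Sigma)^n\to H^{-1}(\Omega)^n$ is bounded by duality from the bounded surjective operator $\gamma_{_\Sigma}$ of Lemma \ref{gamma-Sigma-surj}. Regarding the boundary datum, note that the incompressibility equations ${\rm div}\,{\bf u}^\pm=0$ together with the continuity of traces across $\Sigma$ imply, by Lemma \ref{extention}, that the glued field ${\bf u}\in H^1(\Omega)^n$ is divergence-free in $\Omega$; the divergence theorem then forces $\langle\bs\varphi,\boldsymbol\nu\rangle_{\partial\Omega}=0$, i.e., $\bs\varphi\in H_{\boldsymbol\nu}^{\frac{1}{2}}(\partial\Omega)^n$, which is precisely what Theorem \ref{Dir-NS-var} requires. (This implicit compatibility should be read into the statement of Theorem \ref{int-D-NS-var}.)

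Theorem \ref{Dir-NS-var} then supplies a solution $({\bf u},\pi)\in H^1(\Omega)^n\times L^2(\Omega)/\mathbb R$ of the Dirichlet problem. Defining $({\bf u}^\pm,\pi^\pm):=({\bf u}|_{\Omega^\pm},\pi|_{\Omega^\pm})$ as in \eqref{u-new} gives an element of ${\mathfrak X}_{\Omega^+,\Omega^-}$ with matching traces on $\Sigma$ and boundary trace $\bs\varphi$ on $\partial\Omega$. By the converse direction of Lemma \ref{equiv-nonlin}, this quadruple is a weak solution of the Dirichlet-transmission problem \eqref{int-NS-DT-0}, in particular satisfying the conormal-derivative jump condition with right-hand side $\boldsymbol\psi_{_\Sigma}$ on $\Sigma$.

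The heavy lifting, namely the Leray--Schauder fixed-point argument with a carefully chosen Leray--Hopf extension absorbing the boundary datum, has already been packaged into Theorem \ref{Dir-NS-var}; the interface analysis, in particular the reading of the transmission condition from the variational identity on $\mathring H^1(\Omega)^n$, is packaged into Lemma \ref{equiv-nonlin}. Consequently, what is left for the present theorem is essentially bookkeeping, and I expect no substantial new obstacle. The only mildly delicate point is confirming that $\boldsymbol{\mathfrak F}$ genuinely lies in $H^{-1}(\Omega)^n=\big(\mathring H^1(\Omega)^n\big)'\subset\big(\mathring H^1_{\rm div}(\Omega)^n\big)'$, so that both the Dirichlet existence theorem and the pressure recovery formula \eqref{transmission-linear-NS-a-equiv-2P} apply verbatim.
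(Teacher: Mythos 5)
Your proposal is correct and follows essentially the same route as the paper's own (very short) proof: invoke Theorem~\ref{Dir-NS-var} to get a solution $({\bf u},\pi)$ of the Dirichlet problem~\eqref{int-NS-D} with $\boldsymbol{\mathfrak F}=-(\widetilde{\bf f}^++\widetilde{\bf f}^-)+\gamma_{_\Sigma}^*\boldsymbol\psi_{_\Sigma}$, then apply the converse direction of Lemma~\ref{equiv-nonlin} to split it into the quadruple $({\bf u}^+,\pi^+,{\bf u}^-,\pi^-)\in{\mathfrak X}_{\Omega^+,\Omega^-}$ solving~\eqref{int-NS-DT-0}. You also correctly spotted that the hypothesis $\bs\varphi\in H^{\frac12}(\partial\Omega)^n$ in the statement must in fact be $\bs\varphi\in H_{\boldsymbol\nu}^{\frac12}(\partial\Omega)^n$ — both because Theorem~\ref{Dir-NS-var} requires it and because the incompressibility ${\rm div}\,{\bf u}^\pm=0$ together with Lemma~\ref{extention} and the divergence theorem make it a necessary condition — and indeed the problem formulation~\eqref{int-NS-DT-0} itself lists $\bs\varphi\in H_{\boldsymbol\nu}^{\frac12}(\partial\Omega)^n$, so this is merely a slip in the theorem's wording.
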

\begin{proof}
The pair $({\bf u},\pi )\in {H}_{\rm{div}}^1(\Omega )^n\times L^2(\Omega )/\mathbb R$ is a solution of the variational problem \eqref{transmission-linear-NS-a}, and then, in view of Lemma \ref{equiv-nonlin}, the functions $({\bf u}^+,\pi ^+,{\bf u}^-,\pi^-)\!\in \!{\mathfrak X}_{\Omega ^+,\Omega ^-}$ defined by \eqref{u-new} satisfy the nonlinear problem \eqref{int-NS-DT-0} in the distribution sense. \qed
\end{proof}

\subsection*{\bf Uniqueness result for the Dirichlet-transmission problem \eqref{int-NS-DT-0}}
Next we show that an additional constraint to the given data of the nonlinear Dirichlet-transmission problem \eqref{int-NS-DT-0} leads to the uniqueness of the weak solution of this problem.

Recall that ${\gamma _{_{\Sigma }}^*}:H^{-\frac{1}{2}}(\Sigma )^n\to {H}^{-1}(\Omega )^n$ is the adjoint of the trace operator ${\gamma _{_{\Sigma }}}:\mathring{H}^{1}(\Omega )^n\to \widetilde{H}^{\frac{1}{2}}(\Sigma )^n$ {defined by \eqref{gamma-Sigma}}, and that $C_{\mathbb A}$ is the ellipticity constant in \eqref{mu}.
On the other hand, in view of Lemma \ref{B-D}, there exists an extension ${\bf v}_{\bs\varphi }$ of $\bs\varphi \in H_{\bs \nu}^{\frac{1}{2}}(\partial \Omega )^n$ to $H_{{\rm{div}}}^1(\Omega )$, that is, $\gamma _{_\Omega }{\bf v}_{\bs\varphi}=\bs\varphi $ on $\partial \Omega $, and
\begin{align}
\label{v-0-1}
\|\nabla {\bf v}_{\bs\varphi}\|_{L^2(\Omega )^{n\times n}}
{\leq\|{\bf v}_{\bs\varphi}\|_{H^1(\Omega )^{n}}}
\leq  C\|\bs\varphi\|_{H^{\frac{1}{2}}(\partial \Omega )^n}
\end{align}
with some constant $C=C(\Omega,n)>0$.

Then we prove the following uniqueness result (see also \cite[Lemma 3.1]{Seregin}
in the isotropic case \eqref{isotropic} with $\mu =1$ and homogeneous Dirichlet condition, and \cite[Theorem 4.2]{K-M-W-2} for a nonlinear transmission problem in a pseudostress approach).
\begin{theorem}
\label{well-posed-N-S-Stokes-small}
Let $n=2,3$ and $\Omega \subset {\mathbb R}^n$ be a bounded Lipschitz domain satisfying Assumption $\ref{interface-Sigma}$.
Let conditions \eqref{Stokes-1}-\eqref{mu} are satisfied.
Let $(\tilde{\bf f}^+,\tilde{\bf f}^-,{\bs\varphi},{\boldsymbol \psi }_{_\Sigma })$ be given in the space
$\left({H}_{\Gamma ^+}^1(\Omega ^+)^n\right)'
\times  \left({H}_{\Gamma ^-}^1(\Omega ^-)^n\right)'
\times H_{\boldsymbol\nu }^{\frac{1}{2}}(\partial \Omega )^n
\times H^{-\frac{1}{2}}(\Sigma )^n$.
Let
\begin{align}
\label{uniqueness-a}
&c_0^2|\!|\!|\bs{\mathfrak F}|\!|\!|_{{H}^{-1}(\Omega )^n}
+(c_0^2C\|{\mathbb A}\|+C_{\mathbb A}^{-1}Cc_0c_1)\|\bs\varphi \|_{H^{\frac{1}{2}}(\partial \Omega )^n}
<\frac{1}{4}C_{\mathbb A}^{-2},
\end{align}
where $C_{\mathbb A}$, $C$, $c_0$, and $c_1$ are the constants in \eqref{mu},  \eqref{v-0-1}, \eqref{L4}, and \eqref{SET}, respectively,
while $\boldsymbol{\mathfrak F}=-(\tilde{\bf f}^+ +\tilde{\bf f}^-)+{\gamma _{_{\Sigma }}^*(\boldsymbol\psi_{_\Sigma})}$.
Then the nonlinear problem \eqref{int-NS-DT-0} has {only one solution $({\bf u}^+,\pi ^+,{\bf u}^-,\pi ^-)\in {\mathfrak X}_{\Omega ^+,\Omega ^-}$.}
\end{theorem}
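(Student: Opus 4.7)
The plan is to reduce uniqueness of \eqref{int-NS-DT-0} to uniqueness of the Dirichlet problem \eqref{int-NS-D}, and to prove the latter by combining an a priori bound for each solution with an energy argument on the difference of two solutions. By Lemma \ref{equiv-nonlin}, the relations \eqref{u-new} establish a bijection between solutions of \eqref{int-NS-DT-0} in $\mathfrak X_{\Omega^+,\Omega^-}$ and solutions $(\mathbf u,\pi)\in H^1(\Omega)^n\times L^2(\Omega)/\mathbb R$ of the Dirichlet problem \eqref{int-NS-D} with data $\bs{\mathfrak F}=-(\tilde{\bf f}^+ +\tilde{\bf f}^-)+\gamma_{_\Sigma}^*\bs\psi_{_\Sigma}$, so it is enough to show that \eqref{int-NS-D} has at most one solution whenever \eqref{uniqueness-a} holds.

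For the a priori step I would take an arbitrary solution $(\mathbf u,\pi)$ of \eqref{int-NS-D}, decompose $\mathbf u=\mathbf u_0+\mathbf v_{\bs\varphi}$ with $\mathbf u_0\in\mathring H^1_{\mathrm{div}}(\Omega)^n$ and $\mathbf v_{\bs\varphi}$ from Lemma \ref{B-D} (satisfying \eqref{v-0-1} with constant $C$), and test the weak form of \eqref{int-NS-D} against $\mathbf u_0$. The main obstacle here is that the standard cancellation $\langle(\mathbf u\cdot\nabla)\mathbf u,\mathbf u\rangle_\Omega=0$ is unavailable, since $\mathbf u\notin\mathring H^1$ and the resulting boundary term $\tfrac12\int_{\partial\Omega}(\bs\varphi\cdot\boldsymbol\nu)|\bs\varphi|^2\, d\sigma$ need not vanish pointwise; to avoid it I would instead use the identity
\begin{equation*}
\langle(\mathbf u\cdot\nabla)\mathbf u,\mathbf u_0\rangle_\Omega = -\langle(\mathbf u\cdot\nabla)\mathbf u_0,\mathbf u\rangle_\Omega = -\langle(\mathbf u\cdot\nabla)\mathbf u_0,\mathbf v_{\bs\varphi}\rangle_\Omega,
\end{equation*}
obtained by integrating by parts (using $\mathrm{div}\,\mathbf u=0$ and $\mathbf u_0=\mathbf 0$ on $\partial\Omega$) and the cancellation $\langle(\mathbf u\cdot\nabla)\mathbf u_0,\mathbf u_0\rangle=0$. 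Combining this with coercivity of $a_{{\mathbb A};\Omega}$ (Korn's inequality \eqref{Korn3-R3} together with ellipticity \eqref{mu}) and with the embeddings \eqref{L4} (constant $c_0$) and \eqref{SET} (constant $c_1$) should yield
\begin{equation*}
\Bigl(\tfrac12 C_{\mathbb A}^{-1} - c_0 c_1 C\|\bs\varphi\|_{H^{\frac{1}{2}}(\partial\Omega)^n}\Bigr)\|\nabla\mathbf u_0\|_{L^2} \le |\!|\!|\bs{\mathfrak F}|\!|\!|_{H^{-1}(\Omega)^n} + \|\mathbb A\|C\|\bs\varphi\|_{H^{\frac{1}{2}}} + c_1^2C^2\|\bs\varphi\|_{H^{\frac{1}{2}}}^2.
\end{equation*}
Assumption \eqref{uniqueness-a} forces in particular $c_0c_1C\|\bs\varphi\|_{H^{\frac{1}{2}}}<\tfrac14 C_{\mathbb A}^{-1}$, so the coefficient on the left is strictly positive and the inequality is a genuine a priori bound on $\|\nabla\mathbf u_0\|_{L^2}$.

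For the uniqueness step let $(\mathbf u_i,\pi_i)$, $i=1,2$, be two solutions of \eqref{int-NS-D}, and set $\mathbf w:=\mathbf u_1-\mathbf u_2\in\mathring H^1_{\mathrm{div}}(\Omega)^n$, $q:=\pi_1-\pi_2$. Testing the difference equation $\bs{\mathcal L}(\mathbf w,q)=(\mathbf u_1\cdot\nabla)\mathbf w + (\mathbf w\cdot\nabla)\mathbf u_2$ with $\mathbf w$, using $\langle(\mathbf u_1\cdot\nabla)\mathbf w,\mathbf w\rangle=0$ (divergence-free $\mathbf u_1$ with $\mathbf w|_{\partial\Omega}=\mathbf 0$) together with one more integration by parts gives $a_{{\mathbb A};\Omega}(\mathbf w,\mathbf w)=\langle(\mathbf w\cdot\nabla)\mathbf w,\mathbf u_2\rangle_\Omega$, whence
\begin{equation*}
\tfrac12 C_{\mathbb A}^{-1}\|\nabla\mathbf w\|_{L^2}^2 \le c_0\|\mathbf u_2\|_{L^4}\|\nabla\mathbf w\|_{L^2}^2 \le \bigl(c_0^2\|\nabla\mathbf u_{2,0}\|_{L^2} + c_0c_1C\|\bs\varphi\|_{H^{\frac{1}{2}}}\bigr)\|\nabla\mathbf w\|_{L^2}^2,
\end{equation*}
where I used $\|\mathbf u_2\|_{L^4}\le c_0\|\nabla\mathbf u_{2,0}\|_{L^2}+c_1 C\|\bs\varphi\|_{H^{\frac{1}{2}}}$. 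Inserting the a priori bound for $\|\nabla\mathbf u_{2,0}\|_{L^2}$ and clearing the positive factor $\tfrac12 C_{\mathbb A}^{-1}-c_0c_1C\|\bs\varphi\|_{H^{\frac{1}{2}}}$, a short algebraic manipulation shows that the parenthesised expression is strictly less than $\tfrac12 C_{\mathbb A}^{-1}$ precisely when \eqref{uniqueness-a} holds. Consequently $\mathbf w=\mathbf 0$, and then $q=0$ follows from the uniqueness part of Theorem \ref{lemma-a47-1-Stokes} applied to the linear Stokes system with zero data, proving uniqueness for \eqref{int-NS-D} and hence for \eqref{int-NS-DT-0}.
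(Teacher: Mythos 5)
Your proposal is correct and follows essentially the same route as the paper: reduce via Lemma \ref{equiv-nonlin} to uniqueness for the Dirichlet problem \eqref{int-NS-D}, decompose any solution as $\mathbf u=\mathbf u_0+\mathbf v_{\bs\varphi}$ with $\mathbf v_{\bs\varphi}$ from Lemma \ref{B-D}, derive an a priori bound on $\|\nabla\mathbf u_0\|_{L^2}$ (your inequality agrees with the paper's \eqref{weak-D-1ua} down to the constants), and then test the difference equation against $\overline{\bf u}=\mathbf u_1-\mathbf u_2$, using the cancellations \eqref{P-5a}, \eqref{antisym} and the a priori bound together with \eqref{uniqueness-a} to force $\nabla\overline{\bf u}=\mathbf 0$ and then $\overline\pi=0$. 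Your bookkeeping of the convective term in the a priori step (reducing $\langle(\mathbf u\cdot\nabla)\mathbf u,\mathbf u_0\rangle$ to $-\langle(\mathbf u\cdot\nabla)\mathbf u_0,\mathbf v_{\bs\varphi}\rangle$ by \eqref{antisym} and \eqref{P-5a}) is a slightly more compact organization than the paper's term-by-term expansion, but leads to the identical estimate.
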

\begin{proof}
{The solution existence is implied by Theorem \ref{int-D-NS-var}.
To prove that it is unique under the theorem conditions, let us assume}
that ${\bf u}^{(1)},{\bf u}^{(2)}\in {\rd {H}_{\rm{div}}^1(\Omega )^n}$ are the velocities in two
solutions of the nonlinear problem \eqref{int-NS-DT-0} {in the sense of \eqref{u-new}}.
Let us write them in the form
\begin{align}
{\bf u}^{(i)}={\bf v}_{\bf \varphi}+{\bf u}_0^{(i)},\ \ i=1,2,
\end{align}
where ${\bf v}_{\bs\varphi }\in H_{{\rm div}}^1(\Omega )^n$ satisfies the relation $\gamma _{_\Omega }{\bf v}_{\bs\varphi}=\bs\varphi $ on $\partial \Omega $ and estimate \eqref{v-0-1}, while {\rd ${\bf u}_0^{(1)},{\bf u}_0^{(2)}\in {\rd \mathring{H}_{\rm{div}}^1(\Omega )^n}$} satisfy equations \eqref{transmission-linear-NS-a-equiv-1}-\eqref{Fuphi} with ${\bf v}_{\bs\varphi }$ instead of ${\bf v}_{\varepsilon }$.

Let us denote $\overline{\bf u}:={\bf u}^{(1)}-{\bf u}^{(2)}={\bf u}_0^{(1)}-{\bf u}_0^{(2)}$, $\overline\pi:=\pi^{(1)}-\pi^{(2)}$, where $\pi ^{(i)}$ is the pressure term corresponding to ${\bf u}^{(i)}$, $i=1,2$.
Using the first equations in the two upper lines of \eqref{int-NS-DT-0},
we obtain
\begin{align}
\label{transmission-linear-NS-a-equiv-u}
\bs{\mathcal L}(\overline{\bf u},\overline\pi)=&
({\bf u}^{(1)}\cdot \nabla ){\bf u}^{(1)}-({\bf u}^{(2)}\cdot \nabla ){\bf u}^{(2)}.
\end{align}
This implies that
\begin{align}
\label{NS-var-eq-int-unique-0}
\big\langle a_{ij}^{\alpha \beta }E_{j\beta }(\overline{\bf u}&),
E_{i\alpha }(\overline{\bf u})\big\rangle _{\Omega }\!
=\!-\big\langle\big(\overline{\bf u}\cdot \nabla \big){\bf u}^{(1)},\overline{\bf u}\big\rangle _{\Omega }
-\big\langle({\bf u}^{(2)}\cdot \nabla )\overline{\bf u},\overline{\bf u}\big\rangle _{\Omega }.
\end{align}
Moreover, identity \eqref{P-5a}
and the assumption that $\overline{\bf u}\in \mathring{H}_{\rm{div}}^1({\Omega })^n$ show that the last term in the right-hand side of \eqref{NS-var-eq-int-unique-0} equals zero.
Therefore, equation \eqref{NS-var-eq-int-unique-0} reduces to
\begin{align}
\label{NS-var-eq-int-uniqe-1}
\big\langle a_{ij}^{\alpha \beta }E_{j\beta }(\overline{\bf u}&),E_{i\alpha }(\overline{\bf u})\big\rangle _{\Omega }
=-\big\langle\big(\overline{\bf u}\cdot \nabla \big){\bf u}^{(1)},\overline{\bf u}\big\rangle _{\Omega }.
\end{align}
On the other hand, estimate \eqref{a-1-v2-S} implies that
\begin{align}
\label{P-11-unique}
\!\!\!\!\!\|\nabla \overline{\bf u}\|_{L^2(\Omega )^{n\times n}}^2\,\leq
\!2C_{\mathbb A}\big\langle a_{ij}^{\alpha \beta }E_{j\beta }(\overline{\bf u}),E_{i\alpha }(\overline{\bf u})\big\rangle _{\Omega }\,,
\end{align}
and by the H\"{o}lder inequality and inequalities \eqref{L4} and {\rd \eqref{v-0-1}} together with \eqref{antisym} and \eqref{SET}, we obtain
\begin{align}
\label{NS-var-eq-int-unique-2a}
\big|\big\langle\big(&\overline{\bf u}\cdot \nabla \big){\bf u}^{(1)},\overline{\bf u}\big\rangle _{\Omega }\big|
=\big|\big\langle\big(\overline{\bf u}\cdot \nabla \big)\overline{\bf u},{\bf u}^{(1)}\big\rangle_{\Omega }\big|
\leq \|\overline{\bf u}\|_{L^4(\Omega )^n}\|\nabla \overline{\bf u}\|_{L^2(\Omega )^{n\times n}}
\|{\bf u}^{(1)}\|_{L^4(\Omega )^n}\nonumber\\
&\leq \|\overline{\bf u}\|_{L^4(\Omega )^n}\|\nabla \overline{\bf u}\|_{L^2(\Omega )^{n\times n}}
(\|{\bf u}_0^{(1)}\|_{L^4(\Omega )^n}+\|{\bf v}_{\bs\varphi}\|_{L^4(\Omega )^{n}})\nonumber\\
&\leq c_0\|\nabla \overline{\bf u}\|_{L^2(\Omega )^{n\times n}}^2\left(c_0\|\nabla {\bf u}_0^{(1)}\|_{L^2(\Omega )^{n\times n}}
 +Cc_1\|\bs\varphi\|_{H^\frac{1}{2}(\partial \Omega )^n}\right).
\end{align}
Hence
\begin{align}
\label{P-11-unique-a}
\hspace{-1.15em}\|\nabla \overline{\bf u}\|_{L^2(\Omega )^{n\times n}}^2\!\leq
\!2C_{\mathbb A}{c_0}\|\nabla \overline{\bf u}\|_{L^2(\Omega )^{n\times n}}^2
\left({c_0}\|\nabla {\bf u}_0^{(1)}\|_{L^2(\Omega )^{n\times n}}\!+\!Cc_1\|\bs\varphi\|_{H^\frac{1}{2}(\partial \Omega )^n}\right).
\end{align}

Moreover, an estimate similar to \eqref{weak-D-1} with $\lambda=1$, combined with estimates \eqref{L4}, \eqref{SET}, \eqref{antisym} and \eqref{v-0-1} together with relation \eqref{transmission-linear-NS-a-equiv-5new-e} imply that
\begin{align*}
&\frac{1}{2}C_{\mathbb A}^{-1}\|\nabla {\bf u}_0^{(1)}\|_{L^2(\Omega )^{n\times n}}^2\leq
C_{\mathbb A}^{-1}\|{\mathbb E}({\bf u}_0^{(1)})\|_{L^2(\Omega )^{n\times n}}^2
\leq \left\langle a_{ij}^{\alpha \beta }E_{j\beta }({\bf u}_0^{(1)}),E_{i\alpha }({\bf u}_0^{(1)})\right\rangle _{\Omega }\nonumber\\
&\hspace{2em}\leq |\!|\!|\bs{\mathfrak F}|\!|\!|_{{H}^{-1}(\Omega )^n} \|\nabla {\bf u}_0^{(1)}\|_{L^2(\Omega )^{n\times n}}
+\|{\mathbb A}\|\, \|\nabla {\bf u}_0^{(1)}\|_{L^2(\Omega )^{n\times n}}
\|\nabla {\bf v}_{\bs\varphi}\|_{L^2(\Omega )^{n\times n}}\nonumber\\
&\hspace{2em}+\|{\bf v}_{\bs\varphi}\|_{L^4(\Omega )^n}^2 \|\nabla {\bf u}_0^{(1)}\|_{L^2(\Omega )^{n\times n}}
+ \|\nabla {\bf u}_0^{(1)}\|_{L^2(\Omega )^{n\times n}}\|{\bf u}_0^{(1)}\|_{L^4(\Omega )^{n}}
\|{\bf v}_{\bs\varphi}\|_{L^4(\Omega )^n}
\nonumber\\
&\hspace{2em}\leq |\!|\!|\bs{\mathfrak F}|\!|\!|_{{H}^{-1}(\Omega )^n} \|\nabla {\bf u}_0^{(1)}\|_{L^2(\Omega )^{n\times n}}+C\|{\mathbb A}\|\, \|\nabla {\bf u}_0^{(1)}\|_{L^2(\Omega )^{n\times n}}\|\bs\varphi \|_{H^{\frac{1}{2}}(\partial \Omega )^n}\\
&\hspace{3em}
+{C^2c_1^2}\|\bs\varphi \|_{H^{\frac{1}{2}}(\partial \Omega )^n}^2 \|\nabla {\bf u}_0^{(1)}\|_{L^2(\Omega )^{n\times n}}
+ Cc_1c_0\|\bs\varphi \|_{H^{\frac{1}{2}}(\partial \Omega )^n}\|\nabla {\bf u}_0^{(1)}\|_{L^2(\Omega )^{n\times n}}^2\,.\nonumber
\end{align*}

Thus, we obtain the estimate
\begin{align}
\label{weak-D-1ua}
&\left(\frac{1}{2}C_{\mathbb A}^{-1}-Cc_1c_0\|\bs\varphi \|_{H^{\frac{1}{2}}(\partial \Omega )^n}\right)\|\nabla {\bf u}_0^{(1)}\|_{L^2(\Omega )^{n\times n}}\nonumber\\
&\hspace{5em}\leq
|\!|\!|\bs{\mathfrak F}|\!|\!|_{{H}^{-1}(\Omega )^n}+C\|{\mathbb A}\|\, \|\bs\varphi \|_{H^{\frac{1}{2}}(\partial \Omega )^n}
+C^2c_1^2\|\bs\varphi \|_{H^{\frac{1}{2}}(\partial \Omega )^n}^2\,.
\end{align}

From \eqref{uniqueness-a} we have,
\begin{align*}
&C_{\mathbb A}^{-1}Cc_0c_1\|\bs\varphi \|_{H^{\frac{1}{2}}(\partial \Omega )^n}
<\frac{1}{4}C_{\mathbb A}^{-2}-c_0^2|\!|\!|\bs{\mathfrak F}|\!|\!|_{{H}^{-1}(\Omega )^n}
-c_0^2C\|{\mathbb A}\|\|\bs\varphi \|_{H^{\frac{1}{2}}(\partial \Omega )^n}
<\frac{1}{2}C_{\mathbb A}^{-2}
\end{align*}
This implies that the term
$\left(\frac{1}{2}C_{\mathbb A}^{-1}-Cc_1c_0\|\bs\varphi \|_{H^{\frac{1}{2}}(\partial \Omega )^n}\right)$
is positive.
Dividing \eqref{weak-D-1ua}  by this term and substituting the obtained inequality to  \eqref{P-11-unique-a}, we obtain
\begin{align*}
\|\nabla &\overline{\bf u}\|_{L^2(\Omega )^{n\times n}}^2
\leq 2C_{\mathbb A}c_0\|\nabla \overline{\bf u}\|_{L^2(\Omega )^{n\times n}}^2\
\\
&\times\Big[c_0\,\frac{|\!|\!|\bs{\mathfrak F}|\!|\!|_{{H}^{-1}(\Omega )^n}
+C\|{\mathbb A}\|\, \|\bs\varphi \|_{H^{\frac{1}{2}}(\partial \Omega )^n}
+{C^2c_1^2}\|\bs\varphi \|_{H^{\frac{1}{2}}(\partial \Omega )^n}^2}
{\frac{1}{2}C_{\mathbb A}^{-1}-Cc_1c_0\|\bs\varphi \|_{H^{\frac{1}{2}}(\partial \Omega )^n}}
+Cc_1\|\bs\varphi \|_{H^{\frac{1}{2}}(\partial \Omega )^n}\Big].
\end{align*}
This finally reduces to
\begin{align}
&\frac{1}{4}C_{\mathbb A}^{-2}\|\nabla \overline{\bf u}\|_{L^2(\Omega )^{n\times n}}^2
\leq \nonumber\\
&\hspace{0.5em}\left[c_0^2|\!|\!|\bs{\mathfrak F}|\!|\!|_{{H}^{-1}(\Omega )^n}
+c_0^2C\|{\mathbb A}\|\, \|\bs\varphi \|_{H^{\frac{1}{2}}(\partial \Omega )^n}
+C_{\mathbb A}^{-1}Cc_0c_1\|\bs\varphi \|_{H^{\frac{1}{2}}(\partial \Omega )^n}\right]
\|\nabla \overline{\bf u}\|_{L^2(\Omega )^{n\times n}}^2\,,\nonumber
\end{align}
In view of assumption \eqref{uniqueness-a}, this is possible only if $\nabla \overline{\bf u}=0$, and since
$\overline{\bf u}\in \mathring{H}^1(\Omega )^n$, we obtain that
${\bf u}^{(1)}={\bf u}^{(2)}$ in $\Omega $.
{\Bl Moreover, equation \eqref{transmission-linear-NS-a-equiv-u} reduces to the equation $\nabla\overline\pi=0$,
that is, $\pi^{(1)}=\pi^{(2)}$ in $L^2(\Omega )/{\mathbb R}$.}
\qed
\end{proof}

In the special case of zero Dirichlet datum on $\partial \Omega $, we obtain the following result.
\begin{corollary}
\label{well-posed-N-S-Stokes-small-1}
Let $n=2,3$ and $\Omega \subset {\mathbb R}^n$ be a bounded Lipschitz domain satisfying Assumption $\ref{interface-Sigma}$. Let $\mathbb A$ satisfy conditions \eqref{Stokes-1}-\eqref{mu}.
Let $\big({\widetilde{\bf f}},\boldsymbol\psi_{_\Sigma}\big)\in {H}^{-1}(\Omega )^n\times H^{-\frac{1}{2}}(\Sigma )^n$ and $\boldsymbol{\mathfrak F}\in {H}^{-1}(\Omega )^n$ be given by $\boldsymbol{\mathfrak F}=-(\tilde{\bf f}^++\tilde{\bf f}^-)+{\gamma _{_{\Sigma }}^*(\boldsymbol\psi_{_\Sigma})}$. If 
\begin{align}
\label{uniqueness-2}
c_0^2|\!|\!|\bs{\mathfrak F}|\!|\!|_{{H}^{-1}(\Omega )^n}<\frac{1}{4}C_{\mathbb A}^{-2},
\end{align}
where $c_0$ is the constant given in 
\eqref{L4}, then the nonlinear problem \eqref{int-NS-DT-0} with $\bs\varphi ={\bf 0}$ has a unique weak solution ${\bf u}\in \mathring{H}_{\rm{div}}^1(\Omega )^n$.
\end{corollary}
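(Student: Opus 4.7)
The plan is to derive Corollary \ref{well-posed-N-S-Stokes-small-1} as a direct specialization of Theorem \ref{well-posed-N-S-Stokes-small} and Theorem \ref{int-D-NS-var} to the homogeneous Dirichlet case $\bs\varphi = {\bf 0}$. First, I would invoke Theorem \ref{int-D-NS-var} with the data $(\tilde{\bf f}^+,\tilde{\bf f}^-,\boldsymbol\psi_{_\Sigma},{\bf 0})$ to produce a weak solution $({\bf u}^+,\pi^+,{\bf u}^-,\pi^-)\in {\mathfrak X}_{\Omega^+,\Omega^-}$. By the equivalence established in Lemma \ref{equiv-nonlin}, this solution corresponds to a pair $({\bf u},\pi)\in H^1(\Omega)^n\times L^2(\Omega)/{\mathbb R}$ solving the nonlinear Dirichlet problem \eqref{int-NS-D} with $\gamma_{_\Omega}{\bf u}={\bf 0}$ on $\partial\Omega$ and with $\bs{\mathfrak F}$ as given. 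The zero trace together with ${\rm div}\,{\bf u}=0$ places ${\bf u}$ in $\mathring H^1_{\rm div}(\Omega)^n$, so the existence claim is settled.

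For uniqueness, I would specialize the hypothesis \eqref{uniqueness-a} of Theorem \ref{well-posed-N-S-Stokes-small} to the case $\bs\varphi = {\bf 0}$: the two terms proportional to $\|\bs\varphi\|_{H^{\frac{1}{2}}(\partial\Omega)^n}$ drop out, leaving precisely the assumed inequality \eqref{uniqueness-2}. Hence all the hypotheses of Theorem \ref{well-posed-N-S-Stokes-small} are satisfied, which yields the uniqueness of $({\bf u}^+,\pi^+,{\bf u}^-,\pi^-)$ in ${\mathfrak X}_{\Omega^+,\Omega^-}$; equivalently, ${\bf u}\in \mathring H^1_{\rm div}(\Omega)^n$ is unique.

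There is essentially no obstacle to this proof, since the corollary is a transparent specialization of the two preceding results. The only minor verification needed is the observation that, when $\bs\varphi={\bf 0}$, one may take the extension ${\bf v}_{\bs\varphi}={\bf 0}$ in the decomposition ${\bf u}^{(i)}={\bf v}_{\bs\varphi}+{\bf u}_0^{(i)}$ used in the proof of Theorem \ref{well-posed-N-S-Stokes-small}, so that ${\bf u}^{(i)}={\bf u}_0^{(i)}\in \mathring H^1_{\rm div}(\Omega)^n$ and the structural assumptions of that theorem apply verbatim.
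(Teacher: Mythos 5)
Your proposal is correct and is the natural derivation: the corollary is a transparent specialization of Theorem~\ref{well-posed-N-S-Stokes-small} to $\bs\varphi=\mathbf 0$, under which the $\|\bs\varphi\|_{H^{1/2}(\partial\Omega)^n}$-terms in \eqref{uniqueness-a} vanish and the condition reduces exactly to \eqref{uniqueness-2}, while $\mathbf 0\in H_{\bs\nu}^{1/2}(\partial\Omega)^n$ trivially. Since Theorem~\ref{well-posed-N-S-Stokes-small} already yields both existence (via Theorem~\ref{int-D-NS-var}) and uniqueness, and the vanishing trace together with $\mathrm{div}\,\mathbf u=0$ places $\mathbf u$ in $\mathring H^1_{\rm div}(\Omega)^n$, the argument is complete; the extra remark about choosing $\mathbf v_{\bs\varphi}=\mathbf 0$ is harmless but not needed.
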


\subsection{\bf Dirichlet-transmission problem for the compressible anisotropic Navier-Stokes system with trace and conormal derivative jumps on a transversal Lipschitz interface in a bounded Lipschitz domain}

Let us consider the Dirichlet-transmission problem
\begin{equation}
\label{int-NS-DT-1}
\left\{
\begin{array}{ll}
\boldsymbol{\mathcal L}({\bf u}^+,\pi ^+)=\widetilde{\bf f}^+|_{\Omega ^{+}}+{\bl ({\bf u}^+\cdot \nabla ){\bf u}^+}\,, \ \ {\rm{div}} \, {\bf u}^+={\Bl g^+} & \mbox{ in } \Omega ^{+},\\
{\boldsymbol{\mathcal L}({\bf u}^-,\pi ^-)}=\widetilde{\bf f}^-|_{\Omega ^{-}}
+{\bl ({\bf u}^-\cdot \nabla ){\bf u}^-}\,, \ \ {\rm{div}} \, {\bf u}^-={\Bl g^-}  & \mbox{ in } \Omega ^-,\\
(\gamma _{_{\Omega ^+}}{\bf u}^+)|_{_{\Sigma }}-{(\gamma _{_{\Omega ^-}}{\bf u}^{-})|_{_{\Sigma }}}
=\bs\varphi_{_{\Sigma }}&  \mbox{ on } \Sigma \,,\\
\big({\bf t}_{{\Omega ^+}}\big({\bf u}^+,\pi ^+;\widetilde{\bf f}^+ +\mathring{E}_{\Omega ^+\to \Omega ^-}({\bf u}^+\cdot \nabla ){\bf u}^+\big)\big)\big|_{\Sigma }\\
\hspace{3em}{+\, \big({\bf t}_{{\Omega ^-}}\big({\bf u}^-,\pi ^-;\widetilde{\bf f}^- +\mathring{E}_{\Omega ^-\to \Omega ^+}({\bf u}^-\cdot \nabla ){\bf u}^-\big)\big)\big|_{\Sigma }}={{\boldsymbol \psi }_{_\Sigma }} &  \mbox{ on } \Sigma \,,\\
({\gamma _{_{\Omega ^+}}}{\bf u}^+)|_{\Gamma ^+}=\bs\varphi |_{\Gamma ^+}&  \mbox{ on } \Gamma ^+\,,\\
({\gamma _{_{\Omega ^-}}}{\bf u}^-)|_{\Gamma ^-}=\bs\varphi |_{\Gamma ^-}&  \mbox{ on } \Gamma ^-\,
\end{array}\right.
\end{equation}
with the unknowns $({\bf u}^+,\pi ^+,{\bf u}^-,\pi ^-)\in {\mathfrak X}_{\Omega ^+,\Omega ^-}$ and the given data\newline
$(\widetilde{\bf f}^+,\widetilde{\bf f}^-,{\Bl g^+},{\Bl g^-},\bs\varphi_{_{\Sigma }},{{\boldsymbol \psi }_{_\Sigma }},{\bs\varphi})
\in \mathfrak Y_\bullet$.
Recall that ${\mathfrak X}_{\Omega ^+,\Omega ^-}$ is the space defined in \eqref{Omega-pm}, and $\mathfrak Y_\bullet$ is the space defined in the beginning of Section~\ref{S3.3.2}.

Note that by Lemma \ref{L.BT} there exist some functions ${\bf v}_{\bs\varphi}^\pm\in {H}^1(\Omega ^\pm )^n$ such that
\begin{equation}
\label{int-S-DT-1}
\left\{
\begin{array}{ll}
{\rm{div}} \, {\bf v}_{\bs\varphi}^+=g^+ & \mbox{ in } \Omega^{+},\\
{\rm{div}} \, {\bf v}_{\bs\varphi}^-=g^- & \mbox{ in } \Omega^{-},\\
\big(\gamma _{_{\Omega^+}}{\bn\bf v}_{\bs\varphi}^+\big)|_{_{\Sigma }}-{\big(\gamma _{_{\Omega^-}}{\bf v}_{\bs\varphi}^-\big)|_{_{\Sigma }}}
=\bs\varphi_{_{\Sigma }}&  \mbox{ on } \Sigma \,,\\
\big({\gamma _{_{\Omega^+}}}{\bf v}_{\bs\varphi}^+\big)|_{\Gamma^+}=\bs\varphi |_{\Gamma^+}&  \mbox{ on } \Gamma^+\,,\\
\big({\gamma _{_{\Omega^-}}}{\bf v}_{\bs\varphi}^-\big)|_{\Gamma^-}=\bs\varphi |_{\Gamma^-}&  \mbox{ on } \Gamma^-\,,
\end{array}\right.
\end{equation}
and some constant $C_\Sigma=C_\Sigma(\Omega ^+,\Omega ^-,n)>0$ such that
\begin{align}
\label{estimate-a1}
\|{\bf v}_{\bs\varphi}^\pm\|_{H^1(\Omega ^\pm)^n}\
\leq C_\Sigma
\gr\|(g^+,g^-,{\boldsymbol \varphi}_{_{\Sigma }},\boldsymbol \varphi)\|_{\mathcal M_\bullet}\,,
\end{align}
where
$\|(g^+,g^-,{\boldsymbol \varphi}_{_{\Sigma }},{\boldsymbol \varphi})\|_{\mathcal M_\bullet}$ is defined by \eqref{M-norm}.

On the other hand,
by \eqref{SET}
there exists constants $c_1^\pm>0$ depending only on $\Omega ^\pm $ and $n$, such that
\begin{align}
\label{L4-a}
\|{\bf v}\|_{L^4(\Omega^\pm )^n}
\leq c_1^\pm\|{\bf v}\|_{H^1(\Omega ^\pm )^n}
\leq  c_1^*\|{\bf v}\|_{H^1(\Omega ^\pm )^n}\,,
\quad \forall \ {\bf v}\in H^1(\Omega ^\pm )^n\,,
\end{align}
where $c_1^*=\max(c_1^+,c_1^-)$.
In addition,
inequality \eqref{L4} holds on $\Omega$.

Let us prove the existence of a solution to problem \eqref{int-NS-DT-1} by employing arguments similar to those in the proof of Theorem \ref{Dir-NS-var}.
\begin{theorem}
\label{Dir-NS-varT}
Let Assumption $\ref{interface-Sigma}$ and conditions \eqref{Stokes-1}-\eqref{mu} hold with $n\in \{2,3\}$.
Let $(\widetilde{\bf f}^+,\widetilde{\bf f}^-,{\Bl g^+},{\Bl g^-},\bs\varphi_{_{\Sigma }},{{\boldsymbol \psi }_{_\Sigma }},{\bs\varphi}) \in \mathfrak Y_\bullet$.
\begin{itemize}
\item[$(i)$]
If
\begin{align}
\label{assumption-data}
&\|(g^+,g^-,{\boldsymbol \varphi}_{_{\Sigma }},{\boldsymbol \varphi})\|_{\mathcal M_\bullet}
<\frac{1}{4}C_{\mathbb A}^{-1}c_0^{-1}(c_1^*+c_0)^{-1}C^{-1}_{\Sigma}\,,
\end{align}
where $C_{\mathbb A}$,  $C_{\Sigma}$, $c_1^*$ and $c_0$, are the constants in \eqref{mu}, \eqref{estimate-a1}, \eqref{L4-a} and \eqref{L4}, respectively,
then the Dirichlet-transmission problem \eqref{int-NS-DT-1} for the Navier-Stokes system has a solution $({\bf u}^+,\pi^+,{\bf u}^-,\pi^-)\in{\mathfrak X}_{\Omega^+,\Omega^-}$.
\item[$(ii)$]
If
\begin{multline}
\label{assumption-data-1}
c^2_0|\!|\!|\bs{\mathfrak F}|\!|\!|_{H^{-1}(\Omega )^n}
+\left(c^2_0C_{\Sigma}\|{\mathbb A}\|
+2C_{\mathbb A}^{-1}C_{\Sigma}c_0(c^*_1+c_0)\right)
\|(g^+,g^-,{\boldsymbol \varphi}_{_{\Sigma }},{\boldsymbol \varphi})\|_{\mathcal M_\bullet}\\
<\frac{1}{4}C_{\mathbb A}^{-2}\,,
\end{multline}
where $\bs{\mathfrak F}:=(\widetilde{\bf f}^+ + \widetilde{\bf f}^-) -\gamma _{_{\Sigma }}^*{{\boldsymbol \psi }_{_\Sigma }}$, then the nonlinear Dirichlet-transmission problem \eqref{int-NS-DT-1} has a unique solution $({\bf u}^+,\pi^+,{\bf u}^-,\pi^-)\in{\mathfrak X}_{\Omega^+,\Omega^-}$.
\end{itemize}
\end{theorem}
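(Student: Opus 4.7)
The plan is to reduce the fully non-homogeneous Dirichlet-transmission problem \eqref{int-NS-DT-1} to a fixed-point problem on $\mathring{H}^1_{\rm{div}}(\Omega )^n$, combining the strategies used in the proofs of Theorems~\ref{T-nn} and~\ref{Dir-NS-var}. First, I would apply Lemma~\ref{L.BT} to produce a lifting ${\bf v}_{\bs\varphi}^\pm\in H^1(\Omega^\pm)^n$ of the non-homogeneous data $(g^+,g^-,\bs\varphi_{_{\Sigma}},\bs\varphi)$ satisfying \eqref{int-S-DT-1} and the estimate \eqref{estimate-a1}; by Lemma~\ref{extention}, the pair glues to ${\bf v}_{\bs\varphi}\in H^1(\Omega)^n$. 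Writing ${\bf u}^\pm={\bf v}_{\bs\varphi}^\pm+{\bf u}_0^\pm$ turns \eqref{int-NS-DT-1} into a homogeneous-data Dirichlet-transmission problem for $({\bf u}_0,\pi)$ with ${\bf u}_0\in\mathring{H}^1_{\rm{div}}(\Omega)^n$, the lifting contributing an extra Stokes residual $\boldsymbol{\mathcal L}({\bf v}_{\bs\varphi},0)$ to the right-hand side. Inverting the Stokes operator via Theorem~\ref{lemma-a47-1-Stokes} (as in the proof of Theorem~\ref{Dir-NS-var}) yields a fixed-point equation ${\bf u}_0=\bs{\mathcal U}\mathbf F_{\bs\varphi}{\bf u}_0$ on $\mathring{H}^1_{\rm{div}}(\Omega)^n$, with $\mathbf F_{\bs\varphi}$ defined exactly as $\mathbf F_\varepsilon$ in \eqref{Fuphi} but with ${\bf v}_\varepsilon$ replaced by ${\bf v}_{\bs\varphi}$. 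Continuity and compactness of the resulting operator follow line by line from the estimates \eqref{P-3}--\eqref{compactness}. The essential step for part~(i) is the a priori bound: testing ${\bf u}_0=\lambda\bs{\mathcal U}\mathbf F_{\bs\varphi}{\bf u}_0$ against ${\bf u}_0$, the cubic self-interaction $\langle({\bf u}_0\cdot\nabla){\bf u}_0,{\bf u}_0\rangle$ vanishes by \eqref{P-5a}, while the remaining cross convective terms involving ${\bf v}_{\bs\varphi}$ are controlled, via H\"older together with \eqref{L4}--\eqref{L4-a}, by $c_0(c_0+c_1^*)\|{\bf v}_{\bs\varphi}\|_{H^1(\Omega)^n}\|\nabla{\bf u}_0\|_{L^2(\Omega)^{n\times n}}^2$ plus terms linear in $\|\nabla{\bf u}_0\|$. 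Combining this with the ellipticity \eqref{a-1-v2-S} and inserting the bound \eqref{estimate-a1}, assumption \eqref{assumption-data} is precisely what keeps the coefficient of $\|\nabla{\bf u}_0\|^2$ on the left bounded below by $\frac{1}{4}C_{\mathbb A}^{-1}$, delivering a uniform $H^1$-bound, after which Theorem~\ref{L-S-fixed} produces a fixed point, and hence a solution of \eqref{int-NS-DT-1}.

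For part~(ii), I would follow the strategy of Theorem~\ref{well-posed-N-S-Stokes-small}. Take two solutions $({\bf u}^{(i)},\pi^{(i)})$, $i=1,2$, decompose them as ${\bf u}^{(i)}={\bf v}_{\bs\varphi}+{\bf u}_0^{(i)}$, and set $\overline{\bf u}:={\bf u}_0^{(1)}-{\bf u}_0^{(2)}\in\mathring{H}^1_{\rm{div}}(\Omega)^n$, $\overline\pi:=\pi^{(1)}-\pi^{(2)}$. Subtracting the two nonlinear equations and testing with $\overline{\bf u}$ gives
\[
a_{{\mathbb A};\Omega}(\overline{\bf u},\overline{\bf u})=-\langle(\overline{\bf u}\cdot\nabla){\bf u}^{(1)},\overline{\bf u}\rangle_\Omega-\langle({\bf u}^{(2)}\cdot\nabla)\overline{\bf u},\overline{\bf u}\rangle_\Omega.
\]
The key new subtlety, in contrast to Theorem~\ref{well-posed-N-S-Stokes-small}, is that the last term does \emph{not} vanish via \eqref{P-5a} since ${\rm div}\,{\bf u}^{(2)}=g^\pm$ is not zero; instead, integration by parts yields $\langle({\bf u}^{(2)}\cdot\nabla)\overline{\bf u},\overline{\bf u}\rangle_\Omega=-\frac{1}{2}\langle g,|\overline{\bf u}|^2\rangle_\Omega$, which by \eqref{L4} is controlled by $\frac{1}{2}c_0^2\|(g^+,g^-,\bs\varphi_{_\Sigma},\bs\varphi)\|_{\mathcal M_\bullet}\|\nabla\overline{\bf u}\|_{L^2(\Omega)^{n\times n}}^2$. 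The first convective term is handled verbatim as in \eqref{NS-var-eq-int-unique-2a}, once an a priori bound on $\|\nabla{\bf u}_0^{(1)}\|$ is derived by the same reasoning that led to \eqref{weak-D-1ua}, now using the present lifting ${\bf v}_{\bs\varphi}$ and \eqref{estimate-a1} in place of \eqref{v-0-1}. Putting all bounds together and applying \eqref{a-1-v2-S}, condition \eqref{assumption-data-1} is tailored so that the coefficient of $\|\nabla\overline{\bf u}\|^2$ on the right is strictly below $\frac{1}{4}C_{\mathbb A}^{-2}$, forcing $\nabla\overline{\bf u}={\bf 0}$, whence $\overline{\bf u}={\bf 0}$ by the Poincar\'e inequality on $\mathring H^1(\Omega)^n$, and finally $\overline\pi=0$ from the residual equation $\nabla\overline\pi=0$ in $L^2(\Omega)/\mathbb R$.

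The main obstacle, compared with the incompressible Dirichlet case of Theorem~\ref{Dir-NS-var}, is the absence of Leray-Hopf flexibility: because ${\bf v}_{\bs\varphi}$ is forced to realise the prescribed non-zero divergence, interface jump and Dirichlet trace exactly, it cannot be chosen arbitrarily small in $H^1(\Omega)^n$, and the convective contributions involving it cannot be absorbed by refining $\varepsilon$ as in Lemma~\ref{LH-lem}. Therefore the existence argument already requires the quantitative smallness \eqref{assumption-data} of the data, and the uniqueness argument the stronger \eqref{assumption-data-1}. The careful bookkeeping of constants, tracked through the Bogovskii bound \eqref{estimate-a1}, the Sobolev embeddings \eqref{L4}--\eqref{L4-a}, and the ellipticity constant $C_{\mathbb A}$, is what yields the exact coefficients appearing in these two assumptions.
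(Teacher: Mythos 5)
Your overall strategy is the correct one, and it is the one the paper follows: lift the non-homogeneous data with Lemma~\ref{L.BT}, subtract, and run a Leray--Schauder fixed-point argument on $\mathring H^1_{\rm div}(\Omega)^n$, then prove uniqueness by the energy argument of Theorem~\ref{well-posed-N-S-Stokes-small}. However, the proposal mishandles the interface trace jump $\bs\varphi_{_\Sigma}$ in a way that creates two real gaps.

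First, the claim ``by Lemma~\ref{extention}, the pair ${\bf v}_{\bs\varphi}^\pm$ glues to ${\bf v}_{\bs\varphi}\in H^1(\Omega)^n$'' is false whenever $\bs\varphi_{_\Sigma}\neq\bf 0$. Lemma~\ref{extention}(i) requires $\gamma_{_{\Omega^+}}{\bf v}_{\bs\varphi}^+=\gamma_{_{\Omega^-}}{\bf v}_{\bs\varphi}^-$ on $\Sigma$, whereas \eqref{BT3} forces their difference to be $\bs\varphi_{_\Sigma}$. Consequently the ``extra Stokes residual $\boldsymbol{\mathcal L}({\bf v}_{\bs\varphi},0)$'', the norm $\|{\bf v}_{\bs\varphi}\|_{H^1(\Omega)^n}$ appearing in your a~priori bound, and the assertion that continuity and compactness ``follow line by line from \eqref{P-3}--\eqref{compactness}'' all rest on an object that does not exist. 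The paper instead keeps ${\bf v}_{\bs\varphi}^\pm$ strictly piecewise, forms the residual via the piecewise operators $\check{\bs{\mathfrak L}}^\pm$ of \eqref{checkL}, and writes the convective cross terms through the zero-extension operators $\mathring E_{\Omega^\pm}$ as in \eqref{FuphiT}. This changes the compactness proof: the piecewise convective terms produce a contribution on $\Sigma$ which cannot be absorbed into the interior $L^4(\Omega)$ compactness, and the paper needs the extra compact embedding $H^{1/2}(\Sigma)^n\hookrightarrow L^3(\Sigma)^n$ together with estimates \eqref{P-01-gamma'}, \eqref{P-01'3} (see \eqref{compactnessT}); this step is entirely absent from your account.

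Second, in part~(ii) your reduction $\langle({\bf u}^{(2)}\cdot\nabla)\overline{\bf u},\overline{\bf u}\rangle_\Omega=-\tfrac12\langle g,|\overline{\bf u}|^2\rangle_\Omega$ is not correct: because ${\bf u}^{(2)}$ jumps across $\Sigma$, the integration-by-parts identity must be applied separately in $\Omega^+$ and $\Omega^-$, and after summing there remains an uncontrolled interface term $\tfrac12\int_\Sigma(\bs\varphi_{_\Sigma}\cdot\bs\nu_{_\Sigma})\,|\gamma_{_\Sigma}\overline{\bf u}|^2\,d\sigma$ that your formula omits. The paper sidesteps this by decomposing \emph{before} testing: after writing ${\bf u}^{(2)}={\bf u}_0^{(2)}+\mathring E_{\Omega^+}{\bf v}_{\bs\varphi}^++\mathring E_{\Omega^-}{\bf v}_{\bs\varphi}^-$ with ${\bf u}_0^{(2)}\in\mathring H^1_{\rm div}(\Omega)^n$, the troublesome term $\langle({\bf u}_0^{(2)}\cdot\nabla)\overline{\bf u}_0,\overline{\bf u}_0\rangle_\Omega$ vanishes by \eqref{P-5a}, and the piecewise lifting contributions are bounded directly via \eqref{L4}, \eqref{L4-a}, and \eqref{estimate-a1} as in \eqref{P-11-unique0}--\eqref{E7.44}; no global integration by parts of $({\bf u}^{(2)}\cdot\nabla)\overline{\bf u}$ is ever attempted, precisely because it would produce the interface term. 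You should adopt the paper's decomposition rather than the formula $-\tfrac12\langle g,|\overline{\bf u}|^2\rangle_\Omega$, and redo the constant-tracking with the piecewise norms $\|{\bf v}_{\bs\varphi}^\pm\|_{H^1(\Omega^\pm)^n}$ (via \eqref{estimate-a1}) in place of the nonexistent $\|{\bf v}_{\bs\varphi}\|_{H^1(\Omega)^n}$.
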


\begin{proof}
(i) Let us represent the unknowns ${\bf u}^\pm$ of problem \eqref{int-NS-DT-1} in the form
\begin{align}
\label{form-solutionT}
{\bf u}^\pm=\mathbf u_0^\pm+{\bf v}_{\bs\varphi}^\pm\,,
\end{align}
where $({\bf v}_{\bs\varphi}^+,{\bf v}_{\bs\varphi}^-)\in H^1(\Omega ^+)^n\times H^1(\Omega ^-)^n$ satisfy relations \eqref{int-S-DT-1} and estimate \eqref{estimate-a1}.
Then the nonlinear Dirichlet-transmission problem \eqref{int-NS-DT-1} reduces to the problem
\begin{equation}
\label{transmission-linear-NS-a-equiv-2T}
\!\!\!\!\!\left\{
\begin{array}{llll}
\boldsymbol{\mathcal L}(\mathbf u_0^+,\pi^+)=(\widetilde{\mathbf F}^+_{\bs\varphi}\mathbf u_0^+)|_{\Omega^+},\
{\rm{div}}\, \mathbf u_0^+=0
& \mbox{ in } \Omega^+,\\
\boldsymbol{\mathcal L}(\mathbf u_0^-,\pi_0^-)=(\widetilde{\mathbf F}^-_{\bs\varphi}\mathbf u_0^-)|_{\Omega^-},\
{\rm{div}}\, \mathbf u_0^-=0 & \mbox{ in } \Omega^-,\\
(\gamma _{_{\Omega^+}}\mathbf u_0^+)|_{_{\Sigma }} =(\gamma _{_{\Omega^-}}\mathbf u_0^{-})|_{_{\Sigma }} & \mbox{ on } \Sigma ,\\
{\rd \big({\bf t}_{\Omega^+}(\mathbf u_0^+,\pi^+;\widetilde{\mathbf F}^+_{\bs\varphi}\mathbf u_0^+)\big)|_{_{\Sigma }}\,
+\, \big({\bf t}_{\Omega^-}(\mathbf u_0^-,\pi^-;\widetilde{\mathbf F}^-_{\bs\varphi}\mathbf u_0^-)\big)|_{_{\Sigma }}={{\boldsymbol \psi }_{_\Sigma }}} &  \mbox{ on } \Sigma ,\\
\!({\gamma _{_{\Omega^+}}}\mathbf u_0^+)|_{\Gamma^+}={\bf 0}&  \mbox{ on } \Gamma^+,\\
\!({\gamma _{_{\Omega^-}}}\mathbf u_0^-)|_{\Gamma^-}={\bf 0}&  \mbox{ on } \Gamma^-\,,
\end{array}\right.
\end{equation}
for the unknowns {\rd $({\bf u}_0^+,\pi^+,{\bf u}_0^-,\pi^-)\in {\mathfrak X}_{\Omega^+,\Omega^-}$}, where
\begin{align}
\label{FuphiTa}
\widetilde{\mathbf F}^\pm_{\bs\varphi}\mathbf w^\pm:&=\widetilde{\bf f}^\pm {\Bl -{\Bl \check{\bs{\mathfrak L}}^\pm{\bf v}_{\bs\varphi}^\pm}}
+\mathring E_{\Omega^\pm}\big[((\mathbf w^\pm+{\bf v}_{\bs\varphi}^\pm)\cdot \nabla )(\mathbf w^\pm+{\bf v}_{\bs\varphi}^\pm)\big]\,.
\end{align}
{Recall that $\check{\bs{\mathfrak L}}^\pm: {H}^1(\Omega^\pm)^n\to \widetilde{H}^{-1}(\Omega^\pm)^n$ are the operators defined in \eqref{checkL}}
and $\big(\check{\bs{\mathfrak L}}^\pm{\bf v}^\pm\big)|_{\Omega^\pm}=\boldsymbol{\mathcal L}({\bf v}^\pm,0)$ in $\Omega^\pm$.
For fixed ${\bf v}_{\bs\varphi}^\pm\!\in \!{H}^1(\Omega )^n$, formula \eqref{FuphiTa} defines nonlinear operators
${\bf w}^\pm\!\mapsto \!\widetilde{\mathbf F}^\pm_{\bs\varphi}\mathbf w^\pm$
from the space ${H}^{1}(\Omega^\pm)^n$ {\Bl to the space $\left({H}_{\Gamma^\pm}^1(\Omega^\pm)^n\right)'$}
due to estimate \eqref{P-01til} and the inclusion $\check{\bs{\mathfrak L}}^\pm{\bf v}_{\bs\varphi}^\pm\in \widetilde{H}^{-1}(\Omega ^\pm )^n\hookrightarrow \left({H}_{\Gamma^\pm}^1(\Omega^\pm)^n\right)'$. 

In addition, the inclusion $({\bf u}_0^+,{\rd \pi^+},{\bf u}_0^-,{\rd \pi^-})\in {\mathfrak X}_{\Omega^+,\Omega^-}$, the homogeneous interface condition for traces ${(\gamma _{_{\Omega^+}}{\bf u}_0^+)|_{_{\Sigma }}}-{(\gamma _{_{\Omega^-}}{\bf u}_0^{-})|_{_{\Sigma }}}={\bf 0}$ in \eqref{transmission-linear-NS-a-equiv-2T}, and
Lemma \ref{extention} imply that there exists a unique pair $({\bf u}_0,\pi)\in H^1(\Omega )^n\times L^2(\Omega)/{\mathbb R}$ such that
\begin{align}
\label{uNS}
{\bf u}_0|_{\Omega^+}={\bf u}_0^+,\quad
{\bf u}_0|_{\Omega^-}={\bf u}_0^-,\quad
{\rd \pi }|_{\Omega^+} ={\rd \pi ^+},\quad
{\rd \pi }|_{\Omega^-} ={\rd \pi^-}\,.
\end{align}
Since ${\bf u}_0^+$ and ${\bf u}_0^-$ also satisfy the homogeneous Dirichlet condition in \eqref{transmission-linear-NS-a-equiv-2T} and are divergence-free,
we have that ${\bf u}_0\in \mathring{H}_{{\rm{div}}}^1(\Omega )^n$.
Hence, $({\bf u}_0,{\rd \pi })\in\mathring{H}_{{\rm{div}}}^1(\Omega )^n\times L^2(\Omega )/\mathbb R$.

For any ${\bf w}\in \mathring{H}_{{\rm{div}}}^1(\Omega )^n$ let ${\mathbf F}_{\bs\varphi}\mathbf w$
be defined by
\begin{align}
\label{FuphiT}
{\mathbf F}_{\bs\varphi}\mathbf w
&=\widetilde{\mathbf F}^+_{\bs\varphi} r_{_{\Omega^+}}\mathbf w
+ \widetilde{\mathbf F}^-_{\bs\varphi} r_{_{\Omega^-}}\mathbf w {\rd -\gamma _{_{\Sigma }}^*{\boldsymbol \psi }_{_\Sigma }}\nonumber\\
&
={\bs{\mathfrak F}} {-\gr \big(\check{\bs{\mathfrak L}}^+{\bf v}_{\bs\varphi}^+
+\check{\bs{\mathfrak L}}^-{\bf v}_{\bs\varphi}^-\big)}
+\mathring E_{\Omega^+}\big[({\bf v}_{\bs\varphi}^+\cdot \nabla ){\bf v}_{\bs\varphi}^+\big]
+\mathring E_{\Omega^-}\big[({\bf v}_{\bs\varphi}^-\cdot \nabla ){\bf v}_{\bs\varphi}^-\big]+(\mathbf w\cdot \nabla )\mathbf w\nonumber\\
&\quad
+((\mathring E_{\Omega^+}{\bf v}_{\bs\varphi}^++\mathring E_{\Omega^-}{\bf v}_{\bs\varphi}^-)\cdot \nabla )\mathbf w
+\mathbf w\cdot (\mathring E_{\Omega^+}\nabla {\bf v}_{\bs\varphi}^+ + \mathring E_{\Omega^-}\nabla {\bf v}_{\bs\varphi}^-)\,,
\end{align}
where
$\bs{\mathfrak F}{\gr:=}(\widetilde{\bf f}^+ + \widetilde{\bf f}^-) -\gamma _{_{\Sigma }}^*{\boldsymbol \psi }_{_\Sigma}$, ${\gamma _{_{\Sigma }}^*}:H^{-\frac{1}{2}}(\Sigma )^n\to {H}^{-1}(\Omega )^n$ is the adjoint of the trace operator ${\gamma _{_{\Sigma }}}:\mathring{H}^{1}(\Omega )^n\to \widetilde{H}^{\frac{1}{2}}(\Sigma )^n$ defined by \eqref{gamma-Sigma},
and hence $\bs{\mathfrak F}\in H^{-1}(\Omega )^n$
due to Lemma \ref{identification}.
For fixed ${\bf v}_{\bs\varphi}^\pm\in {H}^1(\Omega^\pm)^n$, formula \eqref{FuphiT} defines a nonlinear operator ${\bf w}\mapsto \mathbf F_{\bs\varphi}\mathbf w$
from $\mathring{H}_{{\rm{div}}}^{1}(\Omega )^n$ to ${H}^{-1}(\Omega )^n$
due to Lemma \ref{identification} and estimates \eqref{P-01til} and \eqref{P-01}.

Now, arguing as in the proof of Lemma \ref{equiv-nonlin}
(cf. also Theorem \ref{T-p}),
we obtain that the nonlinear Dirichlet-transmission problem \eqref{transmission-linear-NS-a-equiv-2T}
with the unknowns $\left({\bf u}_0^+,\pi^+,{\bf u}_0^-,\pi^-\right)\in {\mathfrak X}_{\Omega^+,\Omega^-}$ is equivalent,
in the sense of relations \eqref{uNS},
to the nonlinear equation
\begin{align}
\label{transmission-linear-NS-a-equiv-1T}
\bs{\mathcal L}({\bf u}_0,{\rd \pi })=
\mathbf F_{\bs\varphi}\mathbf u_0\quad \mbox{in } \Omega
\end{align}
for the {\rd unknowns} $({\bf u}_0,{\rd \pi })\in \mathring{H}_{{\rm{div}}}^1(\Omega )^n\times L^2(\Omega )/{\mathbb R}$,
with $\mathbf F_{\bs\varphi}$ given by \eqref{FuphiT}.

The following arguments are similar to {\rd those in} the proof of Theorem \ref{Dir-NS-var}.
By Theorem \ref{lemma-a47-1-Stokes}, the linear operator
\begin{align}\label{LisomT}
\bs{\mathcal L}:\mathring{H}_{{\rm{div}}}^1(\Omega )^n\times L^2(\Omega )/{\mathbb R}\to H^{-1}(\Omega)^n
\end{align}
is an isomorphism.
Its inverse operator can be split into two operator components,
$$\bs{\mathcal L}^{-1}=(\bs{\mathcal U},\mathcal P),$$
where
$\bs{\mathcal U}: H^{-1}(\Omega)^n\to \mathring{H}_{{\rm{div}}}^1(\Omega )^n$
and $\mathcal P:H^{-1}(\Omega)^n\to L^2(\Omega )/{\mathbb R}$ are linear continuous operators such that
$\bs{\mathcal L}(\bs{\mathcal U}\mathbf F,\mathcal P\mathbf F)=\mathbf F$ for any $\mathbf F\in H^{-1}(\Omega)^n$.
Applying the operator $\bs{\mathcal L}^{-1}$ to equation \eqref{transmission-linear-NS-a-equiv-1T} we obtain the equivalent nonlinear system
\begin{align}
\label{transmission-linear-NS-a-equiv-2Tu}
{\mathbf u_0}
&=\mathbf U{\mathbf u_0},\\
\label{transmission-linear-NS-a-equiv-2PTpi}
{\rd \pi }&=P{\mathbf u_0},
\end{align}
where $\mathbf U:\mathring{H}_{{\rm{div}}}^{1}(\Omega )^n\to \mathring{H}_{{\rm{div}}}^{1}(\Omega )^n$
and
$P:\mathring{H}_{{\rm{div}}}^{1}(\Omega )^n\to L^2(\Omega )/{\mathbb R}$
are the nonlinear operators defined as
\begin{align}
\label{NT}
&\mathbf U{\mathbf w}:=
\bs{\mathcal U}\,\mathbf F_{\bs\varphi}\mathbf w,\\
\label{PT}
&P{\mathbf w}:=
\mathcal P\,\mathbf F_{\bs\varphi}\mathbf w.
\end{align}

Since ${\rd \pi }$ is not involved in \eqref{transmission-linear-NS-a-equiv-2Tu}, we will first prove the existence of a solution ${\mathbf u_0}\in \mathring{H}_{\rm{div}}^1(\Omega )^n$ to this equation and then use \eqref{transmission-linear-NS-a-equiv-2PTpi} as a representation formula. This provides the existence of {\rd a pressure field} ${\rd \pi }\in L^2(\Omega )/{\mathbb R}$.

In order to show the existence of a fixed point of the operator $\mathbf U$ and, thus, the existence of  a weak solution of nonlinear problem \eqref{transmission-linear-NS-a-equiv-2T}, we employ Theorem \ref{L-S-fixed}.

Let us show first that $\mathbf U$ is continuous.
Let ${\bf w},{\bf w}'\in \mathring{H}_{\rm{div}}^1(\Omega )^n$.
Then by \eqref{FuphiT},  \eqref{P-01},  \eqref{P-01'3} and \eqref{P-01'3-1} we obtain that
\begin{align*}
&\big\|{\mathbf F}_{\bs\varphi}{\bf w}-{\mathbf F}_{\bs\varphi}{\bf w}'\big\|_{{H}^{-1}(\Omega)^n}
\le\left\|({\bf w}\cdot \nabla ){\bf w}-({\bf w}'\cdot \nabla ){\bf w}'\right\|_{{H}^{-1}(\Omega)^n}
\nonumber\\
&+\big\|((\mathring E_{\Omega^+}{\bf v}_{\bs\varphi}^++\mathring E_{\Omega^-}{\bf v}_{\bs\varphi}^-)\cdot \nabla )({\bf w}-{\bf w}')
+({\bf w}-{\bf w}')\cdot (\mathring E_{\Omega^+}\nabla {\bf v}_{\bs\varphi}^+
+ \mathring E_{\Omega^-}\nabla {\bf v}_{\bs\varphi}^-) \big\|_{{H}^{-1}(\Omega)^n}
\nonumber\\
&\le\left\|(({\bf w}-{\bf w}')\cdot \nabla ){\bf w} + ({\bf w}'\cdot \nabla )({\bf w}-{\bf w}')\right\|_{{H}^{-1}(\Omega)^n}
\nonumber\\
&\hspace{6em}
+2(c_1^*)^2
\left\|{\bf w}-{\bf w}'\right\|_{{H}^{1}(\Omega)^n}(\|{\bf v}_{\bs\varphi}^+\|_{{H}^{1}(\Omega^+)^n}
+\|{\bf v}_{\bs\varphi}^-\|_{{H}^{1}(\Omega^-)^n})
\nonumber\\
&\le
\left\|{\bf w}-{\bf w}'\right\|_{{H}^{1}(\Omega)^n}\big(c_1^2\|{\bf w}\|_{{H}^{1}(\Omega)^n}
+c_1^2\|{\bf w}'\|_{{H}^{1}(\Omega)^n}
\nonumber\\
&\hspace{6em}+2(c_1^*)^2
\|{\bf v}_{\bs\varphi}^+\|_{{H}^{1}(\Omega^+)^n}
+2(c_1^*)^2
|{\bf v}_{\bs\varphi}^-\|_{{H}^{1}(\Omega^-)^n}\big).
\end{align*}
This estimate shows that the operator
${\mathbf F}_{\bs\varphi}:\mathring{H}_{{\rm{div}}}^{1}(\Omega )^n\to {H}^{-1}(\Omega )^n$
is continuous.
Consequently, the operator $\mathbf U=\bs{\mathcal U}\,{\mathbf F}_{\bs\varphi}:\mathring{H}_{{\rm{div}}}^1(\Omega )^n\to \mathring H_{{\rm{div}}}^1(\Omega )^n$ is also continuous, as asserted.

Next we show that the operator $\mathbf U$ is compact. To this end, we assume that $\{{\bf w}_k\}_{k\in {\mathbb N}}$ is a bounded sequence in the space $\mathring{H}_{{\rm{div}}}^1(\Omega )^n$
and prove that the sequence $\{{\mathbf F}_{\bs\varphi}{\bf w}_k\}_{k\in {\mathbb N}}$ contains a convergent subsequence in ${H}^{-1}(\Omega )^n$.

Let $M>0$ be such that $\|{\bf w}_k\|_{{H}^1(\Omega )^n}\leq M$ for all $k\in {\mathbb N}$.
By \eqref{FuphiT},
\begin{align*}
\big\|{\mathbf F}_{\bs\varphi}{\bf w}_k-{\mathbf F}_{\bs\varphi}{\bf w}_\ell\big\|_{{H}^{-1}(\Omega)^n}
&\le\left\|(({\bf w}_k-{\bf w}_\ell)\cdot \nabla ){\bf w}_k
+ ({\bf w}_\ell\cdot \nabla )({\bf w}_k-{\bf w}_\ell)\right\|_{{H}^{-1}(\Omega)^n}
\quad\nonumber\\
&+\big\|((\mathring E_{\Omega^+}{\bf v}_{\bs\varphi}^+ +\mathring E_{\Omega^-}{\bf v}_{\bs\varphi}^-)\cdot \nabla )
({\bf w}_k-{\bf w}_\ell)\big\|_{{H}^{-1}(\Omega)^n}\\
&+\big\|({\bf w}_k-{\bf w}_\ell)\cdot (\mathring E_{\Omega^+}\nabla {\bf v}_{\bs\varphi}^+
+ \mathring E_{\Omega^-}\nabla {\bf v}_{\bs\varphi}^-) \big\|_{{H}^{-1}(\Omega)^n}.
\end{align*}
Employing estimates \eqref{P-01} and \eqref{P-0512b} to the first norm in the right hand side, \eqref{P-01-gamma'} for the second norm, and estimate \eqref{P-01'3} for the third norm, we obtain
\begin{align}
\label{compactnessT}
&\big\|{\mathbf F}_{\bs\varphi}{\bf w}_k-{\mathbf F}_{\bs\varphi}{\bf w}_\ell\big\|_{{H}^{-1}(\Omega)^n}
\leq \Big( c_1\|{\bf w}_k\|_{{H}^{1}(\Omega)^n} +c_1 \|{\bf w}_\ell\|_{{H}^{1}(\Omega)^n}
\nonumber\\
&+ 3c_1^*\|{\bf v}_{\bs\varphi}^+\|_{{H}^{1}(\Omega^+)^n}
+ 3c_1^* \|{\bf v}_{\bs\varphi}^-\|_{{H}^{1}(\Omega^-)^n}\Big)
\left\|{\bf w}_k-{\bf w}_\ell\right\|_{L^4(\Omega)^n}
\nonumber\\
&+\Big((c^+_2)^2\|\gamma_{\Omega^+}\|^2\|{\bf v}_{\bs\varphi}^+\|_{H^{1}(\Omega^+)^n}
+(c^-_2)^2\|\gamma_{\Omega^-}\|^2\|{\bf v}_{\bs\varphi}^-\|_{H^{1}(\Omega^-)^n}\Big)
\|\gamma_{_\Sigma}({\bf w}_k-{\bf w}_\ell)\|_{L^{3}(\Sigma)^{n}}\nonumber\\
&\leq \Big(2c_1M+ 3c_1^*\|{\bf v}_{\bs\varphi}^+\|_{{H}^{1}(\Omega^+)^n}
+ 3c_1^*\|{\bf v}_{\bs\varphi}^-\|_{{H}^{1}(\Omega^-)^n}\Big)\left\|{\bf w}_k-{\bf w}_\ell\right\|_{L^4(\Omega)^n}
\nonumber\\
&+\Big((c^+_2)^2\|\gamma_{\Omega^+}\|^2\|{\bf v}_{\bs\varphi}^+\|_{H^{1}(\Omega^+)^n}
+(c^-_2)^2\|\gamma_{\Omega^-}\|^2\|{\bf v}_{\bs\varphi}^-\|_{H^{1}(\Omega^-)^n}\Big)
\|\gamma_{_\Sigma}({\bf w}_k-{\bf w}_\ell)\|_{L^{3}(\Sigma)^{n}},
\end{align}
where we denoted
$\gamma_{_\Sigma}({\bf w}_k-{\bf w}_\ell):=r_{_\Sigma}\gamma_{\Omega^+}({\bf w}_k-{\bf w}_\ell)
=r_{_\Sigma}\gamma_{\Omega^-}({\bf w}_k-{\bf w}_\ell)$
and took into account that
$r_{_{\Gamma^+}}\gamma_{\Omega^+}({\bf w}_k-{\bf w}_\ell)=\bf 0$
and
$r_{_{\Gamma^-}}\gamma_{\Omega^-}({\bf w}_k-{\bf w}_\ell)=\bf 0$.

Since {\gr $\|{\bf w}_k\|_{{H}^1(\Omega )^n}\leq M$ and} the embedding of the space ${H}^1(\Omega )^n$ into the space $L^4(\Omega )^n$ is compact (see, e.g., \cite[Theorem 6.3]{Adams2003}), there exists a subsequence of $\{{\bf w}_k\}_{k\in {\mathbb N}}$, labelled as the sequence,
which converges in $L^4(\Omega )^n$, and, hence, is a Cauchy sequence in  $L^4(\Omega )^n$.

Since $\|{\bf w}_k\|_{{H}^1(\Omega )^n}\leq M$, the sequence $\{\gamma_{_\Sigma}{\bf w}_k\}_{k\in {\mathbb N}}$ is bounded in ${H}^{1/2}(\Sigma)^n$.
Further, for $n=2,3$ the space ${H}^{1/2}(\Sigma)^n$ is compactly embedded in $L^3({\gr\Sigma})^n$.
For bounded Lipschitz domains in $\R^{n-1}$, this follows by the Rellich-Kondrachev compactness theorems, e.g., from the embeddings in \cite[Section 2.2.4, Corollary 2(i)]{Runst-Sickel} for $\R^{n-1}$ and can be extended to {\rd $(n-1)$-dimensional bounded Lipschitz manifolds} by standard arguments (cf. also a more general statement in \cite[Proposition 3.8]{MM2013Spr}).
Then again, there exists a subsequence of $\{{\bf w}_k\}_{k\in {\mathbb N}}$, labelled as the sequence, such that $\{\gamma_{_\Sigma}{\bf w}_k\}_{k\in {\mathbb N}}$ converges in $L^3(\Sigma )^n$, and, hence, is a Cauchy sequence in  $L^3(\Sigma)^n$.

Inequality \eqref{compactnessT} combined with the these Cauchy properties implies that
${\mathbf F}_{\bs\varphi}:\mathring{H}_{\rm{div}}^1(\Omega )^n\to {H}^{-1}(\Omega )^n$
is a compact operator.
Hence, the operator
$\mathbf U=\bs{\mathcal U}\,{\mathbf F}_{\bs\varphi}:
\mathring{H}_{\rm{div}}^1(\Omega )^n\to \mathring H_{\rm{div}}^1(\Omega )^n$  is also compact, as asserted.

It remains to show that there exists a constant $M_0>0$ such that if ${\bf w}\in \mathring{H}_{\rm{div}}^1(\Omega )^n$ satisfies the equation
\begin{align}
\label{AsetT}
&{\bf w}=\lambda\mathbf U{\bf w}
\end{align}
for some $\lambda \in [0,1]$, then $\|{\bf w}\|_{{H}^1(\Omega )^n}\le M_0$.
Let us introduce the function
\begin{align}\label{Aset-qT}
q:=\lambda P{\mathbf w}.
\end{align}
By applying the operator $\boldsymbol{\mathcal L}$ to equations \eqref{AsetT}-\eqref{Aset-qT}, and by using relations \eqref{NT} and \eqref{PT}, we deduce that whenever the pair
$({\mathbf w},\lambda)\in \mathring{H}_{\rm{div}}^1(\Omega )^n\times\R$ satisfies {\gr equation \eqref{AsetT}}, the equation
$
\boldsymbol{\mathcal L}({\bf w},q)=\lambda {\mathbf F}_{\bs\varphi}{\bf w}
$
is also satisfied. (Recall the isomorphism property of operator \eqref{LisomT}.)
Then the first Green identity \eqref{Green-H0} implies
\begin{align}
a_{{\mathbb A};\Omega}({\bf w},{\bf v}):=\left\langle a_{ij}^{\alpha \beta }E_{j\beta }({\bf w}),E_{i\alpha }({\bf v})\right\rangle _{\Omega}
=-\langle\lambda \mathbf F_{\bs\varphi}{\bf w},{\bf v}\rangle_{\Omega}
\quad \forall\ \mathbf v\in \mathring{H}_{\rm{div}}^1(\Omega )^n,
\end{align}
which, in view of relation \eqref{FuphiT}, takes the form
\begin{align}
\label{transmission-linear-NS-a-equiv-5new-0}
a_{{\mathbb A};\Omega}({\bf w},{\bf v})=
&\!-\!\lambda \langle \bs{\mathfrak F},{\bf v}\rangle _{\Omega }\!
-\!\lambda {\Bl \Big(\big\langle a_{ij}^{\alpha \beta}E_{j\beta }({\gr\bf v}_{\bs\varphi}^+),E_{i\alpha }({\bf v})\big\rangle_{\Omega ^+}\!
+\!\big\langle a_{ij}^{\alpha \beta }E_{j\beta }({\gr\bf v}_{\bs\varphi}^-),E_{i\alpha }({\bf v})\big\rangle _{\Omega ^-}\Big)}\nonumber\\
&-\lambda \Big\langle \mathring E_{\Omega^+}\big[({\bf v}_{\bs\varphi}^+\cdot \nabla ){\bf v}_{\bs\varphi}^+\big]
+\mathring E_{\Omega^-}\big[({\bf v}_{\bs\varphi}^-\cdot \nabla ){\bf v}_{\bs\varphi}^-\big]
\nonumber\\
&
+((\mathring E_{\Omega^+}{\bf v}_{\bs\varphi}^++\mathring E_{\Omega^-}{\bf v}_{\bs\varphi}^-)\cdot \nabla )\mathbf w
\nonumber\\
&
\hspace{-2em}+\mathbf w\cdot (\mathring E_{\Omega^+}\nabla {\bf v}_{\bs\varphi}^+ + \mathring E_{\Omega^-}\nabla {\bf v}_{\bs\varphi}^-)
+(\mathbf w\cdot \nabla )\mathbf w,
{\bf v}\Big\rangle _{\Omega}\,, \ \forall\, \mathbf v\in \mathring{H}_{\rm{div}}^1(\Omega )^n\,.
\end{align}
Moreover, formula \eqref{P-5a} and the {\gr inclusion} ${\bf w}\in \mathring{H}_{\rm{div}}^1(\Omega )^n$ imply the relation
$
\Big\langle (\mathbf w\cdot \nabla )\mathbf w,{\bf w}\Big\rangle _{\Omega}=0\,.
$
Then by \eqref{transmission-linear-NS-a-equiv-5new-0} 
we obtain the formula
\begin{align}
\label{transmission-linear-NS-a-equiv-5new}
\hspace{-1.018em}a_{{\mathbb A};\Omega}({\bf w},{\bf w})=
&-\lambda \langle {{\bs{\mathfrak F}}},{\gr\bf w}\rangle _{\Omega }
-\lambda {\Bl \Big(\big\langle a_{ij}^{\alpha \beta }E_{j\beta }({\bf v}_{\bs\varphi}^+),E_{i\alpha }({\bf w})\big\rangle _{\Omega ^+}\!+\!\big\langle a_{ij}^{\alpha \beta }E_{j\beta }({\gr\bf v}_{\bs\varphi}^-),E_{i\alpha }({\bf w})\big\rangle _{\Omega ^-}\Big)}\nonumber\\
&{\rd -\lambda \Big\langle \mathring E_{\Omega^+}\big[({\bf v}_{\bs\varphi}^+\cdot \nabla ){\bf v}_{\bs\varphi}^+\big]+\mathring E_{\Omega^-}\big[({\bf v}_{\bs\varphi}^-\cdot \nabla ){\bf v}_{\bs\varphi}^-\big],{\bf w}\Big\rangle _{\Omega }}\nonumber\\
&{\rd -\lambda \Big\langle \left(\left(\mathring E_{\Omega^+}{\bf v}_{\bs\varphi}^++\mathring E_{\Omega^-}{\bf v}_{\bs\varphi}^-\right)\cdot \nabla \right)\mathbf w ,{\bf w}\Big\rangle _{\Omega}}\nonumber\\
&-\lambda \Big\langle \mathbf w\cdot \left(\mathring E_{\Omega^+}\nabla {\bf v}_{\bs\varphi}^+
+ \mathring E_{\Omega^-}\nabla {\bf v}_{\bs\varphi}^-\right),
{\bf w}\Big\rangle _{\Omega}\,,
\end{align}
Arguments similar to those for estimate \eqref{weak-D-1} combined with formula \eqref{transmission-linear-NS-a-equiv-5new}, the {\gr inclusion} $\lambda \in [0,1]$ and inequalities \eqref{L4},  \eqref{L4-a}, and \eqref{estimate-a1} imply that
\begin{align}
\frac{1}{2}&C_{\mathbb A}^{-1}\|\nabla {\bf w}\|_{L^2(\Omega )^{n\times n}}^2\leq
C_{\mathbb A}^{-1}\|{\mathbb E}({\bf w})\|_{L^2(\Omega )^{n\times n}}^2
\leq a_{{\mathbb A};\Omega}({\bf w},{\bf w})
\nonumber\\
&\leq
|\!|\!|\bs{\mathfrak F}|\!|\!|_{H^{-1}(\Omega )^n}\|\nabla {\bf w}\|_{L^2(\Omega )^{n\times n}}
\nonumber
\\
&\hspace{1em}+\|{\mathbb A}\|\Big(\|\nabla {\bf v}_{\bs\varphi}^+\|_{L^2(\Omega ^+)^{n\times n}}
+\|\nabla {\bf v}_{\bs\varphi}^-\|_{L^2(\Omega ^-)^{n\times n}}\Big)
\|\nabla {\bf w}\|_{L^2(\Omega )^{n\times n}}\nonumber\\
&\hspace{1em}+ \Big(\|{\bf v}_{\bs\varphi}^+\|_{L^4(\Omega ^+)^n}
\|\nabla {\bf v}_{\bs\varphi}^+\|_{L^2(\Omega ^+)^{n\times n}}\!+\!
\|{\bf v}_{\bs\varphi}^-\|_{L^4(\Omega ^-)^n}\|\nabla {\bf v}_{\bs\varphi}^-\|_{L^2(\Omega ^-)^{n\times n}}\Big)\|{\bf w}\|_{L^4(\Omega )^n}
\nonumber
\\
&\hspace{1em} +\Big(\|{\bf v}_{\bs\varphi}^+\|_{L^4(\Omega ^+)^n}+\|{\bf v}_{\bs\varphi}^-\|_{L^4(\Omega ^+)^n}\Big)\|\nabla {\bf w}\|_{L^2(\Omega )^{n\times n}}\|{\bf w}\|_{L^4(\Omega )^n}\nonumber\\
&\hspace{1em}+\Big(\|\nabla {\bf v}_{\bs\varphi}^+\|_{L^2(\Omega ^+)^{n\times n}}
+\|\nabla {\bf v}_{\bs\varphi}^-\|_{L^2(\Omega ^-)^{n\times n}}\Big)\|{\bf w}\|_{L^4(\Omega )^n}^2\nonumber\\
&
\leq
|\!|\!|\bs{\mathfrak F}|\!|\!|_{H^{-1}(\Omega )^n}\|\nabla {\bf w}\|_{L^2(\Omega )^{n\times n}}
\nonumber
\\
&\hspace{1em}+\|{\mathbb A}\|\Big(\|{\bf v}_{\bs\varphi}^+\|_{H^1(\Omega ^+)^n}
+\|{\bf v}_{\bs\varphi}^-\|_{H^1(\Omega ^-)^n}\Big)
\|\nabla {\bf w}\|_{L^2(\Omega )^{n\times n}}\nonumber\\
&\hspace{1em}+ c_0{\gr c_1^*}\Big(\|{\bf v}_{\bs\varphi}^+\|^2_{H^1(\Omega ^+)^n}+
\|{\bf v}_{\bs\varphi}^-\|^2_{H^1(\Omega ^-)^n}\Big)\|\nabla {\bf w}\|_{L^2(\Omega )^{n\times n}}\nonumber\\
&\hspace{1em} +c_0{\gr c_1^*}\Big(\|{\bf v}_{\bs\varphi}^+\|_{H^1(\Omega ^+)^n}
+\|{\bf v}_{\bs\varphi}^-\|_{H^1(\Omega ^+)^n}\Big)\|\nabla {\bf w}\|^2_{L^2(\Omega )^{n\times n}}\nonumber\\
&\hspace{1em}+c_0^2\Big(\|{\bf v}_{\bs\varphi}^+\|_{H^1(\Omega ^+)^n}
+\|{\bf v}_{\bs\varphi}^-\|_{H^1(\Omega ^+)^n}\Big)\|\nabla{\bf w}\|_{L^2(\Omega )^{n\times n}}^2\nonumber\\
&
\leq
|\!|\!|\bs{\mathfrak F}|\!|\!|_{H^{-1}(\Omega )^n}\|\nabla {\bf w}\|_{L^2(\Omega )^{n\times n}}
+\|{\mathbb A}\|\,C_{\gr\Sigma}\,\|(g^+,g^-,{\boldsymbol \varphi}_{_{\Sigma }},{\boldsymbol \varphi})\|_{\mathcal M_\bullet}
\|\nabla {\bf w}\|_{L^2(\Omega )^{n\times n}}
\nonumber\\
&\hspace{1em}
+2c_0 c_1^*C^2_{\Sigma}\|(g^+,g^-,{\boldsymbol \varphi}_{_{\Sigma }},{\boldsymbol \varphi})\|_{\mathcal M_\bullet}^2
\|\nabla {\bf w}\|_{L^2(\Omega )^{n\times n}}
\nonumber\\
&\hspace{1em}
+2{(c_0 c_1^*+c_0^2)}C_{\Sigma}\|(g^+,g^-,{\boldsymbol \varphi}_{_{\Sigma }},{\boldsymbol \varphi})\|_{\mathcal M_\bullet}
\|\nabla {\bf w}\|_{L^2(\Omega )^{n\times n}}^2
\label{E7.44}
\end{align}
Therefore, we obtain the estimate
\begin{align}
\label{weak-D-1T}
\hspace{-1em}&\left(\frac{1}{2}C_{\mathbb A}^{-1}
-2c_0(c_1^*+c_0)C_{\Sigma}\|(g^+,g^-,{\boldsymbol \varphi}_{_{\Sigma }},{\boldsymbol \varphi})\|_{\mathcal M_\bullet}\right)\|\nabla {\bf w}\|_{L^2(\Omega )^{n\times n}}
\\
\hspace{-1em}&
\le
|\!|\!|\bs{\mathfrak F}|\!|\!|_{H^{-1}(\Omega )^n}
+ C_{\Sigma}\|{\mathbb A}\|  \|(g^+,g^-,{\boldsymbol \varphi}_{_{\Sigma }},{\boldsymbol \varphi})\|_{\mathcal M_\bullet}
\!+\!2c_0c_1^*C^2_{\gr\Sigma}\|(g^+,g^-,{\boldsymbol \varphi}_{_{\Sigma }},{\boldsymbol \varphi})\|_{\mathcal M_\bullet}^2.\nonumber
\end{align}
In view of assumption \eqref{assumption-data}, estimate \eqref{weak-D-1T} can be written in the form
\begin{align}
\label{weak-D-2}
&\hspace{-0.5em}\|\nabla {\bf w}\|_{L^2(\Omega )^{n\times n}}\leq
\nonumber\\
&\frac{|\!|\!|\bs{\mathfrak F}|\!|\!|_{H^{-1}(\Omega )^n}
+C_{\Sigma}\|{\mathbb A}\|  \|(g^+,g^-,{\boldsymbol \varphi}_{_{\Sigma }},{\boldsymbol \varphi})\|_{\mathcal M_\bullet}
+2c_0c_1^*C^2_{\Sigma}\|(g^+,g^-,{\boldsymbol \varphi}_{_{\Sigma }},{\boldsymbol \varphi})\|_{\mathcal M_\bullet}^2}
{\frac{1}{2}C_{\mathbb A}^{-1}
-2c_0(c_1^*+c_0)C_{\Sigma}\|(g^+,g^-,{\boldsymbol \varphi}_{_{\Sigma }},{\boldsymbol \varphi})\|_{\mathcal M_\bullet}}\,,
\end{align}
that is, $\|{\bf w}\|_{{H}^1(\Omega )^n}\le M_0$, where $M_0$ is given by the right hand side of \eqref{weak-D-2}
multiplied by the equivalence constant $\mathring C$ from \eqref{norm-ineq}.

Therefore, the operator ${\rd \mathbf U}:\mathring{H}_{{\rm{div}}}^1(\Omega )^n\to \mathring H_{{\rm{div}}}^1(\Omega )^n$ given by \eqref{NT} satisfies the hypothesis of Theorem \ref{L-S-fixed} (for $\mathcal X\!=\!\mathring{H}_{\rm{div}}^1(\Omega )^n$), and hence {\rd it has a fixed point ${\mathbf u_0}\!\in \!\mathring{H}_{\rm{div}}^1(\Omega )^n$, that is,} ${\mathbf u_0}={\rd \mathbf U}{\mathbf u_0}$. Then with $\pi \!\in \!L^2(\Omega )/{\mathbb R}$ as in \eqref{transmission-linear-NS-a-equiv-2PTpi}, the couple $(\mathbf u_0, \pi )\!\in \!\mathring H_{{\rm{div}}}^1(\Omega )^n\times L^2(\Omega )/{\mathbb R}$ satisfies the nonlinear equation \eqref{transmission-linear-NS-a-equiv-1T}.
Consequently, the couples $(\mathbf u_0^+ + {\bf v}_{\bs\varphi}^+, \pi ^+ ,\mathbf u_0^- + {\bf v}_{\bs\varphi}^-, \pi ^- )\in{\mathfrak X}_{\Omega^+,\Omega^-}$
provide a solution of the nonlinear Dirichlet-transmission \eqref{int-NS-DT-1} in the sense of relations \eqref{form-solutionT}. (Revall the equivalence between the nonlinear Dirichlet-transmission problem \eqref{transmission-linear-NS-a-equiv-2T} and the nonlinear equation \eqref{transmission-linear-NS-a-equiv-1T}.)

(ii) Let us assume that condition \eqref{assumption-data-1} holds. Then it is immediate that condition \eqref{assumption-data} holds as well, and, thus, the nonlinear Dirichlet-transmission problem \eqref{int-NS-DT-1} has at least one solution in the space ${\mathfrak X}_{\Omega^+,\Omega^-}$.

Now, assume that the nonlinear problem \eqref{int-NS-DT-1} has two solutions,\\
$({\bf u}^{(1)+},\pi^{(1)+}, {\bf u}^{(1)-},\pi^{(1)-})$ and
$({\bf u}^{(2)+},\pi^{(2)+},{\bf u}^{(2)-},\pi^{(2)-})$ in the space ${\mathfrak X}_{\Omega^+,\Omega^-}$.
Let us represent the velocities in $\Omega^\pm$ in the form
\begin{align}
\label{form-solution-new}
{\bf u}^{(i)\pm}={\bf v}_{\bf \bs\varphi}^\pm+{\bf u}_0^{(i)\pm},\ \ i=1,2,
\end{align}
where $({\bf v}_{\bs\varphi}^+,{\bf v}_{\bs\varphi}^-)\in H^1(\Omega ^+)^n\times H^1(\Omega ^-)^n$ satisfy relations \eqref{int-S-DT-1} and estimate \eqref{estimate-a1}, while
\begin{align}
\label{uNSi}
{\bf u}^{(i)}_0|_{\Omega^+}={\bf u}_0^{(i)+},\quad
{\bf u}^{(i)}_0|_{\Omega^-}={\bf u}_0^{(i)-},\quad
{\rd \pi }^{(i)}|_{\Omega^+} = \pi^{(i)+},\quad
{\rd \pi }^{(i)}|_{\Omega^-} = \pi^{(i)-}\,.
\end{align}
corresponds to the pairs $({\bf u}^{(i)}_0,\pi^{(i)})\in \mathring H^1(\Omega )^n\times L^2(\Omega)/{\mathbb R}$.

Let us also introduce the notations $\overline{\bf u}_0:={\bf u}_0^{(1)}-{\bf u}_0^{(2)}, \quad
\overline\pi:=\pi^{(1)}-\pi^{(2)}$,
$$
\overline{\bf u}^\pm:={\bf u}^{(1)\pm}-{\bf u}^{(2)\pm}=\overline{\bf u}_0|_{\Omega^\pm}, \quad
\overline\pi^\pm:=\pi^{(1)\pm}-\pi^{(2)\pm}=\overline\pi|_{\Omega^\pm}.
$$
Using \eqref{transmission-linear-NS-a-equiv-1T} and \eqref{FuphiT}, we obtain,
\begin{align}
\label{transmission-linear-NS-a-equiv-upm}
\bs{\mathcal L}(\overline{\bf u}_0,\overline\pi)&=
\mathring E_{\Omega^+}[({\bf u}^{(1)+}\cdot \nabla ){\bf u}^{(1)+}-({\bf u}^{(2)+}\cdot \nabla ){\bf u}^{(2)+}]\nonumber\\
&\quad +\mathring E_{\Omega^-}[({\bf u}^{(1)-}\cdot \nabla ){\bf u}^{(1)-}-({\bf u}^{(2)-}\cdot \nabla ){\bf u}^{(2)-}],
\end{align}
that is,
\begin{align*}
\bs{\mathcal L}(\overline{\bf u}_0,\overline\pi)&=
\overline{\bf u}_0\cdot (\nabla {\bf u}^{(1)}_0+\mathring E_{\Omega^+}\nabla {\bf v}_{\bf \bs\varphi}^+
+ \mathring E_{\Omega^-}\nabla {\bf v}_{\bf \bs\varphi}^-)\nonumber\\
&\quad
+ (({\bf u}^{(2)}_0 + \mathring E_{\Omega^+}{\bf v}_{\bf \bs\varphi}^+
+\mathring E_{\Omega^-}{\bf v}_{\bf \bs\varphi}^-)\cdot \nabla )\overline{\bf u}_0\  \mbox{ in } \Omega.
\end{align*}
This implies that
\begin{multline}
\label{NS-var-eq-int-unique-0pm}
\hspace{-1.2em}\big\langle a_{ij}^{\alpha \beta }E_{j\beta }(\overline{\bf u}_0),
E_{i\alpha }(\overline{\bf u}_0)\big\rangle _{\Omega}\!=\!
-\big\langle\big(\overline{\bf u}_0\cdot \nabla \big){\bf u}^{(1)}_0,\overline{\bf u}_0\big\rangle_{\Omega}
-\big\langle\big(\overline{\bf u}_0\cdot (\mathring E_{\Omega^+}\nabla {\bf v}_{\bf \bs\varphi}^+
+ \mathring E_{\Omega^-}\nabla {\bf v}_{\bf \bs\varphi}^-),\overline{\bf u}_0\big\rangle_{\Omega}\\
-\big\langle\big( ( \mathring E_{\Omega^+}{\bf v}_{\bf \bs\varphi}^+
+\mathring E_{\Omega^-}{\bf v}_{\bf \bs\varphi}^-)\cdot \nabla \big)\overline{\bf u}_0,\overline{\bf u}_0\big\rangle_{\Omega}
-\big\langle({\bf u}^{(2)}_0\cdot \nabla )\overline{\bf u}_0,\overline{\bf u}_0\big\rangle_{\Omega}.
\end{multline}
Moreover, identity \eqref{P-5a}
and the inclusion $\overline{\bf u}_0\in \mathring{H}_{\rm{div}}^1({\Omega })^n$ show that the last term in the right-hand side of \eqref{NS-var-eq-int-unique-0pm} equals zero.
Therefore,
inequality \eqref{a-1-v2-S} similar to \eqref{NS-var-eq-int-unique-2a} implies that
\begin{multline}
\label{P-11-unique0}
\frac{1}{2}C_{\mathbb A}^{-1}\|\nabla \overline{\bf u}_0\|_{L^2(\Omega )^{n\times n}}^2\,\leq
\big\langle a_{ij}^{\alpha \beta }E_{j\beta }(\overline{\bf u}_0),
E_{i\alpha }(\overline{\bf u}_0)\big\rangle _{\Omega }\\
\shoveleft{\le\big|\big\langle\big(\overline{\bf u}_0\cdot \nabla \big){\bf u}^{(1)}_0,\overline{\bf u}_0\big\rangle_{\Omega}\big|
+\big|\big\langle\big(\overline{\bf u}_0\cdot (\mathring E_{\Omega^+}\nabla {\bf v}_{\bf \bs\varphi}^+
+ \mathring E_{\Omega^-}\nabla {\bf v}_{\bf \bs\varphi}^-),\overline{\bf u}_0\big\rangle_{\Omega}\big|}\\
\shoveright{+\big|\big\langle\big( ( \mathring E_{\Omega^+}{\bf v}_{\bf \bs\varphi}^+
+\mathring E_{\Omega^-}{\bf v}_{\bf \bs\varphi}^-)\cdot \nabla \big)\overline{\bf u}_0,\overline{\bf u}_0\big\rangle_{\Omega}\big|}\\
\shoveleft{\le  c_0^2\|\nabla {\bf u}_0^{(1)}\|_{L^2(\Omega )^{n\times n}}}\|\nabla \overline{\bf u}_0\|_{L^2(\Omega )^{n\times n}}^2\\
+ 2c_0(c_1^*+c_0)C_\Sigma\|(g^+,g^-,{\boldsymbol \varphi}_{_{\Sigma }},{\boldsymbol \varphi})\|_{\mathcal M_\bullet}
\|\nabla \overline{\bf u}_0\|_{L^2(\Omega )^{n\times n}}^2.
\end{multline}
Employing in \eqref{P-11-unique0} estimate \eqref{weak-D-2} for ${\bf w}={\bf u}_0^{(1)}$,  after some simplifications we obtain
\begin{multline}
\left(\frac{1}{2}C_{\mathbb A}^{-1}
-2c_0(c_1^*+c_0)C_\Sigma\|(g^+,g^-,{\boldsymbol \varphi}_{_{\Sigma }},{\boldsymbol \varphi})\|_{\mathcal M_\bullet}\right)^2
\|\nabla \overline{\bf u}_0\|^2_{L^2(\Omega )^{n\times n}}
\\
\shoveleft{\le c_0^2\Big({|\!|\!|\bs{\mathfrak F}|\!|\!|}_{H^{-1}(\Omega )^n}
+C_{\Sigma}\|{\mathbb A}\|
\|(g^+,g^-,{\boldsymbol \varphi}_{_{\Sigma }},{\boldsymbol \varphi})\|_{\mathcal M_\bullet}}
\\
+ 2c_0c_1^*C^2_{\Sigma}{\|(g^+,g^-,{\boldsymbol \varphi}_{_{\Sigma }},{\boldsymbol \varphi})\|_{\mathcal M_\bullet}^2}\Big)
\|\nabla \overline{\bf u}_0\|^2_{L^2(\Omega )^{n\times n}}\,,
\end{multline}
which implies
\begin{multline*}
\frac{1}{4}C_{\mathbb A}^{-2}\|\nabla \overline{\bf u}_0\|_{L^2(\Omega )^{n\times n}}^2
\leq
\Big(c_0^2|\!|\!|\bs{\mathfrak F}|\!|\!|_{{H}^{-1}(\Omega )^n}
+c_0^2C_\Sigma\|{\mathbb A}\|\, \|(g^+,g^-,{\boldsymbol \varphi}_{_{\Sigma }},{\boldsymbol \varphi})\|_{\mathcal M_\bullet}
\\
+2C_{\mathbb A}^{-1}c_0(c_1^*+c_0)C_\Sigma
\|(g^+,g^-,{\boldsymbol \varphi}_{_{\Sigma }},{\boldsymbol \varphi})\|_{\mathcal M_\bullet}\\
\shoveright{-c_0^2\big(4(c_1^*+c_0)^2-2c_0c_1^*\big)C^2_{\Sigma}
\|(g^+,g^-,{\boldsymbol \varphi}_{_{\Sigma }},{\boldsymbol \varphi})\|_{\mathcal M_\bullet}^2
\Big)
\|\nabla \overline{\bf u}_0\|_{L^2(\Omega )^{n\times n}}^2}
\\
\shoveleft{\le\Big(c_0^2|\!|\!|\bs{\mathfrak F}|\!|\!|_{{H}^{-1}(\Omega )^n}
+c_0^2C_\Sigma\|{\mathbb A}\|\, \|(g^+,g^-,{\boldsymbol \varphi}_{_{\Sigma }},{\boldsymbol \varphi})\|_{\mathcal M_\bullet}}
\\
+2C_{\mathbb A}^{-1}c_0(c_1^*+c_0)C_\Sigma
\|(g^+,g^-,{\boldsymbol \varphi}_{_{\Sigma }},{\boldsymbol \varphi})\|_{\mathcal M_\bullet}
\Big)
\|\nabla \overline{\bf u}_0\|_{L^2(\Omega )^{n\times n}}^2\,,\nonumber
\end{multline*}
In view of condition \eqref{assumption-data-1}, this is possible only if
$$\|\nabla \overline{\bf u}_0\|_{L^2(\Omega )^{n\times n}}^2=\|\nabla ({\bf u}_0^{(1)}-{\bf u}_0^{(2)})\|_{L^2(\Omega )^{n\times n}}=0.$$
Hence, ${\bf u}_0^{(1)}={\bf u}_0^{(2)}$ in $\Omega $ and  relations \eqref{form-solution-new} imply that
${{\bf u}^{(1)\pm}}={{\bf u}^{(2)\pm}}$ in $\Omega ^\pm $.
Finally,
equation \eqref{transmission-linear-NS-a-equiv-upm} leads to $\nabla\overline\pi=0$, i.e., $\pi ^{(1)}=\pi ^{(2)}$ in $L^2(\Omega )/{\mathbb R}$.
\qed
\end{proof}

\appendix
{
{\section*{APPENDIX}
}

\section{\bf Trace and generalized conormal derivative for the anisotropic Stokes system in a Lipschitz domain}\label{A-GCN}

Let $\Omega \subset \mathbb R^n$ be a bounded Lipschitz domain in ${\mathbb R}^n$.
The following trace theorem holds (see \cite{Co}, \cite[Lemma 2.6]{Mikh}, \cite[Theorem 2.5.2]{M-W}).
\begin{theorem}\label{trace-operator1}
Then there exists a linear, bounded trace operator $\gamma_{_{\Omega }}:H^1(\Omega )\to H^{\frac{1}{2}}(\partial\Omega )$ such that $\gamma_{_{\Omega }}f=f_{|\partial\Omega }$ for any $f\in C^{\infty }(\overline{\Omega })$.
The trace operator $\gamma _{_{\Omega }}$ is surjective and has a $($non-unique$)$ linear and bounded right inverse operator $\gamma ^{-1}_{_\Omega }:H^{\frac{1}{2}}(\partial\Omega )\to H^1(\Omega ).$ The trace operator
$\gamma :{H}^1({\mathbb R}^n)\to H^{\frac{1}{2}}(\partial \Omega )$ can also be considered and it is linear and bounded with a bounded linear right inverse $\gamma ^{-1}:H^{\frac{1}{2}}(\partial \Omega )\to {H}^1({\mathbb R}^n)$\footnote{The trace operators defined on Sobolev spaces of vector fields on $\Omega $, or ${\mathbb R}^n$ are also denoted by $\gamma_{_\Omega }$ and $\gamma $, respectively.}.
\end{theorem}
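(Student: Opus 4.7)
The plan is to prove Theorem~\ref{trace-operator1} by the standard localization-and-flattening approach, reducing to the half-space model and then transferring the estimates back via bi-Lipschitz charts. First, I would cover $\partial\Omega$ by finitely many open sets $U_k \subset \mathbb{R}^n$ such that, after a suitable rotation, $\Omega \cap U_k$ coincides with the hypograph of a Lipschitz function, and choose a smooth partition of unity $\{\chi_k\}$ subordinate to $\{U_k\}$ together with one extra cutoff supported in $\Omega$. For each boundary patch, a bi-Lipschitz change of variables straightens $\partial\Omega\cap U_k$ to a piece of $\{x_n=0\}$, so it suffices to establish the inequality
\begin{equation}
\label{planE1}
\|u(\cdot,0)\|_{H^{1/2}(\mathbb{R}^{n-1})} \le C\,\|u\|_{H^1(\mathbb{R}^n_+)},\quad u\in C_c^\infty(\overline{\mathbb{R}^n_+}).
\end{equation}

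For \eqref{planE1} I would first define $\gamma u := u(\cdot,0)$ on $C_c^\infty(\overline{\mathbb{R}^n_+})$. Using the identity $|u(x',0)|^2 = -\int_0^\infty \partial_n|u(x',x_n)|^2\,dx_n$ combined with Plancherel in the tangential variable $x'$ and Cauchy–Schwarz in $x_n$, one obtains
\begin{equation*}
\int_{\mathbb{R}^{n-1}}(1+|\xi'|^2)^{1/2}|\widehat{\gamma u}(\xi')|^2\,d\xi'
\le C\int_{\mathbb{R}^n_+}\bigl(|u|^2+|\nabla u|^2\bigr)\,dx,
\end{equation*}
which yields \eqref{planE1}. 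Density of $C^\infty(\overline{\Omega})$ in $H^1(\Omega)$ (valid on Lipschitz domains) then allows extension of $\gamma_{_\Omega}$ to a bounded linear operator $H^1(\Omega)\to H^{1/2}(\partial\Omega)$ after transferring back via the charts and exploiting the equivalence of the chart-pulled $H^{1/2}$ norm with the intrinsic Slobodeckij norm \eqref{Sobolev-boundary}.

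For surjectivity and the bounded right inverse $\gamma_{_\Omega}^{-1}$, I would construct an extension operator in the model case by a Fourier-based formula: for $\varphi \in H^{1/2}(\mathbb{R}^{n-1})$, set
\begin{equation*}
(E\varphi)(x',x_n) := \mathcal{F}^{-1}_{\xi'\to x'}\bigl(\chi(x_n\langle\xi'\rangle)\,\widehat{\varphi}(\xi')\bigr),
\end{equation*}
with $\chi\in C_c^\infty([0,\infty))$, $\chi(0)=1$, and verify via Plancherel that $\|E\varphi\|_{H^1(\mathbb{R}^n_+)}\le C\|\varphi\|_{H^{1/2}(\mathbb{R}^{n-1})}$ with $\gamma E\varphi=\varphi$. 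Gluing the local extensions through the partition of unity produces the global right inverse $\gamma_{_\Omega}^{-1}$. For the trace on $H^1(\mathbb{R}^n)$, I would compose $\gamma_{_\Omega}$ with the (bounded) restriction $H^1(\mathbb{R}^n)\to H^1(\Omega)$; a bounded right inverse is then obtained by composing $\gamma_{_\Omega}^{-1}$ with any Stein-type extension operator $H^1(\Omega)\to H^1(\mathbb{R}^n)$, which exists on bounded Lipschitz domains.

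The main obstacle is the careful handling of the bi-Lipschitz change of variables: such maps preserve $H^1$ but not higher smoothness, so one must check that the Slobodeckij seminorm on $\partial\Omega$ in \eqref{Sobolev-boundary} is truly equivalent under the pullback (including the surface-measure Jacobian, which is merely $L^\infty$), and that the densities and partition-of-unity arguments do not lose regularity. Since all of these steps are developed in detail in \cite{Co,Mikh,M-W}, I would invoke those references for the technical verifications and present only the structural outline above.
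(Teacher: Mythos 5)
Your proposal outlines the standard localization-and-flattening proof, which is precisely what the sources the paper cites (\cite{Co}, \cite[Lemma 2.6]{Mikh}, \cite[Theorem 2.5.2]{M-W}) carry out: the paper itself states Theorem~\ref{trace-operator1} without proof and defers to those references. Your sketch is correct and matches that standard route: cover and flatten via bi-Lipschitz charts, establish the half-space trace inequality by Plancherel in the tangential variable and a one-dimensional fundamental-theorem-of-calculus argument, obtain the right inverse by the Poisson-type extension $\mathcal{F}^{-1}_{\xi'\to x'}\bigl(\chi(x_n\langle\xi'\rangle)\widehat{\varphi}(\xi')\bigr)$, glue with a partition of unity, and pass to $\gamma$ on $H^1(\mathbb{R}^n)$ by composing with restriction/Stein extension. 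You have also identified correctly where the real technical work lies on a Lipschitz domain (the equivalence of the chart-pulled $H^{1/2}$ norm with the intrinsic Slobodeckij norm \eqref{Sobolev-boundary}, with only an $L^\infty$ Jacobian), and it is reasonable to defer those verifications to the cited references just as the paper does. No gap.
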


Let $\boldsymbol{\mathfrak L}$ be the divergence form second-order elliptic differential operator given by \eqref{Stokes-0}. The coefficients $A^{\alpha \beta }$ of the anisotropic tensor ${\mathbb A}=\left(A^{\alpha \beta }\right)_{1\leq \alpha ,\beta \leq n}$ are $n\times n$ matrix-valued functions in $L^\infty({\mathbb R}^n)^{n\times n}$, with bounded measurable, real-valued entries $a_{ij}^{\alpha \beta }$, satisfying the symmetry and ellipticity conditions \eqref{Stokes-sym} and \eqref{mu}. Let $\boldsymbol{\mathcal L}$ denote the Stokes operator defined in \eqref{Stokes-new}
and let us consider the anisotropic Stokes system \eqref{Stokes}, i.e.,
\begin{equation}
\label{Stokes-new-1}
\boldsymbol{\mathcal L}({\bf u},\pi )
={\bf f}
,\ {\rm{div}}\ {\bf u}=g \mbox{ in } \Omega
\end{equation}
for $({\bf u},\pi )\in{{H}^1(\Omega )^n}\times  L^2(\Omega )$,
with $\mathbf f \in {H}^{-1}(\Omega )^n$ and $g\in L^2(\Omega )$.
System \eqref{Stokes-new-1} is understood in the sense of distribution, i.e.,
\begin{align}
\label{Stokes-dist}
\left\{\begin{array}{ll}
a_{{\mathbb A};\Omega }({\bf u},{\bf v})+b_{\Omega }({\bf v},\pi )=-\langle \mathbf f,{\bf v}\rangle _{\Omega }\,,
& \forall \, {\bf v}\in \mathcal D(\Omega )^n,\\
b_{\Omega }({\bf u},q)=-\langle g,q\rangle _{\Omega }\,, & \forall \, q\in \mathcal D(\Omega ),
\end{array}
\right.
\end{align}
where
\begin{align}
\label{a-v-A}
a_{{\mathbb A};\Omega }({\bf u},{\bf v})&:
=\left\langle a_{ij}^{\alpha \beta }E_{j\beta }({\bf u}),E_{i\alpha }({\bf v})\right\rangle _{\Omega }
=\left\langle A^{\alpha \beta }\partial _\beta ({\bf u}),\partial _\alpha ({\bf v})\right\rangle _{\Omega },\\
\label{b-v-A}
b_{\Omega }({\bf v},q)&:=-\langle {\rm{div}}\, {\bf v},q\rangle _{\Omega },
\end{align}
The space $\mathcal D(\Omega )^n$ is dense in $ \mathring{H}^1(\Omega )^n$ and $ \mathcal D(\Omega )$ is dense in $L^2(\Omega)$, while the bilinear forms $a_{{\mathbb A};\Omega }:{H}^1(\Omega )^n\times \mathring{H}^{1}(\Omega )^n\to {\mathbb R}$ and $b_{\Omega }: {H}^1(\Omega )^n\times L^2(\Omega )\to {\mathbb R}$,  defined by \eqref{a-v-A} and \eqref{b-v-A}, are bounded.
Hence \eqref{Stokes-dist} implies the following weak formulation of the Stokes system,
\begin{align}
\label{Stokes-weak}
\left\{\begin{array}{ll}
a_{{\mathbb A};\Omega }({\bf u},{\bf v})+b_{\Omega }({\bf v},\pi )=-\langle \mathbf f,{\bf v}\rangle _{\Omega }\,,
& \forall \, {\bf v}\in \mathring{H}^1(\Omega )^n,\\
b_{\Omega }({\bf u},q)=-\langle g,q\rangle _{\Omega }\,, & \forall \, q\in L^2(\Omega ).
\end{array}
\right.
\end{align}
The first equation in \eqref{Stokes-weak} can be considered as the Green identity, as follows.
\begin{lemma}
\label{GI-H0}
Let $\Omega $ be a bounded Lipschitz domain in ${\mathbb R}^n$, $n\geq 2$,
and let conditions \eqref{Stokes-1}, \eqref{Stokes-sym} be satisfied.
Then
the following first Green identity holds
\begin{align}
\label{Green-H0}
\big\langle a_{ij}^{\alpha \beta }E_{j\beta }({\bf u}),E_{i\alpha }({\bf w})\big\rangle _{\Omega }
-\langle {\pi },{\rm{div}}\, {\bf w} \rangle _{\Omega }+\langle {\bf f},{\bf w}\rangle _{{\Omega }}&=0
\end{align}
for all ${\bf w}\in \mathring{H}^{1}({\Omega })^n$,  ${\bf u}\in {H}^{1}({\Omega })^n$ and
$\mathbf f \in {H}^{-1}(\Omega )^n$ such that $\boldsymbol{\mathcal L}({\bf u},\pi )={\bf f}$  in $\Omega$.
\end{lemma}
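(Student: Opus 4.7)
\medskip
\noindent\textbf{Proof plan.}
My plan is to derive \eqref{Green-H0} by first validating it on the dense subspace $\mathcal D(\Omega)^n\subset\mathring H^1(\Omega)^n$ via integration by parts, and then extending by continuity. The anchor is the distributional equation $\boldsymbol{\mathcal L}(\mathbf u,\pi)=\mathbf f$ in $\Omega$, which by definition means that for every test vector field $\mathbf v\in\mathcal D(\Omega)^n$,
\begin{equation*}
\langle \mathbf f,\mathbf v\rangle_\Omega
=\langle \boldsymbol{\mathcal L}(\mathbf u,\pi),\mathbf v\rangle_\Omega
=\sum_i\big\langle \partial_\alpha(a_{ij}^{\alpha\beta}E_{j\beta}(\mathbf u))-\partial_i\pi,\, v_i\big\rangle_\Omega,
\end{equation*}
where I have used the form of $\boldsymbol{\mathcal L}$ given in \eqref{Stokes-new} together with \eqref{L-oper}.

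For $\mathbf v\in\mathcal D(\Omega)^n$, the distributional derivatives can be shifted onto $\mathbf v$ since $\mathbf v$ has compact support in $\Omega$ and no boundary terms appear. Using $\partial_\alpha v_i\in L^2(\Omega)$, the $L^\infty$-boundedness of the coefficients in \eqref{Stokes-1}, and the symmetry relations \eqref{Stokes-sym} to symmetrize $\partial_\alpha v_i$ into $E_{i\alpha}(\mathbf v)$ in the first term, I obtain
\begin{equation*}
\langle \mathbf f,\mathbf v\rangle_\Omega
=-\big\langle a_{ij}^{\alpha\beta}E_{j\beta}(\mathbf u),E_{i\alpha}(\mathbf v)\big\rangle_\Omega
+\langle \pi,\mathrm{div}\,\mathbf v\rangle_\Omega
=-a_{\mathbb A;\Omega}(\mathbf u,\mathbf v)+\langle\pi,\mathrm{div}\,\mathbf v\rangle_\Omega,
\end{equation*}
which is exactly the first line of \eqref{Stokes-weak} (equivalently \eqref{Green-H0}) for $\mathbf v\in\mathcal D(\Omega)^n$.

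To pass from $\mathcal D(\Omega)^n$ to $\mathring H^1(\Omega)^n$, I would invoke two ingredients already recorded just before the lemma: the density of $\mathcal D(\Omega)^n$ in $\mathring H^1(\Omega)^n$, and the boundedness of the bilinear forms
$a_{\mathbb A;\Omega}:H^1(\Omega)^n\times\mathring H^1(\Omega)^n\to\mathbb R$
and $b_\Omega:H^1(\Omega)^n\times L^2(\Omega)\to\mathbb R$, which follows from \eqref{Stokes-1} and the Cauchy--Schwarz inequality. Since $\mathbf f\in H^{-1}(\Omega)^n=(\mathring H^1(\Omega)^n)'$, the functional $\mathbf w\mapsto\langle\mathbf f,\mathbf w\rangle_\Omega$ is continuous on $\mathring H^1(\Omega)^n$. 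Therefore every term in the identity
$a_{\mathbb A;\Omega}(\mathbf u,\mathbf w)-\langle\pi,\mathrm{div}\,\mathbf w\rangle_\Omega+\langle\mathbf f,\mathbf w\rangle_\Omega=0$
is a continuous linear functional of $\mathbf w\in\mathring H^1(\Omega)^n$ for fixed $(\mathbf u,\pi,\mathbf f)$, and the identity extends from the dense subspace $\mathcal D(\Omega)^n$ to all of $\mathring H^1(\Omega)^n$.

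The only non-routine point is the symmetrization step in the middle paragraph: writing $a_{ij}^{\alpha\beta}\partial_\beta u_j\,\partial_\alpha v_i=a_{ij}^{\alpha\beta}E_{j\beta}(\mathbf u)\,E_{i\alpha}(\mathbf v)$ requires using both symmetries in \eqref{Stokes-sym} (the index pair $(i,\alpha)\leftrightarrow(\alpha,i)$ to symmetrize in $(i,\alpha)$ and $(j,\beta)\leftrightarrow(\beta,j)$ in $(j,\beta)$), so I should record this symmetrization explicitly. Everything else is a routine density/continuity argument.
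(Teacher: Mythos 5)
Your argument is correct and is essentially the approach the paper takes (the paper embeds the reasoning in the discussion just before the lemma rather than in a separate proof block): the distributional meaning of $\boldsymbol{\mathcal L}({\bf u},\pi)={\bf f}$ against ${\bf v}\in\mathcal D(\Omega)^n$ is precisely \eqref{Green-H0} restricted to test functions, and one then passes to $\mathring{H}^1(\Omega)^n$ by density of $\mathcal D(\Omega)^n$ and continuity of $a_{\mathbb A;\Omega}$, $b_\Omega$ and $\langle{\bf f},\cdot\rangle_\Omega$. Your note on the symmetrization step, using both relations in \eqref{Stokes-sym} to replace $\partial_\alpha v_i$ by $E_{i\alpha}({\bf v})$, is exactly the content of the second equality in \eqref{a-v-A} and is worth recording explicitly, as you did.
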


Let $\boldsymbol\nu =(\nu _1,\ldots ,\nu _n)^\top$ denote the outward unit normal to $\Omega $, which is defined a.e. on the boundary {\bl $\partial {\Omega }$} of $\Omega $.
In the special case, when $({\bf u},\pi)\!\in \!C^1(\overline {\Omega })^n\times C^0(\overline{\Omega })$, the {\em classical}
conormal derivative (i.e., the {\it boundary traction field}) for the anisotropic Stokes system \eqref{Stokes-new-1}
where ${\bf f}\in L^2(\Omega )^n$ and $g\in L^2(\Omega )$, is defined by the formula
\begin{align}
\label{2.37-}
{{\bf t}^{{\rm{c}}}_{\Omega }}({\bf u},\pi ):
=-({\gamma_{_{\Omega }}}\pi)\boldsymbol\nu
+\gamma _{_{\Omega  }}\big(a_{\cdot j}^{\alpha \beta }E_{j\beta }({\bf u})\big)\nu _\alpha
=-({\gamma_{_{\Omega }}}\pi)\boldsymbol\nu
+{\gamma _{_{\Omega }}(A^{\alpha \beta }\partial _\beta {\bf u})\nu _\alpha }
\end{align}
where $E_{j\beta }({\bf u}):=\frac{1}{2}\left(\partial _j(u_\beta )+\partial _\beta (u_j)\right)$ and the symmetry conditions \eqref{Stokes-sym} show that\\
$
\gamma _{_{\Omega  }}\big(a_{ij}^{\alpha \beta }\partial _\beta (u_j)\big)\nu _\alpha
=\gamma _{_{\Omega  }}\big(a_{ij}^{\alpha \beta }E_{j\beta }({\bf u})\big)\nu _\alpha \,.
$
Then the following {\it first Green identity} holds
\begin{multline}
\label{special-v}
{\left\langle {\bf t}^{{\rm{c}}}_{\Omega }({\bf u},\pi ),
{\bf v}\right\rangle _{_{\!\partial\Omega }}}
=\left\langle a_{ij}^{\alpha \beta}E_{j\beta}({\bf u}),E_{i\alpha}({\bf v} )\right\rangle _{\Omega }
-\langle \pi,{\rm{div}}\, {\bf v}\rangle _{\Omega }
+\left\langle \bs{\mathcal L}({\bf u},\pi ),{\bf v}\right\rangle _{{\Omega }},\,
\forall \, {\bf v} \in {\mathcal D}({\mathbb R}^n)^n\,.
\end{multline}

As in \cite[Definition 2.2]{KMW-DCDS2021} and \cite[Definition 1]{KMW-LP}, formula \eqref{special-v} suggests the following definition
(cf. also \cite[Lemma 3.2]{Co}, 
\cite[Definition 3.1, Theorem 3.2]{Mikh}, \cite[Definition 2.4]{K-M-W-2}, \cite[Theorem 10.4.1]{M-W} for other equations).
\begin{defn}
\label{conormal-derivative-var-Brinkman}
Let conditions \eqref{Stokes-1} {\Bn and \eqref{Stokes-sym}} be satisfied and
\begin{align*}
{\pmb{H}}^1(\Omega ,\boldsymbol{\mathcal L})
:=\Big\{({\bf u},\pi ,{\tilde{\bf f}})\in {H}^1({\Omega })^n\times L^2({\Omega })\times \widetilde{H}^{-1}({\Omega  })^n:
\boldsymbol{\mathcal L}({\bf u},\pi )=\tilde{\bf f}|_{\Omega } \mbox{ in } {\Omega }\Big\}.
\end{align*}

If $({\bf u},\pi ,{\tilde{\bf f}})\in {\pmb{H}}^1({\Omega },{\boldsymbol{\mathcal L}})$, then the generalized conormal derivative ${{\bf t}_{\Omega }}({\bf u} ,\pi ;{\tilde{\bf f}})\in H^{-\frac{1}{2}}(\partial\Omega )^n$ is
defined in the weak form as
\begin{multline}
\label{conormal-derivative-var-Brinkman-3}
\big\langle {{\bf t}_{\Omega }}({\bf u} ,\pi ;{\tilde{\bf f}}),{\bs\Phi}\big\rangle _{_{\!\partial\Omega }}:=
\big\langle a_{ij}^{\alpha \beta}E_{j\beta}({\bf u}),E_{i\alpha} (\gamma^{-1}_{_{\Omega }}{\bs\Phi})\big\rangle _{\Omega }
-\big\langle {\pi },{\rm{div}}(\gamma^{-1}_{_{\Omega }}{\bs\Phi})\big\rangle _{\Omega }
\\
+\big\langle {\tilde{\bf f}} ,\gamma^{-1}_{_{\Omega }}{\bs\Phi}\big\rangle _{{\Omega }}, \ \forall\, {\bs\Phi}\!\in\!H^{\frac{1}{2}}(\partial\Omega )^n\,,
\end{multline}
where $\gamma^{-1}_{_{\Omega }}:H^{\frac{1}{2}}(\partial\Omega )^n\to {H}^{1}({\Omega })^n$ is a bounded right inverse of the trace operator $\gamma _{_{\Omega }}:{H}^{1}({\Omega })^n\to H^{\frac{1}{2}}(\partial\Omega )^n$.
\end{defn}
Thus, for given $({\bf u} ,\pi ;{\tilde{\bf f}} )\in {\pmb{ H}}^1({\Omega  },{\boldsymbol{\mathcal L}})$, we have ${{\bf t}_{\Omega }}({\bf u} ,\pi ;{\tilde{\bf f}})\in H^{-\frac{1}{2}}(\partial\Omega )^n$.
In addition, \cite[Lemma 2.3]{KMW-DCDS2021}, \cite[Lemma 1]{KMW-LP} imply the following property (see also \cite{Co},
\cite[Definition 3.1, Theorem 3.2]{Mikh}, \cite{Mikh-3}, \cite[Theorem 10.4.1]{M-W}).
\begin{lemma}
\label{lem-add1}
Let $\Omega $ be a bounded Lipschitz domain in ${\mathbb R}^n$, $n\geq 2$,
and let conditions \eqref{Stokes-1}, \eqref{Stokes-sym} hold.
Then the generalized conormal derivative operator ${{\bf t}_{{\Omega }}}:\pmb{H}^1(\Omega ,\boldsymbol{\mathcal L})\to H^{-\frac{1}{2}}(\partial \Omega )^n$ is linear and bounded, and definition \eqref{conormal-derivative-var-Brinkman-3} does not depend on the particular choice of a right inverse $\gamma _{_{\Omega  }}^{-1}:H^{\frac{1}{2}}(\partial \Omega )^n\!\to \!{H}^{1}(\Omega )^n$ of the trace operator $\gamma _{_{\Omega }}:{H}^{1}(\Omega )^n\to H^{\frac{1}{2}}(\partial \Omega )^n$. 
In addition, for all ${\bf w}\in {H}^{1}({\Omega })^n$ and
$({\bf u},\pi ,{\tilde{\bf f}})\in {\pmb{H}}^1({\Omega },{\boldsymbol{\mathcal L}})$,
the following first Green identity holds,
\begin{align}
\label{Green-particular-p}
\big\langle{{\bf t}_{\Omega }}({\bf u},\pi ;{\tilde{\bf f}}),\gamma_{_{\Omega }}{\bf w}\big\rangle _{_{\partial\Omega }}
&=\big\langle a_{ij}^{\alpha \beta }E_{j\beta }({\bf u}),E_{i\alpha }({\bf w})\big\rangle _{\Omega }-\langle {\pi },{\rm{div}}\, {\bf w} \rangle _{\Omega }+\langle {\tilde{\bf f}},{\bf w}\rangle _{{\Omega }}.
\end{align}
\end{lemma}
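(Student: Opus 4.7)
The plan is to reduce everything to the Green identity \eqref{Green-H0} of Lemma~\ref{GI-H0} applied to suitable functions in $\mathring H^1(\Omega)^n$, and to exploit the identification $\widetilde H^{-1}(\Omega)=(H^1(\Omega))'$ whenever we pair $\tilde{\mathbf f}$ with an arbitrary $H^1$-function.

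First I would verify linearity and boundedness of $\mathbf t_{\Omega}(\cdot,\cdot;\cdot)$ directly from formula \eqref{conormal-derivative-var-Brinkman-3}. Linearity in the triple $(\mathbf u,\pi,\tilde{\mathbf f})$ is obvious because each of the three terms on the right-hand side is linear in it. For boundedness, I estimate term-by-term: the first term is controlled by $\|\mathbb A\|\,\|\nabla\mathbf u\|_{L^2(\Omega)^{n\times n}}\|\nabla(\gamma_\Omega^{-1}\bs\Phi)\|_{L^2(\Omega)^{n\times n}}$ via H\"older together with \eqref{A}, the second by $\|\pi\|_{L^2(\Omega)}\|\mathrm{div}(\gamma_\Omega^{-1}\bs\Phi)\|_{L^2(\Omega)}$, and the third by $\|\tilde{\mathbf f}\|_{(H^1(\Omega)^n)'}\|\gamma_\Omega^{-1}\bs\Phi\|_{H^1(\Omega)^n}$. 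Using the continuity of $\gamma_\Omega^{-1}:H^{\frac12}(\partial\Omega)^n\to H^1(\Omega)^n$ (Theorem~\ref{trace-operator1}) yields
\begin{align*}
\big|\langle\mathbf t_\Omega(\mathbf u,\pi;\tilde{\mathbf f}),\bs\Phi\rangle_{\partial\Omega}\big|
\le C\big(\|\mathbf u\|_{H^1(\Omega)^n}+\|\pi\|_{L^2(\Omega)}+\|\tilde{\mathbf f}\|_{\widetilde H^{-1}(\Omega)^n}\big)\|\bs\Phi\|_{H^{\frac12}(\partial\Omega)^n},
\end{align*}
which proves boundedness into $H^{-\frac12}(\partial\Omega)^n$.

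Next I would prove independence of the right inverse and the Green identity \eqref{Green-particular-p} simultaneously. The key observation is that if $\gamma_1^{-1},\gamma_2^{-1}$ are two bounded right inverses of $\gamma_\Omega$, then for any $\bs\Phi\in H^{\frac12}(\partial\Omega)^n$ the difference $\mathbf z:=\gamma_1^{-1}\bs\Phi-\gamma_2^{-1}\bs\Phi$ has zero trace and hence lies in $\mathring H^1(\Omega)^n$. Because $\boldsymbol{\mathcal L}(\mathbf u,\pi)=\tilde{\mathbf f}|_\Omega$ in $\Omega$, Lemma~\ref{GI-H0} applied with $\mathbf w=\mathbf z$ (and noting that the pairings $\langle \tilde{\mathbf f},\mathbf z\rangle_\Omega$ and $\langle \tilde{\mathbf f}|_\Omega,\mathbf z\rangle_\Omega$ coincide on $\mathring H^1(\Omega)^n$) gives
\begin{align*}
\big\langle a_{ij}^{\alpha\beta}E_{j\beta}(\mathbf u),E_{i\alpha}(\mathbf z)\big\rangle_\Omega
-\langle\pi,\mathrm{div}\,\mathbf z\rangle_\Omega+\langle\tilde{\mathbf f},\mathbf z\rangle_\Omega=0,
\end{align*}
which is precisely the statement that the value of the right-hand side of \eqref{conormal-derivative-var-Brinkman-3} coincides for $\gamma_1^{-1}$ and $\gamma_2^{-1}$.

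Finally, for the Green identity \eqref{Green-particular-p}, given an arbitrary $\mathbf w\in H^1(\Omega)^n$ I would set $\mathbf w_0:=\mathbf w-\gamma_\Omega^{-1}\gamma_\Omega\mathbf w\in\mathring H^1(\Omega)^n$. Applying Lemma~\ref{GI-H0} to $\mathbf w_0$ gives
\begin{align*}
\big\langle a_{ij}^{\alpha\beta}E_{j\beta}(\mathbf u),E_{i\alpha}(\mathbf w_0)\big\rangle_\Omega
-\langle\pi,\mathrm{div}\,\mathbf w_0\rangle_\Omega+\langle\tilde{\mathbf f},\mathbf w_0\rangle_\Omega=0,
\end{align*}
and rearranging with $\mathbf w=\mathbf w_0+\gamma_\Omega^{-1}\gamma_\Omega\mathbf w$ together with Definition~\ref{conormal-derivative-var-Brinkman} applied to the test function $\bs\Phi=\gamma_\Omega\mathbf w$ yields exactly \eqref{Green-particular-p}.

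The only genuinely delicate point is the careful bookkeeping between $\tilde{\mathbf f}\in\widetilde H^{-1}(\Omega)^n=(H^1(\Omega)^n)'$ and its restriction $\tilde{\mathbf f}|_\Omega\in H^{-1}(\Omega)^n=(\widetilde H^{1}(\Omega)^n)'$: one must ensure that the pairings agree precisely for $\mathring H^1(\Omega)^n$-test fields (via \eqref{duality-spaces}) so that Lemma~\ref{GI-H0} applies; this is what makes the extension of the conormal-derivative definition from the classical case \eqref{2.37-} to the distributional case well-posed.
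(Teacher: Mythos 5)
Your argument is correct, and since the paper itself gives no proof of this lemma (it merely cites Lemma 2.3 of \cite{KMW-DCDS2021}, Lemma 1 of \cite{KMW-LP}, and further results in \cite{Co}, \cite{Mikh}, \cite{Mikh-3}, \cite{M-W}), the relevant comparison is with the standard argument in those sources — which is precisely what you reproduce.

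Three remarks on the details. First, your boundedness estimate is right: each of the three terms on the right of \eqref{conormal-derivative-var-Brinkman-3} is bounded by $C\|\gamma_\Omega^{-1}\bs\Phi\|_{H^1(\Omega)^n}\le C'\|\bs\Phi\|_{H^{1/2}(\partial\Omega)^n}$ times the norm of the corresponding component of $(\mathbf u,\pi,\tilde{\mathbf f})$ in $\pmb H^1(\Omega,\boldsymbol{\mathcal L})$, and this is the full content of the boundedness claim. Second, the observation that the difference of two right inverses lands in $\mathring H^1(\Omega)^n$, so that Lemma~\ref{GI-H0} forces the two candidate expressions for $\langle\mathbf t_\Omega(\mathbf u,\pi;\tilde{\mathbf f}),\bs\Phi\rangle_{\partial\Omega}$ to coincide, is exactly the argument used in \cite[Theorem 3.2]{Mikh} and in Lemma 2.3 of \cite{KMW-DCDS2021}; the same decomposition $\mathbf w=\mathbf w_0+\gamma_\Omega^{-1}\gamma_\Omega\mathbf w$ with $\mathbf w_0\in\mathring H^1(\Omega)^n$ then yields \eqref{Green-particular-p} with no additional work. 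Third, you correctly flag the only genuinely delicate point: Lemma~\ref{GI-H0} involves the pairing of $\tilde{\mathbf f}|_\Omega\in H^{-1}(\Omega)^n=(\widetilde H^1(\Omega)^n)'$ with test fields in $\mathring H^1(\Omega)^n$, whereas \eqref{conormal-derivative-var-Brinkman-3} uses $\tilde{\mathbf f}\in\widetilde H^{-1}(\Omega)^n=(H^1(\Omega)^n)'$. These two pairings agree on $\mathring H^1(\Omega)^n$ because the restriction operator $\widetilde H^{-1}(\Omega)^n\to H^{-1}(\Omega)^n$ is the adjoint of the inclusion $\mathring H^1(\Omega)^n\cong\widetilde H^1(\Omega)^n\hookrightarrow H^1(\Omega)^n$, cf.~\eqref{duality-spaces}; making this explicit is exactly what justifies passing from the weak formulation in $\mathring H^1(\Omega)^n$ to the conormal-derivative formula on $H^1(\Omega)^n$.

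So the proposal is complete and matches the argument the cited references use; there is no gap.
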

We also adopt the simplified notation ${{\bf t}_{{\Omega }}}({\bf u},\pi )$ for ${\bf t}_{{\Omega}}({\bf u},\pi ;{\bf 0})$.

\section{Extension results for Sobolev spaces on Lipschitz domains with Lipschitz interfaces}

Let $\Omega \subset \mathbb R^n$, $n\geq 2,$ be a bounded Lipschitz domain satisfying Assumption \ref{interface-Sigma}.
Thus, $\Omega =\Omega ^+\cup \Sigma \Omega ^-$, where $\Sigma $ is the $(n-1)$-dimensional Lipschitz interface between the disjoint Lipschitz sub-domains $\Omega ^+$ and $\Omega ^-$, and $\Sigma $
meets transversally $\partial \Omega $.
The boundary $\partial \Omega ^\pm $ of $\Omega ^\pm $ is partitioned into {two relatively open subsets} $\Gamma ^\pm $ and $\Sigma $, and $\Gamma ^+$ and $\Gamma ^-$ are not empty. Let $\gamma _{_{\Omega ^\pm }}$ be the trace operator from ${H}^{1}(\Omega ^\pm )$ to $H^{\frac{1}{2}}(\partial \Omega ^\pm)$.

The proof of the following extension property is based on similar arguments to those for Theorem 5.13 in \cite{B-M-M-M} (see also Lemma C.1 in \cite{KMW-LP}). 
We omit the details for the shortness.
\begin{lemma}
\label{extention}
The following assertions hold.
\begin{itemize}
\item[$(i)$]
Let $u^+\in H^1(\Omega ^+)$ and $u^-\in {H}^1(\Omega ^-)$ be such that
$\gamma _{_{\Omega ^+}}u^+\!=\!\gamma _{_{\Omega ^-}}u^-$ on $\Sigma $.
Then there exists a unique function $u\in H^1(\Omega )$ such that
$
{u|_{\Omega^\pm}\!=\!u^\pm}.
$
Moreover, there exists $C\!=\!C(n,\Omega ^\pm )>0$ such that
$\|u\|_{{H}^1(\Omega )}\leq C\left(\|u^+\|_{H^1(\Omega ^+)}+\|u^-\|_{{H}^1(\Omega ^-)}\right).$
\item[$(ii)$]
If $u\in H^1(\Omega )$ then $\gamma _{_{\Omega ^+}}(u|_{\Omega ^+})=\gamma _{_{\Omega ^-}}(u|_{\Omega ^-})$ on $\Sigma $.
\end{itemize}
\end{lemma}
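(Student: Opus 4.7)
\noindent\textbf{Proof plan for Lemma \ref{extention}.} The approach is to argue (ii) by density and (i) by a distributional computation that exploits the trace matching on $\Sigma$ together with the fact that the outward normals to $\Omega^+$ and $\Omega^-$ on $\Sigma$ are opposite.

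For part (ii), I would first observe that $C^\infty(\overline{\Omega})$ is dense in $H^1(\Omega)$. For $u\in C^\infty(\overline{\Omega})$, both restrictions $u|_{\Omega^\pm}$ are smooth up to $\partial\Omega^\pm$, so their classical traces on $\Sigma$ coincide with $u|_\Sigma$ and therefore with each other. For general $u\in H^1(\Omega)$, pick smooth $u_k\to u$ in $H^1(\Omega)$. The restriction operators $r_{\Omega^\pm}:H^1(\Omega)\to H^1(\Omega^\pm)$ and the trace operators $\gamma_{\Omega^\pm}:H^1(\Omega^\pm)\to H^{1/2}(\partial\Omega^\pm)$ are bounded, so $\gamma_{\Omega^\pm}(u_k|_{\Omega^\pm})\to\gamma_{\Omega^\pm}(u|_{\Omega^\pm})$ in $H^{1/2}(\partial\Omega^\pm)$, and hence their restrictions to $\Sigma$ converge in the sense required. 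Since the restrictions to $\Sigma$ agree for every $u_k$, they agree for $u$ as well.

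For part (i), I would define $u:=u^+$ on $\Omega^+$ and $u:=u^-$ on $\Omega^-$ pointwise a.e.\ on $\Omega$, so that $u\in L^2(\Omega)$ with $\|u\|_{L^2(\Omega)}^2=\|u^+\|_{L^2(\Omega^+)}^2+\|u^-\|_{L^2(\Omega^-)}^2$. The main step is to identify the distributional gradient of $u$ with the $L^2$-function $G\in L^2(\Omega)^n$ given by $G|_{\Omega^\pm}=\nabla u^\pm$. For any test vector field $\boldsymbol\varphi\in\mathcal D(\Omega)^n$, the standard Green's identity in each Lipschitz subdomain $\Omega^\pm$ yields
\begin{align*}
-\langle u,\mathrm{div}\,\boldsymbol\varphi\rangle_\Omega
&=-\langle u^+,\mathrm{div}\,\boldsymbol\varphi\rangle_{\Omega^+}-\langle u^-,\mathrm{div}\,\boldsymbol\varphi\rangle_{\Omega^-}\\
&=\langle\nabla u^+,\boldsymbol\varphi\rangle_{\Omega^+}+\langle\nabla u^-,\boldsymbol\varphi\rangle_{\Omega^-}
-\langle\gamma_{\Omega^+}u^+,\boldsymbol\varphi\!\cdot\!\boldsymbol\nu^+\rangle_{\partial\Omega^+}
-\langle\gamma_{\Omega^-}u^-,\boldsymbol\varphi\!\cdot\!\boldsymbol\nu^-\rangle_{\partial\Omega^-}.
\end{align*}
Since $\mathrm{supp}\,\boldsymbol\varphi\subset\Omega$, the boundary integrals over $\Gamma^\pm\subset\partial\Omega$ vanish, leaving only contributions from $\Sigma$. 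On $\Sigma$ we have $\boldsymbol\nu^+=-\boldsymbol\nu^-$ and by hypothesis $\gamma_{\Omega^+}u^+=\gamma_{\Omega^-}u^-$, so these two remaining boundary terms cancel. Hence $-\langle u,\mathrm{div}\,\boldsymbol\varphi\rangle_\Omega=\langle G,\boldsymbol\varphi\rangle_\Omega$ for all $\boldsymbol\varphi\in\mathcal D(\Omega)^n$, which shows $\nabla u=G\in L^2(\Omega)^n$, so $u\in H^1(\Omega)$. Uniqueness is automatic from the pointwise definition on the partition $\Omega=\Omega^+\cup\Sigma\cup\Omega^-$ (since $\Sigma$ has zero $n$-dimensional Lebesgue measure), and the claimed norm bound $\|u\|_{H^1(\Omega)}\le C(\|u^+\|_{H^1(\Omega^+)}+\|u^-\|_{H^1(\Omega^-)})$ follows from $\|u\|_{L^2(\Omega)}^2+\|\nabla u\|_{L^2(\Omega)^n}^2=\|u^+\|_{H^1(\Omega^+)}^2+\|u^-\|_{H^1(\Omega^-)}^2$.

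The only delicate point is the cancellation of boundary integrals on $\Sigma$ interpreted as a duality pairing in $H^{\pm 1/2}$; because $\Sigma$ is a relatively open piece of $\partial\Omega^\pm$ meeting $\partial\Omega$ transversally, one must pair against $\boldsymbol\varphi\!\cdot\!\boldsymbol\nu^\pm|_{\partial\Omega^\pm}\in\widetilde H^{1/2}(\Sigma;\partial\Omega^\pm)^n$ (compactly supported in $\overline\Sigma$ since $\boldsymbol\varphi$ vanishes near $\partial\Omega$), which is precisely the setting in which the identification $\gamma_{\Omega^+}u^+=\gamma_{\Omega^-}u^-$ on $\Sigma$ makes the two pairings cancel. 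This is essentially the same mechanism used in the proofs of \cite[Theorem 5.13]{B-M-M-M} and \cite[Lemma C.1]{KMW-LP}, to which the details may be referred.
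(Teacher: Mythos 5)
Your proof is correct. The paper itself omits the details and defers to \cite[Theorem 5.13]{B-M-M-M} and \cite[Lemma C.1]{KMW-LP}; your argument --- density of $C^\infty(\overline\Omega)$ for part $(ii)$, and for part $(i)$ the distributional identification of $\nabla u$ with the piecewise gradient via Green's identity in each Lipschitz subdomain, with the $\Sigma$-boundary contributions cancelling because $\gamma_{\Omega^+}u^+=\gamma_{\Omega^-}u^-$ and $\boldsymbol\nu^+=-\boldsymbol\nu^-$ on $\Sigma$ --- is precisely the standard mechanism those references rely on, so you have in effect reconstructed the cited proof.
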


\begin{lemma}\label{L3}
Let $\Gamma_1$ and $\Gamma_2$ be two $(n-1)$-dimensional Lipschitz hyper-surfaces in $\R^n$, $n\ge 2$, that coincide on a relatively open $(n-1)$-dimensional subset $\Gamma_0$ having a Lipschitz boundary.
Assume that either one of the following assumptions holds:
\begin{enumerate}
\item[$(1)$]
$\Gamma_1$ and $\Gamma_2$ are the graphs of two Lipschitz functions;
\item[$(2)$]
$\Gamma_1$ and $\Gamma_2$ can be mapped by rigid rotations into two Lipschitz graphs;
\item[$(3)$]
$\Gamma_1$ and $\Gamma_2$ are two bounded Lipschitz surfaces in $\R^n$.
\end{enumerate}
Let $0\le s\le 1$ and functions $f_i\in L^2(\Gamma_i)$, $f_i=0$ on $\Gamma_i\setminus\overline\Gamma_{0}$,  $i=1,2$,  and $f_2=f_1$ on $\Gamma_0$.
Then $f_1\in H^s(\Gamma_1)$ if and only if $f_2\in H^s(\Gamma_2)$.
\end{lemma}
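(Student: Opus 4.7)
The plan is to handle the three cases by reducing each to a local flat setting in which the pullbacks of $f_1$ and $f_2$ become identical functions on $\R^{n-1}$.

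For $s=0$ the conclusion is immediate since $H^0(\Gamma_i)=L^2(\Gamma_i)$ and both $f_i\in L^2(\Gamma_i)$ by hypothesis; I will assume $0<s\le 1$.

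Under assumption (1), I would write $\Gamma_i=\{(x',\psi_i(x')):x'\in\R^{n-1}\}$ with $\psi_i$ Lipschitz. Since $\Gamma_1$ and $\Gamma_2$ coincide on $\Gamma_0$, the functions $\psi_1,\psi_2$ agree on the projection $V\subset\R^{n-1}$ of $\Gamma_0$. Introduce the pullbacks $\tilde f_i(x'):=f_i(x',\psi_i(x'))$; then $\tilde f_i$ vanishes outside $\overline V$ (by the support hypothesis) and $\tilde f_1=\tilde f_2$ on $V$ (because $\psi_1=\psi_2$ there and $f_1=f_2$ on $\Gamma_0$). Hence $\tilde f_1\equiv\tilde f_2$ as functions on $\R^{n-1}$. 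The parametrizations $\Psi_i(x'):=(x',\psi_i(x'))$ are bi-Lipschitz, which yields the standard norm equivalence $\|f\|_{H^s(\Gamma_i)}\asymp\|f\circ\Psi_i\|_{H^s(\R^{n-1})}$ for $0\le s\le 1$; this is verified directly from the Gagliardo--Slobodeckij double integral (using $|\Psi_i(x')-\Psi_i(y')|\asymp|x'-y'|$) for $0<s<1$ and from the tangential gradient for $s=1$. The identity $\tilde f_1=\tilde f_2$ then gives $f_1\in H^s(\Gamma_1)\iff f_2\in H^s(\Gamma_2)$.

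Under assumption (2), applying a common rigid rotation of $\R^n$ that sends both $\Gamma_i$ into graph form reduces (2) to (1), because rigid rotations are Euclidean isometries that preserve both the structural hypotheses and the $H^s$-norms on hypersurfaces. Under assumption (3), I would cover $\overline{\Gamma_0}$ (which contains $\mathrm{supp}\,f_i$) by finitely many open balls $B_k\subset\R^n$ such that in each $B_k$ both $\Gamma_1\cap B_k$ and $\Gamma_2\cap B_k$ become Lipschitz graphs after one common rigid rotation $R_k$. A $C^\infty$ partition of unity $\{\chi_k\}$ subordinate to this cover with $\sum_k\chi_k\equiv 1$ on a neighborhood of $\overline{\Gamma_0}$ allows the standard partition-of-unity characterization of Sobolev norms on Lipschitz hypersurfaces: $f_i\in H^s(\Gamma_i)\iff \chi_k f_i\in H^s(\Gamma_i)$ for every $k$. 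Applying the graph case to each localized pair $(\chi_k f_1,\chi_k f_2)$ then completes the proof.

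The main obstacle is verifying in case (3) the existence of a common local rotation $R_k$ at points $x_0\in\partial\Gamma_0$, where $\Gamma_1$ and $\Gamma_2$ genuinely diverge on the non-$\Gamma_0$ side. This follows from the observation that on $\Gamma_0\cap B_k$ the two surfaces, and hence their a.e.-defined normal directions, coincide, while by Lipschitz continuity the normals of each $\Gamma_i$ within $B_k$ are confined to a bounded cone around this common direction; choosing $R_k$ to align a vector lying outside both cones as the vertical axis realizes both $\Gamma_i\cap B_k$ as Lipschitz graphs over a single hyperplane, enabling the pullback argument of case (1).
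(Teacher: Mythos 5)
Your proof of case (1) is essentially the paper's argument and is fine. However, your treatment of cases (2) and (3) contains a genuine gap: you assume throughout that both surfaces can be put into Lipschitz-graph form by \emph{one common} rigid rotation. Hypothesis (2) only says each $\Gamma_i$ separately is a rotated graph, and the paper's proof explicitly works with two different rotation matrices $\Phi_1$ and $\Phi_2$. A common rotation can fail to exist even locally. For a concrete counterexample in $\R^2$, take $\Gamma_0=\{(x,0):-1<x<1\}$, let $\Gamma_1$ be $\overline{\Gamma_0}$ together with the upward vertical segment $\{(1,y):0\le y\le 1\}$, and let $\Gamma_2$ be $\overline{\Gamma_0}$ together with the downward segment $\{(1,y):-1\le y\le 0\}$. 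Both are bounded Lipschitz curves coinciding on $\Gamma_0$. Near the corner $(1,0)\in\partial\Gamma_0$, the set of directions over which $\Gamma_1$ is a graph is the (open) set of lines making an angle in $(0,\pi/2)$ with the $x$-axis, while for $\Gamma_2$ the admissible angles lie in $(-\pi/2,0)$; the two sets are disjoint. The cone heuristic in your last paragraph misidentifies the relevant cone: the normals of a Lipschitz graph are confined to a cone around the \emph{rotated vertical axis} chosen for that graph, not around the normal at some fixed interior point of $\Gamma_0$. In the example above the normals of $\Gamma_1$ in any neighborhood of $(1,0)$ include both $(0,1)$ and $(1,0)$, which are $\pi/2$ apart, so they do not concentrate around the $\Gamma_0$-normal $(0,1)$.

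The paper circumvents exactly this obstruction: with distinct rotations $\Phi_1,\Phi_2$ it writes down the change of parameter $x'_1=\Phi_1'\Phi_2^{-1}(x'_2,\zeta_2(x'_2))$ on the common projected region, observes that it is bi-Lipschitz together with its inverse, and \emph{extends} it to a Lipschitz map of all of $\R^{n-1}$ via the Kirszbraun theorem; the conclusion then follows from the bi-Lipschitz invariance of $H^s(\R^{n-1})$ (McLean, Theorem~3.23), using that both pullbacks vanish outside the projected $\Gamma_0$. Your argument lacks all three of these ingredients (the change of variables, the Kirszbraun extension, and the bi-Lipschitz invariance), so cases (2) and (3) do not go through as written. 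Case (3)'s partition-of-unity structure is fine, but each local piece must be handled by the genuine case-(2) argument with possibly different rotations for the two surfaces, not by assuming a common $R_k$.
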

\begin{proof}
(1)
Let $\Gamma_1$ and $\Gamma_2$ be graphs of two Lipschitz functions $x_n=\zeta_1(x')$ and $x_n=\zeta_2(x')$, $x'\in\R^{n-1}$, and $\Gamma_0$ be the image of a domain $S_0\subset \R^{n-1}$, i.e., $x_n=\zeta_1(x')=\zeta_2(x')$ for $x'\in S_0\subset \R^{n-1}$.

By the definition of the Sobolev spaces on Lipschitz graphs (see, e.g., \cite[p. 98]{Lean}),
$f_1\in H^s(\Gamma_1)$, $0\le s\le 1$, means that $f_{\zeta_1}\in H^s(\R^{n-1})$, where $f_{\zeta_1}(x')=f_{1}(x',\zeta_1(x'))$, $x'\in\R^{n-1}$.
On the other hand, $f_{\zeta_2}(x')=f_{2}(x',\zeta_2(x'))=f_{1}(x',\zeta_1(x'))$ for $x'\in S_0$, and $f_{\zeta_2}(x')=f_{2}(x',\zeta_2(x'))=0=f_{1}(x',\zeta_1(x'))$ for $x'\in \R^{n-1}\setminus\overline S_0$.
Hence $f_{\zeta_2}(x')=f_{\zeta_1}(x')$ for almost any $x'\in \R^{n-1}$ which implies $f_1\in H^s(\Gamma_1)$ if and only if $f_2\in H^s(\Gamma_2)$.

(2) Let further $i=1,2$.
By the assumption of item (2), there exist constant invertible rotation matrices $\Phi_i\in\R^{n\times n}$ such that
$\Gamma^*_i=\{x=\Phi_i y,\ y\in\Gamma_i\}$, $i=1,2$, are  Lipschitz graphs, i.e.,
they are represented by two Lipschitz functions, $x_{1,n}=\zeta_1(x'_1)$ and $x_{2,n}=\zeta_2(x'_2)$, where $x_{1,n},x_{2,n}\in\R$ and $x'_1, x'_2\in\R^{n-1}$.
By the definition of Sobolev spaces on Lipschitz hyper-surfaces, the inclusion $f_i\in H^s(\Gamma_i)$ implies that $f_{\zeta_i}\in H^s(\R^{n-1})$, where $f_{\zeta_i}(x')=f_{i}(\Phi^{-1}_i(x'_i,\zeta_i(x'_i))$, $x'_i\in\R^{n-1}$.

Let $\Gamma^*_{0i}\subset \Gamma^*_{i}$ be {\Rd the} images of $\Gamma_0$ through the above mapping, i.e.,
$\Gamma^*_{0i}\ni x_i=\Phi_i y$, $y\in \Gamma_{0}$, and, on the other hand,
$\Gamma^*_{0i}\ni x_i=(x'_i,\zeta_i(x'_i))$, $x'_i\in S_{0i}$, where $S_{0i}$ are Lipschitz domains in $\R^{n-1}$.
Then
\begin{align*} 
\Phi^{-1}_2 x_2=\Phi^{-1}_2(x'_2,\zeta_2(x'_2))=y=\Phi^{-1}_1(x'_1,\zeta_1(x'_1))=\Phi^{-1}_1 x_1,\quad
\forall\, y\in\Gamma_0,\ x'_1\in S_{01},\ x'_2\in S_{02},
\end{align*}
and
\begin{multline}
\label{E7g}
f_{\zeta_2}(x'_2)=f_{2}(\Phi^{-1}_2(x'_2,\zeta_2(x'_2))=f_{2}(y)=f_{1}(y)=f_{1}(\Phi^{-1}_1(x'_1,\zeta_1(x'_1))
=f_{\zeta_1}(x'_1),\\
\forall\,y\in\Gamma_0,\ x'_1\in S_{01},\ x'_2\in S_{02}.
\end{multline}
By definition of the rigid rotation matrices and graph functions, we have,
$$(x'_1,\zeta_1(x_1'))=\Phi_1 y =(\Phi_1' y, \Phi_{1,n} y),\quad
(x'_2,\zeta_2(x_2'))=\Phi_2 y =(\Phi_2' y, \Phi_{2,n} y),\quad y\in \Gamma_0,$$
where the matrices $\Phi_i'\in\R^{(n-1)\times n}$ and $\Phi_{i,n}\in\R^{1\times n}$ are parts of the corresponding matrices $\Phi_i$.
Then
\begin{align}
\label{E9g}
\hspace{-1em}x'_1=\Phi_1' y=\Phi_1' \Phi^{-1}_2\left(x'_2,\zeta_2(x'_2)\right),\
x'_2=\Phi_2' y=\Phi_2' \Phi^{-1}_1\left(x'_1,\zeta_1(x'_1)\right),\ x'_1\in \bar S_{01},\ x'_2\in \bar S_{02}.
\end{align}
Since $\zeta_1$ and $\zeta_2$ are Lipschitz functions, relations \eqref{E9g} imply that $x'_1=x'_1(x'_2)$ and $x'_2=x'_2(x'_1)$ are mutually inverse bi-Lipschitz mappings for $x'_1\in \bar S_{01}$, $x'_2\in \bar S_{02}$.

Assume now that $f_1\in H^s(\Gamma_1)$ and $f_1=0$ on $\Gamma_1\setminus\overline\Gamma_{0}$.
Then $f_{\zeta_1}\in H^s(\R^{n-1})$ and $f_{\zeta_1}=0$ on $\R^{n-1}\setminus\overline S_{01}$.
Assume also that a function $f_2\in L^2(\Gamma_2)$ is such that $f_2=f_1$ on $\Gamma_0$ and $f_2=0$ on $\Gamma_2\setminus\overline\Gamma_{0}$.
From \eqref{E7g} we have for any $x'_2\in S_{02}$ that
\begin{align}
\label{E10g}
f_{\zeta_2}(x'_2)=f_{\zeta_1}(x'_1(x'_2)),
\end{align}
where the Lipschitz map $x'_1(x'_2)$ is defined for $x'_2\in S_{02}$ by \eqref{E9g}.
By the Kirszbraun theorem (cf., Lemma 1.29 and Theorem 1.31 in \cite{Schwartz1969}) the map $x'_1(x'_2)$ can be extended to all $x'_2\in \R^{n-1}$ with the same Lipschitz constant.
Hence, taking into account that $f_{\zeta_1}(x'_1)=0$ for $x'_1\in\R^{n-1}\setminus\overline S_{01}$ and
$f_{\zeta_2}(x'_2)=0$ for $x'_2\in\R^{n-1}\setminus\overline S_{02}$, we obtain that
 for such extension, relation \eqref{E10g} holds for almost any $x'_2\in \R^{n-1}$.
Then, cf., e.g., Theorem 3.23 in \cite{Lean}, the inclusion $f_{\zeta_1}\in H^s(\R^{n-1})$ implies that $f_{\zeta_2}\in H^s(\R^{n-1})$ and thus $f_2\in H^s(\Gamma_2)$.

(3) Assume now that $\Gamma_1$ and $\Gamma_2$ are bounded Lipschitz hyper-surfaces in $\R^n$ and arrange finite covers of both of them by open balls such that the intersections of each ball with the corresponding surface can be extended (possibly after some rigid rotations) to Lipschitz graphs.
Moreover we choose the covers in such a way that the balls covering the closure of $\Gamma_0$ coincide for both surfaces. Arranging the subordinate partition of unity (see, e.g., \cite[p. 98]{Lean}) and employing item (2) for the balls intersecting the boundary of $\Gamma_0$ yield the asserted result.
\qed
\end{proof}

Let us show that the space ${H}_\bullet^s(\cdot)$ can be characterized as the weighted space ${H}_{00}^{s}(\cdot)$, whose counterpart on smooth domains in $\R^n$ was given in \cite[Chapter 1, Theorem 11.7]{LiMa1}, see also Corollary 1.4.4.10 in \cite{Grisvard1985} for Lipschitz domains.
\begin{theorem}\label{Tbl}
Let $\Gamma$  be a $(n-1)$-dimensional Lipschitz graph or a bounded Lipschitz hyper-surface in $\R^n$, $n\ge 2$, and let $\Gamma_0$ be its relatively open $(n-1)$-dimensional subset with a $(n-2)$-dimensional Lipschitz boundary $\partial\Gamma_0$.
Let $0< s< 1$.

Let ${H}_{00}^{s}(\Gamma_0)$ denote the space of all functions $\phi\in\mathring{H}^{s}(\Gamma_0)$,  such that $\delta^{-s}\phi\in L^2(\Gamma_0)$, where $\delta(x)$ is the distance in $\R^n$ from $x$ to the boundary $\partial\Gamma_0$.

Then the space ${H}_{00}^{s}(\Gamma_0)$ coincides with the space ${H}_\bullet^{s}(\Gamma_0)$, i.e., with the space of all functions from ${H}^{s}(\Gamma_0)$ such that their extensions by zero to $\Gamma$ belong to ${H}^{s}(\Gamma)$.
\end{theorem}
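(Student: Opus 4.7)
The plan is to reduce the claim to a flat, Euclidean equivalence on bounded Lipschitz subdomains of $\mathbb R^{n-1}$ via partition of unity, and then invoke (or reprove) the classical Hardy-type characterization due to Grisvard. First, I would choose a finite open cover $\{B_k\}$ of $\overline{\Gamma_0}$ by balls in $\mathbb R^n$ so that on each $B_k$ the hyper-surface $\Gamma$ is (after a possible rigid rotation, as in Lemma \ref{L3}) the graph of a Lipschitz function $x_n=\zeta_k(x')$, and the piece of $\Gamma_0$ sitting in $B_k$ corresponds to a bounded Lipschitz subdomain $S_{0,k}\subset\mathbb R^{n-1}$. With a subordinate smooth partition of unity $\{\chi_k\}$, I would decompose $\phi=\sum_k\chi_k\phi$; since multiplication by $\chi_k$ preserves $H^s$, preserves zero extensions, and preserves the pointwise bound on $\delta^{-s}|\phi|$ (using that $\chi_k$ is supported in $B_k$), both memberships $\phi\in H_{00}^s(\Gamma_0)$ and $\phi\in H_\bullet^s(\Gamma_0)$ reduce to the corresponding statements for each $\chi_k\phi$.

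Next, using that the graph parametrization $x'\mapsto(x',\zeta_k(x'))$ is bi-Lipschitz, the definition of $H^s$ on the graph in terms of $H^s(\mathbb R^{n-1})$ transports the assertion to the following Euclidean equivalence: for a bounded Lipschitz domain $S\subset\mathbb R^{n-1}$ and $\psi\in L^2(S)$,
\begin{equation*}
\psi\in\mathring H^s(S)\ \text{and}\ \tilde\delta^{-s}\psi\in L^2(S)\ \Longleftrightarrow\ \mathring E\psi\in H^s(\mathbb R^{n-1}),
\end{equation*}
where $\tilde\delta(y)$ is the distance from $y\in S$ to $\partial S$ and $\mathring E$ denotes extension by zero. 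Here I would rely on the fact that the intrinsic geodesic distance to $\partial\Gamma_0$ and the Euclidean distance to $\partial S$ are comparable under the bi-Lipschitz parametrization, so that the weighted conditions match up.

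For the Euclidean step I would cite Corollary 1.4.4.10 in Grisvard \cite{Grisvard1985}, which gives exactly this equivalence on bounded Lipschitz domains and for all $0<s<1$. If a self-contained argument is preferred, the ``$\Leftarrow$'' direction follows by writing the Slobodetski seminorm of $\mathring E\psi$ as the sum of the intrinsic seminorm $[\psi]_{H^s(S)}^2$ and the off-diagonal contribution, and observing that for $x\in S$,
\begin{equation*}
\int_{\mathbb R^{n-1}\setminus S}\frac{dy}{|x-y|^{n-1+2s}}\ \asymp\ \tilde\delta(x)^{-2s},
\end{equation*}
so that the off-diagonal term is comparable to $\|\tilde\delta^{-s}\psi\|_{L^2(S)}^2$. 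The ``$\Rightarrow$'' direction uses the same comparison together with the elementary bound $\int_{\mathbb R^{n-1}\setminus S}|x-y|^{-(n-1+2s)}dy\ge c\,\tilde\delta(x)^{-2s}$ on one side, and a standard Hardy inequality on Lipschitz domains controlling $\|\tilde\delta^{-s}\psi\|_{L^2(S)}$ by the full $H^s(\mathbb R^{n-1})$ norm of $\mathring E\psi$ on the other.

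The main obstacle is precisely this Hardy-type inequality at the Lipschitz boundary $\partial S$: on smooth domains the comparison $\int_{S^c}|x-y|^{-(n-1+2s)}dy\asymp\tilde\delta(x)^{-2s}$ is straightforward via tubular neighborhoods, but on a general Lipschitz $\partial S$ one must flatten the boundary locally by a bi-Lipschitz map, carry the weighted estimate through that map, and then reassemble via a boundary partition of unity. Since this is done in full generality in \cite{Grisvard1985}, I would keep the proof short by appealing to that reference, and concentrate the write-up on verifying that the graph/partition-of-unity reduction truly transfers the three objects involved (the $H^s$ norm, the zero extension, and the weight $\delta^{-s}$) between the manifold $\Gamma_0$ and the Euclidean model $S$.
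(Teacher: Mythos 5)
Your proposal is correct and follows essentially the same route as the paper's proof: reduce via Lipschitz graph parametrization (with the comparability of the distance to $\partial\Gamma_0$ in $\R^n$ with the Euclidean distance to the boundary of the parameter domain in $\R^{n-1}$) to the flat Euclidean case, and there invoke Corollary 1.4.4.10 of Grisvard for bounded Lipschitz domains; the bounded-surface case is handled in both arguments by a finite cover with a subordinate partition of unity. The only cosmetic difference is ordering — the paper treats the single-graph case first and then assembles the bounded-surface case, whereas you localize from the start — and your sketched self-contained argument for the Grisvard step is a useful but optional supplement.
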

\begin{proof}
Let first $\Gamma$  be graph of a Lipschitz function $x_n=\zeta(x')\in\R$, $x'\in\R^{n-1}$, and $\Gamma_0$ be the image of a domain $S_0\subset \R^{n-1}$, i.e., $x_n=\zeta(x')$ for $x'\in S_0\subset \R^{n-1}$.
{\Rd For all} $x,\tilde x\in\overline\Gamma_0$, we have
\begin{align}
|x'-\tilde x'|\le|x-\tilde x|&=\sqrt{|x'-\tilde x^{\prime}|^2+|\zeta(x')-\zeta(\tilde x^{\prime})|^2}
\le\sqrt{1+A^2}\,|x'-\tilde x^{\prime}|. \label{E13}
\end{align}
where $A$ is a finite Lipschitz constant of the function $\zeta$ on the domain $S_{0}$.
The distance to the boundary is defined as
\begin{align}\label{EA4a}
\delta(x)&=\inf_{\tilde x\in\partial \Gamma_{0}}|x-\tilde x|
=\inf_{\tilde x^{\prime}\in\partial S_{0}}\sqrt{|x'-\tilde x^{\prime}|^2+|\zeta(x')-\zeta(\tilde x^{\prime})|^2},
\end{align}
Denoting
$
\delta'(x')=\inf_{\tilde x^{\prime}\in\partial S_{0}}|x'-\tilde x^{\prime}|,
$
we obtain from \eqref{EA4a} and \eqref{E13} that
\begin{align}\label{EA5}
\delta'(x')\le \delta(x)&
\le\sqrt{1+A^2}\,\delta'(x').
\end{align}

By the definition of the Sobolev spaces on Lipschitz graphs (see, e.g., \cite[p. 98]{Lean}),
$\tilde f\in H^s(\Gamma)$, $0< s< 1$, means that $\tilde f_{\zeta}\in H^s(\R^{n-1})$, where
$f_{\zeta}(x')=f(x',\zeta(x'))$, $x'\in \R^{n-1}$
and $\phi\in \mathring H^s(\Gamma_0)$, $0< s< 1$, means that $\phi_{\zeta}\in \mathring H^s(S_0)$.

Let $\phi\in{H}_{00}^{s}(\Gamma_0)$.
Then $\phi \in\mathring{H}^{s}(\Gamma_0)$, $\delta^{-s}\phi\in L^2(\Gamma_0)$.
The surface measure formula
\begin{align}\label{EA6}
d\sigma(x)=\sqrt{1+|{\rm grad}\,\zeta(x')|^2}d x'
\end{align}
(see, e.g. \cite[Eq. (3.28)]{Lean}) together with \eqref{EA5} implies that
$\phi_\zeta \in\mathring{H}^{s}(S_0)$ and $(\delta')^{-s}\phi_\zeta\in L^2(S_0)$, where $\phi_{\zeta}(x')=\phi(x',\zeta(x'))$, $x'\in S_0$.
Then by Corollary 1.4.4.10 in \cite{Grisvard1985} we obtain that
the extension of $\phi_{\zeta}$ by zero from $S_0$ to $\R^{n-1}$ belongs to $H^s (\R^{n-1})$ and hence the extension of $\phi$ by zero from $\Gamma_0$ to $\Gamma$ belongs to $H^s (\Gamma)$.

Conversely, let $\phi \in\mathring{H}^{s}(\Gamma_0)$ be such that its extension by zero from $\Gamma_0$ to $\Gamma$ belongs to $H^s (\Gamma)$.
Then $\phi_{\zeta}(x')=\phi(x',\zeta(x'))$ belongs to $\mathring{H}^{s}(S_0)$ and its extension by zero from  to $S_0$ to $\R^{n-1}$ belongs to $H^s (\R^{n-1})$.
Hence by Corollary 1.4.4.10 in \cite{Grisvard1985} we obtain that $(\delta')^{-s}\phi_\zeta\in L^2(S_0)$, which by \eqref{EA5} and \eqref{EA6} implies that $\delta^{-s}\phi\in L^2(\Gamma_0)$ and thus $\phi\in{H}_{00}^{s}(\Gamma_0)$.

Let now $\Gamma$  be a $(n-1)$-dimensional bounded Lipschitz hyper-surface. Let us arrange a finite cover of the hyper-surface by open balls such that, as usual, the intersections of each ball with the hyper-surface can be extended (maybe after corresponding rigid rotations) to a Lipschitz graph. Arranging the subordinate partition of unity (see, e.g., \cite[p. 98]{Lean}) and employing the above arguments to each of these graphs we obtain the asserted result.
\qed
\end{proof}

In addition, the following extension result holds (see also \cite[p. 373]{Sayas-book}).
\begin{lemma}
\label{identification-E}
Let $n\geq 2$ and $\Omega \subset {\mathbb R}^n$ be a bounded Lipschitz domain satisfying Assumption $\ref{interface-Sigma}$.
{Then there exists an extension operator $E_{\Omega ^+\to \Omega }$ from the space ${H}_{\Gamma ^+}^1(\Omega ^+)^n$ to $\mathring {H}^1(\Omega )^n$.}
\end{lemma}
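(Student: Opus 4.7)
The plan is to construct the extension $E_{\Omega^+\to\Omega}{\bf v}^+$ by first reflecting the trace of ${\bf v}^+$ on $\Sigma$ across the interface into $\Omega^-$ via a lifting, and then gluing the resulting function over $\Omega^+\cup\Sigma\cup\Omega^-$ using Lemma~\ref{extention}(i). Every step will be realized through a bounded linear operator from a previously established result, so the estimate $\|E_{\Omega^+\to\Omega}{\bf v}^+\|_{H^1(\Omega)^n}\leq C\|{\bf v}^+\|_{H^1(\Omega^+)^n}$ will be automatic.

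In detail, given ${\bf v}^+\in H_{\Gamma^+}^1(\Omega^+)^n$, I first consider the trace $\gamma_{_{\Omega^+}}{\bf v}^+\in H^{\frac{1}{2}}(\partial\Omega^+)^n$. Since $(\gamma_{_{\Omega^+}}{\bf v}^+)|_{\Gamma^+}=\bf 0$ on $\Gamma^+$ by the characterization \eqref{H-D-a}, this trace has support in $\overline{\Sigma}$, that is, $\gamma_{_{\Omega^+}}{\bf v}^+\in \widetilde{H}^{\frac{1}{2}}(\Sigma;\partial\Omega^+)^n$. By the identification of $\widetilde{H}^{\frac{1}{2}}(\Sigma;\partial\Omega^+)^n$ with ${H}_\bullet^{\frac{1}{2}}(\Sigma)^n$ through the surjective extension-by-zero operator $\mathring E_{_{\Sigma\to\partial\Omega^+}}$, there is a unique ${\boldsymbol\phi}\in {H}_\bullet^{\frac{1}{2}}(\Sigma)^n$ such that $\mathring E_{_{\Sigma\to\partial\Omega^+}}{\boldsymbol\phi}=\gamma_{_{\Omega^+}}{\bf v}^+$, and this assignment is bounded.

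Next, I apply the extension by zero on the $\Omega^-$ side to obtain $\mathring E_{_{\Sigma\to\partial\Omega^-}}{\boldsymbol\phi}\in\widetilde{H}^{\frac{1}{2}}(\Sigma;\partial\Omega^-)^n\subset H^{\frac{1}{2}}(\partial\Omega^-)^n$. Invoking Lemma~\ref{mixt-trace-D}, there exists a bounded right inverse $\gamma^{-1}_{_{\Omega^-}}:\widetilde{H}^{\frac{1}{2}}(\Sigma;\partial\Omega^-)^n\to H_{\Gamma^-}^1(\Omega^-)^n$ of the trace operator $\gamma_{_{\Omega^-}}:H_{\Gamma^-}^1(\Omega^-)^n\to\widetilde{H}^{\frac{1}{2}}(\Sigma;\partial\Omega^-)^n$, so that
\begin{align*}
{\bf v}^-:=\gamma^{-1}_{_{\Omega^-}}\mathring E_{_{\Sigma\to\partial\Omega^-}}{\boldsymbol\phi}\in H_{\Gamma^-}^1(\Omega^-)^n
\end{align*}
satisfies $\gamma_{_{\Omega^-}}{\bf v}^-|_{\Gamma^-}={\bf 0}$ and, by construction and the definition of $\gamma_{_\Sigma}$ in Lemma~\ref{gamma-Sigma-surj}, $(\gamma_{_{\Omega^-}}{\bf v}^-)|_{\Sigma}={\boldsymbol\phi}=(\gamma_{_{\Omega^+}}{\bf v}^+)|_{\Sigma}$.

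Finally, the matching of traces on $\Sigma$ permits applying Lemma~\ref{extention}(i) to the pair $({\bf v}^+,{\bf v}^-)$, producing a unique ${\bf v}\in H^1(\Omega)^n$ with ${\bf v}|_{\Omega^\pm}={\bf v}^\pm$ and the norm bound $\|{\bf v}\|_{H^1(\Omega)^n}\leq C(\|{\bf v}^+\|_{H^1(\Omega^+)^n}+\|{\bf v}^-\|_{H^1(\Omega^-)^n})$. Since $\gamma_{_{\Omega^\pm}}{\bf v}^\pm=\bf 0$ on $\Gamma^\pm$ and $\partial\Omega=\Gamma^+\cup\Gamma^-$ (up to the $(n-2)$-dimensional set $\partial\Sigma$, which has zero surface measure), we conclude that $\gamma_{_\Omega}{\bf v}=\bf 0$ on $\partial\Omega$, i.e., ${\bf v}\in\mathring H^1(\Omega)^n$. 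Setting $E_{\Omega^+\to\Omega}{\bf v}^+:={\bf v}$ yields the required operator; boundedness follows by composing the continuous operators involved. No step is genuinely an obstacle, since the spaces and the right-inverse machinery have all been prepared in Sections~\ref{preliminaries}--\ref{N-S-D}; the only point to verify with care is that the trace of ${\bf v}^+$ on $\Sigma$ lies in the correct "bullet" space ${H}_\bullet^{\frac{1}{2}}(\Sigma)^n$, which is precisely what the vanishing of $\gamma_{_{\Omega^+}}{\bf v}^+$ on $\Gamma^+$ provides.
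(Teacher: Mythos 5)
Your proof is correct, but you take a genuinely different route from the one in the paper. The paper first applies a Rychkov (whole-domain) extension $\mathcal E_{\Omega^+\to\Omega}$ to get a function ${\bf u}_\Omega\in H^1(\Omega)^n$ whose trace vanishes on $\Gamma^+$ but is generally nonzero on $\Gamma^-$; it then constructs a corrector ${\bf u}_0$ supported in $\overline{\Omega^-}$ with zero trace on $\Sigma$ and with trace on $\Gamma^-$ equal to that of ${\bf u}_\Omega$ (invoking Lemma~\ref{L3}(ii) to verify that the extension by zero of $(\gamma_\Omega{\bf u}_\Omega)|_{\Gamma^-}$ onto $\Sigma$ lands in $\widetilde{H}^{1/2}(\Gamma^-)^n$), and finally sets ${\bf u}={\bf u}_\Omega-{\bf u}_0$. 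Your approach instead works with the trace of ${\bf v}^+$ on $\Sigma$: you identify it as an element $\bs\phi\in H_\bullet^{1/2}(\Sigma)^n$ (using the already-established identification via the extension-by-zero operators), transfer it across to $\partial\Omega^-$ as $\mathring E_{\Sigma\to\partial\Omega^-}\bs\phi$, and lift it to ${\bf v}^-\in H^1_{\Gamma^-}(\Omega^-)^n$ via Lemma~\ref{mixt-trace-D}, so that the traces on $\Sigma$ from both sides match and Lemma~\ref{extention}(i) glues them into ${\bf v}\in\mathring H^1(\Omega)^n$. Your route avoids both the Rychkov extension operator and the explicit invocation of Lemma~\ref{L3}, since the required Sobolev-space identifications are pre-packaged in the definition of $H_\bullet^{1/2}(\Sigma)^n$ and in Lemma~\ref{mixt-trace-D}; it is also closer in spirit to the construction in the surjectivity part of the proof of Lemma~\ref{gamma-Sigma-surj}. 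The paper's subtraction argument buys nothing extra here; both approaches give the same bounded linear operator, and your version is arguably more streamlined.
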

\begin{proof}
Let ${\bf u}^+\in {H}_{\Gamma ^+}^1(\Omega ^+)^n$. Let ${\bf u}_{_\Omega }\in {H}_{\Gamma ^+}^1(\Omega )^n$ be the function defined by
$
{\bf u}_{_\Omega }:=
\mathcal E_{\Omega ^+\to \Omega }{\bf u}^+\,,
$
where $\mathcal E_{\Omega ^+\to \Omega }:=r_{\Omega }\circ E_{\Omega ^+\to {\mathbb R}^n}$, and $E_{\Omega ^+\to {\mathbb R}^n}$ is the Rychkov extension operator from ${H}^1(\Omega ^+)^n$ to ${H}^1({\mathbb R}^n)^n$ (cf., e.g., \cite[Theorem 2.4.1]{M-W}). Thus, $r_{\Omega ^+}({\bf u}_{_\Omega })={\bf u}^+$, where {$r_{\Omega ^+}$
denotes} the restriction to $\Omega ^+$.

In addition, $(\gamma _{_{\Omega }}{\bf u}_{_\Omega })\!\in \!\widetilde H^{\frac{1}{2}}(\Gamma ^-)^n$ and $(\gamma _{_{\Omega }}{\bf u}_{_\Omega })|_{_{\Gamma ^-}}\!\in \! H^{\frac{1}{2}}(\Gamma ^-)^n$. Let $\mathring{E}_{\Gamma ^-\to \Sigma }\left(\left(\gamma _{_{\Omega }}{\bf u}_{_\Omega }\right)|_{\Gamma ^-}\right)$ be the extension of $(\gamma _{_{\Omega }}{\bf u}_{_\Omega })|_{\Gamma ^-}$ by zero on $\Sigma $. Then Lemma \ref{L3} (ii) (applied to the functions ${\bf f}_1=\gamma _{_{\Omega }}{\bf u}_{_{\Omega }}$ and ${\bf f}_2=\mathring{E}_{\Gamma ^-\to \Sigma }\left((\gamma _{_{\Omega }}{\bf u}_{_\Omega })|_{\Gamma ^-}\right)$, which are equal on $\Gamma ^-$ and vanish on $\Gamma ^+$ and $\Sigma $, respectively) implies that $\mathring{E}_{\Gamma ^-\to \Sigma }\left((\gamma _{_{\Omega }}{\bf u})|_{\Gamma ^-}\right)\in \widetilde{H}^{\frac{1}{2}}(\Gamma ^-)^n$. Moreover, by Theorem \ref{trace-operator1}, there exists ${\bf u}^-\in H^1_{_{\Sigma }}(\Omega ^-)^n$ such that
\begin{align}
\label{R-2}
{\bf u}^-=\gamma _{_{\Omega ^-}}^{-1}\big(\mathring{E}_{\Gamma ^-\to \Sigma }\big((\gamma _{_{\Omega }}{\bf u}_{_\Omega })|_{_{\Gamma ^-}}\big)\big)\,,
\end{align}
where $\gamma _{_{\Omega ^-}}^{-1}\!:\!H^{\frac{1}{2}}(\partial \Omega ^-)^n\!\to \!H^1(\Omega ^{-})^n$ is a right inverse of the trace map $\gamma _{_{\Omega ^-}}\!:\!H^1(\Omega ^{-})^n\!\to \! H^{\frac{1}{2}}(\partial \Omega ^-)^n$.
Thus, ${\bf u}^-\!\in \!H^1(\Omega ^-)^n$, $(\gamma _{_{\Omega ^-}}{\bf u}^-)|_{_{\Sigma }}\!=\!0$, $\left(\gamma _{_{\Omega ^-}}{\bf u}^-\right)\big|_{_{\Gamma ^-}}\!=\!(\gamma _{_{\Omega }}{\bf u}_{_\Omega })|_{_{\Gamma ^-}}$.
Let ${\bf u}_0$ be the extension by zero of ${\bf u}^-$ in $\Omega ^+$,
${\bf u}_0:=\mathring{E}_{\Omega ^-\to \Omega^+}{\bf u}^-\,.$
Therefore, ${\bf u}_0\in H^1(\Omega )^n$ and ${\bf u}_0=0$ in $\Omega ^+$. In addition, $(\gamma _{_{\Omega }}{\bf u}_0)|_{\Gamma ^-}=(\gamma _{_{\Omega }}{\bf u}_{_\Omega })|_{_{\Gamma ^-}}$.
Moreover,
\begin{align}
{\bf u}&:={\bf u}_{_\Omega }-{\bf u}_0=\mathcal E_{\Omega ^+\to \Omega }{\bf u}^+-\mathring{E}_{\Omega ^-\to \Omega^+}{\bf u}^-\nonumber\\
&=\mathcal E_{\Omega ^+\to \Omega }{\bf u}^+-\mathring{E}_{\Omega ^-\to \Omega^+}\left(\gamma _{_{\Omega ^-}}^{-1}\left(\mathring{E}_{\Gamma ^-\to \Sigma }\big((\gamma _{_{\Omega }}{\bf u}_{_\Omega })|_{_{\Gamma ^-}}\big)\right)\right)\,.
\end{align}
satisfies ${\bf u}\in H^1(\Omega )^n$ and, by construction, $(\gamma _{_{\Omega }}{\bf u})|_{_{\Gamma ^\pm }}=0$, i.e., $\gamma _{_{\Omega }}{\bf u}=0$ (a.e.) on $\Gamma $. Moreover, ${\bf u}|_{_{\Omega ^+}}={\bf u}^+$ in $\Omega ^+$. Consequently, ${\bf u}$ 
is an extension of ${\bf u}^+$ from ${H}_{\Gamma ^+}^1(\Omega ^+)^n$ to $\mathring H^1(\Omega )^n$.

Finally, we define the extension operator $E_{\Omega ^+\to \Omega }:{H}_{\Gamma ^+}^1(\Omega ^+)^n\to \mathring {H}^1(\Omega )^n$, such that $E_{\Omega ^+\to \Omega }({\bf u}^+):={\bf u}$, where ${\bf u}$ has been constructed above. Consequently,
\begin{align}
E_{\Omega ^+\to \Omega }{\bf u}^+:=\mathcal E_{\Omega ^+\to \Omega }{\bf u}^+-\mathring{E}_{\Omega ^-\to \Omega^+}\left(\gamma _{_{\Omega ^-}}^{-1}\left(\mathring{E}_{\Gamma ^-\to \Sigma }\left(\gamma _{_{\Omega }}\left(\mathcal E_{\Omega ^+\to \Omega }{\bf u}^+\right)|_{_{\Gamma ^-}}\right)\right)\right)\,.
\end{align}
{An alternative construction of such an extension map can be consulted in \cite[pp. 373, 374]{Sayas-book}.} \qed
\end{proof}

Let us introduce the space (cf., e.g., \cite[p. 76]{Lean}),
\begin{align}
H^{-1}_{_{\overline{\Gamma^+}}}(\R^n)^n
=\{\boldsymbol\Phi\in H^{-1}(\R^n)^n: {\rm supp\,} \boldsymbol\Phi\subseteq \overline{\Gamma^+}\}.
\end{align}
Note that,
$\Phi\in H^{-1}_{_{\overline{\Gamma^+}}}(\R^n)^n$ if and only if $\bs\Phi=\gamma_\Omega^*\bs\phi$, i.e.,
\begin{align}\label{EA.5a}
\langle {\bf v}, \bs\Phi\rangle_{\Omega}=\langle \gamma_\Omega{\bf v}, \bs\phi\rangle_{\partial\Omega}\quad
\forall\  {\bf v}\in {H}^1(\Omega)^n .
\end{align}
for some $\bs\phi\in\widetilde H^{-\frac{1}{2}}(\Gamma^+)^n$, which is uniquely defined by $\bs\Phi$, cf. \cite[Theorem 2.10(ii)]{Mikh}.

\begin{lemma}
\label{identification-Omega}
The dual $\left({H}_{\Gamma ^+}^1(\Omega)^n\right)'$ of the space ${H}_{\Gamma ^+ }^1(\Omega)^n$ can be identified with the space
$
\widetilde H^{-1}(\Omega)^n/H^{-1}_{_{\overline{\Gamma^+}}}(\R^n)^n.
$
\end{lemma}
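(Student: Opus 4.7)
My plan is to realize the claimed identification via the adjoint of the natural inclusion
$i: H_{\Gamma^+}^1(\Omega)^n \hookrightarrow H^1(\Omega)^n$.
Taking adjoints and using $\bigl(H^1(\Omega)^n\bigr)' = \widetilde H^{-1}(\Omega)^n$ (see \eqref{duality-spaces}), one gets a continuous linear map
$$
i^*: \widetilde H^{-1}(\Omega)^n \longrightarrow \bigl(H_{\Gamma^+}^1(\Omega)^n\bigr)',
\qquad (i^*\mathbf F)(\mathbf v) = \langle \mathbf F,\mathbf v\rangle_\Omega,\ \mathbf v\in H_{\Gamma^+}^1(\Omega)^n.
$$
Since $H_{\Gamma^+}^1(\Omega)^n$ is a closed subspace of $H^1(\Omega)^n$, the Hahn--Banach theorem gives that $i^*$ is surjective: any $\mathbf G\in \bigl(H_{\Gamma^+}^1(\Omega)^n\bigr)'$ extends to an element $\mathbf F\in \widetilde H^{-1}(\Omega)^n$ with $i^*\mathbf F = \mathbf G$ (and preserving the norm). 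The open mapping theorem then makes the induced map
$\widetilde H^{-1}(\Omega)^n/\ker i^* \to \bigl(H_{\Gamma^+}^1(\Omega)^n\bigr)'$ a topological isomorphism, so it only remains to identify $\ker i^*$.

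The key step, and the main obstacle, is to prove that
$\ker i^* = H^{-1}_{\overline{\Gamma^+}}(\mathbb R^n)^n$ (viewed as a subspace of $\widetilde H^{-1}(\Omega)^n$ via \eqref{EA.5a}).
For the inclusion $H^{-1}_{\overline{\Gamma^+}}(\mathbb R^n)^n \subseteq \ker i^*$, if $\mathbf F = \gamma_\Omega^*\bs\phi$ with $\bs\phi\in \widetilde H^{-\frac{1}{2}}(\Gamma^+)^n$, then for every $\mathbf v\in H_{\Gamma^+}^1(\Omega)^n$ the restriction of $\gamma_\Omega\mathbf v$ to $\Gamma^+$ vanishes, so $\langle \mathbf F,\mathbf v\rangle_\Omega = \langle \bs\phi,\gamma_\Omega\mathbf v\rangle_{\partial\Omega} = 0$.
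For the reverse inclusion, take $\mathbf F\in\ker i^*$ and test against $\bs\varphi\in \mathcal D(\mathbb R^n\setminus \overline{\Gamma^+})^n$: the restriction $\bs\varphi|_\Omega\in H^1(\Omega)^n$ has trace vanishing on $\Gamma^+$, hence lies in $H_{\Gamma^+}^1(\Omega)^n$. Using that $\mathbf F\in \widetilde H^{-1}(\Omega)^n$ is supported in $\overline\Omega$, the pairing $\langle \mathbf F,\bs\varphi\rangle_{\mathbb R^n}$ equals $\langle \mathbf F,\bs\varphi|_\Omega\rangle_\Omega$ (using $\bs\varphi$ as one admissible extension of $\bs\varphi|_\Omega$, and independence of the extension due to the support property of $\mathbf F$ in the Lipschitz domain $\Omega$), and this vanishes by assumption. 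Hence $\mathrm{supp}\,\mathbf F\subseteq \overline{\Gamma^+}$, so $\mathbf F\in H^{-1}_{\overline{\Gamma^+}}(\mathbb R^n)^n$.

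The delicate point I expect to check carefully is the compatibility of the two duality pairings used above, namely $\langle \mathbf F,\mathbf v\rangle_\Omega$ for $\mathbf v\in H^1(\Omega)^n$ and $\langle \mathbf F,\bs\varphi\rangle_{\mathbb R^n}$ for $\bs\varphi\in \mathcal D(\mathbb R^n)^n$, under the identification $\widetilde H^{-1}(\Omega)^n \equiv \{F\in H^{-1}(\mathbb R^n)^n : \mathrm{supp}\,F\subseteq\overline\Omega\}$. This rests on the density of $\mathcal D(\mathbb R^n\setminus\overline\Omega)^n$ in $\{\mathbf w\in H^1(\mathbb R^n)^n : \mathbf w|_\Omega = 0\}$, which holds on bounded Lipschitz domains and makes the pairing $\langle \mathbf F,\mathbf v\rangle_\Omega := \langle \mathbf F,\mathcal E\mathbf v\rangle_{\mathbb R^n}$ independent of the choice of bounded extension $\mathcal E: H^1(\Omega)^n\to H^1(\mathbb R^n)^n$ (e.g. the Rychkov extension used in Lemma~\ref{identification-E}). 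Combining the surjectivity of $i^*$ with the identification of its kernel and invoking the first isomorphism theorem for Banach spaces yields the stated topological isomorphism $\bigl(H_{\Gamma^+}^1(\Omega)^n\bigr)' \cong \widetilde H^{-1}(\Omega)^n/H^{-1}_{\overline{\Gamma^+}}(\mathbb R^n)^n$.
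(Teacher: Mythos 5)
Your proof is correct and arrives at the result via the same broad strategy as the paper: both reduce the claim to identifying the annihilator of $H^1_{\Gamma^+}(\Omega)^n$ inside $\widetilde H^{-1}(\Omega)^n$ and then apply the dual-of-a-closed-subspace/quotient theorem, which you unpack explicitly (restriction map $i^*$, Hahn--Banach for surjectivity, first isomorphism theorem) where the paper simply cites Rudin, Sections 4.8--4.9. The genuine variation is in how the annihilator is computed. The paper proves the pre-annihilator identity $H^1_{\Gamma^+}(\Omega)^n = {}^\perp H^{-1}_{\overline{\Gamma^+}}(\R^n)^n$ by chaining through the trace-space duality $\widetilde H^{-1/2}(\Gamma^+)^n = \big(H^{1/2}(\Gamma^+)^n\big)'$ together with the characterization \eqref{EA.5a}, and then relies implicitly on the closedness of $H^{-1}_{\overline{\Gamma^+}}(\R^n)^n$ so that the double-annihilator closes up. You instead prove $\ker i^* = H^{-1}_{\overline{\Gamma^+}}(\R^n)^n$ directly: the inclusion $\supseteq$ uses \eqref{EA.5a} exactly as the paper does, but for the inclusion $\subseteq$ you give a self-contained support argument, testing $\mathbf F\in\ker i^*$ against $\mathcal D(\R^n\setminus\overline{\Gamma^+})^n$ and exploiting that $\widetilde H^{-1}(\Omega)^n$ consists precisely of the $H^{-1}(\R^n)^n$-distributions with support in $\overline\Omega$. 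This replaces the trace-duality chain by a more elementary distributional argument and avoids the closure bookkeeping in the abstract theorem; the extension-independence/density point you flag as delicate is exactly what must be checked, and it does hold on bounded Lipschitz domains (it is the same fact that makes the identification in \eqref{duality-spaces} coherent).
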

\begin{proof}
First,
we remark that due to \eqref{EA.5a}, the space ${H}_{\Gamma ^+ }^1(\Omega ^+)^n$ defined as in \eqref{H-D-a}
can be also equivalently defined as
\begin{align*}
{H}^1_{\Gamma^+}(\Omega)^n
&=\left\{{\bf v}\in {H}^1(\Omega)^n :
\gamma_\Omega{\bf v}= 0\mbox{ on }\Gamma^+\right\}\\
&=\left\{{\bf v}\in {H}^1(\Omega)^n :
\langle \gamma_\Omega{\bf v}, \bs\phi\rangle_{\partial\Omega}= 0\,, \ \forall\ \bs\phi\in \widetilde H^{-\frac{1}{2}}(\Gamma^+)^n\right\}\\
&=\left\{{\bf v}\in {H}^1(\Omega)^n :
\langle {\bf v}, \bs\Phi\rangle_{\Omega}= 0\,, \ \forall\ \bs\Phi\in H^{-1}_{_{\overline{\Gamma^+}}}(\R^n)^n\right\}
:={H}^1(\Omega)^n\perp H^{-1}_{_{\overline{\Gamma^+}}}(\R^n)^n.
\end{align*}
Then a duality argument
(see, e.g., \cite[Sections 4.8, 4.9]{Rudin1991}) {\Rd yields} the required identification.
\qed
\end{proof}

\begin{lemma}
\label{identification}
The dual $\big({H}_{\Gamma ^+}^1(\Omega ^+)^n\big)'$ of the space ${H}_{\Gamma ^+ }^1(\Omega ^+)^n$ can be identified with the space
$\widetilde H^{-1}(\Omega^+)^n/H^{-1}_{_{\overline{\Gamma^+}}}(\R^n)^n$
and also with the space
\begin{align}
\label{equivalence-set}
\left\{\boldsymbol\varphi \in H^{-1}(\Omega )^n: \boldsymbol\varphi ={\bf 0} \mbox{ on } \Omega ^-\right\}.
\end{align}
\end{lemma}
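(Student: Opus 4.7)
The plan is to establish the two identifications of $(H^1_{\Gamma^+}(\Omega^+)^n)'$ separately. The first is obtained by repeating the proof of Lemma~\ref{identification-Omega} verbatim with $\Omega$ replaced by the Lipschitz sub-domain $\Omega^+$ and with its Dirichlet portion still denoted $\Gamma^+\subset\partial\Omega^+$. The only ingredients needed are the surjectivity of the trace $\gamma_{\Omega^+}$ onto $H^{1/2}(\partial\Omega^+)^n$ and the characterization of distributions in $H^{-1}(\R^n)^n$ supported in $\overline{\Gamma^+}$ as images $\gamma_{\Omega^+}^*\bs\phi$ with $\bs\phi\in\widetilde H^{-1/2}(\Gamma^+)^n$. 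Both hold on $\Omega^+$, giving the orthogonality decomposition
\[
H^1_{\Gamma^+}(\Omega^+)^n=\left\{\mathbf v\in H^1(\Omega^+)^n:\langle\mathbf v,\bs\Phi\rangle_{\Omega^+}=0,\ \forall\,\bs\Phi\in H^{-1}_{\overline{\Gamma^+}}(\R^n)^n\right\},
\]
and the same abstract duality argument (cf.\ \cite[Sections~4.8, 4.9]{Rudin1991}) then yields the first claimed identification.

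For the second identification, I would construct a linear map $T:(H^1_{\Gamma^+}(\Omega^+)^n)'\to H^{-1}(\Omega)^n$ by
\[
\langle TF,\mathbf v\rangle_\Omega:=\langle F,\mathbf v|_{\Omega^+}\rangle_{\Omega^+},\quad\forall\,\mathbf v\in\mathring H^1(\Omega)^n\simeq\widetilde H^1(\Omega)^n,
\]
which is meaningful because $\mathbf v|_{\Omega^+}\in H^1_{\Gamma^+}(\Omega^+)^n$ whenever $\gamma_\Omega\mathbf v=\mathbf 0$. Choosing $\mathbf v$ supported in the open set $\Omega^-$ yields $\mathbf v|_{\Omega^+}=\mathbf 0$, whence $TF|_{\Omega^-}=\mathbf 0$, placing $TF$ in the right-hand space of \eqref{equivalence-set}. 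Injectivity follows from the extension operator $E_{\Omega^+\to\Omega}:H^1_{\Gamma^+}(\Omega^+)^n\to\mathring H^1(\Omega)^n$ of Lemma~\ref{identification-E}: since $(E_{\Omega^+\to\Omega}\mathbf u^+)|_{\Omega^+}=\mathbf u^+$, the equation $TF=0$ immediately forces $\langle F,\mathbf u^+\rangle_{\Omega^+}=0$ for every $\mathbf u^+$.

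Surjectivity is the delicate step. Given $\bs\varphi\in H^{-1}(\Omega)^n$ with $\bs\varphi|_{\Omega^-}=\mathbf 0$, I would set $\langle F,\mathbf u^+\rangle_{\Omega^+}:=\langle\bs\varphi,E_{\Omega^+\to\Omega}\mathbf u^+\rangle_\Omega$. The key point is independence from the particular extension: if $\mathbf v_1,\mathbf v_2\in\mathring H^1(\Omega)^n$ both restrict to $\mathbf u^+$ on $\Omega^+$, then $\mathbf w:=\mathbf v_1-\mathbf v_2\in\mathring H^1(\Omega)^n$ vanishes on $\Omega^+$, so by Lemma~\ref{extention}(ii) the trace of $\mathbf w$ on $\Sigma$ taken from $\Omega^-$ equals that from $\Omega^+$, i.e.\ zero, whence $\mathbf w|_{\Omega^-}\in\mathring H^1(\Omega^-)^n$ and $\mathbf w$ itself is the zero extension of $\mathbf w|_{\Omega^-}$ to $\Omega$. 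Interpreting $\bs\varphi|_{\Omega^-}=\mathbf 0$ weakly as annihilation of (zero extensions to $\Omega$ of) elements of $\mathring H^1(\Omega^-)^n$ then gives $\langle\bs\varphi,\mathbf w\rangle_\Omega=0$; the analogous argument applied to $\mathbf v-E_{\Omega^+\to\Omega}(\mathbf v|_{\Omega^+})$ yields $TF=\bs\varphi$.

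The main obstacle I foresee is precisely this well-definedness argument. It hinges on correctly matching two a priori distinct pieces of information across the transversal interface~$\Sigma$: the weak vanishing of $\bs\varphi$ on the open piece $\Omega^-$, and the fact that an $H^1$ function on $\Omega$ which vanishes on $\Omega^+$ automatically has zero trace on $\Sigma$ from the $\Omega^-$ side and hence lifts to an element of $\mathring H^1(\Omega^-)^n$. Once this coupling is in place, the remaining checks of linearity, continuity, and the identity $TF=\bs\varphi$ are routine.
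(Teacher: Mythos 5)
Your proposal is correct and follows essentially the same route as the paper: the first identification by applying Lemma~\ref{identification-Omega} to $\Omega^+$, and the second via the restriction map $r_{\Omega\to\Omega^+}$ and the extension $E_{\Omega^+\to\Omega}$ of Lemma~\ref{identification-E}, with the crux — independence from the chosen extension — handled exactly as in the paper by observing that an element of $\mathring H^1(\Omega)^n$ vanishing on $\Omega^+$ has zero trace on $\Sigma$ (Lemma~\ref{extention}(ii)), hence its restriction to $\Omega^-$ lies in $\mathring H^1(\Omega^-)^n$, where $\bs\varphi$ acts trivially. The paper makes the last point explicit by approximating by a sequence in $\mathcal D(\Omega^-)^n$, which is the step you compress into ``interpreting $\bs\varphi|_{\Omega^-}=\mathbf 0$ weakly.''
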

\begin{proof}
The identification with
$\widetilde H^{-1}(\Omega^+)^n/H^{-1}_{_{\overline{\Gamma^+}}}(\R^n)^n$
follows from Lemma \ref{identification-Omega} applied to $\Omega^+$.

To prove the identification with the space in \eqref{equivalence-set}, assume first that $\boldsymbol\varphi$ belongs to the space defined in \eqref{equivalence-set}. Thus,
\begin{align}\label{E3.24}
\boldsymbol\varphi \in {H^{-1}(\Omega )^n=\big(\mathring{H}^1(\Omega )^n\big)'}\, \mbox{ and }\,
{\langle\boldsymbol\varphi,\boldsymbol\psi ^-\rangle_{\Omega}
=\langle\boldsymbol\varphi,\boldsymbol\psi ^-\rangle_{\Omega^-}= 0},
\ \ \forall \, \boldsymbol\psi ^-\in {\mathcal D}(\Omega ^-)^n\,,
\end{align}
i.e., {$\boldsymbol\varphi $ has the support in $\overline{\Omega ^+}$
($\boldsymbol\varphi ={\bf 0}$ on $\Omega ^-$)}. We have used the equivalent description of the space $\mathring{H}^1(\Omega )^n$ given in formula \eqref{duality-spaces} and the identification of $\mathring{H}^1(\Omega )^n$ and $\widetilde{H}^1(\Omega )^n$.

Note that $\boldsymbol\varphi :{\mathring{H}^1(\Omega )^n}\to {\mathbb R}$ is linear and bounded. Let {$E_{\Omega ^+\to \Omega }$ be an extension operator from ${H}_{\Gamma ^+}^1(\Omega ^+)^n$ to $\mathring{H}^1(\Omega )^n$, which exists in view of Lemma \ref{identification-E}}.
Therefore, {the mapping}
\begin{align*}
&\boldsymbol\phi :=\boldsymbol\varphi \circ E_{\Omega ^+\to \Omega }:{H}_{\Gamma ^+}^1(\Omega ^+)^n\to \mathbb R\,,
\quad
{\boldsymbol\phi (\boldsymbol\psi ^+):=\boldsymbol\varphi \left(E_{\Omega^+\to\Omega}{\boldsymbol\psi ^+}\right),\, \ \ \forall \, \boldsymbol\psi ^+\in {H}_{\Gamma ^+}^1(\Omega ^+)^n}
\end{align*}
is linear and bounded as well. Hence, we have that
\begin{align*}
{\boldsymbol\phi (\boldsymbol\psi ^+)\!=\!\boldsymbol\varphi \left(\boldsymbol\psi \right)},\quad {\forall \, \boldsymbol\psi ^+\!\in \!{H}_{\Gamma ^+}^1(\Omega ^+)^n, \mbox{ where }
\boldsymbol\psi \!=\!E_{\Omega ^+\to\Omega }(\boldsymbol\psi ^+)\!\in \!\mathring{H}^1(\Omega )^n.}
\end{align*}
This definition agrees well with the condition that ${\boldsymbol\varphi ({\boldsymbol\psi ^-)}={\bf 0}}$ for any $\boldsymbol\psi ^-\in {\mathcal D}(\Omega ^-)^n$, and shows that for a fixed $E_{\Omega ^+\to\Omega}$, any functional $\boldsymbol\varphi $ from the space \eqref{equivalence-set} can be identified with a functional $\boldsymbol\phi \in\big({H}_{\Gamma ^+}^1(\Omega ^+)^n\big)'$.

To prove that the functional $\boldsymbol\phi$ does not depend on the extension operator $E_{\Omega ^+\to\Omega}$, let us consider two such extension operators, $E_{\Omega ^+\to\Omega}'$ and $E_{\Omega ^+\to\Omega}''$ from ${H}_{\Gamma ^+}^1(\Omega ^+)^n$ to $\mathring{H}^1(\Omega )^n$, generating for a fixed $\boldsymbol\varphi $ two functionals, $\boldsymbol\phi' :=\boldsymbol\varphi \circ E_{\Omega ^+\to \Omega }'$ and $\boldsymbol\phi'' :=\boldsymbol\varphi \circ E_{\Omega ^+\to \Omega }''$.
Then
\begin{align*}
\boldsymbol\phi' (\boldsymbol\psi ^+)-\boldsymbol\phi''(\boldsymbol\psi ^+)
&=\boldsymbol\varphi \left(E_{\Omega^+\to\Omega}'\boldsymbol\psi ^+\right)
-\boldsymbol\varphi \left(E_{\Omega^+\to\Omega}''\boldsymbol\psi ^+\right)\\
&=\boldsymbol\varphi \left(E_{\Omega^+\to\Omega}'\boldsymbol\psi ^+
-E_{\Omega^+\to\Omega}''\boldsymbol\psi ^+\right),
\, \ \ \forall \, \boldsymbol\psi ^+\in {H}_{\Gamma ^+}^1(\Omega ^+)^n.
\end{align*}
Denoting $\boldsymbol\psi_0:=E_{\Omega^+\to\Omega}'\boldsymbol\psi ^+-E_{\Omega^+\to\Omega}''\boldsymbol\psi ^+$, we obtain that $\boldsymbol\psi_0\in\mathring{H}^1(\Omega )^n$ and $\boldsymbol\psi_0=\mathbf 0$ in $\Omega^+$, implying that $r_{_{\Omega^-}}\boldsymbol\psi_0\in\mathring{H}^1(\Omega^-)^n$ and hence $\gamma_{_{\Omega^-}}\boldsymbol\psi_0=\mathbf 0$.
Thus, there exists $\boldsymbol\Psi_0\in \widetilde H^1(\Omega^-)^n\subset\widetilde H^1(\Omega)^n\subset H^1(\R^n)^n$ such that $\boldsymbol\psi_0=r_{_{\Omega}}\boldsymbol\Psi_0$ and a sequence $\{\boldsymbol\Psi_i\}_{i\in {\mathbb N}}\subseteq {\mathcal D}(\Omega ^-)^n$ converging to $\boldsymbol\Psi_0$ in $\widetilde H^1(\Omega^-)^n$.
By \eqref{E3.24} then
$\boldsymbol\varphi(\boldsymbol\psi_0)
=\boldsymbol\varphi(r_{_{\Omega}}\boldsymbol\Psi_0)
=\lim_{i\to\infty}\boldsymbol\varphi(r_{_{\Omega}}\boldsymbol\Psi_i)
=\lim_{i\to\infty}\boldsymbol\varphi(\boldsymbol\Psi_i)=0\,,$
and, hence, the asserted independence property follows.

Conversely, assume that $\boldsymbol \phi \in \left({H}_{\Gamma ^+}^1(\Omega ^+)^n\right)'$ and let $r_{\Omega \to \Omega ^+}$ be the restriction operator from the space $\mathring{H}^1(\Omega )^n$ to ${H}_{\Gamma ^+}^1(\Omega ^+)^n$.
Then the mapping $\boldsymbol \varphi :=\boldsymbol \phi \circ r_{\Omega \to \Omega ^+}:\mathring{H}^1(\Omega )^n\to {\mathbb R}$
is linear and bounded, i.e., $\boldsymbol \varphi \in {H^{-1}(\Omega )^n}$.
In addition, for any ${\boldsymbol\psi^-} \in {\mathcal D}(\Omega ^-)^n$, we have that ${\boldsymbol\psi ^-}\in \mathring{H}^1(\Omega )^n$, and accordingly that
$\boldsymbol \varphi \left({\boldsymbol\psi ^-}\right)=\boldsymbol \phi ({\bf 0})=0\,,$
where the last equality is provided by the linearity of the mapping $\boldsymbol \phi:{H}_{\Gamma ^+}^1(\Omega ^+)^n\to {\mathbb R}.$ Consequently, $\boldsymbol \varphi $ belongs to the space defined in \eqref{equivalence-set}, and $\boldsymbol \phi $ can be identified with $\boldsymbol \varphi $ through the relations
$\boldsymbol \varphi :=\boldsymbol \phi \circ r_{\Omega \to \Omega ^+}:\mathring{H}^1(\Omega )^n\to {\mathbb R}$
and
$\boldsymbol\phi =\boldsymbol\varphi \circ E_{\Omega ^+\to \Omega }:{H}_{\Gamma ^+}^1(\Omega ^+)^n\to \mathbb R$.
\qed
\end{proof}

\section{Abstract variational problems and well-posedness results}
\label{B-B-theory}

A main role in our analysis of variational problems is played by the following well-posedness result from \cite{Babuska}, \cite[Theorem 1.1]{Brezzi},
(cf., also \cite[Theorem 2.34]{Ern-Gu} and \cite{Brezzi-Fortin}). 
\begin{theorem}
\label{B-B}
Let $X$ and ${\mathcal M}$ be two real Hilbert spaces. Let $a(\cdot ,\cdot):X\times X\to {\mathbb R}$ and $b(\cdot ,\cdot):X\times {\mathcal M}\to {\mathbb R}$ be bounded bilinear forms. Let $V$ be the subspace of $X$ defined by
\begin{align}
\label{V}
V:=\left\{v\in X: b(v,q)=0\,,\ \forall \, q\in {\mathcal M}\right\}.
\end{align}
Assume that $a(\cdot ,\cdot ):V\times V\to {\mathbb R}$ is coercive, {\Rd that is,} there exists a constant $c_a>0$ such that
\begin{align}
\label{coercive}
a(w,w)\geq c_a\|w\|_X^2\,,\ \ \forall \, w\in V,
\end{align}
and that $b(\cdot ,\cdot)\!:\!X\!\times \!{\mathcal M}\!\to \!{\mathbb R}$ satisfies
the condition
\begin{align}
\label{inf-sup-sm}
&\inf _{q\in {\mathcal M}\setminus \{0\}}\sup_{v\in X\setminus \{0\}}\frac{b(v,q)}{\|v\|_X\|q\|_{\mathcal M}}\geq c_b \,,
\end{align}
with some constant $c_b>0$. Let $f\in X'$ and $g\in {\mathcal M}'$. Then the variational problem
\begin{equation}
\label{mixed-variational}
\left\{\begin{array}{ll}
a(u,v)+b(v,p)&=f(v), \ \ \forall \, v\in X,\\
b(u,q)&=g(q), \ \ \forall \, q\in {\mathcal M},
\end{array}
\right.
\end{equation}
with the unknown $(u,p)\in X\times {\mathcal M}$, is well-posed, which means that \eqref{mixed-variational} has a unique solution $(u,p)$ in $X\times {\mathcal M}$ and there exists a constant $C>0$ depending on $c_a$ and $c_b$, such that
\begin{align}
\label{mixed-C}
\|u\|_{X}+\|p\|_{{\mathcal M}}\leq C\left(\|f\|_{X'}+\|g\|_{{\mathcal M}'}\right).
\end{align}
\end{theorem}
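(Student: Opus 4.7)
The plan is to follow the classical Babu\v{s}ka--Brezzi strategy: decouple \eqref{mixed-variational} into (i) a problem on the constraint subspace $V$ for the ``velocity'' $u$, handled by Lax--Milgram, and (ii) recovery of the ``pressure'' $p$ from the residual functional via the inf-sup condition. Throughout, one works with the bounded linear operators $A\in\mathcal L(X,X')$ and $B\in\mathcal L(X,\mathcal M')$ associated with $a(\cdot,\cdot)$ and $b(\cdot,\cdot)$ by $\langle Av,w\rangle_{X}=a(v,w)$ and $\langle Bv,q\rangle_{\mathcal M}=b(v,q)$, together with the adjoint $B'\in\mathcal L(\mathcal M,X')$.

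First I would lift the divergence-type constraint. The inf-sup condition \eqref{inf-sup-sm} is equivalent to $B'$ being bounded below: $\|B'q\|_{X'}\ge c_b\|q\|_{\mathcal M}$ for all $q\in\mathcal M$. By the Banach closed range theorem this is equivalent to $B:X\to\mathcal M'$ being surjective with the quotient estimate: for every $g\in\mathcal M'$ there exists $u_g\in V^{\perp}$ with $Bu_g=g$ and $\|u_g\|_X\le c_b^{-1}\|g\|_{\mathcal M'}$. Next, the subspace $V$ defined in \eqref{V} is the kernel of $B$, hence closed in $X$, and it becomes a Hilbert space with the induced norm. On $V\times V$ the form $a$ is bounded and coercive by \eqref{coercive}, so the Lax--Milgram lemma gives a unique $u_0\in V$ such that
\begin{equation*}
a(u_0,v)=f(v)-a(u_g,v),\quad \forall\,v\in V,
\end{equation*}
with $\|u_0\|_X\le c_a^{-1}(\|f\|_{X'}+\|a\|\,\|u_g\|_X)$. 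Setting $u:=u_0+u_g\in X$, one checks directly that the second equation in \eqref{mixed-variational} is satisfied and that $a(u,v)=f(v)$ for every $v\in V$.

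The second, more delicate, step is the construction of the pressure $p$. Define the functional $\Phi\in X'$ by $\Phi(v):=f(v)-a(u,v)$. By the previous paragraph $\Phi$ vanishes on $V=\ker B$, so $\Phi\in(\ker B)^{\perp}={\rm Range}(B')$, where the last equality follows again from the Banach closed range theorem applied to $B$ (whose range is closed thanks to \eqref{inf-sup-sm}). Hence there exists a unique $p\in\mathcal M$ with $B'p=\Phi$, i.e.\ $b(v,p)=f(v)-a(u,v)$ for all $v\in X$, which is the first equation in \eqref{mixed-variational}. Uniqueness of $(u,p)$ follows because, for the homogeneous problem, the second equation gives $u\in V$, coercivity on $V$ forces $u=0$, and then the first equation reduces to $b(v,p)=0$ for all $v\in X$, which by injectivity of $B'$ (a restatement of \eqref{inf-sup-sm}) yields $p=0$.

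Finally, the a priori bound \eqref{mixed-C} is obtained by chaining the three inequalities already collected: the lifting estimate $\|u_g\|_X\le c_b^{-1}\|g\|_{\mathcal M'}$, the Lax--Milgram estimate for $\|u_0\|_X$, and the pressure estimate $\|p\|_{\mathcal M}\le c_b^{-1}\|\Phi\|_{X'}\le c_b^{-1}(\|f\|_{X'}+\|a\|\,\|u\|_X)$, producing a constant $C$ depending only on $c_a$, $c_b$, and $\|a\|$. I expect the main technical obstacle to be step two, namely the clean application of the closed range theorem to identify $(\ker B)^{\perp}$ with ${\rm Range}(B')$; all other ingredients (Lax--Milgram, quotient lifting, and norm bookkeeping) are routine once that identification is in hand.
\qed
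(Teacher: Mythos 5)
Your proof is correct and reproduces the standard Babu\v{s}ka--Brezzi argument (lift of the constraint via the inf-sup condition and the closed range theorem, Lax--Milgram on $V=\ker B$, and recovery of the pressure from the residual in $(\ker B)^{\circ}=\operatorname{Range}(B')$), which is precisely the argument in the references the paper cites for this theorem (Brezzi, Ern--Guermond); the paper itself states the result without reproducing the proof. The only minor point is that your stability constant depends on $\|a\|$ in addition to $c_a$ and $c_b$, which is in fact the correct dependence, so the paper's phrase ``depending on $c_a$ and $c_b$'' should be read as also implicitly allowing dependence on the continuity constant of $a$.
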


\begin{rem}\label{RB.2}
The linearity and well-posedness of problem \eqref{mixed-variational} under conditions of  Theorem $\ref{B-B}$ imply that the solution of the problem can be represented in the form
$(u,p)=\bs{\mathfrak U}(f,g),$
where the solution operator $\bs{\mathfrak U}:X'\times\mathcal M'\to X\times\mathcal M$ is  linear and continuous.
\end{rem}

We need also the following useful result (see \cite[Theorem A.56, Remark 2.7]{Ern-Gu}.
\begin{lemma}
\label{surj-inj-inf-sup}
Let $X$ and ${\mathcal M}$ be reflexive Banach spaces. Let $b(\cdot ,\cdot):X\times {\mathcal M}\to {\mathbb R}$ be a bounded bilinear form.
Let $B:X\to {\mathcal M}'$ be the bounded linear operator defined by\\
$
\langle Bv,q\rangle =b(v,q),\ \forall \, v\in X,\ \forall \, q\in {\mathcal M},
$
where $\langle \cdot ,\cdot \rangle $ denotes the duality pairing between the dual spaces
${\mathcal M}'$ and ${\mathcal M}$. Let $V:={\rm{Ker}}\, B$. Then the following assertions are equivalent:
\begin{itemize}
\item[$(i)$]
There exists a constant ${c_b>0}$ 
such that $b(\cdot ,\cdot)$ satisfies the inf-sup condition \eqref{inf-sup-sm}.
\item[$(ii)$]
The operator $B:{X/V}\to {\mathcal M}'$ is an isomorphism and
$\|Bw\|_{{\mathcal M}'}\geq {c_b}\|w\|_{X/V}$, $w\in X/V.$
\end{itemize}
\end{lemma}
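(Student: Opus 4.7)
The plan is to reformulate both conditions via the transpose operator $B^\top:\mathcal M\to X'$, defined by
$\langle v,B^\top q\rangle_{X',X}:=\langle Bv,q\rangle_{\mathcal M',\mathcal M}=b(v,q)$
for all $v\in X$ and $q\in\mathcal M$, and then to invoke the closed range theorem in the reflexive setting. First I would rewrite the inner supremum appearing in the inf-sup condition \eqref{inf-sup-sm}: since $b(-v,q)=-b(v,q)$,
$$\sup_{v\in X\setminus\{0\}}\frac{b(v,q)}{\|v\|_X}=\sup_{v\in X\setminus\{0\}}\frac{|\langle v,B^\top q\rangle|}{\|v\|_X}=\|B^\top q\|_{X'}.$$
Thus (i) is equivalent to the lower bound $\|B^\top q\|_{X'}\ge c_b\|q\|_{\mathcal M}$ for all $q\in\mathcal M$, i.e., $B^\top$ is injective and bounded below with constant $c_b$.

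Next, using reflexivity of $X$ and $\mathcal M$, I would apply the closed range theorem: $B^\top$ bounded below is equivalent to $B:X\to\mathcal M'$ being surjective together with the quantitative pre-image bound $\inf_{v\in B^{-1}\{f\}}\|v\|_X\le c_b^{-1}\|f\|_{\mathcal M'}$ for every $f\in\mathcal M'$. Since the left-hand side equals $\|[v]\|_{X/V}$ (with $V=\ker B$ and $[v]$ the coset of $v$), this rewrites as $\|\widetilde Bw\|_{\mathcal M'}\ge c_b\|w\|_{X/V}$ for the induced map $\widetilde B:X/V\to\mathcal M'$, $[v]\mapsto Bv$. This $\widetilde B$ is well-defined and continuous (because $V=\ker B$), injective (by the lower bound), and surjective (by the surjectivity of $B$), hence a topological isomorphism, which is precisely (ii).

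The converse (ii)$\Rightarrow$(i) would be a direct verification: the lower bound in (ii) is equivalent to the operator norm bound $\|\widetilde B^{-1}\|\le c_b^{-1}$, which by duality (in reflexive spaces $(X/V)'$ identifies with the annihilator $V^\circ\subset X'$, and one checks that $B^\top q\in V^\circ$ since $\langle v,B^\top q\rangle=b(v,q)=\langle Bv,q\rangle=0$ for $v\in V$) transfers to $\|B^\top q\|_{X'}\ge c_b\|q\|_{\mathcal M}$, i.e., \eqref{inf-sup-sm}.

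The main technical obstacle will be the careful application of the closed range theorem together with the duality identification $(X/V)'\cong V^\circ$, both of which depend essentially on the reflexivity hypothesis; the detailed argument is classical and can be extracted from standard references such as \cite[Theorem A.56]{Ern-Gu}.
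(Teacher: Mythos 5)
The paper does not supply a proof of this lemma, citing instead \cite[Theorem A.56, Remark 2.7]{Ern-Gu}; your reconstruction via the transpose $B^\top$, the rewriting of the inner supremum as $\|B^\top q\|_{X'}$, the closed range theorem, and the isometry $(X/V)'\cong V^\circ$ is correct and is exactly the standard argument behind the cited result. The only small inaccuracy is attributing the identification $(X/V)'\cong V^\circ$ to reflexivity --- that isometry holds in any Banach space; reflexivity is needed only to identify $B^\top$ with the Banach adjoint of $B$ (via $\mathcal M''=\mathcal M$, $X''=X$) so that the closed range theorem can be invoked symmetrically for $B$ and $B^\top$.
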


\section{\bf Useful norm estimates}\label{A-C}

In this appendix we provide several estimates, embeddings, and identities (some of them well known), used in the analysis of the Navier-Stokes problems.
Let $\Omega$ denote a bounded {\bn Lipschitz} domain in ${\mathbb R}^n$, $n\in\{2,3\}$.

$\bullet $ By the Sobolev embedding theorem (see, e.g., \cite[Theorem 6.3]{Adams2003}),
the space $H^1(\Omega)^n$ is {\bn compactly} embedded in $L^{4}(\Omega)^n$
and {\bn there} exists a constant $c_1{\bn=c_1(\Omega,n)}>0$ such that
\begin{align}\label{SET}
\|{\bf v}\|_{L^{4}(\Omega)^n}\le c_1\|{\bf v}\|_{H^1(\Omega)^n}\,, \quad \forall \, {\bf v}\in H^1(\Omega)^n\,.
\end{align}
Due to the equivalence in $\mathring{H}^1(\Omega )^n$ of the semi-norm $\|{\nabla (\cdot )} \|_{L^2(\Omega )^{n\times n}}$ with the norm \mbox{$\|\cdot \|_{{H}^1(\Omega )^n}$} given by \eqref{Sobolev}, estimate \eqref{SET} also implies
\begin{align}
\label{L4}
\|{\bf v}\|_{L^4(\Omega )^n}\leq c_0\|\nabla {\bf v}\|_{L^2(\Omega )^{n\times n}}\,, \quad \forall \ {\bf v}\in \mathring{H}^1(\Omega )^n,
\end{align}
with some constant $c_0=c_0(\Omega ,n)\gr>0$.

$\bullet $
By the H\"{o}lder inequality, we obtain for all ${\bf v}_1,{\bf v}_2, {\bf v}_3\!\in \!{H}^1(\Omega)^n$,
\begin{align}
\left|\left\langle({\bf v}_1\cdot \nabla ){\bf v}_2,{\bf v}_3\right\rangle _{\Omega}\right|
&\leq \|{\bf v}_1\|_{L^{4}(\Omega)^n}\|{\bf v}_3\|_{L^{4}(\Omega)^n} \|\nabla {\bf v}_2\|_{L^{2}(\Omega )^{n\times n}}\nonumber\\
&\leq c_1\|{\bf v}_1\|_{L^{4}(\Omega)^n}\|{\bf v}_3\|_{H^1(\Omega)^{n}} \|\nabla {\bf v}_2\|_{L^{2}(\Omega )^{n\times n}}.
\label{P-0}
\end{align}

This also means that for all ${\bf v}_1,{\bf v}_2, {\bf v}_3\!\in \!{H}^1(\Omega)^n$,
\begin{align}
\left|\left\langle\mathring E_\Omega[({\bf v}_1\cdot \nabla ){\bf v}_2],{\bf v}_3\right\rangle _{\Omega}\right|
&\leq \|\, |\mathring E_\Omega{\bf v}_1| \, |{\bf V}_3|\,\|_{L^{2}(\R^n)}
\|\mathring E_\Omega\nabla {\bf v}_2\|_{L^{2}(\R^n )^{n\times n}}\nonumber\\
&\leq \|{\bf v}_1\|_{L^{4}(\Omega)^n}\|{\bf v}_3\|_{L^{4}(\Omega)^n} \|\nabla {\bf v}_2\|_{L^{2}(\Omega )^{n\times n}}\nonumber\\
&\leq c_1\|{\bf v}_1\|_{L^{4}(\Omega)^n}\|{\bf v}_3\|_{H^1(\Omega)^{n}} \|\nabla {\bf v}_2\|_{L^{2}(\Omega )^{n\times n}},
\label{P-0E}
\end{align}
where ${\bf V}_3\in H^1(\R^n)^n$ is such that $r_{_\Omega}{\bf V}_3={\bf v}_3$.
This implies that $\mathring E_\Omega[({\bf v}_1\cdot \nabla ){\bf v}_2]$ belongs to the space $\widetilde{H}^{-1}(\Omega)^n=\big({H}^1(\Omega)^n\big)'$.
Moreover, in view of \eqref{SET},
for all ${\bf v}_1,{\bf v}_2\in {H}^1(\Omega)^n$,
\begin{align}
\label{P-01til}
\left\|\mathring E_{\Omega}[({\bf v}_1\cdot \nabla ){\bf v}_2]\right\|_{\widetilde{H}^{-1}(\Omega)^n}
\le {\bn c_1^2}\|{\bf v}_1\|_{{H}^{1}(\Omega)^n} \|{\bf v}_2\|_{{H}^{1}(\Omega)^n}.
\end{align}

Taking ${\bf v}_3\in \mathring{H}^1(\Omega)^n$ in \eqref{P-0}, it follows that the term
$({\bf v}_1\cdot \nabla ){\bf v}_2$ belongs to the dual of the space $\mathring{H}^1(\Omega)^n$, that is, to the space ${H}^{-1}(\Omega)^n$ {\bn and for all ${\bf v}_1,{\bf v}_2\in {H}^1(\Omega)^n$,}
\begin{align}
\label{P-01}
\!\!\!\!\left\|({\bf v}_1\cdot \nabla ){\bf v}_2\right\|_{{H}^{-1}(\Omega)^n}
&\le c_1\|{\bf v}_1\|_{L^{4}(\Omega)^n} \|\nabla {\bf v}_2\|_{L^{2}(\Omega )^{n\times n}}\nonumber\\
&\le c_1\|{\bf v}_1\|_{L^4(\Omega)^n}\|{\bf v}_2\|_{H^1(\Omega)^n}
\le c_1^2\|{\bf v}_1\|_{{H}^{1}(\Omega)^n} \|{\bf v}_2\|_{{H}^{1}(\Omega)^n}.
\end{align}

$\bullet $
The dense embedding of the space {\bn$\mathcal D(\overline\Omega)^n$ into ${H}^1(\Omega)^n$}, the divergence theorem and estimate \eqref{P-01} {\rd imply} the following identity for any ${\bf v}_1,{\bf v}_2, {\bf v}_3\in {H}^1(\Omega)^n$
{\bn
\begin{align}
\label{P-051gamma}
&\left\langle({\bf v}_1\cdot \nabla ){\bf v}_2,{\bf v}_3\right\rangle _{\Omega}
=\int_{\Omega}\nabla\cdot\left({\bf v}_1({\bf v}_2\cdot {\bf v}_3)\right)d{\bf x}
-\left\langle(\nabla \cdot{\bf v}_1){\bf v}_3
+({\bf v}_1\cdot \nabla ){\bf v}_3,{\bf v}_2\right\rangle _{\Omega}\nonumber
\\
&=\left\langle\gamma_{\Omega}{\bf v}_1\cdot \bs\nu,\gamma_{\Omega}{\bf v}_2
\cdot\gamma_{\Omega}{\bf v}_3\right\rangle _{\partial\Omega}
-\left\langle(\nabla \cdot{\bf v}_1){\bf v}_3
+({\bf v}_1\cdot \nabla ){\bf v}_3,{\bf v}_2\right\rangle _{\Omega}\,,
 \end{align}
where $\bs\nu$ is the normal vector on $\partial\Omega$ directed outward $\Omega$.

To obtain an alternative versions of estimate \eqref{P-0E}, which does not involve
$\|\nabla {\bf v}_2\|_{L^{2}(\Omega)^{n\times n}}$, let use \eqref{P-051gamma} and take into account that
$\gamma_{\Omega}{\bf v}_1,\gamma_{\Omega}{\bf v}_2,\gamma_{\Omega}{\bf v}_3\in {H}^{1/2}(\partial\Omega)^n$.
Further, we employ that for the Lipschitz domain $\Omega\in\R^n$, $n=2,3$, the space ${H}^{1/2}(\partial\Omega)^n$ is continuously embedded in $L^3(\partial\Omega)^n$ (e.g., by the embeddings in \cite[Section 2.2.4, Corollary 2(i)]{Runst-Sickel} for $\R^{n-1}$ that can be extended to Lipschitz surfaces by standard arguments, cf. also a more general statement
in \cite[Proposition 3.8]{MM2013Spr}).
Thus, there exists a constant $c_2=c_2(\partial\Omega,n)$, such that
\begin{align}\label{SETgamma}
\|\bs\phi\|_{L^{3}(\partial\Omega)^n}\le c_2\|\bs \phi\|_{H^{1/2}(\partial\Omega)^n}\,, \quad \forall \, \bs\phi\in H^{1/2}(\partial\Omega)^n\,.
\end{align}
Hence by the H\"older inequality we have for all ${\bf v}_1,{\bf v}_2,{\bf v}_3\!\in \!{H}^1(\Omega)^n$,
\begin{align}
\left|\left\langle\gamma_{\Omega}{\bf v}_1\cdot \bs\nu,\gamma_{\Omega}{\bf v}_2
\cdot\gamma_{\Omega}{\bf v}_3\right\rangle _{\partial\Omega}\right|
\leq \|\, |\gamma_{\Omega}{\bf v}_1| \, |\gamma_{\Omega}{\bf v}_2|\,\|_{L^{3/2}(\partial\Omega)}
\|\gamma_{\Omega}{\bf v}_3\|_{L^{3}(\partial\Omega)^{n}}\nonumber\\
\leq \|\gamma_{\Omega}{\bf v}_1\|_{L^{3}(\partial\Omega)^{n}}\|\gamma_{\Omega}{\bf v}_2\|_{L^{3}(\partial\Omega)^{n}}
\|\gamma_{\Omega}{\bf v}_3\|_{L^{3}(\partial\Omega)^{n}}\nonumber\\
\leq c^2_2\|\gamma_{\Omega}{\bf v}_1\|_{H^{1/2}(\partial\Omega)^n}\|\gamma_{\Omega}{\bf v}_2\|_{L^{3}(\partial\Omega)^{n}}
\|\gamma_{\Omega}{\bf v}_3\|_{H^{1/2}(\partial\Omega)^n}.
\label{P-0gamma}
\end{align}
Then from \eqref{P-051gamma} and \eqref{P-0gamma} we obtain for any ${\bf v}_1,{\bf v}_2,{\bf v}_3\!\in \!{H}^1(\Omega)^n$
\begin{align}
&\left|\left\langle({\bf v}_1\cdot \nabla ){\bf v}_2,{\bf v}_3\right\rangle _{\Omega}\right|
\leq \|\gamma_{\Omega}{\bf v}_1\|_{L^{3}(\partial\Omega)^{n}}\|\gamma_{\Omega}{\bf v}_2\|_{L^{3}(\partial\Omega)^{n}}
\|\gamma_{\Omega}{\bf v}_3\|_{L^{3}(\partial\Omega)^{n}}\nonumber\\
&+ \|\nabla\cdot {\bf v}_1\|_{L^{2}(\Omega)^{n\times n}} \|{\bf v}_2\|_{L^{4}(\Omega)^n} \|{\bf v}_3\|_{L^{4}(\Omega)^n}
+ \|{\bf v}_1\|_{L^{4}(\Omega)^n}\|{\bf v}_2\|_{L^{4}(\Omega)^n}
\|\nabla {\bf v}_3\|_{L^{2}(\Omega)^{n\times n}}\nonumber\\
&\leq c^2_2\|\gamma_{\Omega}\|^2
\|{\bf v}_1\|_{H^{1}(\Omega)^n}\|\gamma_{\Omega}{\bf v}_2\|_{L^{3}(\partial\Omega)^{n}}
\|{\bf v}_3\|_{H^{1}(\Omega)^n}\nonumber\\
&\qquad+ 2c_1\|{\bf v}_1\|_{H^1(\Omega)^{n}}\|{\bf v}_2\|_{L^{4}(\Omega)^n}\|{\bf v}_3\|_{H^1(\Omega)^{n}}\nonumber\\
&=
\|{\bf v}_1\|_{H^{1}(\Omega)^n}
(c^2_2\|\gamma_{\Omega}\|^2
\|\gamma_{\Omega}{\bf v}_2\|_{L^{3}(\partial\Omega)^{n}}
+ 2c_1\|{\bf v}_2\|_{L^{4}(\Omega)^n})\|{\bf v}_3\|_{H^{1}(\Omega)^n},
\label{P-0Egamma}
\end{align}
where $\|\gamma_{\Omega}\|:=\|\gamma_{\Omega}\|_{H^{1}(\Omega)^n\to H^{1/2}(\partial\Omega)^n}$ is the norm of the trace operator.
Similar to \eqref{P-01til} and by using the previous estimate, we obtain for all ${\bf v}_1,{\bf v}_2\in {H}^1(\Omega)^n$ that
\begin{align}
\label{P-01til-gamma}
\hspace{-0.75em}\left\|\mathring E_{\Omega}[({\bf v}_1\cdot \nabla ){\bf v}_2]\right\|_{\widetilde{H}^{-1}(\Omega)^n}\le
\|{\bf v}_1\|_{H^{1}(\Omega)^n}
(c^2_2\|\gamma_{\Omega}\|^2 \|\gamma_{\Omega}{\bf v}_2\|_{L^{3}(\partial\Omega)^{n}}
+ 2c_1\|{\bf v}_2\|_{L^{4}(\Omega)^n}).
\end{align}

$\bullet $
For ${\bf v}_3\in \mathring{H}^1(\Omega)^n$, \eqref{P-051gamma}  simplifies to
}
\begin{align}
\label{P-051}
\left\langle({\bf v}_1\cdot \nabla ){\bf v}_2,{\bf v}_3\right\rangle _{\Omega}
&=-\left\langle(\nabla \cdot{\bf v}_1){\bf v}_3
+({\bf v}_1\cdot \nabla ){\bf v}_3,{\bf v}_2\right\rangle _{\Omega}\,,
\ \forall\, {\bf v}_1,{\bf v}_2\in {H}^1(\Omega)^n,\ {\bf v}_3\in \mathring{H}^1(\Omega)^n, \end{align}
In view of \eqref{P-051} we also obtain the identity
\begin{align}
\label{antisym}
\!\!\!\!\!\!
\left\langle({\bf v}_1\cdot \nabla ){\bf v}_2,{\bf v}_3\right\rangle _{\Omega}
&\!\!=\!-\left\langle({\bf v}_1\cdot \nabla ){\bf v}_3,{\bf v}_2\right\rangle _{\Omega}\,,
\, \forall\ {\bf v}_1\in {H}_{\rm{div}}^1(\Omega)^n,\
{\bf v}_2\in {H}^1(\Omega)^n,\, {\bf v}_3\in \mathring{H}^1(\Omega)^n\,,
\end{align}
and hence the well known formula
\begin{equation}
\label{P-5a}
\left\langle ({\bf v}_1\cdot \nabla ){\bf v}_2,{\bf v}_2\right\rangle _{\Omega}=0\,, \ \forall \,
{\bf v}_1\!\in \!{H}_{\rm{div}}^1(\Omega)^n,\ {\bf v}_2\!\in \!\mathring{H}^1(\Omega)^n.
\end{equation}

On the other hand, in the more general cases \eqref{P-051gamma} and \eqref{P-051} imply
\begin{align}
\label{P-5gamma-g}
2\left\langle ({\bf v}_1\cdot \nabla ){\bf v}_2,{\bf v}_2\right\rangle _{\Omega}
=\left\langle\gamma_{\Omega}{\bf v}_1\cdot \bs\nu,\gamma_{\Omega}{\bf v}_2
\cdot\gamma_{\Omega}{\bf v}_2\right\rangle _{\partial\Omega}
-\left\langle(\nabla \cdot{\bf v}_1),{\bf v}_2\cdot{\bf v}_2\right\rangle _{\Omega}\\
\forall \,
{\bf v}_1,{\bf v}_2\!\in \!{H}^1(\Omega)^n,\nonumber
\end{align}
\begin{equation}
\label{P-5g}
2\left\langle ({\bf v}_1\cdot \nabla ){\bf v}_2,{\bf v}_2\right\rangle _{\Omega}
=-\left\langle(\nabla \cdot{\bf v}_1),{\bf v}_2\cdot{\bf v}_2\right\rangle _{\Omega}\,, \ \forall \,
{\bf v}_1\!\in \!{H}^1(\Omega)^n,\ {\bf v}_2\!\in \!\mathring{H}^1(\Omega)^n.
\end{equation}

Similar arguments to those for \eqref{P-0} and identity \eqref{antisym} imply the estimate
\begin{align}
\label{C7}
\left|\left\langle({\bf v}_1\cdot \nabla ){\bf v}_2,{\bf v}_3\right\rangle _{\Omega}\right|
=\left|\left\langle({\bf v}_1\cdot \nabla ){\bf v}_3,{\bf v}_2\right\rangle _{\Omega}\right|
&\leq \|{\bf v}_1\|_{L^{4}(\Omega)^n}\|{\bf v}_2\|_{L^{4}(\Omega)^n} \|\nabla {\bf v}_3\|_{L^{2}(\Omega )^{n\times n}}
\end{align}
for all ${\bf v}_1\in {H}_{\rm{div}}^1(\Omega)^n$, ${\bf v}_2\in {H}^1(\Omega)^n$,
${\bf v}_3\in \mathring{H}^1(\Omega)^n$.
Therefore
\begin{align}
\|({\bf v}_1\cdot \nabla ){\bf v}_2\|_{H^{-1}(\Omega)^n}
&\le |\!|\!|({\bf v}_1\cdot \nabla ){\bf v}_2|\!|\!|_{H^{-1}(\Omega)^n}
\le\|{\bf v}_1\|_{L^{4}(\Omega)^n}\|{\bf v}_2\|_{L^{4}(\Omega)^n}\nonumber\\
\label{P-0512b}
&\le c_1\|{\bf v}_1\|_{H^1(\Omega)^n}\|{\bf v}_2\|_{L^4(\Omega)^n}\
\forall\, {\bf v}_1\in {H}_{\rm{div}}^1(\Omega)^n,\, {\bf v}_2\in {H}^1(\Omega)^n.
\end{align}

$\bullet $
Let now Assumption $\ref{interface-Sigma}$ hold and $\Omega'$ be either $\Omega^+$ or $\Omega^-$.
Similar to \eqref{P-0E} we have that for any ${\bf v}_1,{\bf v}_2\!\in \!{H}^1(\Omega')^n$ and ${\bf v}_3\!\in \!{H}^1(\Omega)^n$
\begin{align}
\left|\left\langle\mathring E_{\Omega'}[({\bf v}_1\cdot \nabla ){\bf v}_2],{\bf v}_3\right\rangle _{\Omega}\right|
&
=\left|\left\langle({\bf v}_1\cdot \nabla ){\bf v}_2,{\bf v}_3\right\rangle _{\Omega'}\right|
\nonumber\\
&\leq \|{\bf v}_1\|_{L^{4}(\Omega')^n}\|{\bf v}_3\|_{L^{4}(\Omega')^n}
\|\nabla {\bf v}_2\|_{L^{2}(\Omega')^{n\times n}}
\nonumber\\
&\leq c'_1\|{\bf v}_1\|_{L^{4}(\Omega')^n}\|{\bf v}_3\|_{H^1(\Omega')^{n}} \|\nabla {\bf v}_2\|_{L^{2}(\Omega')^{n\times n}}\nonumber\\
&\leq c'_1\|{\bf v}_1\|_{L^{4}(\Omega')^n}\|{\bf v}_3\|_{H^1(\Omega)^{n}} \|\nabla {\bf v}_2\|_{L^{2}(\Omega')^{n\times n}}.
\label{P-0E'}
\end{align}
Taking ${\bf v}_3\in \mathring{H}^1(\Omega)^n$ in \eqref{P-0E'}, we find that
$r_{_\Omega}\mathring E_{\Omega'}[({\bf v}_1\cdot \nabla ){\bf v}_2]$ belongs to  ${H}^{-1}(\Omega)^n$ and
\begin{multline}
\label{P-01'3}
\left\|\mathring E_{\Omega'}[({\bf v}_1\cdot \nabla ){\bf v}_2]\right\|_{{H}^{-1}(\Omega)^n}
\le c'_1\|{\bf v}_1\|_{L^{4}(\Omega')^n} \|\nabla {\bf v}_2\|_{L^{2}(\Omega')^{n\times n}}\\
\hspace{-1em}\le c'_1\|{\bf v}_1\|_{L^{4}(\Omega')^n} \|{\bf v}_2\|_{{H}^{1}(\Omega')^n}
\le (c_1')^2\|{\bf v}_1\|_{{H}^{1}(\Omega')^n} \|{\bf v}_2\|_{{H}^{1}(\Omega')^n}\ \forall\, {\bf v}_1,{\bf v}_2\in {H}^1(\Omega')^n,
\end{multline}
where
$c_1'=c_1(\Omega',n)$, cf. \eqref{SET}. If, moreover, ${\bf v}_1\in {H}^1(\Omega)^n$, then \eqref{P-01'3} implies
\begin{multline}
\label{P-01'3-1}
\left\|\mathring E_{\Omega'}[(r_{_{\Omega'}}{\bf v}_1\cdot \nabla ){\bf v}_2]\right\|_{{H}^{-1}(\Omega)^n}
\le c'_1\|{\bf v}_1\|_{L^{4}(\Omega)^n} \|{\bf v}_2\|_{{H}^{1}(\Omega')^n}\\
\le (c'_1)^2\|{\bf v}_1\|_{{H}^{1}(\Omega)^n} \|{\bf v}_2\|_{{H}^{1}(\Omega')^n}
\ \forall\, {\bf v}_1\in {H}^1(\Omega)^n,\,{\bf v}_2\in {H}^1(\Omega')^n.
\end{multline}

$\bullet $
Let again Assumption $\ref{interface-Sigma}$ hold.
In order to obtain some alternative versions of estimates \eqref{P-0E'} and \eqref{P-01'3}, which do not involve
$\|\nabla {\bf v}_2\|_{L^{2}(\Omega')^{n\times n}}$, we implement \eqref{P-0Egamma} for $\Omega'$ and find that for all
${\bf v}_1,{\bf v}_2\!\in \!{H}^1(\Omega')^n$ and ${\bf v}_3\!\in \!{H}^1(\Omega)^n$
\begin{align}
&\left|\left\langle\mathring E_{\Omega'}[({\bf v}_1\cdot \nabla ){\bf v}_2],{\bf v}_3\right\rangle _{\Omega}\right|
=\left|\left\langle({\bf v}_1\cdot \nabla ){\bf v}_2,{\bf v}_3\right\rangle _{\Omega'}\right|\nonumber\\
&\le
\|{\bf v}_1\|_{H^{1}(\Omega')^n}
(c'^2_2\|\gamma_{\Omega'}\|^2\|\gamma_{\Omega'}{\bf v}_2\|_{L^{3}(\partial\Omega')^{n}}
+ 2c'_1\|{\bf v}_2\|_{L^{4}(\Omega')^n})\|{\bf v}_3\|_{H^{1}(\Omega)^n},
\label{P-0E'gamma}
\end{align}
where $c'_2=c_2(\partial\Omega',n)$.
If we take ${\bf v}_3\!\in \!\mathring{H}^1(\Omega)^n$, then \eqref{P-0E'gamma} implies
\begin{multline}
\label{P-01-gamma'}
\left\|\mathring E_{\Omega'}[({\bf v}_1\cdot \nabla ){\bf v}_2]\right\|_{{H}^{-1}(\Omega)^n}\le
\|{\bf v}_1\|_{H^{1}(\Omega')^n}
\Big(c'^2_2\|\gamma_{\Omega'}\|^2\|\gamma_{\Omega'}{\bf v}_2\|_{L^{3}(\partial\Omega')^{n}}\\
+ 2c'_1\|{\bf v}_2\|_{L^{4}(\Omega')^n}\Big), \quad \forall\ {\bf v}_1,{\bf v}_2\in {H}^1(\Omega')^n.
\end{multline}

\section*{\bf Acknowledgements}
The research has been supported by the grant EP/M013545/1: "Mathematical Analysis of Boundary-Domain Integral Equations for Nonlinear PDEs" from the EPSRC, UK. M. Kohr has been partially supported by the Babe\c{s}-Bolyai University research grant {AGC35124/31.10.2018}. 
W.L. Wendland has been partially supported by "Deutsche Forschungsgemeinschaft (DFG, German Research Foundation) under Germany's Excellence Strategy-EXC 2075-390740016".

\end{document}